\DeclareMathAlphabet{\mathpzc}{OT1}{pzc}{m}{it}
\def\p{\partial}
\def\a{\alpha}
\def\b{\beta}
\def\g{\gamma}
\def\D{\Delta}
\def\m{\mu}
\def\n{\nu}
\def\rh{\rho}
\def\s{\sigma}
\def\ph{\phi}
\def\vp{\varphi}
\def\lr{\longrightarrow}
\def\olin{\overline}
\def\es{\varnothing}
\def\sm{\setminus}
\def\sube{\subseteq}
\def\tx{\text}
\def\opn\operatorname
\def\ua{\uparrow}
\def\Ho{\mathrm{Ho}}
\def\ob{\mathrm{ob}}
\def\holim{\operatorname{holim}}
\def\dgcat{\mathrm{dgcat}}
\def\mc{\mathcal}
\def\zb{\mathbb Z}
\def\id{\mathrm{id}}
\def\bb{\begin{bmatrix}}
\def\eb{\\\end{bmatrix}}
\def\bp{\begin{proof}}
\def\ep{\end{proof}}
\def\be{\begin{equation}}
\def\ee{\end{equation}}
\def\ba{\begin{align*}}
\def\ea{\end{align*}}
\def\ben{\begin{enumerate}}
\def\een{\end{enumerate}}
\def\bc{\begin{cases}}
\def\ec{\end{cases}}
\newcommand{\textbfit}[1]{\textbf{\textit{#1}}}
\newcommand*\bcd{\mathpalette\bigcdot@{.73}}\newcommand*\bigcdot@[2]{\mathbin{\vcenter{\hbox{\scalebox{#2}{$\m@th#1\bullet$}}}}}\makeatother%adjustable dot
\title{ \textbf{A Homotopical Invariant of Weinstein Surfaces}}
\author{Shanon J. Rubin\footnote{The long-form thesis version of this paper can be found through the author's website: \texttt{sites.google.com/view/shanonrubin}.}\\
\texttt{rubin@mail.tsinghua.edu.cn}\\
\emph{Yau Mathematical Sciences Center, Tsinghua University}}
\date{}
\theoremstyle{plain}
\newtheorem{theorem}{Theorem}[subsection]
\newtheorem{lemma}[theorem]{Lemma}
\newtheorem{proposition}[theorem]{Proposition}
\newtheorem{corollary}[theorem]{Corollary}
\theoremstyle{definition}
\newtheorem{definition}[theorem]{Definition}
\newtheorem{remark}[theorem]{Remark}
\newtheorem{example}[theorem]{Example}
\newtheorem{construction}[theorem]{Construction}
\begin{document}

\maketitle

\begin{abstract}
\noindent 
One generally expects that the techniques of arboreal singularities and gluing of local differential graded categories will result in a useful global invariant for all Weinstein manifolds. In this paper we construct explicit models for the homotopy limits of diagrams of microlocal sheaf categories which arise from Weinstein surfaces with arboreal skeleta. This is done by characterizing all relevant Reedy model structures on the categories of diagrams that we care about. We prove invariance using a complete set of moves for Weinstein homotopies in this setting. Finally we give combinatorial presentations of the invariant for all topological surfaces.
\end{abstract}
\frenchspacing

\tableofcontents

\section{Introduction}

\pagenumbering{arabic}

This paper covers two main topics: the development of homotopy-theoretic tools and their applications to the model category of differential-graded (dg) categories, and the further application of these to the calculation of microlocally-inspired invariants of Weinstein surfaces. In this introduction we outline our main results and the geometric motivations for our definitions.

\subsection{Overview}
One goal of symplectic topology is to understand \textit{Weinstein manifolds}, which are open symplectic manifolds with extra Morse-theoretic structure that allows for a symplectic version of handle decompositions. Weinstein manifolds are often studied via Floer theory and categorical invariants like the Fukaya category first defined in \cite{F}.

Recently, the community has also been interested in producing categories in terms of the microlocal theory of sheaves developed by Kashiwara and Schapira in \cite{KS2}. In \cite{Kon}, Kontsevich proposed that one can calculate such invariants combinatorially from the \textit{skeleton} $\mathfrak X$ of a Weinstein manifold if the singularities of $\mathfrak X$ fall into a particularly simple class. In \cite{N1}, Nadler proposed instead a larger class of combinatorial singularities of $\mathfrak X$, calling these \textit{arboreal singularities} and calculating local microlocal invariants for each singularity type.

Starkston showed in \cite{S} that (signed) arboreal singularities serve as the local models for a large family of skeleta of Weinstein manifolds, in particular for all Weinstein manifolds in dimensions 2 and 4. Further results in this direction were proven in \cite{A}.

It has been a long-term goal in our field to use the microlocal theory of sheaves to define a sheaf of categories on any Weinstein manifold $W$, the global sections of which recover the Fukaya category of $W$. See e.g. \cite{NS, N2, Sh}. The concern of this paper is to use Nadler's local description to define a category associated to a Weinstein manifold whose skeleton contains multiple arboreal singularities.

In particular, the only arboreal singularity in dimension 2 is a trivalent vertex formed by the core of a Weinsein 1-handle being incident to a copy of $D^{1}$ formed by a family of generalized Morse-Bott singularities of index 0. Nadler's theorem in this case can be summarized by the following local diagram of dg-categories, where the vertical functor corresponds to the 1-handle.  % https://q.uiver.app/#q=WzAsNSxbMSwyLCJcXG1hdGhybXtNb2R9KEFfMikiXSxbMSwxLCJcXG1hdGhybXtNb2R9KEFfMSkiXSxbMCwyLCJcXG1hdGhybXtNb2R9KEFfMSkiXSxbMSwwXSxbMiwyLCJcXG1hdGhybXtNb2R9KEFfMSkiXSxbMCwxLCJcXHRleHR0dHtDb25lfSJdLFswLDIsIlxcdGV4dHR0e2RvbWFpbn0iLDJdLFswLDQsIlxcdGV4dHR0e2NvZG9tYWlufSJdXQ==
% https://q.uiver.app/#q=WzAsNCxbMSwxLCJcXG1hdGhybXtNb2R9KEFfMikiXSxbMSwwLCJcXG1hdGhybXtNb2R9KEFfMSkiXSxbMCwxLCJcXG1hdGhybXtNb2R9KEFfMSkiXSxbMiwxLCJcXG1hdGhybXtNb2R9KEFfMSkiXSxbMCwxLCJcXHRleHR0dHtDb25lfSJdLFswLDIsIlxcdGV4dHR0e2RvbWFpbn0iLDJdLFswLDMsIlxcdGV4dHR0e2NvZG9tYWlufSJdXQ==
\be\label{localvertex}\begin{tikzcd}
	& {\mc A} \\
	{\mc A} & {\mc A^{\to}} & {\mc A}
	\arrow["{C}", from=2-2, to=1-2]
	\arrow["{\rh_{1}}"', from=2-2, to=2-1]
	\arrow["{\rh_{2}}", from=2-2, to=2-3]
\end{tikzcd}
\ee
For Nadler's setting in the microlocal theory of sheaves, $\mc A$ is the dg-category of perfect chain complexes over a field. In our context we take $\mc A$ to be any strongly pretriangulated dg-category with 2-periodic hom complexes over a commutative unital ring. The notation $\mc A^{\to}$ stands for the dg-category of $\mc A$-valued representations of the $A_{2}$ quiver, i.e. maps $A\to B$ in $Z^{0}\mc A$. The restriction functors $\rho_{1},\rho_{2},$ and $C$ pick out, respectively, the domain, codomain, and cone of such a map.

Nadler's calculation suggests that a global microlocal invariant can be constructed by gluing together the local invariants at each singularity of an arboreal skeleton. After proving that the result is invariant under a complete set of moves for arboreal skeleta, one would have a new invariant. See \cref{symp} for a more detailed sketch of this proposal.

It turns out that, even in the case $\dim W=2$, the na\"{i}ve gluing procedure is not quite invariant, and some shifts have to be added to the functors in \cref{localvertex}. \cref{symp} discusses the modifications needed, which results in our main theorem:\\

\begin{theorem}[technical statement in \cref{final}]\label{maintheorem}
Fix a strongly pre triangulated dg-category $\mc A$ with 2-periodic hom complexes. Given a Weinstein surface $W$, let $\mathfrak X^{\mathrm{arb}}$ be the skeleton of any arborealization of $W$. Let $D=D_{\mathcal A}(\mathfrak X^{\mathrm{arb}})$ be the diagram constructed from $\mathfrak X^{\mathrm{arb}}$ by the local model shown in \cref{localvertex} followed by the appropriate globally-defined shifts. Then  $\mc L(W)\coloneqq \holim D$ is a Weinstein homotopy invariant of $W$.
\end{theorem}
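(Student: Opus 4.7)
The plan is to reduce the global invariance claim to a finite number of local checks, one for each generating Weinstein move on arborealized surface skeleta. First, I would record (or import from the literature on arborealization and from \cite{S,A}) a complete list of moves that generate Weinstein homotopy equivalence among arborealized skeleta of Weinstein surfaces: isotopy of the underlying ribbon graph, edge subdivision/contraction at trivalent vertices, the handle slide corresponding to sliding a 1-handle core over another, and creation/cancellation of a canceling 1-/2-handle pair (the last of which, in dimension 2, amounts to adding or removing an edge terminating in a monovalent vertex together with an adjacent bigon). Once this list is fixed, it suffices to prove that each move induces a quasi-equivalence $\mc L(W) \to \mc L(W')$.

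Before attacking the moves, I would set up the bookkeeping needed to compute $\holim D$ concretely. The diagram $D = D_{\mc A}(\mathfrak X^{\mathrm{arb}})$ indexed by the face poset of $\mathfrak X^{\mathrm{arb}}$ is a Reedy diagram in $\dgcat$, and the earlier sections of the paper (as advertised in the abstract) characterize the relevant Reedy model structure. Using this, I would replace $D$ by a pointwise-fibrant Reedy fibrant model $\tilde D$ so that $\holim D \simeq \lim \tilde D$, and describe an explicit object of $\lim \tilde D$ as a compatible family of twisted complexes in the vertex categories $\mc A$ and edge categories $\mc A^{\to}$, with coherence data prescribed by the shifted restriction functors $\rh_1,\rh_2,C$ of \cref{localvertex}.

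The bulk of the proof then consists of a move-by-move comparison. For an isotopy or a pure edge subdivision, the diagrams $D$ and $D'$ differ only by inserting or collapsing an identity span $\mc A \xrightarrow{\id} \mc A \xleftarrow{\id} \mc A$-shaped piece, whose contribution to the homotopy limit is trivial; invariance here follows from a standard cofinality/initiality argument for Reedy diagrams. For a handle slide and for creation/cancellation of a $(1,2)$-pair, I would exhibit an intermediate refined diagram $\widehat D$ which admits Reedy fibrant replacements that simultaneously refine $D$ and $D'$, and verify that both natural maps $\holim \widehat D \to \holim D$ and $\holim \widehat D \to \holim D'$ are quasi-equivalences. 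The cone functor $C$ in \cref{localvertex} together with the 2-periodicity of $\mc A$'s hom complexes is what makes the computations of these cones of pullbacks collapse correctly, producing the necessary equivalences.

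The main obstacle, as flagged in the statement, is that the naive gluing fails by sign/shift discrepancies: when one cancels a bigon or slides a handle, the cones along the two outgoing edges of a vertex disagree by a suspension with the cones that would appear if one chose a different orientation data on the graph. I would handle this by placing the globally-defined shifts of the theorem so that, around every vertex, the composite of the two functors $\rh_1,\rh_2,C$ around any small loop in the dual of $\mathfrak X^{\mathrm{arb}}$ equals the identity up to a canonical 2-periodic shift; showing that such a global shift-assignment exists reduces to a cohomological computation on the ribbon graph (an obstruction class in $H^1$ with $\mb Z/2$ coefficients that vanishes because of 2-periodicity). With the shifts fixed, one then checks that each move preserves the local shift balance, completing the verification of invariance and hence \cref{maintheorem}.
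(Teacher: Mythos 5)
Your overall architecture --- reduce Weinstein homotopy to a finite list of graph moves, compute $\holim D$ via a Reedy fibrant replacement, then check invariance move by move --- is the same as the paper's. But two of your steps have genuine gaps.

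First, you dispose of isotopies by saying the diagrams ``differ only by inserting or collapsing an identity span'' so that ``invariance follows from a standard cofinality/initiality argument.'' This misses the central phenomenon of the paper: the isotopy of a joint around the boundary of an index $0$ bone (Move 0) permutes which half-edge at a trivalent vertex carries $C$ versus $\rh_1$ versus $\rh_2$, and its algebraic effect is a shift of the cone functor by $[1]$ (via the quasi-equivalence $T:\mc A^{\to}\to\mc A^{\to}$ with $C\circ T\simeq [1]\circ\rh_1$, the Rotation Lemma). This is not a cofinality statement, and it is exactly the reason the na\"ive gluing fails and shifts must be introduced. A proof that treats all isotopies as algebraically trivial cannot close. (Relatedly, in dimension $2$ the cancellation is of a $0$/$1$-handle pair, not a $1$/$2$-handle pair; there are no $2$-handles on a Weinstein surface.)

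Second, your construction of the global shifts is not an argument. You posit an obstruction class in $H^1$ of the ribbon graph with $\zb/2$ coefficients and assert it ``vanishes because of 2-periodicity,'' but 2-periodicity only reduces the coefficients to $\zb/2$; it gives no reason for the class to vanish, and you give none. The paper instead gives an explicit prescription: choose auxiliary orientations of the index $0$ bones, mark a $1$-bone iff both of its joints lie on the same side of their $0$-bone(s), and shift the cone functor at one joint of each marked bone. Existence is then free, but one must separately prove (i) that each move preserves homotopy limits with this marking rule --- for the handle slide this requires the explicit quasi-equivalence $\mc A^{\to}\times_{\rh_1}\mc A^{\to}\simeq\mc A^{\to}\times_{\rh_2}\mc A^{\to}$ built from Gelfand--Manin reflection functors, not just ``cones of pullbacks collapse'' --- and (ii) that the result is independent of the auxiliary orientations, which the paper proves by an explicit sequence of handle slides and cancellations. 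Neither of these checks is present or replaceable by your cohomological assertion as stated.
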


Readers will note the similarity between the output $\mc L_{\mc A}(W)$ for a Weinstein surface $W$ and Dyckerhoff-Kapranov's \textit{topological Fukaya category of $(S,M)$ with coefficients in $\mc A$}, defined in \cite{DK} for a stable marked oriented surface $(S,M)$. Indeed, their use of surface triangulations is dual to ours of ribbon graphs. The latter, however, generalizes more readily to Weinstein manifolds of higher dimension, and thus serves as a proof-of-concept for later work. Other differences between the two approaches are that we use dg-categories equipped with quasi-equivalences rather than Morita equivalences, and that our construction provides extremely concrete formulas, while avoiding much of the machinery used in \cite{DK}.

Future directions of this work will involve generalizing this and later results to dimension 4 or arbitrary dimension. Our results in dimension 2 represent the general approach we intend to use, but some new homotopical and geometric tools will be required for higher dimensions. Eventually, we intend to cast all statements in terms of Lurie's theory of $\infty$-categories developed in \cite{L1, L2}, as the concrete approach with dg-categories carries many technical subtleties.

Since our invariant $\mc L(W)$ is defined as a homotopy limit of dg-categories, we investigate the homotopy theory of dg-categories more broadly. Thus in \cref{hom} we sketch the homotopical theorem that is used to calculate $\mc L(W)$.

In constructing a global invariant for Weinstein manifolds in the vein of microlocal sheaf theory, this paper stands in relation to \cite{NS}, in which Nadler and Shende embed Weinstein manifolds in the standard Weinstein structure on cotangent bundles. Rather than show that a construction is independent of a choice of embedding, we show that one is independent of a choice of algebraic structure put on the poset of strata of a skeleton. In both cases, one must show that the result is independent of the choice of arborealization of a given Weinstein manifold.

\subsection{Symplectic results}\label{symp}

Given a Weinstein manifold $W$ with arboreal skeleton $\mathfrak X$, we propose a natural definition of an associated sheaf category obtained by gluing together Nadler's local sheaf categories on each singularity along the combinatorially-defined restriction functors given in \cite{N1}. We start by covering $\mathfrak X$ with \textit{arboreal charts}, open sets $U\sube \n(\mathfrak X)$ for which the pair $(U, \mathfrak X\cap U)$ is symplectomorphic to the arboreal singularity $(\mathbb R^{2n},\sf{L}_{\mc T})$ corresponding to some rooted tree $\mc T$. When $\dim W=2$, the combinatorics of $\mathfrak X$ are captured by a finite graph whose interior vertices are trivalent, along with a strict ordering of the half-edges incident to any given vertex. 

We label each singularity $\sf{L}_{\mc T}$ of $\mathfrak X$ with our replacement of its corresponding microlocal sheaf category, the dg-category of modules $\tx{Mod}(\mc T)$\footnote{This is the result of Nadler's calculation of the microlocal sheaf category of $\sf{L}_{\mc T}$ in \cite{N1}.}. That is, instead of $\mathrm{Mod}(A_{1})$ we use $\mc A$, and instead of $\mathrm{Mod}(A_{2})$ we use $\mc A^{\to}$. Including restriction functors yields a diagram of dg-categories. For example, shown below is an arboreal skeleton for the pair of pants

\begin{center}
\tikzset{every picture/.style={line width=0.75pt}} %set default line width to 0.75pt        

\begin{tikzpicture}[x=0.75pt,y=0.75pt,yscale=-1,xscale=1]\label{pantsgraph}
%uncomment if require: \path (0,300); %set diagram left start at 0, and has height of 300

%Straight Lines [id:da3747371595046187] 
\draw    (70,110) -- (130.28,110) ;
\draw [shift={(130.28,110)}, rotate = 0] [color={rgb, 255:red, 0; green, 0; blue, 0 }  ][fill={rgb, 255:red, 0; green, 0; blue, 0 }  ][line width=0.75]      (0, 0) circle [x radius= 3.35, y radius= 3.35]   ;
\draw [shift={(70,110)}, rotate = 0] [color={rgb, 255:red, 0; green, 0; blue, 0 }  ][fill={rgb, 255:red, 0; green, 0; blue, 0 }  ][line width=0.75]      (0, 0) circle [x radius= 3.35, y radius= 3.35]   ;
%Straight Lines [id:da9378287641450184] 
\draw    (130.28,110) -- (190.56,110) ;
\draw [shift={(190.56,110)}, rotate = 0] [color={rgb, 255:red, 0; green, 0; blue, 0 }  ][fill={rgb, 255:red, 0; green, 0; blue, 0 }  ][line width=0.75]      (0, 0) circle [x radius= 3.35, y radius= 3.35]   ;
\draw [shift={(130.28,110)}, rotate = 0] [color={rgb, 255:red, 0; green, 0; blue, 0 }  ][fill={rgb, 255:red, 0; green, 0; blue, 0 }  ][line width=0.75]      (0, 0) circle [x radius= 3.35, y radius= 3.35]   ;
%Straight Lines [id:da06733478656686154] 
\draw    (190.56,110) -- (250.83,110) ;
\draw [shift={(250.83,110)}, rotate = 0] [color={rgb, 255:red, 0; green, 0; blue, 0 }  ][fill={rgb, 255:red, 0; green, 0; blue, 0 }  ][line width=0.75]      (0, 0) circle [x radius= 3.35, y radius= 3.35]   ;
\draw [shift={(190.56,110)}, rotate = 0] [color={rgb, 255:red, 0; green, 0; blue, 0 }  ][fill={rgb, 255:red, 0; green, 0; blue, 0 }  ][line width=0.75]      (0, 0) circle [x radius= 3.35, y radius= 3.35]   ;
%Straight Lines [id:da6816440545109562] 
\draw    (250.83,110) -- (311.11,110) ;
\draw [shift={(311.11,110)}, rotate = 0] [color={rgb, 255:red, 0; green, 0; blue, 0 }  ][fill={rgb, 255:red, 0; green, 0; blue, 0 }  ][line width=0.75]      (0, 0) circle [x radius= 3.35, y radius= 3.35]   ;
\draw [shift={(250.83,110)}, rotate = 0] [color={rgb, 255:red, 0; green, 0; blue, 0 }  ][fill={rgb, 255:red, 0; green, 0; blue, 0 }  ][line width=0.75]      (0, 0) circle [x radius= 3.35, y radius= 3.35]   ;
%Shape: Arc [id:dp9495430571818132] 
\draw  [draw opacity=0] (250.97,108.3) .. controls (250.99,108.73) and (251,109.15) .. (251,109.58) .. controls (250.88,130.02) and (223.71,146.78) .. (190.33,147.01) .. controls (156.95,147.25) and (129.99,130.86) .. (130.11,110.42) .. controls (130.11,109.98) and (130.13,109.54) .. (130.16,109.1) -- (190.56,110) -- cycle ; \draw   (250.97,108.3) .. controls (250.99,108.73) and (251,109.15) .. (251,109.58) .. controls (250.88,130.02) and (223.71,146.78) .. (190.33,147.01) .. controls (156.95,147.25) and (129.99,130.86) .. (130.11,110.42) .. controls (130.11,109.98) and (130.13,109.54) .. (130.16,109.1) ;  
%Straight Lines [id:da04525735539290798] 
\draw    (311.11,110) -- (371.39,110) ;
\draw [shift={(371.39,110)}, rotate = 0] [color={rgb, 255:red, 0; green, 0; blue, 0 }  ][fill={rgb, 255:red, 0; green, 0; blue, 0 }  ][line width=0.75]      (0, 0) circle [x radius= 3.35, y radius= 3.35]   ;
\draw [shift={(311.11,110)}, rotate = 0] [color={rgb, 255:red, 0; green, 0; blue, 0 }  ][fill={rgb, 255:red, 0; green, 0; blue, 0 }  ][line width=0.75]      (0, 0) circle [x radius= 3.35, y radius= 3.35]   ;
%Shape: Arc [id:dp24194277816374576] 
\draw  [draw opacity=0] (190.41,111.23) .. controls (190.39,110.81) and (190.38,110.38) .. (190.39,109.96) .. controls (190.67,89.52) and (217.96,72.97) .. (251.34,72.99) .. controls (284.73,73.01) and (311.56,89.6) .. (311.28,110.04) .. controls (311.27,110.48) and (311.26,110.92) .. (311.22,111.36) -- (250.83,110) -- cycle ; \draw   (190.41,111.23) .. controls (190.39,110.81) and (190.38,110.38) .. (190.39,109.96) .. controls (190.67,89.52) and (217.96,72.97) .. (251.34,72.99) .. controls (284.73,73.01) and (311.56,89.6) .. (311.28,110.04) .. controls (311.27,110.48) and (311.26,110.92) .. (311.22,111.36) ;  
\end{tikzpicture}
\end{center}

and its associated diagram
% https://q.uiver.app/#q=WzAsMTMsWzAsMSwiMCJdLFsxLDEsIlxcbWF0aGNhbCBBXzEiXSxbMiwxLCJcXG1hdGhjYWwgQV8yIl0sWzMsMSwiXFxtYXRoY2FsIEFfMSJdLFs0LDEsIlxcbWF0aGNhbCBBXzIiXSxbNSwxLCJcXG1hdGhjYWwgQV8xIl0sWzYsMSwiXFxtYXRoY2FsIEFfMiJdLFs3LDEsIlxcbWF0aGNhbCBBXzEiXSxbOCwxLCJcXG1hdGhjYWwgQV8yIl0sWzksMSwiXFxtYXRoY2FsIEFfMSJdLFsxMCwxLCIwIl0sWzQsMiwiXFxtYXRoY2FsIEFfMSJdLFs2LDAsIlxcbWF0aGNhbCBBXzEiXSxbMCwxXSxbMSwyLCJcXHJob18yIiwwLHsic3R5bGUiOnsidGFpbCI6eyJuYW1lIjoiYXJyb3doZWFkIn0sImhlYWQiOnsibmFtZSI6Im5vbmUifX19XSxbMiwzLCJcXHJob18xIl0sWzMsNCwiXFxyaG9fMSIsMCx7InN0eWxlIjp7InRhaWwiOnsibmFtZSI6ImFycm93aGVhZCJ9LCJoZWFkIjp7Im5hbWUiOiJub25lIn19fV0sWzYsNSwiXFxyaG9fMiIsMl0sWzYsNywiXFxyaG9fMSIsMl0sWzgsNywiXFxyaG9fMSJdLFs0LDUsIlxccmhvXzIiLDJdLFs4LDksIlxccmhvXzIiLDJdLFsxMCw5XSxbMiwxMSwiQyIsMV0sWzYsMTEsIkMiLDFdLFs0LDEyLCJDIiwxXSxbOCwxMiwiQyIsMV1d
\begin{small}
\be\begin{tikzcd}\label{pants}
	&&&&&& {\mc A} \\
	0 & {\mc A} & {\mc A^{\to}} & {\mc A} & {\mc A^{\to}} & {\mc A} & {\mc A^{\to}} & {\mc A} & {\mc A^{\to}} & {\mc A} & 0 \\
	&&&& {\mc A}
	\arrow[from=2-1, to=2-2]
	\arrow["{\rho_2}"', from=2-3, to=2-2]
	\arrow["{\rho_1}", from=2-3, to=2-4]
	\arrow["C"{description}, bend right =20,from=2-3, to=3-5]
	\arrow["{\rho_1}", from=2-5, to=2-4]
	\arrow["{[1]\circ C}"{description}, bend right =-20,from=2-5, to=1-7]
	\arrow["{\rho_2}"', from=2-5, to=2-6]
	\arrow["{\rho_2}"', from=2-7, to=2-6]
	\arrow["{\rho_1}", from=2-7, to=2-8]
	\arrow["{[1]\circ C}"{description},bend right =-20, from=2-7, to=3-5]
	\arrow["C"{description}, bend right =20,from=2-9, to=1-7]
	\arrow["{\rho_1}", from=2-9, to=2-8]
	\arrow["{\rho_2}"', from=2-9, to=2-10]
	\arrow[from=2-11, to=2-10]
\end{tikzcd}\ee
\end{small}

The shifts encode a Maslov index and will be explained shortly.

After a Weinstein homotopy and corresponding algebraic manipulation of the diagram associated to $\mathfrak{X}$, our results give the equivalent skeleton and diagram: \footnote{Meaning, the two diagrams have equivalent homotopy limits.}
\vspace{-0.3cm}
% https://q.uiver.app/#q=WzAsNyxbNCwwLCJcXG1hdGhjYWwgQV8xIl0sWzMsMSwiXFxtYXRoY2FsIEFfMiJdLFs0LDEsIlxcbWF0aGNhbCBBXzEiXSxbNSwxLCJcXG1hdGhjYWwgQV8yIl0sWzQsMiwiXFxtYXRoY2FsIEFfMSJdLFswLDEsIlxcY3VydmVhcnJvd3JpZ2h0Il0sWzIsMSwiXFxjdXJ2ZWFycm93cmlnaHQiXSxbMSwyLCJDIl0sWzMsMiwiQyIsMl0sWzEsMCwiXFxyaG9fMiJdLFszLDAsIlxccmhvXzEiLDJdLFsxLDQsIlxccmhvXzEiLDJdLFszLDQsIlxccmhvXzIiXSxbNSw2XV0=
\be\label{diagram}
\begin{tikzcd}
	&&&& {\mc A} \\
	{\bcd} && {\bcd} & {\mc A^{\to}} & {\mc A} & {\mc A^{\to}} \\
	&&&& {\mc A}
	\arrow["{[1]\circ C}", from=2-4, to=2-5]
	\arrow["C"', from=2-6, to=2-5]
	\arrow["{\rho_2}", bend right =-20, from=2-4, to=1-5]
	\arrow["{\rho_1}"', bend right =20, from=2-6, to=1-5]
	\arrow["{\rho_1}"', bend right =20, from=2-4, to=3-5]
	\arrow["{\rho_2}", bend right =-20, from=2-6, to=3-5]
	\arrow[no head, shorten <=-8pt, shorten >=-8pt, from=2-1, to=2-3]
		\arrow[no head, bend right = 90, shift right = -2.1, from=2-1, to=2-3]
			\arrow[no head, bend right = -90, shift right = 2, from=2-1, to=2-3]
\end{tikzcd}
\ee

Finally, we take the homotopy limit of this diagram to glue the categories together, yielding a single dg-category, up to quasi-equivalence. The singularities involved in the graph preceding \cref{pants} are the only (generalized) arboreal singularities in this dimension.

This paper achieves the above goal in dimension 2, which serves as a proof of concept for the more complicated situation in dimension 4. By Starkston's result, this gives a powerful invariant for the investigation of all Weinstein manifolds in dimension 2, and in dimension 4 if the same investigation can be done. In particular, we obtain a combinatorial way to compute a invariant which is an analogue to the Fukaya category.

For Weinstein surfaces, all Weinstein homotopies of arboreal Weinstein manifolds can be realized as particular homotopy equivalences of their skeleta.\\

\begin{lemma}[technical statement in \cref{geo}]
Given an arboreal Weinstein surface and the ribbon graph arising from its skeleton, every Weinstein homotopy can be realized by a combination of four \emph{arboreal moves}\footnote{See \cref{moves}.} on the graph.
\end{lemma}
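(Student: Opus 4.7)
The plan is to reduce an arbitrary Weinstein homotopy to a concatenation of standard local Weinstein moves, and then check that each such move is realized at the level of the ribbon graph by one of the four arboreal moves. First, I would invoke the Cerf-theoretic picture for Weinstein structures (in the spirit of the work cited by Cieliebak--Eliashberg): a generic one-parameter family of Weinstein structures on a surface $W$ is constant except at a discrete set of times, at which an elementary move occurs — a handle slide, a creation or cancellation of a canceling $0$-$1$ handle pair, or a half-edge reordering at a trivalent critical point corresponding to an isotopy of the attaching region of a Weinstein $1$-handle across the Morse--Bott index-$0$ stratum. Away from such times the homotopy only isotopes the ribbon graph inside $W$, which is trivial at the level of the combinatorial datum.

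Next, I would interpret each of these elementary Weinstein moves geometrically in a neighborhood of the arboreal skeleton $\mathfrak X^{\mathrm{arb}}$ and read off the induced change on the ribbon graph. A canceling $0$-$1$ pair corresponds to a trivalent vertex attached to a univalent endpoint by a single edge, so its cancellation is an arboreal move erasing this pair (and its creation is the inverse move). A handle slide transforms the cyclic ordering of attached $1$-handles near a vertex and is realized by the arboreal slide move. A passage through a degenerate configuration in which two trivalent vertices collide produces the Whitehead-type edge-contraction and edge-expansion move, which is the remaining arboreal move. Half-edge reorderings and local isotopies reduce to the above after composing with the listed moves.

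The last step is to verify completeness: any two arboreal representatives of skeleta in the same Weinstein homotopy class are related by a finite sequence of these four arboreal moves. Here one uses the transversality statement underlying the generic Cerf picture — at every time $t$ outside a finite set, the intermediate Weinstein structure admits an arborealization in a suitably uniform way, by the results of \cite{S, A}, and the change across the exceptional times is precisely one of the four elementary moves above.

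The main obstacle I anticipate is controlling the arboreality along the whole homotopy. Arborealization is a non-canonical procedure, and a generic Weinstein homotopy will in general pass through non-arboreal skeleta, so one must show that a family of small perturbations recovers an arboreal skeleton in a way that does not add moves beyond the four listed. In dimension $2$ this reduces to a finite combinatorial analysis around each trivalent vertex, since the only arboreal singularity is the one described in \cref{localvertex}, but the verification that every ambiguity in the arborealization procedure is absorbed by the four moves is the delicate point of the argument.
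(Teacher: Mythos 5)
Your overall strategy is the same as the paper's: reduce a generic Weinstein homotopy via Cerf/parametrized Morse theory to a finite sequence of handle cancellations and handle slides (no $2$-handles on a surface), and then match these bifurcations with the arboreal moves. However, you have explicitly flagged but not carried out the step that constitutes most of the actual work, namely controlling arboreality along the homotopy. The paper resolves this concretely: it first perturbs both endpoints $W_0, W_1$ to surfaces $M_0, M_1$ whose Lyapunov functions are honest Morse functions (using \cite[Section 12.3]{CE}), with index $0$ critical points in bijection with the index $0$ bones, so that the entire homotopy can be run inside the Morse world where the standard Cerf picture applies. It then re-arborealizes at each regular time $s_i$ by embedding a standard $T^*D^1$ at each index $0$ critical point and interpolating the Liouville field (\cite[Proposition 3.4]{S}). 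The non-canonicity you worry about is exactly the choice of how to arrange the attached $1$-handles around each thickened $0$-bone, and the paper shows (\cref{mfld2}) that any two such arrangements are related by Moves 0 and 2. Without this step your argument does not close up, since $W^{(0)}$ and $W^{(n)}$ must be identified with the given $W_0$ and $W_1$ and the intermediate choices must be shown to be immaterial.

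A secondary issue is your dictionary between geometric events and the four moves. In the paper, only two of the moves (Move 1 for cancellation, Move 3 for the handle slide, the latter in two cases depending on whether the slid-over handle has both joints on one $0$-bone) correspond to Cerf-theoretic bifurcations; Moves 0 and 2 are not independent bifurcation types but are precisely the bookkeeping moves needed to absorb the re-arborealization ambiguity above, and the cancellation step additionally requires first sliding all other $1$-handles off the $0$-handle being cancelled. Your "Whitehead-type collision of two trivalent vertices" is roughly Move 2 (whose midpoint is indeed a $4$-valent vertex, cf.\ the proof of \cref{gen2}), but presenting it as a separate generic bifurcation of the Weinstein structure, rather than as part of the normalization of the arborealization, obscures where the completeness of the move set actually comes from.
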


This result uses the fact that generic families of Morse functions involve only handle-cancellations, handle-slides, and isotopies.

We now sketch the definition of the invariant $\mc L(W)$ for an arboreal Weinstein surface $W$. The skeleton of $W$ can be represented as a graph $G$ along with an immersion of $G$ in $\mathbb R^{2}$, as in the skeleton above \cref{pants}. As in \cref{pants}, we associate to $G$ a diagram $D$ of dg-categories, with specific labelings of $\{\rh_{1},\rh_{2},C\}$ determined by the orientation of $W$ and the direction corresponding to the Weinstein 1-handle.

We then add in shifts to some of the cone functors in $D$ in order to make $\holim D$ a Weinstein homotopy invariant. Invariance is easiest to prove when we descend to $\mathbb Z/2$-graded categories, so our definition need only describe which mapping cone functors have a shift of $[1]$. It turns out that the homotopy limit only depends on the \textit{difference} in the shifts of the two cone functors corresponding to each index 1 bone. Since each index 1 bone corresponds to an edge in $G$, we only need to indicate which of these edges gets an overall shift. A definition of these shifts is sketched in \cref{markedintro}. Finally, $\mc L(W)\coloneqq \holim D$ is the category referenced in \cref{maintheorem}. 

\begin{figure}
\centering
\tikzset{every picture/.style={line width=0.75pt}} %set default line width to 0.75pt        

\begin{tikzpicture}[x=0.75pt,y=0.75pt,yscale=-1,xscale=1]
%uncomment if require: \path (0,143); %set diagram left start at 0, and has height of 143

%Straight Lines [id:da801530065979462] 
\draw [line width=1.5]    (34,78) -- (244,78) ;
\draw [shift={(244,78)}, rotate = 0] [color={rgb, 255:red, 0; green, 0; blue, 0 }  ][fill={rgb, 255:red, 0; green, 0; blue, 0 }  ][line width=1.5]      (0, 0) circle [x radius= 2.61, y radius= 2.61]   ;
\draw [shift={(34,78)}, rotate = 0] [color={rgb, 255:red, 0; green, 0; blue, 0 }  ][fill={rgb, 255:red, 0; green, 0; blue, 0 }  ][line width=1.5]      (0, 0) circle [x radius= 2.61, y radius= 2.61]   ;
\draw  [line width=1.5]  (162.11,72) -- (174,78) -- (162.11,84) ;
%Straight Lines [id:da17696952549930367] 
\draw [line width=1.5]    (324,78) -- (424,78) ;
\draw [shift={(424,78)}, rotate = 0] [color={rgb, 255:red, 0; green, 0; blue, 0 }  ][fill={rgb, 255:red, 0; green, 0; blue, 0 }  ][line width=1.5]      (0, 0) circle [x radius= 2.61, y radius= 2.61]   ;
\draw [shift={(324,78)}, rotate = 0] [color={rgb, 255:red, 0; green, 0; blue, 0 }  ][fill={rgb, 255:red, 0; green, 0; blue, 0 }  ][line width=1.5]      (0, 0) circle [x radius= 2.61, y radius= 2.61]   ;
\draw  [line width=1.5]  (91,72) -- (102.89,78) -- (91,84) ;
\draw  [line width=1.5]  (351.11,72) -- (363,78) -- (351.11,84) ;
\draw  [line width=1.5]  (381.11,72) -- (393,78) -- (381.11,84) ;
%Curve Lines [id:da5604597089770382] 
\draw [color={rgb, 255:red, 240; green, 0; blue, 0 }  ,draw opacity=1 ][line width=1.5]    (49,78) .. controls (48.67,49.67) and (71.67,50.67) .. (71,78) ;
\draw [shift={(71,78)}, rotate = 91.4] [color={rgb, 255:red, 240; green, 0; blue, 0 }  ,draw opacity=1 ][fill={rgb, 255:red, 240; green, 0; blue, 0 }  ,fill opacity=1 ][line width=1.5]      (0, 0) circle [x radius= 2.61, y radius= 2.61]   ;
\draw [shift={(49,78)}, rotate = 269.33] [color={rgb, 255:red, 240; green, 0; blue, 0 }  ,draw opacity=1 ][fill={rgb, 255:red, 240; green, 0; blue, 0 }  ,fill opacity=1 ][line width=1.5]      (0, 0) circle [x radius= 2.61, y radius= 2.61]   ;
%Curve Lines [id:da6251511283368922] 
\draw [color={rgb, 255:red, 240; green, 0; blue, 0 }  ,draw opacity=1 ][line width=1.5]    (86,78) .. controls (85.67,112.67) and (115.67,113.67) .. (115,78) ;
\draw [shift={(115,78)}, rotate = 268.93] [color={rgb, 255:red, 240; green, 0; blue, 0 }  ,draw opacity=1 ][fill={rgb, 255:red, 240; green, 0; blue, 0 }  ,fill opacity=1 ][line width=1.5]      (0, 0) circle [x radius= 2.61, y radius= 2.61]   ;
\draw [shift={(86,78)}, rotate = 90.55] [color={rgb, 255:red, 240; green, 0; blue, 0 }  ,draw opacity=1 ][fill={rgb, 255:red, 240; green, 0; blue, 0 }  ,fill opacity=1 ][line width=1.5]      (0, 0) circle [x radius= 2.61, y radius= 2.61]   ;
%Curve Lines [id:da5526650719432145] 
\draw [color={rgb, 255:red, 240; green, 0; blue, 0 }  ,draw opacity=1 ][line width=1.5]    (205,78) .. controls (205.67,16.67) and (342.67,17.67) .. (342,78) ;
\draw [shift={(342,78)}, rotate = 90.63] [color={rgb, 255:red, 240; green, 0; blue, 0 }  ,draw opacity=1 ][fill={rgb, 255:red, 240; green, 0; blue, 0 }  ,fill opacity=1 ][line width=1.5]      (0, 0) circle [x radius= 2.61, y radius= 2.61]   ;
\draw [shift={(205,78)}, rotate = 270.62] [color={rgb, 255:red, 240; green, 0; blue, 0 }  ,draw opacity=1 ][fill={rgb, 255:red, 240; green, 0; blue, 0 }  ,fill opacity=1 ][line width=1.5]      (0, 0) circle [x radius= 2.61, y radius= 2.61]   ;
%Curve Lines [id:da22441876068002942] 
\draw [color={rgb, 255:red, 240; green, 0; blue, 0 }  ,draw opacity=1 ][line width=1.5]    (226,78) .. controls (226.67,144.67) and (372.67,140.67) .. (374,78) ;
\draw [shift={(374,78)}, rotate = 271.22] [color={rgb, 255:red, 240; green, 0; blue, 0 }  ,draw opacity=1 ][fill={rgb, 255:red, 240; green, 0; blue, 0 }  ,fill opacity=1 ][line width=1.5]      (0, 0) circle [x radius= 2.61, y radius= 2.61]   ;
\draw [shift={(226,78)}, rotate = 89.43] [color={rgb, 255:red, 240; green, 0; blue, 0 }  ,draw opacity=1 ][fill={rgb, 255:red, 240; green, 0; blue, 0 }  ,fill opacity=1 ][line width=1.5]      (0, 0) circle [x radius= 2.61, y radius= 2.61]   ;
%Curve Lines [id:da5064014904894845] 
\draw [color={rgb, 255:red, 0; green, 183; blue, 0 }  ,draw opacity=1 ][line width=1.5]    (154,78) .. controls (155.67,152.17) and (-18.67,161.83) .. (14,58) .. controls (46.67,-45.83) and (134.67,36.17) .. (134,78) ;
\draw [shift={(134,78)}, rotate = 77.46] [color={rgb, 255:red, 0; green, 183; blue, 0 }  ,draw opacity=1 ][fill={rgb, 255:red, 0; green, 183; blue, 0 }  ,fill opacity=1 ][line width=1.5]      (0, 0) circle [x radius= 2.61, y radius= 2.61]   ;
\draw [shift={(154,78)}, rotate = 114.73] [color={rgb, 255:red, 0; green, 183; blue, 0 }  ,draw opacity=1 ][fill={rgb, 255:red, 0; green, 183; blue, 0 }  ,fill opacity=1 ][line width=1.5]      (0, 0) circle [x radius= 2.61, y radius= 2.61]   ;
%Curve Lines [id:da0776827546460298] 
\draw [color={rgb, 255:red, 0; green, 183; blue, 0 }  ,draw opacity=1 ][line width=1.5]    (404,78) .. controls (407.67,130.17) and (465.33,97.83) .. (444,58) .. controls (422.67,18.17) and (180.67,-39.83) .. (185,78) ;
\draw [shift={(185,78)}, rotate = 118.48] [color={rgb, 255:red, 0; green, 183; blue, 0 }  ,draw opacity=1 ][fill={rgb, 255:red, 0; green, 183; blue, 0 }  ,fill opacity=1 ][line width=1.5]      (0, 0) circle [x radius= 2.61, y radius= 2.61]   ;
\draw [shift={(404,78)}, rotate = 58.26] [color={rgb, 255:red, 0; green, 183; blue, 0 }  ,draw opacity=1 ][fill={rgb, 255:red, 0; green, 183; blue, 0 }  ,fill opacity=1 ][line width=1.5]      (0, 0) circle [x radius= 2.61, y radius= 2.61]   ;
%Straight Lines [id:da5245998130334958] 
\draw [color={rgb, 255:red, 240; green, 0; blue, 0 }  ,draw opacity=1 ][line width=1.5]    (56,48) -- (61,66) ;
%Straight Lines [id:da19058935525569154] 
\draw [color={rgb, 255:red, 240; green, 0; blue, 0 }  ,draw opacity=1 ][line width=1.5]    (99,93) -- (104,111) ;
%Straight Lines [id:da03135218939245099] 
\draw [color={rgb, 255:red, 240; green, 0; blue, 0 }  ,draw opacity=1 ][line width=1.5]    (269,23) -- (274,41) ;
%Straight Lines [id:da4458115278003173] 
\draw [color={rgb, 255:red, 240; green, 0; blue, 0 }  ,draw opacity=1 ][line width=1.5]    (294,117) -- (299,135) ;
\end{tikzpicture}
\caption{This schematic indicates with a mark those index 1 bones which are assigned a shift in our $\zb/2$-graded diagram. The markings are determined by how the joints interact with a fixed choice of (ultimately auxiliary) orientations on the index 0 bones. See \cref{invariantdef} for more details.}
\label{markedintro}
\end{figure}
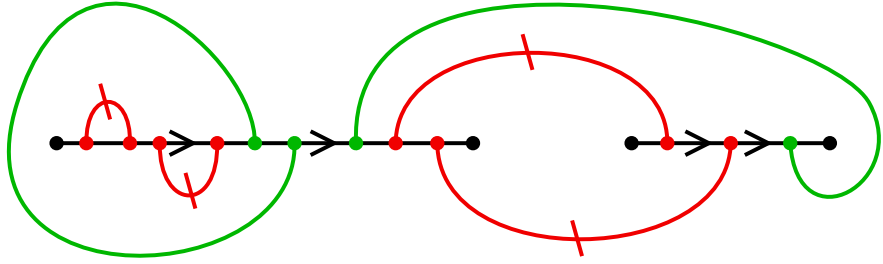

Invariance under each move on $G$ is proved using a particular algebraic model for the homotopy limit of dg-categories. In particular, we find that the algebraic effect of a handle slide can be combinatorially derived from that of a particular isotopy. For example, we find the following:\\

\begin{theorem} \label{exampletheorem}
Fix a strongly pre triangulated dg-category $\mc A$ with 2-periodic hom complexes. Let $W$ be the Weinstein manifold associated to the skeleton in \cref{diagram}. Then an object of $\mc L({W})$ can be presented as an $\mc A$-valued representation $A_{0}\overset{a_{0}}\to A_{1}\overset{a_{1}}\to A_{2}$ along with homotopy equivalences $A_{0}\simeq A_{2}$ and $C(a_{0})\simeq C(a_{1})[1]$.\end{theorem}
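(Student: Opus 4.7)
The plan is to apply the explicit algebraic presentation of the homotopy limit of dg-categories sketched in \cref{hom} directly to the diagram $D$ displayed in \cref{diagram}, and then to unpack the resulting data until it matches the statement. First I would fix notation for the vertices: let $X_{1} = \mc A^{\to}$ at position $(2,4)$ and $X_{2} = \mc A^{\to}$ at position $(2,6)$, and let $Y_{+}, Y_{0}, Y_{-}$ denote the three copies of $\mc A$ at positions $(1,5), (2,5), (3,5)$. An object of $\holim D$ then consists of a choice of object at each vertex together with a closed, degree-zero quasi-isomorphism realizing each edge, subject to the coherence conditions built into the model. Writing $X_{1} = (A_{0} \overset{a_{0}}\to A_{1})$ and $X_{2} = (A_{1}' \overset{a_{1}}\to A_{2})$, the six labeled functors of \cref{diagram} produce six such quasi-isomorphisms, and my task is to compress this package into the stated form.

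The key step would be to strictify at the top vertex. The two edges into $Y_{+}$ give quasi-isomorphisms $A_{1} = \rho_{2}(X_{1}) \simeq Y_{+} \simeq \rho_{1}(X_{2}) = A_{1}'$; invoking the Reedy-fibrant replacement / strictification results from \cref{hom}, I would replace $X_{2}$ by a quasi-equivalent representative whose domain coincides on the nose with $A_{1}$, absorbing these two quasi-isomorphisms into that choice. The pair then assembles to a single composable chain $A_{0} \overset{a_{0}}\to A_{1} \overset{a_{1}}\to A_{2}$ in $\mc A$.

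With this strictification in place, the remaining two coherence conditions fall out immediately. At $Y_{-}$ the two incoming edges yield $A_{0} = \rho_{1}(X_{1}) \simeq Y_{-} \simeq \rho_{2}(X_{2}) = A_{2}$, giving the claimed equivalence $A_{0} \simeq A_{2}$. At $Y_{0}$ they yield $C(a_{0})[1] = ([1]\circ C)(X_{1}) \simeq Y_{0} \simeq C(X_{2}) = C(a_{1})$, i.e.\ $C(a_{0})[1] \simeq C(a_{1})$; applying $[1]$ and using the $2$-periodicity $[2] = \id$ of $\mc A$ rewrites this as $C(a_{0}) \simeq C(a_{1})[1]$. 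I would then check that the $\bcd$-decoration on the left of \cref{diagram} contributes no additional object data or coherence conditions, confirming that the listed data exhausts the presentation of an object of $\mc L(W)$.

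The hard part will be the strictification step. One must justify that the zig-zag $A_{1} \simeq Y_{+} \simeq A_{1}'$ can be absorbed into a single choice of representative of $X_{2}$ without leaving behind uncontrolled higher homotopies, and that what remains after absorption is faithfully recorded by the composable pair together with the two honest equivalences $A_{0} \simeq A_{2}$ and $C(a_{0}) \simeq C(a_{1})[1]$. This is precisely where the explicit Reedy-model presentation from \cref{hom} is essential: without it, a naive totalization of $D$ would retain many additional simplicial coherence homotopies, none of which appear in the clean presentation claimed by the theorem.
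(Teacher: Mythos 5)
Your proposal is correct and follows essentially the same route as the paper: the paper's justification is the $(g,n)=(0,2)$ example in Section 3.6, which applies the explicit path-object presentation of the homotopy limit (\cref{formula}) and then strictifies one $\rho$-cospan along a spanning tree of the theta graph via \cref{simplify} (legitimate because $\rho_1,\rho_2$ are fibrations), leaving exactly the composable pair $A_0\to A_1\to A_2$ plus the two homotopy equivalences $A_0\simeq A_2$ and $C(a_0)\simeq C(a_1)[1]$. Your "hard part" is precisely \cref{simplify}/\cref{steps}, which the paper supplies.
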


In future work, we plan to analyze the diagrams that arise in dimension 4. This involves representing the relevant categories and restriction functors arising from arboreal singularities of singular surfaces, proving invariance under a complete set of moves that relate $2$-dimensional arboreal skeleta, and demonstrating useful presentations of $\mc L ({W})$ for a wide class of Weinstein $4$-manifolds.

\subsection{Homotopical results}\label{hom}
\cref{maintheorem}, while satisfying, does not state an explicit way to compute our invariant $\mc L(W)$. In particular, to obtain the presentation given in \cref{exampletheorem}, a model for homotopy limits of dg-categories must be obtained. The usual philosophy of ``fattening up'' our diagram using path objects works, but proving so involves many details and generalizes in an elegant way to diagrams of more general quivers over more general model categories.

Given a small category $J$, we are working in the diagram category $\dgcat^{J}$, on which we use a Reedy model structure suitable to $J$ induced by the Tabuada model structure on $\dgcat$ defined in \cite{T}. More generally, we can replace $\dgcat$ with any model category $\mc M$.  If $J$ is equipped with a Reedy structure $(J^{+},J^{-})$ compatible with the (ordinary) limit functor, then the homotopy limit functor $\holim:\mc M^{J}\lr \mc M$ can be calculated by applying the limit functor to a fibrant replacement of diagrams. Compatibility is ensured if $(J^{+},J^{-})$ has cofibrant constants, which is a simple combinatorial condition described in \cite{H}.

In the case of Weinstein surfaces, each diagram arises from a graph $G=(V,E)$. In particular, we always have $\mc A^{\to}$ associated to each trivalent vertex and $\mc A$ associated to each edge, with three arrows out of each $\mc A^{\to}$ and two arrows into each $\mc A$. More generally, we allow $G$ to be any finite graph and consider any diagram over the quiver $Q(G)$ whose vertex set is $V\sqcup E$ and whose arrows arise from $G$ by replacing each edge $e=\{v,w\}$ with $v_{1}\to e\leftarrow w$.\\

\begin{theorem}[technical statement in \cref{Reedy}]\label{reed}
Fix a model category $\mc M$ in which all final maps and projections from Cartesian products are fibrations. Let $G$ be a finite simple graph and $D$ an arbitrary diagram in $\mc M^{Q(G)}$. Then $\holim D \simeq \lim D$ if there exists an acyclic orientation $\mathfrak o$ on $G$ such that, for each vertex $v$, a certain map $\vp_{\mathfrak o}^{D}(v)$ is a fibration in $\mc M$.
\end{theorem}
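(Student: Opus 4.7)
The plan is to exhibit a Reedy model structure on $\mc M^{Q(G)}$ whose fibrant diagrams are precisely those satisfying the hypothesis of the theorem, and then to invoke the general fact that on Reedy-fibrant diagrams the ordinary limit computes the homotopy limit. Since $\mathfrak o$ is acyclic, it induces a strict partial order on $V(G)$ with a well-defined height function $h_{\mathfrak o}: V \to \mbb Z_{\geq 0}$. I would set $\deg(v) \coloneqq 2 h_{\mathfrak o}(v) + 1$ for each vertex $v$ and $\deg(e) \coloneqq 2 h_{\mathfrak o}(t_{\mathfrak o}(e))$ for each edge $e$, where $t_{\mathfrak o}(e)$ denotes the target of $e$ under $\mathfrak o$. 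With these degrees, the arrow $s_{\mathfrak o}(e) \to e$ in $Q(G)$ from the source strictly raises degree while the arrow $t_{\mathfrak o}(e) \to e$ from the target strictly lowers degree. Because $Q(G)$ contains no nontrivial compositions, the required factorization axiom is trivially satisfied, and we obtain a Reedy structure on $Q(G)$.

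Next I would verify that this Reedy structure has cofibrant constants via the combinatorial criterion of \cite{H}: the latching category at each vertex is empty (there are no arrows mapping into vertices in $Q(G)$), while the latching category at each edge $e$ is the terminal category on the single raising arrow $s_{\mathfrak o}(e) \to e$. Since every latching category is either empty or has a terminal object, $Q(G)$ has cofibrant constants, so the constant-diagram functor $c: \mc M \to \mc M^{Q(G)}$ is left Quillen and its right adjoint $\lim: \mc M^{Q(G)} \to \mc M$ is right Quillen. Dually, the matching category at an edge is empty (yielding $M_e D = *$), and the matching category at a vertex $v$ is the discrete category on the set $\{e : t_{\mathfrak o}(e) = v\}$ of incoming edges (yielding $M_v D = \prod_{t_{\mathfrak o}(e) = v} D_e$). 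The canonical matching map $D_v \to M_v D$ is, by construction, precisely the map $\varphi_{\mathfrak o}^D(v)$ referenced in the theorem.

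Reedy fibrancy of $D$ then reduces to two conditions: $D_e \to *$ is a fibration for each edge $e$ (ensured by the blanket hypothesis that all final maps in $\mc M$ are fibrations), and $\varphi_{\mathfrak o}^D(v)$ is a fibration for each vertex $v$ (the hypothesis of the theorem). The hypothesis that projections from Cartesian products are fibrations is used in parallel to ensure that each matching object $\prod_e D_e$ is itself fibrant, via iterated composition of projections with a final map, so the matching data sits coherently inside the fibrant world of $\mc M$. With $D$ Reedy fibrant and $\lim$ right Quillen, $\lim D$ already computes the right-derived functor $R\lim D = \holim D$, giving the desired equivalence $\holim D \simeq \lim D$. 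The main obstacle is the upfront combinatorial bookkeeping: setting up the degree function so that each arrow in $Q(G)$ is unambiguously classified as raising or lowering (which is exactly what acyclicity of $\mathfrak o$ permits), and then carefully checking that the resulting Reedy matching maps coincide with the graph-theoretic construction of $\varphi_{\mathfrak o}^D(v)$ referenced in the statement.
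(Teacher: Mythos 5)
Your proof is correct and follows the same strategy as the paper: realize the acyclic orientation $\mathfrak o$ as a Reedy structure on $Q(G)$ with cofibrant constants, identify the matching map at each vertex with $\vp_{\mathfrak o}^{D}(v)$ (and at each edge with the final map $D_{e}\lr *$), and conclude from the fact that $\lim$ is right Quillen on a Reedy structure with cofibrant constants. The only difference is that you build the degree function directly from the height function of $\mathfrak o$, whereas the paper arrives at the same Reedy structure through its general classification of acyclic labelings (\cref{bijection}, \cref{latching}, \cref{fibrantcheck}); your direct construction is perfectly adequate for the sufficiency direction asserted here, and your one imprecision---attributing the role of the product-projection hypothesis to fibrancy of the matching objects rather than to fibrancy of the values $D_{e}$ at the sinks---does not affect the argument, since both conditions follow from the stated hypotheses.
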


The map $\vp_{\mathfrak o}^{D}(v)$ is defined in the discussion preceding \cref{Reedy}, whose natural generalization, \cref{recognize}, enlarges our class of diagrams to those with shape any quiver of diameter 1. The hypotheses of \cref{reed} are satisfied by the following procedure. By definition, $D$ contains one cospan $\mc D \to \mc C \leftarrow \mc B$ for each edge of $G$. We construct the \textit{path extension} $P(D)$ by enlarging each cospan in the following way:
% https://q.uiver.app/#q=WzAsOSxbMCwwLCJcXG1hdGhjYWwgQSJdLFsxLDAsIlxcbWF0aGNhbCBDIl0sWzIsMCwiXFxtYXRoY2FsIEQiXSxbMywwLCJcXHJpZ2h0c3F1aWdhcnJvdyJdLFs0LDAsIlxcbWF0aGNhbCBBIl0sWzUsMCwiXFxtYXRoY2FsIEMiXSxbNiwwLCJQKFxcbWF0aGNhbCBDKSJdLFs3LDAsIlxcbWF0aGNhbCBDIl0sWzgsMCwiXFxtYXRoY2FsIEIiXSxbMCwxXSxbMiwxXSxbNCw1XSxbNiw1LCJcXHBpXzEiLDJdLFs2LDcsIlxccGlfMiJdLFs4LDddXQ==
\[\begin{tikzcd}
	{\mathcal D} & {\mathcal C} & {\mathcal B} & \rightsquigarrow & {\mathcal D} & {\mathcal C} & {P(\mathcal C)} & {\mathcal C} & {\mathcal B}
	\arrow[from=1-1, to=1-2]
	\arrow[from=1-3, to=1-2]
	\arrow[from=1-5, to=1-6]
	\arrow["{\pi_1}"', from=1-7, to=1-6]
	\arrow["{\pi_2}", from=1-7, to=1-8]
	\arrow[from=1-9, to=1-8]
\end{tikzcd}\]
Here $P(\mc C)$ is the path object of $\mc C$ with projections $\pi_{i}$. In the case $\mc M=\dgcat$, this is the category of homotopy equivalences in $\mc C$. In \cite{K}, Karaba\c{s} proved that $\holim D \simeq \lim P(D)$ for the case where $G$ is simple with one edge, giving an explicit formula for homotopy pullbacks in $\dgcat$. One direction of this research is in generalizing his approach: \\

\begin{proposition} [\cref{pathresult}]
Fix a model category $\mc M$ in which all final maps and projections from Cartesian products are fibrations. Given a finite graph $G$, we have $\holim D \simeq \lim P(D)$ for any diagram $D$ in $\mc M^{Q(G)}$.
\end{proposition}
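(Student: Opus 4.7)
The plan is to deduce $\holim D \simeq \lim P(D)$ by applying the recognition criterion \cref{reed} (or rather its stated diameter-$1$ generalization \cref{recognize}) to $P(D)$ itself, and then separately comparing $\holim P(D)$ with $\holim D$. Let $Q'(G)$ denote the enlarged quiver over which $P(D)$ lives, obtained from $Q(G)$ by substituting the zig-zag $\mc C \leftarrow P(\mc C) \to \mc C$ for each edge-vertex. The argument splits naturally into two halves: first, verify the fibration hypothesis for $P(D)$ on $Q'(G)$, giving $\holim P(D) \simeq \lim P(D)$; second, show $\holim D \simeq \holim P(D)$.

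For the first half I would orient each extended cospan $\mc D \to \mc C \leftarrow P(\mc C) \to \mc C \leftarrow \mc B$ exactly as drawn, which is acyclic since each is a tree. At every vertex of type $\mc D$, $\mc B$, or duplicated $\mc C$, the map $\vp_{\mathfrak o}^{P(D)}$ is either a final map to the terminal object or a projection from a Cartesian product, both fibrations by the standing hypothesis on $\mc M$. At each new central vertex $P(\mc C_e)$, the map $\vp_{\mathfrak o}^{P(D)}(P(\mc C_e))$ identifies with the canonical $(\pi_1, \pi_2): P(\mc C_e) \to \mc C_e \times \mc C_e$, which is a fibration by the very definition of a path object. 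Hence \cref{recognize} applies.

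For the comparison I would interpolate through the auxiliary diagram $\bar D \coloneqq \pi^* D$ obtained by pulling $D$ back along the collapse functor $\pi: Q'(G) \to Q(G)$ that identifies $\mc C_{e,v}, P(\mc C_e), \mc C_{e,w}$ with the original edge-vertex $e$. A standard cofinality argument applied to $\pi$ (the nontrivial fibers are each the contractible zig-zag $\bullet \leftarrow \bullet \to \bullet$) gives $\holim D \simeq \holim \bar D$. The section $s: \mc C \to P(\mc C)$ of each path object then assembles into a natural transformation $\bar D \to P(D)$ which is a level-wise weak equivalence (the identity at every vertex except $P(\mc C_e)$, where it is $s$), so $\holim \bar D \simeq \holim P(D)$. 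Chaining the three equivalences yields the claim.

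The main obstacle I expect is the cofinality step used to identify $\holim \bar D$ with $\holim D$: one must upgrade the combinatorial contractibility of the fibers of $\pi$ into a statement compatible with the particular model for $\holim$ used throughout the paper (presumably a Reedy or Bousfield--Kan formula in $\dgcat$). This is notationally fiddly but structurally routine. The fibration checks in the first half are immediate from the defining property of path objects, and the final identification $\holim P(D) \simeq \lim P(D)$ is delivered directly by \cref{recognize}.
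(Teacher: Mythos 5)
Your overall decomposition --- $\holim D \simeq \holim(\text{collapsed diagram}) \simeq \holim P(D) \simeq \lim P(D)$ --- is the same as the paper's, and your first half is essentially the paper's \cref{patheq}: the only nontrivial matching map of $P(D)$ is $\pi_1\times\pi_2:P(\mc C_e)\lr \mc C_e\times \mc C_e$ at each central vertex, a fibration by definition of path object, so $P(D)$ is Reedy fibrant and $\holim P(D)\simeq \lim P(D)$. (Two small imprecisions here: the orientation required by the recognition theorem lives on the graph $U(Q)$ with vertex set $V\sqcup E$, not on the subdivided quiver itself; the duplicated copies of $\mc C$ are sinks, where the only requirement is fibrancy of the object; and the relevant acyclicity is that of the global orientation of $U(Q)$ --- which holds because $U(Q)$ is bipartite with every edge directed from $V$ to $E$ --- not because each cospan is a tree, since the cospans share endpoints and $U(Q)$ is generally not a forest.) Likewise your auxiliary diagram $\bar D=\pi^*D$ is literally the paper's padding $\olin D$, and the levelwise weak equivalence $\olin D\lr P(D)$ assembled from the sections $s$ is the paper's argument verbatim.

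The genuinely different step is the identification $\holim D\simeq \holim \olin D$, and as written it has a gap. First, the correct criterion for $\pi$ to induce an equivalence $\holim_{F(Q)}D\simeq\holim_{F(\olin Q)}\pi^*D$ is contractibility of the comma categories $\pi/j$, not of the fibers; here $\pi/e$ is the five-object zig-zag $v\to e_v\leftarrow e\to e_w\leftarrow w$, not the fiber $e_v\leftarrow e\to e_w$. These comma categories are in fact contractible, so the conclusion is true, but, second, the homotopy-initiality theorem for homotopy limits over an arbitrary model category is precisely the kind of external machinery the paper is structured to avoid: it is not developed in Section 1, and the standard model-categorical versions require additional care (e.g.\ an objectwise fibrant replacement before applying the Bousfield--Kan formula). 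The paper instead stays inside the Reedy framework: it takes a Reedy fibrant replacement $D'$ of $D$ over $Q$, checks that the padding $\olin{D'}$ is Reedy fibrant over $\olin Q$ for an induced acyclic orientation (the only new matching maps being identities), and concludes $\holim D\simeq\lim D'=\lim\olin{D'}\simeq\holim\olin D$. Your route can be completed, but you must either correct the cofinality criterion and cite (or prove) a cofinality theorem valid at this level of generality, or substitute the fibrant-replacement argument for that step.
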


We aim to further this direction of research in parallel to the investigation of skeleta of Weinstein $4$-manifolds. First, this involves generalizing \cref{Reedy} quivers $Q$ of diameter 2. We intend to use Reedy-categorical methods to find a wide range of conditions on a diagram $D$ in $\mc M^{Q}$ which ensure that $\holim D \simeq \lim D$, using this to find concrete presentations of a wider class of homotopy limits of dg-categories.

The rest of this paper contains the calculations which prove the results stated above. In Section 1 we develop the Reedy categorical information needed to characterize fibrant diagrams and calculate homotopy limits. In Section 2 we calculate explicit homotopy limits of dg-categories and establish small algebraic moves. In Section 3 we combine these moves to describe the algebraic effect of Weinstein homotopies and prove invariance of $\mc L(W)$, followed by explicit calculations covering all topological surfaces.

The reader interested in the dg-categorical algebra that builds our geometric results can safely skip most of Section 2. The main use of Section 2 in the later sections is \cref{pathresult}. Independently of our symplectic geometric goals, the main model categorical result of Section 2 is \cref{recognize}. 

\textbf{Acknowledgements:} The author thanks Laura Starkston for the inspiration and direction of this project, along with Honghao Gao and Roger Casals for their additional mentorship. The author is also grateful for helpful conversations with David Nadler, Hiro Tanaka, Do\u{g}ancan Karaba\c{s}, Thomas Blom, and Maxime Ramzi. Thank you to Merlin Christ for his pointing out the relevance of \cite{DK}. This project was supported by Laura Starkston's NSF CAREER Grant (204234): Symplectic 4-Manifolds and Singular Symplectic Surfaces.

%%%%%%%%%%%%%%%%%%% SECTION 1%%%%%%%%%%%%%
\section{Fibrancy recognition}
%%%%%%%%%%%%%%%%%%%%%%%%%%%%%%%%%%%%%%
Given a Weinstein surface $W$, we will define $\mc L(W)$ to be the homotopy limit of a diagram $D$ of dg-categories associated to an arboreal representative of the Weinstein homotopy class of $W$. It follows that we need a way to concretely compute homotopy limits of the diagrams arising from arboreal skeleta. 

As a derived functor, the operation of taking a homotopy limit in general involves some replacement of a diagram by a more well-behaved diagram. In theory, this can be done without altering the shape of the original diagram, but constructing this replacement directly from definitions involves complicated objects that arise through Quillen's Small Object Argument. Instead, a useful approach is to enlarge diagrams, in the vein of homotopy (co)limits in algebraic topology.

Thus we want to first replace $D$ with a suitably equivalent \textit{larger} diagram which is suitably \textit{fibrant}, meaning its homotopy limit is equivalent to its (ordinary) limit. This will be achieved by putting on $\dgcat^{J}$ a Reedy model structure suitable for the shape $J$ of our diagram.

In this section we characterize such Reedy structures on a large class of model categories $\mc M$, producing a lemma that allows us to recognize when a diagram over $\mc M$ is fibrant in this sense. Then we use our results to represent the homotopy limits we care about as explicit classical limits.

\subsection{Categorical preliminaries} 
A \textit{homotopical category} is a category $\mc M$ along with a specified class $W\sube \mathrm{Mor}(\mc M)$ of \textit{weak equivalences}; see \cite{R} for the general theory. With respect to these weak equivalences, one wants to compute derived functors, in particular the derived functor of the limit functor, if $\mc M$ is complete, which is then called the \textit{homotopy limit} functor. If $(\mc M, W)$ supports a \textit{model structure} in which \textit{(co)fibrations} have been specified and satisfy some coherence and lifting properties, then a derived functor can be computed by first applying a \textit{fibrant replacement} to objects and then applying the original functor, as in the classical theory in homological algebra. See \cite{DHKS, Hir, H} for the detailed theory of model categories. 

We now fix an arbitrary small category $J$ and a model category $\mc M$. In the case of homotopy limits, one is working with the homotopical category $\mc M^{J}$. Model categories carry a natural notion of homotopy between morphisms, so it will be convenient for us to create weak equivalences in $\mc M^{J}$ by using morphisms which only commute up to homotopy. Thus we will often implicitly use the following technical result.\\

\begin{lemma}\label{commute2} Suppose $J$ is the free category on a quiver. Fix diagrams $X,Y\in \ob(\mc M^{J})$. If there exist objectwise weak equivalences from $X$ to $Y$ such that the resulting squares commute up to homotopy in $\mc M$, then $X$ and $Y$ are weakly equivalent in $\mc M^{J}$. That is, the squares can be made to commute on the nose.
\end{lemma}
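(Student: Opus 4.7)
The plan is to construct an intermediate diagram $Z \in \mc M^{J}$ together with strict natural transformations $X \to Z \leftarrow Y$ that are objectwise weak equivalences, thereby exhibiting the desired weak equivalence in $\mc M^{J}$.

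First I would apply the factorization axiom in $\mc M$ to write each $f_{j} : X_{j} \to Y_{j}$ as an acyclic cofibration $i_{j} : X_{j} \to Z_{j}$ followed by an acyclic fibration $p_{j} : Z_{j} \to Y_{j}$. Setting $Z_{j}$ as the values of $Z$ on vertices, the task reduces to choosing, for each arrow $\alpha : j \to j'$ of the underlying quiver of $J$, a morphism $Z(\alpha) : Z_{j} \to Z_{j'}$ satisfying both
\[
Z(\alpha) \circ i_{j} \;=\; i_{j'} \circ X(\alpha) \aan p_{j'} \circ Z(\alpha) \;=\; Y(\alpha) \circ p_{j}.
\]
If this can be done, then $i_{\bullet}$ and $p_{\bullet}$ assemble into strict natural transformations that are objectwise weak equivalences, producing the required zigzag.

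To produce $Z(\alpha)$ I would set up the obvious lifting problem with $i_{j}$ on the left and $p_{j'}$ on the right; its boundary commutes strictly precisely when $f_{j'} \circ X(\alpha) = Y(\alpha) \circ f_{j}$, which by hypothesis holds only up to a specified homotopy $H_{\alpha}$. The central maneuver is to rectify this homotopy-commutative square to a strictly commutative one: I would lift $H_{\alpha}$ through the acyclic fibration $p_{j'}$ (using the homotopy lifting property of fibrations in any model category, realized through a choice of path object for $Z_{j'}$ compatible with the one for $Y_{j'}$), and then invoke the lifting axiom to obtain $Z(\alpha)$. Because $J$ is the free category on a quiver, there are no composable-pair coherence conditions to be respected, and the rectifications at distinct arrows may be chosen independently and then collected into a bona fide diagram $Z \in \mc M^{J}$.

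The main obstacle I expect is precisely this rectification step: the passage from a square that only commutes up to homotopy to one that commutes on the nose. The work is classical but delicate, requiring path objects to be chosen compatibly with the factorizations $f_{j} = p_{j} \circ i_{j}$ and careful use of Brown's lemma together with the lifting properties of acyclic (co)fibrations. The freeness of $J$ is what keeps this step purely local; without it one would be forced into the full apparatus of homotopy-coherent diagrams, which is beyond what the lemma asks for.
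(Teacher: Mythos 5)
There is a genuine obstruction at the heart of your construction. You ask for a single $Z(\alpha)$ satisfying both $Z(\alpha)\circ i_{j}=i_{j'}\circ X(\alpha)$ and $p_{j'}\circ Z(\alpha)=Y(\alpha)\circ p_{j}$. Composing, these two identities force
\[
f_{j'}\circ X(\alpha)\;=\;p_{j'}\circ i_{j'}\circ X(\alpha)\;=\;p_{j'}\circ Z(\alpha)\circ i_{j}\;=\;Y(\alpha)\circ p_{j}\circ i_{j}\;=\;Y(\alpha)\circ f_{j},
\]
i.e.\ the mere existence of such a $Z(\alpha)$ already implies that the original square commutes on the nose --- precisely the hypothesis you do not have. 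Your proposed rectification does not escape this: lifting $H_{\alpha}$ through $p_{j'}$ replaces $i_{j'}\circ X(\alpha)$ by a map $g$ with $p_{j'}\circ g=Y(\alpha)\circ f_{j}$, and the resulting lift then satisfies $Z(\alpha)\circ i_{j}=g$, which agrees with $i_{j'}\circ X(\alpha)$ only up to homotopy. So the leg $i\colon X\to Z$ is again merely homotopy-natural, and you are facing the same problem you started with. A single two-sided factorization of the $f_{j}$ cannot absorb the homotopies; the intermediate diagram must actually contain them. For a single arrow the standard fix is to take $W_{j'}=X_{j'}\times_{Y_{j'}}P(Y_{j'})$ (the mapping path object of $f_{j'}$) and let $W(\alpha)$ record the pair $(X(\alpha),H_{\alpha})$; then $X\longleftarrow W\longrightarrow Y$ is a strictly commuting zigzag of objectwise weak equivalences, and for a general free quiver one iterates this over the arrows into each vertex (here freeness is genuinely used). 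Also note that in a general model category the homotopy lifting you invoke requires cofibrancy hypotheses you have not arranged.

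For comparison, the paper's proof is entirely different and much shorter: the hypothesis makes $X$ and $Y$ isomorphic in $\Ho(\mc M)^{J}$ (homotopic maps are identified in $\Ho(\mc M)$, and each $f_{j}$ becomes invertible there), and one then cites the conservativity of the comparison functor $\Ho(\mc M^{J})\lr\Ho(\mc M)^{J}$ for $J$ free \cite[3.1.2]{RV}. If you want an explicit zigzag rather than an appeal to that black box, the mapping-path-object construction above is the route to take.
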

\begin{proof}
By the hypothesis, $X$ and $Y$ are isomorphic in $\Ho(\mc M)^{J}$. Since the natural functor $\Ho(\mc M^{J})\lr \Ho(\mc M)^{J}$ is the identity on objects and reflects isomorphisms by \cite[3.1.2]{RV}\footnote{We are implicitly using the fact that every model category presents an $\infty$-category.}, $X$ and $Y$ are also isomorphic in $\Ho(\mc M^{J})$.
\end{proof}

Depending on $J$, there are multiple different model structures that may be put on $\mc M^{J}$. In particular, if $J$ supports a \textit{Reedy structure}, then $\mc M^{J}$ has an induced \textit{Reedy model structure}, which is often simple to work with. Since the shapes $J$ we care about will be very simple quivers, we will employ Reedy categorical methods, determining which diagrams in $\mc M^{J}$ are fibrant with respect to a Reedy model structure suitable for taking homotopy limits. See \cite{R, Hir} for the theory of Reedy categories.

We record a useful way to recognize homotopy limits in a Reedy model category. We will say that an element $(J^{+},J^{-})$ of the set $\mathrm{R}(J)$ of Reedy structures on $J$ has \textbfit{cofibrant constants} if the latching category $\p (J^{+}/\a)$ is connected for each $\a\in \ob(J)$.\\

\begin{lemma}\label{p1} Given $X\in \ob(\mc M^{J})$, if there exists an element of $\mathrm{R}(J)$ with cofibrant constants such that all matching maps of $X$ are fibrations, then $\holim X\simeq \lim X$. If all objects in $\mc M$ are fibrant, then we only need to check the matching maps for those $\a\in \ob(J)$ for which there is some arrow $\a\to \b$ in $J^{-}$ with $\b\neq \a$.
\end{lemma}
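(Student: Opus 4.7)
The plan is to reduce the claim to standard Reedy-categorical machinery: first observe that the hypothesis makes $X$ Reedy fibrant, then conclude from the cofibrant constants assumption that the ordinary limit functor computes the homotopy limit on Reedy fibrant diagrams. Fix a Reedy structure $(J^{+},J^{-})\in \mathrm{R}(J)$ satisfying the hypotheses of the lemma. By the definition of the Reedy model structure induced on $\mc M^{J}$, a diagram is Reedy fibrant precisely when each of its matching maps is a fibration in $\mc M$; thus $X$ is Reedy fibrant by assumption.

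Because $(J^{+},J^{-})$ has cofibrant constants in the sense defined above, the constant diagram functor $c\colon \mc M\to \mc M^{J}$ is a left Quillen functor with respect to the Reedy model structure, as in \cite[\S 15.10]{Hir}. Its right adjoint $\lim\colon \mc M^{J}\to \mc M$ is therefore right Quillen, so by Ken Brown's lemma it preserves weak equivalences between Reedy fibrant objects. The total right derived functor $\holim$ may thus be computed by applying $\lim$ to any Reedy fibrant replacement of $X$; since $X$ is already Reedy fibrant, no replacement is needed and we obtain $\holim X\simeq \lim X$.

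For the second assertion, fix $\a\in \ob(J)$ admitting no arrow $\a\to \b$ in $J^{-}$ with $\b\neq \a$. Then the matching category at $\a$ is empty, so the matching object $M_{\a}X$ is the terminal object of $\mc M$, and the matching map $X_{\a}\to M_{\a}X$ reduces to the unique morphism $X_{\a}\to \ast$. This morphism is a fibration if and only if $X_{\a}$ is fibrant, which holds automatically whenever every object of $\mc M$ is fibrant. Consequently, only the matching maps at vertices $\a$ having at least one non-identity outgoing arrow in $J^{-}$ genuinely need to be verified.

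No substantive obstacle is anticipated, since the argument is essentially a bookkeeping application of Hirschhorn's Reedy machinery. The only minor subtlety is to confirm that the notion of cofibrant constants stated in the excerpt (connectedness of each latching category $\p(J^{+}/\a)$) matches the standard hypothesis under which $c$ is left Quillen; this is a direct comparison with the characterization of Reedy cofibrations of constant diagrams in terms of the components of the latching categories.
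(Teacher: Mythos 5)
Your proof is correct and follows the same route as the paper: recognize that the matching-map condition is exactly Reedy fibrancy, then invoke Hirschhorn's cofibrant-constants criterion (making $\lim$ right Quillen) together with Ken Brown's Lemma; your treatment of the second assertion via empty matching categories is also the intended one. You have simply unpacked the citations that the paper's one-line proof leaves implicit.
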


\begin{proof}
By the definition of fibrations in a Reedy model structure, the condition on the matching maps ensures that $X$ is fibrant, so \cite[15.10.2, 15.10.8]{Hir} combined with Ken Brown's Lemma\footnote{See \cite{B,R}.} yields the result.
\end{proof}

%%%%%%%%%%%%Graph definitions%%%%%%%%%
\subsection{Reedy structures on graphic quivers}

Here we make restrictions on our shape category $J$ to suit our purposes. Given vertices $s$ and $t$ in a quiver $Q$, we write $Q(s,t)$ for the set of arrows $s\to t$, and $Q(t)$ for the set of all arrows incident to $t$. By convention we consider isolated vertices to be \textit{sources} and not sinks. We will occasionally write $d, c: Q_{1}\lr Q_{0}$ for the source and target functions, respectively.\\

\begin{definition}\label{quiverdefinitions}
Fix a quiver $Q$. An arrow $s\overset{x}\to t$ is called \textbfit{isolated} if $x$ is the only arrow $s\to t$, and \textbfit{parallel} otherwise. If $x$ is isolated, we say that \textbfit{$s$ is isolated by $t$}, that \textbfit{$t$ isolates $s$}, and that \textbfit{$t$ is isolating}. The \textbfit{simplification} of $Q$ is the quiver $Q^{\mathrm{s}}$ obtained from $Q$ by removing all parallel arrows and all non-isolating sinks. Similarly, the \textbfit{simplification} of a graph $G$ is the graph $G^{\mathrm{s}}$ obtained by removing all loops in $G$.\footnote{Note that $Q$ is \textbfit{simple}, that is $Q=Q^{\mathrm{s}}$, if and only if every arrow in $Q$ is isolated.}
\end{definition}

Given a graph $G=(V,E)$, the \textbfit{quiver associated to $G$} is the quiver $Q(G)$ with vertex set the ordered partition $V\sqcup E$, and one arrow $v\to e$ for each instance\footnote{If $G$ has a loop $e$ at $v$, then $(v,e)$ is counted twice in $V\times E$.} of $(v,e)\in V\times E$ with $v\in e$.\footnote{This justifies our asymmetric definition of sources and sinks: $S(Q(G))$ (resp. $T(Q(G))$) is in bijection with $V$ (resp. $E$). We also have $Q(G^{\mathrm{s}})=Q(G)^{\mathrm{s}}$.} A quiver isomorphic to some $Q(G)$ will be called \textbfit{graphic}.

A \textbfit{labeling} on a quiver $Q$ is a map $\s:Q_{1}\lr \{+,-\}$. A \textbfit{simple labeling} on $Q$ is a labeling on $Q^{\mathrm{s}}$. The set of all labelings on $Q$ will be denoted by $\s(Q)$, and the set of all orientations on a graph $G$ will be denoted $\mathfrak o(G)$. By reversing all negative arrows in $Q$, we obtain a bijection $O:\s(Q)\lr \mathfrak o(U(Q)\footnote{The underlying graph of $Q$.})$, and we call $\s\in \s(Q)$ \textbfit{acyclic} if $O(\s)$ is acyclic. We write $\s(Q)^{\mathrm{a}}$ for the set of acyclic labelings on $Q$. 

Graphic quivers in particular have diameter 1. This weaker condition is the natural general setting for our results, so we now fix a finite quiver $Q$ with diameter 1.\\

\begin{definition}
 Fix a labeling $\s\in \s(Q)$. Given a sink $t$, if there is exactly one positive arrow incident to $t$, then we say that $t$ is \textbfit{$\s$-strict} and that $\s$ is \textbfit{strict at $t$}. We say that $\s$ is \textbfit{strict} if it is strict at every sink.
\end{definition}

The next result shows that it's easy to find all strict acyclic labelings on a simple quiver. A \textbfit{graphic ordering} of $Q$ is a linear ordering on the set of sources of $Q$. Any graphic ordering $\leq$ of $Q$ specifies a simple labeling $\s_{\leq}\in \s(Q^{\mathrm{s}})$: given an arrow $x$ in $Q^{\mathrm{s}}$, $\s_{\leq }(x)=+$ if and only if $d(x)=\min_{\leq}d(Q(t))$.\\

\begin{lemma}\label{topologicalordering}
Fix a finite quiver $Q$ with diameter $1$. The mapping $\leq\, \mapsto \s_{\leq}$ on graphic orderings of $Q$ has image exactly the strict elements of the set $\s(Q^{\mathrm{s}})^{\mathrm{a}}$ of acyclic simple labelings.
\end{lemma}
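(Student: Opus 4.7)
The plan is to verify both containments.

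For the forward direction, fix a graphic ordering $\leq$ on the sources and check that $\sigma_\leq$ is both strict and acyclic. Strictness at each sink $t$ of $Q^{\mathrm{s}}$ is immediate: exactly one source incident to $t$ attains the $\leq$-minimum. For acyclicity, suppose toward a contradiction that $O(\sigma_\leq)$ contains a directed cycle. Since $Q$ has diameter $1$, every arrow of $Q^{\mathrm{s}}$ runs from a source to a sink, so $U(Q^{\mathrm{s}})$ is bipartite and the putative cycle alternates as $v_1 \to t_1 \to v_2 \to \cdots \to v_k \to t_k \to v_1$. A forward-oriented leg $v_i \to t_i$ certifies $\sigma_\leq(v_i \to t_i) = +$, so $v_i$ is the $\leq$-minimum source at $t_i$; the reversed leg $t_i \to v_{i+1}$ certifies $\sigma_\leq(v_{i+1} \to t_i) = -$, so $v_{i+1}$ is a different source incident to $t_i$ and hence $v_i < v_{i+1}$. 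Chaining around the cycle yields $v_1 < v_1$, a contradiction.

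For the reverse direction, let $\sigma$ be any strict acyclic simple labeling. Strictness singles out, for each sink $t$ of $Q^{\mathrm{s}}$, a unique source $v^{*}(t)$ with $\sigma(v^{*}(t) \to t) = +$. Build an auxiliary digraph $H$ on the source set of $Q$ by placing an arrow $v \to w$ whenever some sink $t$ satisfies $v = v^{*}(t)$ and $w \neq v$ is also a source incident to $t$ in $Q^{\mathrm{s}}$. Any directed cycle in $H$ lifts to an alternating cycle in $O(\sigma)$ by the same bipartite argument as above, so acyclicity of $\sigma$ forces $H$ to be acyclic. Being finite and acyclic, $H$ extends (by any topological sort) to a linear order $\leq$ on sources of $Q$, and a direct check then shows $\sigma = \sigma_\leq$: a positive arrow of $\sigma$ has source $v^{*}(t)$, which by the very construction of $H$ is the $\leq$-minimum among sources incident to $t$, while a negative arrow has source strictly greater than $v^{*}(t)$ and therefore fails to be the minimum.

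The main subtlety is not any single step but the translation between data on the labeled quiver and data on the source ordering. The diameter-$1$ hypothesis is what unlocks this translation: bipartiteness of $U(Q^{\mathrm{s}})$ is exactly what collapses a cycle among labels into a cycle among sources and vice versa, making the digraph $H$ a faithful intermediary. A minor bookkeeping point worth flagging is that simplification removes only arrows and non-isolating sinks, so the source sets of $Q$ and $Q^{\mathrm{s}}$ coincide and a single graphic ordering plays both roles.
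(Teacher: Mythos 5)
Your proof is correct and follows essentially the same route as the paper: positive arrows point from the $\leq$-minimal source, and acyclicity of the labeling is traded for acyclicity of an induced relation on sources via the source/sink bipartition forced by diameter $1$. The only cosmetic difference is that you topologically sort an auxiliary digraph $H$ on the sources, whereas the paper takes a linear extension of the $O(\s)$-reachability order on all of $Q_{0}$ and restricts it to the sources; you also spell out the forward direction that the paper dismisses as straightforward.
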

\begin{proof}
It is straightforward that each $\s_{\leq}$ is strict and acyclic. Since $Q$ and its simplification have the same sources, we can assume that $Q$ is simple. Fix a strict element $\s\in \s(Q^{\mathrm{s}})^{\mathrm{a}}$. There is a linear ordering $\leq_{U(Q)}$ on $Q_{0}$ such that $x\leq_{U(Q)} y$ if and only if there is an $O(\s)$-path from $x$ to $y$ in $U(Q)$. 

Let $\leq$ be the restriction of $\leq_{U(Q)}$ to the sources of $Q$. We show $\s_{\leq}=\s$. Since $\s_{\leq}$ and $\s$ are both strict simple labelings on $Q$, it suffices to show, for each sink $t$, that $\s(x)=+$, where $x\in Q(t)$ is the (necessarily unique) arrow with $d(x)=\min_{\leq }d(Q(t))$. Indeed, let $z\in U(Q)_{1}$ be the unique element of $Q(t)\cap \s^{-1}(+)$. For any $y\in Q(t)$ distinct from $z$ we have $\s(y)=-$, so $d(a)\overset{+}\to t\overset{-}\to d(y)$ is the $O(\s)$-path $zy$ in $U(Q)$ from $d(z)$ to $d(y)$. Note that $d(z)\neq d(y)$ because $Q$ is simple,  so $d(z)<d(y)$. Therefore, $d(z)=\min_{\leq} d(Q(t))=d(x)$, so $z=x$ because $Q$ is simple.
\end{proof}

In the following, given a quiver $Q$, we write $F(Q)$ for the \textit{free category} on $Q$. We characterize the Reedy structures on $F(Q)$. From a labeling $\s\in \s(Q)$, we obtain two wide subcategories $Q^{+}_{\s}$ and $Q^{-}_{\s}$ of $F(Q)$, those generated by $\s^{-1}(+)$ and $\s^{-1}(-)$ respectively.\\

\begin{lemma}\label{bijection}
Fix a finite quiver $Q$ with diameter $1$. The mapping $\s\mapsto (Q^{+}_{\s},Q^{-}_{\s})$ restricts to a bijection $P:\s(Q)^{\mathrm{a}}\lr \mathrm{R}(Q)$\footnote{As we occasionally  abuse notation by conflating $Q$ with $F(Q)$, we will write $\mathrm{R}(Q)$ instead of $\mathrm{R}(F(Q))$.} between acyclic labelings and Reedy structures\footnote{Recall that a Reedy structure does not include the data of a specific degree function.}.
\end{lemma}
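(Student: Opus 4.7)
The plan is to exploit the diameter-$1$ hypothesis so heavily that the problem becomes purely combinatorial. Since every path in $Q$ has length at most $1$, the non-identity morphisms of $F(Q)$ are exactly the arrows of $Q$ (there are no nontrivial compositions available), and any wide subcategory of $F(Q)$ is determined by the set of non-identity arrows it contains. A Reedy structure on $F(Q)$ (in the sense of ``some compatible degree function exists'') is therefore equivalent to a partition of $Q_1$ into two pieces such that some degree function respects both pieces---which is precisely the data of an acyclic labeling $\sigma$.

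First I will check that $P$ is well-defined. Given $\sigma \in \sigma(Q)^{\mathrm{a}}$, I verify both Reedy axioms for $(Q^+_\sigma, Q^-_\sigma)$. For unique factorization of a single arrow $x:s\to t$: any factorization $x = f^+ \circ f^-$ in $F(Q)$ must have one of $f^\pm$ equal to an identity, since no two non-identity arrows of $Q$ are composable; the label $\sigma(x) \in \{+,-\}$ then picks out exactly one of $x \circ \mathrm{id}_s$ or $\mathrm{id}_t \circ x$ as the valid factorization. For the existence of a compatible degree function: acyclicity of $\sigma$ means $O(\sigma)$ is a finite directed acyclic graph, which admits a topological ordering, and this ordering is a valid degree function because $Q^+_\sigma$-arrows agree with the corresponding edges of $O(\sigma)$ (so they raise degree) while $Q^-_\sigma$-arrows reverse them (so they lower degree). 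Conversely, given $(J^+, J^-) \in \mathrm{R}(Q)$, the unique factorization axiom assigns each non-identity arrow of $Q$ to exactly one of $J^\pm$, defining a labeling $\tau$; any compatible degree function certifies acyclicity of $\tau$, since a cycle in $O(\tau)$ would yield a sequence of strictly degree-increasing morphisms returning to its starting vertex. Mutual inversion of $P$ and the assignment $(J^+, J^-) \mapsto \tau$ is then immediate from the characterization of wide subcategories by their non-identity arrows: the non-identity arrows of $Q^\pm_\sigma$ are by definition $\sigma^{-1}(\pm)$, and conversely $J^\pm$ equals the wide subcategory generated by its own non-identity arrows.

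I expect the main obstacle to be pinning down the correct interpretation of ``Reedy structure'' absent a specified degree function and handling the edge cases. Under the diameter-$1$ hypothesis $Q$ cannot have loops (a loop composes with itself to form a length-$2$ path), so that case is vacuous. Parallel arrows in the same direction are, however, permitted, and they must receive matching labels under any acyclic $\sigma$---otherwise $O(\sigma)$ would contain a $2$-cycle. Correspondingly, such parallel arrows cannot be split across $J^+$ and $J^-$ under any Reedy structure, as this would force both $\mathrm{deg}(s) < \mathrm{deg}(t)$ and $\mathrm{deg}(s) > \mathrm{deg}(t)$ for their common source and target. Verifying these combinatorial compatibilities on both sides is essentially the only nonformal content of the proof; once they are checked, the bijection follows by unpacking definitions.
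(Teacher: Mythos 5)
Your proof is correct and follows essentially the same route as the paper's: both arguments rest on the observation that diameter $1$ forbids nontrivial compositions in $F(Q)$, use a topological ordering of $O(\sigma)$ to produce the degree function, and derive a contradiction with the degree function from any directed cycle. The only cosmetic difference is that you package the argument as an explicit two-sided inverse while the paper checks injectivity and surjectivity separately.
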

\begin{proof}

Injectivity is clear, so it remains to show that $P(\s)$ is a Reedy structure when $\s\in \s(Q)^{\mathrm{a}}$, and that this mapping is surjective. The pair $P(\s)$ has unique factorization because there are no nontrivial factorizations in $F(Q)$. We can construct a degree function as follows. Let $Q'$ be the quiver $(U(Q), O(\s))$. Since $\s$ is acyclic, $Q'$ is acyclic, so $Q'$ has a topological ordering $(v_{1},\dots,v_{n})$ of $(Q')_{0}=Q_{0}$. The association $v_{i}\mapsto i$ is a degree function for $F(Q)$, so $P(\s)$ is a Reedy structure.

To show surjectivity, let $(F(Q)^{+},F(Q)^{-})$ be a Reedy structure on $F(Q)$. Since $F(Q)$ has no nontrivial factorizations, every morphism in $F(Q)$ must be in $F(Q)^{+}$ or $F(Q)^{-}$. The non-identity morphisms of $F(Q)$ cannot be in both subcategories, otherwise factorization would be non-unique. So every non-identity morphism of $F(Q)$ (and therefore every arrow in $Q$) has a well-defined sign, meaning $(F(Q)^{+},F(Q)^{-})=(Q^{+}_{\s},Q^{-}_{\s})$ for a unique labeling $\s\in \s(Q)$. 

If $\s$ is not acyclic, then there is a cycle in the quiver $(U(Q), O(\s))$ traversing vertices $(v_{1},\dots,v_{n}=v_{1})$. Fix a degree function for the Reedy structure $(Q^{+}_{\s},Q^{-}_{\s})$. Then $\deg(v_{i})<\deg(v_{i+1})$ for all $i\in\{1,\dots,n-1\}$, because each edge in the cycle has sign $\s(v_{i}\to v_{i+1})=+$ or $\s(v_{i}\leftarrow v_{i+1})=-$. This results in the contradiction $\deg(v_{1})<\deg(v_{1})$.
\end{proof}

As a result, we can specify a Reedy structure on a finite quiver with diameter $1$ by assigning signs to every arrow, as long as the result is acyclic.

\subsection{Checking \cref{p1} for homotopy limits}
Having characterized the Reedy structures on a finite Quiver with diameter 1 by acyclic labelings of the edge set, we look at which of these make the limit functor amenable to fibrant replacement\footnote{In technical terms, right Quillen.} by checking connectedness of the latching categories. In these situations, we write down the matching maps that tell us which diagrams are fibrant. We start with the first condition of \cref{p1}.\\

\begin{lemma}\label{latching}
Fix $\s\in\s(Q)^{\mathrm{a}}$ for a finite quiver $Q$ with diameter 1. The Reedy structure $(Q^{+}_{\s},Q^{-}_{\s})$ has cofibrant constants if and only if for each vertex $v\in Q_{0}$, there is at most one positive arrow with target $v$.
\end{lemma}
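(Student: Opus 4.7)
The plan is to compute the latching category $\partial((Q^{+}_{\s})/v)$ directly at each vertex $v\in Q_{0}$, use the diameter-$1$ hypothesis to show it is a discrete category whose objects are exactly the positive arrows of $Q$ with target $v$, and then apply the triviality that a discrete category satisfies the empty-or-connected condition if and only if it has at most one object.

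For the objects, diameter $1$ prevents any vertex of $Q$ from being both the source and the target of arrows, so every non-identity morphism in $F(Q)$, and hence in the wide subcategory $Q^{+}_{\s}$, is a single arrow. Thus the objects of $\partial((Q^{+}_{\s})/v)$ are precisely the positive arrows into $v$. For the morphisms, given positive arrows $x\colon u\to v$ and $y\colon w\to v$, a morphism $x\to y$ is some $f\in Q^{+}_{\s}(u,w)$ with $y\circ f = x$. Since $u$ emits $x$, diameter $1$ forces $u$ to receive no arrow, so the only morphism $u\to w$ in $F(Q)$ is $\id_{u}$; this requires $u=w$ and $y = y\circ\id_{u} = x$. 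Hence $\partial((Q^{+}_{\s})/v)$ has only identity morphisms, i.e., it is discrete.

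Invoking the usual convention (cf.\ \cite{H}) that cofibrant constants means each latching category is empty or connected, and that this is the case for a discrete category if and only if it has at most one object, the lemma follows by quantifying over $v\in Q_{0}$. The one point to handle with care is parallel positive arrows sharing a common source $u=w$: even then, $y\circ\id_{u}=y$ cannot equal $x$ unless $x=y$, so such arrows still lie in distinct components of the latching category, confirming that each parallel positive arrow is counted separately in the condition "at most one." No substantive obstacle arises beyond this bookkeeping in $F(Q)$.
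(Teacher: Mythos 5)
Your proof is correct and follows essentially the same route as the paper's: the latching category at a vertex is discrete with object set the positive arrows into that vertex, so the cofibrant-constants condition reduces to having at most one such arrow (the empty latching categories at non-sinks being harmless). One small point of bookkeeping: the reason the only morphism $u\to w$ in $F(Q)$ is $\id_{u}$ is that $w$, not $u$, receives no arrows---$w$ emits the positive arrow $y$ into $v$, so any arrow into $w$ would produce a path of length $2$, contradicting diameter $1$---though since $u$ and $w$ are both sources of arrows the needed fact is immediate either way.
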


\begin{proof}
The vertex set $Q_{0}=\ob(F(Q))$ partitions into the non-isolated sources, the (necessarily non-isolated) sinks, and the isolated vertices. The latching categories of $F(Q)$ are non-empty only at the sinks. For a sink $t$, the object set of the latching category $\p(Q^{+}_{\s}/t)$ is $Q(t)\cap \s^{-1}(+).$ Given $x,y\in \ob(\p(Q^{+}_{\s}/t))$, since $d(x)$ is a source there is no arrow $d(x)\to d(y)$ unless $d(x)=d(y)$. If $d(x)=d(y)$, then the only arrow $d(x)\to d(y)$ in $F(Q)$ is the identity. But if $x\neq y$, then $\id_{d(x})$ is not a morphism $x\to y$, so the latching category $\p(Q^{+}_{\s}/t)$ is discrete, and the result follows.
\end{proof}

Since a quiver with diameter 1 is acyclic, the two constant labelings are both acyclic. Therefore, in our setting there always exists an acyclic labeling satisfying the conditions in \cref{latching}: the constant negative labeling. But as we will see later, this labeling produces the smallest class of fibrant diagrams among all useful Reedy model structures. Given a small category $J$, a Reedy structure $R\in \mathrm{R}(J)$, and a model category $\mc M$, we will write $\mc M^{R}$ for the Reedy model structure on $\mc M^{J}$ induced by $R$.\\

\begin{lemma}\label{fibrantcheck}
Fix $\s\in\s(Q)^{\mathrm{a}}$ for a finite quiver $Q$ with diameter 1, and let $\mc M$ be a model category. Let $X\in \ob(\mc M^{F(Q)})$ be a diagram. Then $X$ is fibrant in $\mc M^{(Q_{\s}^{+},Q_{\s}^{-})}$ if and only if the value of $X$ at every sink is fibrant and for each source $s$ the induced map $X^{{s}}\lr \prod_{x\in Q(s)\cap \s^{-1}(-)}X^{c(x)}$ is a fibration.
\end{lemma}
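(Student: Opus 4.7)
The plan is to invoke the general criterion for Reedy fibrancy: a diagram $X$ over $F(Q)$ is fibrant in $\mc M^{(Q_\s^+, Q_\s^-)}$ if and only if, at every object $\a \in \ob(F(Q))$, the matching map $X^\a \to M_\a X$ into the matching object $M_\a X = \lim_{\p(\a/Q_\s^-)} X$ is a fibration in $\mc M$. Characterizing fibrancy thus reduces to computing each matching category $\p(\a/Q_\s^-)$ explicitly in the diameter 1 setting and identifying the resulting limit with the product appearing in the statement.

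First, since $Q$ has diameter 1, $F(Q)$ has no nontrivial compositions, so the non-identity morphisms out of $\a$ in $Q_\s^-$ are exactly the negative arrows of $Q$ with source $\a$. For a sink $t$ there are no such arrows, so $\p(t/Q_\s^-)$ is empty, $M_t X$ is the terminal object, and the matching map is $X^t \to *$, which is a fibration iff $X^t$ is fibrant in $\mc M$. For a source $s$, the objects of $\p(s/Q_\s^-)$ are the negative arrows $x \in Q(s) \cap \s^{-1}(-)$. The key step is to verify that this category is discrete: a morphism $x \to y$ is a morphism $f: c(x) \to c(y)$ in $Q_\s^-$ with $f \circ x = y$, but $c(x)$ is the target of an arrow of $Q$ and therefore a sink of $Q$, so the only morphism out of $c(x)$ in $F(Q)$ is $\id_{c(x)}$. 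Hence $c(x) = c(y)$ and $x = y$. This yields $M_s X = \prod_{x \in Q(s) \cap \s^{-1}(-)} X^{c(x)}$ with matching map the canonical one assembled from the values of $X$ on the negative arrows out of $s$, giving exactly the stated condition.

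Combining the two cases proves the lemma. The isolated vertex case (by the convention in \cref{quiverdefinitions} an isolated vertex is a source) is handled automatically by the source condition, since the empty product is the terminal object and the fibration condition on the matching map there reduces to fibrancy of $X^s$. The main step requiring care is the discreteness of $\p(s/Q_\s^-)$, where the diameter 1 hypothesis enters precisely to rule out any nontrivial morphism $c(x) \to c(y)$ in $F(Q)$; everything else is a direct unwinding of the Reedy fibrancy definition.
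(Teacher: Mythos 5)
Your proof is correct and follows essentially the same route as the paper: the paper's proof simply cites the dual of the latching-category computation in \cref{latching}, which is exactly the discreteness argument for $\p(s/Q_\s^-)$ that you spell out (the targets of negative arrows out of a source are sinks, so admit only identity morphisms, making the matching object the stated product). Your treatment of sinks and isolated vertices via empty matching categories is also the intended reading of the lemma's fibrancy conditions.
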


\begin{proof}
Given a source $s$, an argument dual to the proof of \cref{latching} shows that the matching map of $X$ at $s$ is the one given in the statement.
\end{proof}
 
\begin{definition} Given a quiver $Q$ with diameter 1, the poset $(\s(Q),\leq)$ is defined by $\s_{1}\leq \s_{2}$ if and only if $\s_{1}^{-1}(+)\subseteq \s_{2}^{-1}(+)$. Given a model category $\mc M$, the poset $(\mathrm{R}(Q),\leq_{\mc M})$ is defined by $R_{1}\leq_{\mc M} R_{2}$ if and only if the class of fibrant objects of ${\mc M}^{R_{1}}$ is contained in that of ${\mc M}^{R_{2}}$.
\end{definition}

In the following, given a complete category $\mc M$, we will use the term \textbfit{projection} to refer to any of the canonical maps  $\prod_{i\in I} x_{i}\lr \prod_{i\in J} x_{i},$ where $I$ is an index set, $J\sube I$, and $x_{i}\in \ob(\mc M)$.\footnote{If $J$ is empty then this is the final map $\prod_{i\in I}x_{i}\lr 1$. In particular, if all projections in $\mc M$ are fibrations, then every object is fibrant.} In the following, $\mathrm{R}(Q)^{\mathrm{c}}\sube \mathrm{R}(Q)$ is the set of Reedy structures on $Q$ with cofibrant constants.\\

\begin{proposition}\label{monotone}
Let $Q$ be a finite quiver with diameter $1$, and fix a model category $\mc M$ in which every projection is a fibration. The bijection in \cref{bijection} is a monotone map $P:(\s(Q)^{\mathrm{a}},\leq)\lr (\mathrm{R}(Q),\leq_{\mc M})$. In particular, if $X\in \ob(\mc M^{F(Q)})$ is fibrant with respect to the Reedy model structure corresponding to the constant negative labeling, then $X$ is fibrant with respect to any Reedy model structure. Furthermore, every maximal element of $(\mathrm{R}(Q),\leq_{\mc M})$ (resp. $(\mathrm{R}(Q)^{\mathrm{c}},\leq_{\mc M})$) corresponds to a maximal element of $(\s(Q)^{\mathrm{a}},\leq)$ (resp. $(P^{-1}(\mathrm{R}(Q)^{\mathrm{c}}),\leq)$.
\end{proposition}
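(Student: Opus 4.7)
The plan is to reduce every claim to \cref{fibrantcheck}, which characterizes fibrancy of a diagram $X$ in $\mc M^{P(\sigma)}$ by two conditions: each $X^t$ at a sink $t$ is fibrant, and for each source $s$ the matching map $X^s \to \prod_{x \in Q(s) \cap \sigma^{-1}(-)} X^{c(x)}$ is a fibration in $\mc M$. The first condition does not depend on $\sigma$, so everything hinges on how the matching maps vary as $\sigma$ moves in $\sigma(Q)^{\mathrm{a}}$.

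For monotonicity, I would suppose $\sigma_1 \leq \sigma_2$ in $\sigma(Q)^{\mathrm{a}}$, so that $\sigma_2^{-1}(-) \subseteq \sigma_1^{-1}(-)$. At each source $s$, the matching map for $P(\sigma_2)$ at $s$ factors as the matching map for $P(\sigma_1)$ at $s$ followed by the canonical projection
\[
\prod_{x \in Q(s) \cap \sigma_1^{-1}(-)} X^{c(x)} \;\longrightarrow\; \prod_{x \in Q(s) \cap \sigma_2^{-1}(-)} X^{c(x)},
\]
which is a fibration in $\mc M$ by hypothesis on projections. Thus, whenever $X$ is fibrant in $\mc M^{P(\sigma_1)}$, the composite is a fibration at every source, and since the sink fibrancy condition is identical, \cref{fibrantcheck} gives $X$ fibrant in $\mc M^{P(\sigma_2)}$, i.e.\ $P(\sigma_1) \leq_{\mc M} P(\sigma_2)$.

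The remaining assertions are formal consequences of monotonicity and injectivity. The constant-negative labeling $\sigma_-$ lies in $\sigma(Q)^{\mathrm{a}}$: because $Q$ has diameter $1$, it is bipartite on its sources and sinks, and reversing all of its arrows produces another bipartite, hence acyclic, orientation of $U(Q)$. Since $\sigma_-^{-1}(+) = \varnothing$, $\sigma_-$ is the minimum of $(\sigma(Q)^{\mathrm{a}}, \leq)$, and monotonicity yields the stated propagation of fibrancy from $\mc M^{P(\sigma_-)}$ to every $\mc M^{P(\sigma)}$. For the maximality claims, if $P(\sigma)$ is maximal in $(\mathrm{R}(Q), \leq_{\mc M})$ and $\sigma \leq \sigma'$ in $\sigma(Q)^{\mathrm{a}}$, then $P(\sigma) \leq_{\mc M} P(\sigma')$ by monotonicity, maximality forces $P(\sigma) = P(\sigma')$, and injectivity of $P$ from \cref{bijection} gives $\sigma = \sigma'$; restricting the same argument to the subposet $P^{-1}(\mathrm{R}(Q)^{\mathrm{c}})$ handles the parenthetical.

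The only real step is spotting the displayed factorization of matching maps in the monotonicity argument, which is immediate once one writes out the matching map of \cref{fibrantcheck} and invokes the universal property of products. Everything else is essentially bookkeeping.
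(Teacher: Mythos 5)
Your proposal is correct and follows essentially the same route as the paper: the monotonicity is established by the identical factorization of the $\s_{2}$-matching map through the $\s_{1}$-matching map followed by a projection, and the remaining claims are deduced from monotonicity together with injectivity of $P$ exactly as the paper does (the paper simply cites the general fact that $f^{-1}$ preserves maximal elements when $f$ is an injective monotone map, which you spell out). Your added justification that the constant negative labeling is acyclic and minimal is consistent with the paper's remark preceding \cref{fibrantcheck}.
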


\begin{proof}
For $i=1,2$ fix $\s_{1},\s_{2}\in \s(Q)^{\mathrm{a}}$ and define $R_{i}\coloneqq P(\s_{i})$. Suppose $\s_{1}\leq \s_{2}$, and fix a diagram $X\in \ob(\mc M^{F(Q)})$ which is fibrant with respect to ${\mc M}^{R_{1}}$. We verify the hypotheses of \cref{fibrantcheck} for $\s_{2}$. Since $\s_{1}^{-1}(-)\supseteq \s_{2}^{-1}(-),$ the map $X^{{s}}\lr \prod_{x\in Q(s)\cap \s_{2}^{-1}(-)}X^{c(x)}$ factors through the projection \[\prod_{x\in Q(s)\cap \s_{1}^{-1}(-)}X^{c(x)}\lr \prod_{x\in Q(s)\cap \s_{2}^{-1}(-)}X^{c(x)}.\] Since both maps in the factorization are fibrations, $X$ is fibrant in $\mc M^{R_{2}}$. The other statements follow from the fact that, given a monotone map $f$ of posets, $f$ preserves least and greatest elements if $f$ is surjective, and $f^{-1}$ preserves minimal and maximal elements if $f$ is injective.
\end{proof}

A maximal element of $(\mathrm{R}(Q)^{\mathrm{c}},\leq_{\mc M})$ induces a Reedy model structure on $\mc M^{F(Q)}$ whose fibrancy conditions can't be weakened by moving to another element of $\mathrm{R}(Q)^{\mathrm{c}}$. Therefore, we will aim to characterize the corresponding labelings in $P^{-1}(\mathrm{R}(Q)^{\mathrm{c}})$. \\

\begin{definition}
Fixing $Q$ and $\mc M$ as above, a maximal element of $(\mathrm{R}(Q)^{\mathrm{c}},\leq_{\mc M})$ will be called \textbfit{ideal}, as will its corresponding labelings. A maximal element of $(P^{-1}(\mathrm{R}(Q)^{\mathrm{c}}),\leq)$ will be called \textbfit{adequate}, as will its corresponding Reedy structure.\footnote{The names reflect the fact that every ideal labeling/Reedy structure is adequate, but not vice versa, unless $P^{-1}$ is monotone.}
\end{definition}

Ideal Reedy structures are defined using $\mathrm{R}(Q)^{\mathrm{c}}$, while adequate Reedy structures are defined using $P^{-1}(\mathrm{R}(Q)^{\mathrm{c}})\sube \s(Q)^{\mathrm{a}}$. Therefore, since the latter poset relations are easier to check than the former, to stay within the combinatorics of labelings we seek to classify all adequate labelings. This will capture all ideal Reedy structures while leaving out many (but perhaps not all) weaker Reedy structures, meaning those in $\mathrm{R}(Q)^{\mathrm{c}}$ whose fibrancy conditions can be weakened within $\mathrm{R}(Q)^{\mathrm{c}}$.

%%%%%%%%%%%%%%%%%%%%%%%%%%%%%%%%%%%%%%%
%%%%%%%%%%% FINAL CHARACTERIZATION %%%%%%%%%%%%%
%%%%%%%%%%%%%%%%%%%%%%%%%%%%%%%%%%%%%%%
\subsection{Finding adequate Reedy structures}

As before, fix a finite quiver $Q$ with diameter $1$. Given a labeling $\s\in \s(Q)$, a necessary condition for $\s$ to be acyclic is that for any sink $t$, $\s$ is constant on any two parallel arrows $x,y\in Q(t)$ with common source. Therefore, to ensure that $P(\s)$ has cofibrant constants, \cref{latching} amounts to saying that all parallel arrows must be negative under $\s$.\\

\begin{definition} \label{lset} We set $L=L(Q)$ to be the subposet of $(\s(Q),\leq)$ consisting of those labelings which are negative at every parallel arrow and for which each sink has at most one positive arrow incident to it.
\end{definition}

Among the labelings in $L$, we'd like to pick those which have many positive arrows, because more positive arrows means more fibrant diagrams. In particular, the set $\max(L)$ of maximal elements of $L$ consists of those labelings in $L$ for which each sink has exactly one positive arrow incident to it.\footnote{Said another way, $\max(L)$ is the set of labelings on $Q$ which extend some strict simple labeling on $Q$ by all negative signs, and $L$ is the downward closure of $\max(L)$ in $(\s(Q),\leq)$.}

In general, not every labeling in $\max(L)$ is acyclic, so we set $L^{\mathrm{c}}=L^{\mathrm{c}}(Q)\coloneqq L(Q)\cap \s(Q)^{\mathrm{a}}$. Then $P(L^{\mathrm{c}}(Q))=\mathrm{R}(Q)^{\mathrm{c}}$, and in particular $\max(L^{\mathrm{c}})$ is the set of adequate labelings. Our goal can now be stated concisely: characterize the set $\max(L^{\mathrm{c}})$. Given $\s\in \s(Q^{\mathrm{s}})$, the \textbfit{negative filling} of $\s$ is the labeling $\widehat\s\in \s(Q)$ which extends $\s$ minimally---that is, by negative signs.\\

\begin{lemma}\label{extend}
Fix a quiver $Q$ with diameter 1. The set $\max(L(Q))\cap L^{\mathrm{c}}(Q)$ is equal to the set of negative fillings of the strict elements of $\s(Q^{\mathrm{s}})^{\mathrm{a}}$.\footnote{We saw in \cref{topologicalordering} that every strict element of $\s(Q^{\mathrm{s}})^{\mathrm{a}}$ arises from a linear ordering of the sources of $Q$, so these are easily enumerated.}
\end{lemma}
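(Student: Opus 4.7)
The plan is to establish the equality by mutual inclusion, leveraging a single observation: any $\s \in L(Q)$ assigns $-$ to every parallel arrow, so $\s$ is uniquely determined by its restriction to $Q^{\mathrm{s}}$; concretely, $\s = \widehat{\s|_{Q^{\mathrm{s}}}}$.

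For the forward inclusion, I take $\s \in \max(L(Q)) \cap L^{\mathrm{c}}(Q)$ and set $\tau \coloneqq \s|_{Q^{\mathrm{s}}}$, so that $\s = \widehat\tau$. Strictness of $\tau$ at each sink of $Q^{\mathrm{s}}$---necessarily an isolating sink of $Q$---follows because the maximality of $\s$ in $L(Q)$ guarantees exactly one $\s$-positive incident arrow at such a sink, and this positive arrow cannot be parallel (those are forced to be $-$), so it lies in $Q^{\mathrm{s}}$. Acyclicity of $\tau$ follows by lifting: any directed cycle in $O(\tau)$ on $U(Q^{\mathrm{s}})$ is also a directed cycle in $O(\s)$ on $U(Q)$, contradicting $\s \in L^{\mathrm{c}}(Q)$.

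For the reverse inclusion, I take a strict $\tau \in \s(Q^{\mathrm{s}})^{\mathrm{a}}$ and show $\widehat\tau \in \max(L(Q)) \cap L^{\mathrm{c}}(Q)$. Membership in $L(Q)$ and maximality there are routine bookkeeping: parallel arrows are $-$ by construction of the filling, each isolating sink inherits its unique positive arrow from $\tau$'s strictness, each non-isolating sink has all incident arrows parallel hence none positive, and any flip of a single arrow to $+$ either contradicts the negative-on-parallel rule or oversaturates an isolating sink. The substantive step is showing $\widehat\tau$ is acyclic on the full quiver $Q$.

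The main obstacle is this last acyclicity check. The plan is to argue by contradiction: suppose $O(\widehat\tau)$ has a directed cycle on $U(Q)$. Since $Q$ has diameter $1$, every arrow runs from a source to a sink, so $U(Q)$ is bipartite and the cycle alternates between the two classes. At each sink visited, the incoming arrow is $\widehat\tau$-positive and therefore isolated, belonging to $Q^{\mathrm{s}}$, while the outgoing arrow is $\widehat\tau$-negative and may be either isolated or parallel. The crux is to argue that the outgoing negative arrows along the cycle can be realized within $Q^{\mathrm{s}}$ as well, using the strictness of $\tau$ to pin down, via the unique positive arrow at each isolating sink, that each segment of the cycle descends to $O(\tau)$. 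Once this rerouting is made precise, the descended cycle contradicts $\tau \in \s(Q^{\mathrm{s}})^{\mathrm{a}}$, completing the proof.
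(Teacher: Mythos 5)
Your forward inclusion is sound and matches the paper's: an element of $\max(L(Q))\cap L^{\mathrm{c}}(Q)$ is forced (by negativity on parallel arrows) to be the negative filling of its restriction to $Q^{\mathrm{s}}$, and strictness and acyclicity of that restriction follow from maximality and from lifting directed cycles from $U(Q^{\mathrm{s}})$ to $U(Q)$. The problem is the reverse inclusion, where you correctly isolate the only nontrivial point --- acyclicity of $\widehat\tau$ on all of $Q$ --- and then defer it (``once this rerouting is made precise''). That deferral is a genuine gap, and in fact the rerouting cannot be carried out at this level of generality. A directed cycle in $O(\widehat\tau)$ leaves each sink it visits along a $\widehat\tau$-negative arrow; if that arrow is parallel, it has been deleted from $Q^{\mathrm{s}}$, so there is no edge of $U(Q^{\mathrm{s}})$ through which to descend that segment, and strictness of $\tau$ supplies no substitute. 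Concretely, let $Q$ have sources $s_{1},s_{2}$ and sinks $t_{1},t_{2}$, with one arrow $s_{1}\to t_{1}$, one arrow $s_{2}\to t_{2}$, a parallel pair $s_{2}\to t_{1}$, and a parallel pair $s_{1}\to t_{2}$. Both sinks are isolating, $Q^{\mathrm{s}}$ consists of the two isolated arrows, and the labeling $\tau$ assigning $+$ to both is strict and trivially acyclic. Yet $O(\widehat\tau)$ contains the directed cycle $s_{1}\to t_{1}\to s_{2}\to t_{2}\to s_{1}$ (using one arrow from each parallel pair, reversed), so $\widehat\tau\in\max(L(Q))\setminus L^{\mathrm{c}}(Q)$ and the asserted equality fails.

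To be fair, the paper's own proof is no more careful: it simply asserts that ``$\s$ is acyclic if and only if $\widehat\s$ is acyclic,'' and the forward implication of that sentence is exactly the false step above. The statement, and your descent argument, do go through under the extra hypothesis that every isolating sink has only isolated arrows incident to it --- which holds for the graphic quivers $Q(G)$ to which the lemma is ultimately applied, since there a sink carries either two isolated arrows or a single parallel pair, never a mixture; in that case the outgoing negative arrow at each sink of the cycle does lie in $Q^{\mathrm{s}}$ and the cycle descends to $O(\tau)$, giving the desired contradiction. A complete write-up should either add that hypothesis or restrict the lemma to graphic quivers.
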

\begin{proof}
Fix $\s\in \s(Q^{\mathrm{s}})$. Note that $\s$ is acyclic if and only if $\widehat\s$ is acyclic, and $\s$ is strict if and only if $\widehat\s$ is strict. Then $\widehat \s \in \max(L)$ by strictness, and $\s\in L^{\mathrm{c}}$ by acyclicity.
Conversely, given $\s\in \max(L)\cap L^{\mathrm{c}}$, upon ignoring parallel arrows, $\s$ arises as the negative filling of a unique simple labeling. Again acyclicity and strictness follow.
\end{proof}

\begin{lemma}\label{astrict}
Let $Q$ be a quiver with diameter $1$, let $t$ be an isolating sink, and let $Q'\coloneqq Q\sm t$ be the quiver obtained from $Q$ by deleting $t$. Every labeling in $L^{\mathrm{c}}(Q')$ extends to a labeling in $L^{\mathrm{c}}(Q)$ which is strict at $t$.
\end{lemma}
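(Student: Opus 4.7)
The plan is to extend $\s'$ to $\s$ by assigning signs only to the arrows in $Q(t)$, in a way that is essentially forced by the constraints. Since $\s$ must lie in $L(Q)$, every parallel arrow incident to $t$ is assigned $-$. Strictness at $t$ then demands exactly one positive arrow at $t$, which must therefore be an isolated arrow $x_{0}:s\to t$ with $s$ in the set $I_{t}$ of sources that $t$ isolates; and $I_{t}\neq\es$ because $t$ is isolating. All remaining arrows of $Q(t)$ are assigned $-$. By construction, $\s$ lies in $L(Q)$, is strict at $t$, and restricts to $\s'$ on $Q'_{1}$, so the only remaining issue is to choose $s$ so that $\s$ is acyclic.

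To analyze acyclicity, note that $\s'\in L^{c}(Q')$ guarantees $O(\s')$ is acyclic, and thus induces a reachability partial order $\preceq$ on $Q'_{0}$. In $O(\s)$, the vertex $t$ has a unique incoming edge (from $s$ along the positive arrow $x_{0}$) and an outgoing edge $t\to v$ for each negative arrow $v\to t$ of $Q$---that is, for every $v\in S_{t}\sm\{s\}$, where $S_{t}\sube Q_{0}$ is the set of sources with at least one arrow into $t$. Any cycle in $O(\s)$ that was not already present in $O(\s')$ must traverse $t$; it must exit to some $v\in S_{t}\sm\{s\}$, return to $s$ along an $O(\s')$-path, and close via $x_{0}$. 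So the extended orientation is acyclic if and only if $s$ has no $\preceq$-predecessor in $S_{t}$.

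The step I expect to be the main obstacle is the combinatorial one: selecting $s\in I_{t}$ that is $\preceq$-minimal within $S_{t}$. Minimal elements of $S_{t}$ exist because $S_{t}$ is finite and $\preceq$ is antisymmetric, and any such minimal element lying in $I_{t}$ serves as a valid witness. Once the witness $s$ is produced, the rest of the proof is formal: $\s\in L(Q)$ and strictness at $t$ hold by construction, and the acyclicity argument of the previous paragraph rules out any cycle in $O(\s)$, yielding $\s\in L^{c}(Q)$ extending $\s'$ and strict at $t$, as required.
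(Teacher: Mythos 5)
Your reduction of the acyclicity question is correct and, where it applies, more direct than the paper's argument: extending $\sigma'$ by forcing all parallel arrows at $t$ negative and one isolated arrow $x_0:s\to t$ positive, and observing that the result is acyclic precisely when $s$ is $\preceq$-minimal in $S_t$ (any new cycle must run $s\to t\to v\to\cdots\to s$ with $v\in S_t\setminus\{s\}$ and a $t$-avoiding return path), is a clean constructive proof, whereas the paper argues the contrapositive by splicing the $t$-avoiding portions of two directed cycles through $t$ into a single cycle in $Q'$. The genuine gap is your final step: you need a vertex that is simultaneously $\preceq$-minimal in $S_t$ and a member of $I_t$, but you only establish that $\preceq$-minimal elements of $S_t$ exist, and then hedge with ``any such minimal element lying in $I_t$ serves as a valid witness.'' Nothing guarantees that some minimal element lies in $I_t$. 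Concretely: let $Q$ have sources $s_1,s_2$ and sinks $t,u$, with one arrow $s_1\to t$, two parallel arrows $s_2\to t$, and single arrows $s_1\to u$, $s_2\to u$; take $\sigma'(s_2\to u)=+$ and $\sigma'(s_1\to u)=-$, so that $\sigma'\in L^{\mathrm{c}}(Q')$ and $O(\sigma')$ contains the path $s_2\to u\to s_1$. Here $S_t=\{s_1,s_2\}$, $I_t=\{s_1\}$, the unique $\preceq$-minimal element of $S_t$ is $s_2\notin I_t$, and the only extension in $L(Q)$ that is strict at $t$ (positive on $s_1\to t$, negative on the parallel pair) produces the directed cycle $s_1\to t\to s_2\to u\to s_1$.

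The gap closes exactly when $Q$ is simple: then every arrow at $t$ is isolated, $I_t=S_t$, and any $\preceq$-minimal element of $S_t$ is an admissible witness, so your proof is complete under that hypothesis. The paper's own proof opens with ``we may assume that $Q$ is simple,'' which is the same escape hatch; but since $L^{\mathrm{c}}(Q)$ tests acyclicity of the orientation on all of $U(Q)$, reversed parallel arrows included, that reduction is itself not obviously harmless, and the configuration above indicates that the statement needs either a simplicity hypothesis or a more careful treatment of parallel arrows at $t$. In short, your argument is the right one for simple $Q$, but the witness-selection step fails in general, and that failure points at the statement as much as at your proof.
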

\begin{proof}
We prove the contrapositive: given $\s\in L(Q')$, if no extension of $\s$ which is strict at $t$ is in $L^{\mathrm{c}}(Q)$, then $\s$ is not acyclic. Since we are only looking at labelings on the simplification of $Q$, we may assume that $Q$ is simple. Fix $\s\in L(Q')$, and assume $\s$ does not extend to an element of $L^{\mathrm{c}}(Q)$ strict at $t$. For each $x\in Q(t)$ let $\s_{x}$ be the unique element of $L(Q)$ extending $\s$ and satisfying $\s_{x}(x)=+$. Thus $\{\s_{x}\mid x\in Q(t)\}$ is the set of extensions of $\s$ to labelings in $L(Q)$ which are strict at $t$.

By our assumption, for each $x\in Q(t)$, there is a $\s_{x}$-directed cycle $C_{x}$ in $U(Q)$. If for some $x\in Q(t)$, $C_{x}$ does not contain $t$, then $C_{x}$ is a $\s$-directed cycle in $U(Q')$, so $\s$ is not acyclic. So assume each $C_{x}$ contains $t$ for each $x\in Q(t)$. Since $Q(t)$ is nonempty by our definition of sink, there exists a cycle through $t$, so $Q(t)$ contains at least two elements $x_{1}\neq x_{2}$ with sources $s_{1}$ and $s_{2}$, respectively. Under $\s_{1}\coloneqq \s_{x_{1}}$ and $\s_{2}\coloneqq\s_{x_{2}}$, these look like $s_{1}\overset{x_{1}(\s_{1})}\lr t\overset{x_{2}(\s_{1})}\lr s_{2}$ and $s_{1}\overset{x_{1}(\s_{2})}\longleftarrow t\overset{x_{2}(\s_{2})}\longleftarrow s_{2}$, where $x_{i}(\s_{j})$ means the edge $x_{i}\in U(Q)_{1}$ oriented by $\s_{j}$. 

We choose each $C_{x}$ to start at $x$, so that $C_{x_{1}}=x_{1}x_{2}f_{1}\cdots f_{n}$ and $C_{x_{2}}=x_{2}x_{1}g_{1}\cdots g_{m}$ for composable arrows $f_{\a}$ and $g_{\b}$ composed left to right. Note that none of the $f_{\a}$ or $g_{\b}$ are in $Q(t)$, so $\s_{1}$ and $\s_{2}$ agree on these edges. Therefore, $C_{x_1}$ and $C_{x_2}$ locally look like the following:
% https://q.uiver.app/#q=WzAsNyxbMSwxLCJ2XzEiXSxbMCwwLCJcXGJ1bGxldCJdLFswLDIsIlxcYnVsbGV0Il0sWzIsMSwiZSJdLFszLDEsInZfMiJdLFs0LDAsIlxcYnVsbGV0Il0sWzQsMiwiXFxidWxsZXQiXSxbMSwwLCJmX24iXSxbMCwyLCJnX20iXSxbMCwzLCIodl8xLGUpX3tcXHNpZ21hX3t2XzF9fSIsMCx7ImN1cnZlIjotMX1dLFszLDQsIih2XzIsZSlfe1xcc2lnbWFfe3ZfMX19IiwwLHsiY3VydmUiOi0xfV0sWzMsMCwiKHZfMSxlKV97XFxzaWdtYV97dl8yfX0iLDAseyJjdXJ2ZSI6LTF9XSxbNCwzLCIodl8yLGUpX3tcXHNpZ21hX3t2XzJ9fSIsMCx7ImN1cnZlIjotMX1dLFs0LDUsImZfMSJdLFs0LDYsImdfMSIsMl1d
\[\begin{tikzcd}
	\bullet &&&& \bullet \\
	& {s_1} & t & {s_2} \\
	\bullet &&&& \bullet
	\arrow["{f_n}", from=1-1, to=2-2]
	\arrow["{g_1}", from=2-2, to=3-1]
	\arrow["{\!\!\!\!\!\!\!\!\!\!\!x_{1}(\s_{1})\quad \,\,\,x_{2}(\s_{1})}",bend right = -30, from=2-2, to=2-3]
	\arrow["", bend right = -30, from=2-3, to=2-4]
	\arrow["{\!\!\!\!\!\!\!\!\!\!\!x_{1}(\s_{2})\quad \,\,\,x_{2}(\s_{2})}", bend right = -30, from=2-3, to=2-2]
	\arrow["", bend right = -30, from=2-4, to=2-3]
	\arrow["{f_1}", from=2-4, to=1-5]
	\arrow["{g_m}", to=2-4, from=3-5]
\end{tikzcd}\]
Then $f_{1}\cdots f_{n}g_{1}\cdots g_{m}$ is a well-defined $\s$-directed cycle in $U(Q')$ based at $s_{2}$, so $\s$ is not acyclic.
\end{proof}

\begin{lemma}\label{equal}
Let $Q$ be a finite quiver with diameter 1. Then $\max(L^{\mathrm{c}}(Q))=\max(L(Q))\cap L^{\mathrm{c}}(Q)$.
\end{lemma}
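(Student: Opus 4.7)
The inclusion $\max(L)\cap L^{\mathrm c}\subseteq \max(L^{\mathrm c})$ is immediate: any element that is maximal in the larger poset $L$ and happens to lie in the sub-poset $L^{\mathrm c}$ is automatically maximal within $L^{\mathrm c}$. So the real content is the reverse inclusion, which I will prove by contrapositive.

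By the footnote following \cref{lset}, a labeling in $L$ is maximal in $L$ if and only if it is strict at every isolating sink (since all parallel arrows and all arrows to non-isolating sinks are already forced to be negative by membership in $L$). Thus my goal reduces to showing: if $\s\in L^{\mathrm c}(Q)$ fails to be strict at some isolating sink $t$, then $\s$ is not maximal in $L^{\mathrm c}(Q)$. Concretely, I will construct some $\s''\in L^{\mathrm c}(Q)$ with $\s<\s''$, using \cref{astrict} applied to the deleted quiver $Q'\coloneqq Q\sm t$.

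The plan is as follows. First, restrict $\s$ to $Q'$ and verify that $\s|_{Q'}\in L^{\mathrm c}(Q')$. Acyclicity is preserved under restriction, so only the $L$-conditions require checking: the parallel-arrow condition is inherited, and the at-most-one-positive-arrow condition at sinks is inherited provided the sinks of $Q'$ are exactly $\{\text{sinks of }Q\}\sm\{t\}$. This uses the diameter-$1$ assumption in a mild way: removing $t$ cannot turn a vertex $v\neq t$ into a new sink, because $v$ was either already a sink of $Q$ (and thus unaffected by removing a sink) or a source of $Q$, in which case losing all of its outgoing arrows to $t$ would leave it isolated, hence a source by convention. Second, apply \cref{astrict} to produce an extension $\s''\in L^{\mathrm c}(Q)$ of $\s|_{Q'}$ which is strict at $t$. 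Third, compare $\s$ and $\s''$: both extend $\s|_{Q'}$, so they agree away from $t$; at arrows incident to $t$, $\s$ is all-negative (by our non-strictness assumption combined with the $L$-condition) while $\s''$ has exactly one positive arrow. Hence $\s<\s''$ in $L^{\mathrm c}(Q)$, contradicting the maximality of $\s$.

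The only genuine obstacle I anticipate is the sink-tracking bookkeeping in the first step—i.e., confirming that $\s|_{Q'}$ really lies in $L^{\mathrm c}(Q')$ so that \cref{astrict} is legitimately applicable. Everything else is a direct appeal to \cref{astrict} and the explicit description of $\max(L)$ already recorded after \cref{lset}.
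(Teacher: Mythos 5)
Your proposal is correct and follows essentially the same route as the paper: reduce to showing $\max(L^{\mathrm c})\subseteq\max(L)$, note that failure of maximality in $L$ means $\s$ is all-negative at some isolating sink $t$, restrict to $Q\sm t$, and invoke \cref{astrict} to produce a strictly larger element of $L^{\mathrm c}(Q)$. Your extra bookkeeping confirming that $\s|_{Q\sm t}$ lands in $L^{\mathrm c}(Q\sm t)$ (in particular that deleting a sink creates no new sinks in a diameter-$1$ quiver) is a point the paper leaves implicit, and is a welcome clarification.
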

\begin{proof}
It suffices to prove $\max (L^{\mathrm{c}})\sube \max(L)$, as the other direction is a general fact about subsets of posets. Fix $\s\in L^{\mathrm{c}}\sm \max(L)$, so there is an isolating sink $t$ such that $\s$ is negative on all arrows incident to $t$. Now $\s|_{{Q\sm t}}\in L^{\mathrm{c}}(Q\sm t)$ because $\s\in L^{\mathrm{c}}(Q)$, so \cref{extend} produces a labeling $\s'\in L^{\mathrm{c}}(Q)$ which extends $\s|_{{Q\sm t}}$ and is strict at $t$. In particular, there is an element $x\in Q(t)$ such that $\s'(x)=+$. Consequently, $\s< \s'$, so $\s$ is not maximal in $L^{\mathrm{c}}$.
\end{proof}

We now come to our main result.\\

\begin{proposition}[Fibrancy recognition]\label{recognize}Fix a finite quiver $Q$ with diameter $1$ and a model category $\mc M$ in which all projections are fibrations. A diagram $X\in \ob(\mc{M}^{F(Q)})$ is fibrant with respect to a Reedy structure on $F(Q)$ with cofibrant constants if and only if there exists a strict, acyclic, simple labeling $\s$ on $Q$ such that, for each source $s$, the map $X^{{s}}\lr \prod_{x\in Q(s)\cap \widehat \s^{-1}(-)}X^{c(x)}$\footnote{This is the product over the maps $X(x):X^{{s}}\lr X^{c(x)}$.}  is a fibration. Under these equivalent conditions, $\holim X \simeq \lim X.$
\end{proposition}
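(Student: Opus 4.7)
The plan is to read the statement off the lemmas of this subsection, using the monotone bijection $P:\s(Q)^{\mathrm{a}}\lr \mathrm{R}(Q)$ of \cref{bijection} together with the identification $P(L^{\mathrm{c}}(Q))=\mathrm{R}(Q)^{\mathrm{c}}$ that \cref{latching} supplies. Once fibrancy in \emph{some} Reedy structure with cofibrant constants is in hand, the homotopy limit conclusion $\holim X\simeq \lim X$ will follow immediately from \cref{p1}, so the real task is the biconditional between fibrancy and the existence of a strict acyclic simple labeling with the asserted fibration property.

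For the \emph{if} direction, I start with a strict acyclic simple labeling $\s$ on $Q$ and form its negative filling $\widehat{\s}\in \s(Q)$. Combining \cref{extend} and \cref{equal} places $\widehat{\s}$ in $\max(L^{\mathrm{c}}(Q))$, so the corresponding Reedy structure $P(\widehat{\s})$ lies in $\mathrm{R}(Q)^{\mathrm{c}}$. The hypothesized fibrations $X^{{s}}\lr \prod_{x\in Q(s)\cap \widehat{\s}^{-1}(-)}X^{c(x)}$ are exactly the source matching maps of $X$ featured in \cref{fibrantcheck}, and the remaining sink-fibrancy requirement of that lemma is automatic here because every final map is a projection and hence a fibration by the standing hypothesis on $\mc M$. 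This makes $X$ fibrant in $\mc M^{P(\widehat{\s})}$, and \cref{p1} yields the homotopy limit statement.

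For the \emph{only if} direction, suppose $X$ is fibrant in $\mc M^{R}$ for some $R\in \mathrm{R}(Q)^{\mathrm{c}}$ and write $R=P(\s')$ with $\s'\in L^{\mathrm{c}}(Q)$. Since $L^{\mathrm{c}}(Q)$ is a finite poset, I pick a maximal element $\s\in L^{\mathrm{c}}(Q)$ with $\s\geq \s'$. The monotonicity of $P$ supplied by \cref{monotone} guarantees that $X$ is still fibrant in $\mc M^{P(\s)}$; applying \cref{equal} and then \cref{extend} identifies $\s$ as the negative filling $\widehat{\s_{0}}$ of a unique strict acyclic simple labeling $\s_{0}$ on $Q$, and reading off \cref{fibrantcheck} at each source of $Q$ produces the desired fibrations for $\s_{0}$.

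The main subtlety in this argument is the upgrade step from $\s'$ to the maximal $\s$: it rests on the standing hypothesis that all projections in $\mc M$ are fibrations, which is precisely what powers the monotonicity assertion of \cref{monotone} (via the factorization of the enlarged matching map through a projection). Without this hypothesis one would be stuck at an arbitrary element of $L^{\mathrm{c}}(Q)$ rather than a maximal one, and the bridge to a strict acyclic simple labeling via \cref{extend} would break down. With the monotonicity in hand, however, the remainder of the proof is a bookkeeping assembly of the lemmas developed above.
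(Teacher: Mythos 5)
Your proof is correct and follows essentially the same route as the paper: both directions are assembled from \cref{fibrantcheck}, \cref{extend}, \cref{equal}, \cref{monotone}, and \cref{p1}. The only (harmless) difference is in the ``only if'' direction, where you pass to a maximal element of the labeling poset $L^{\mathrm{c}}(Q)$ above $\s'$ and transfer fibrancy via the monotonicity of $P$, whereas the paper passes to an ideal Reedy structure above $R$ in $(\mathrm{R}(Q)^{\mathrm{c}},\leq_{\mc M})$ and then uses that ideal implies adequate; both steps are contained in \cref{monotone} and land on the same identification of adequate labelings as negative fillings of strict acyclic simple labelings.
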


\begin{proof}
Suppose the first condition, and let $R\in \mathrm{R}(Q)^{\mathrm{c}}$ be the given Reedy structure on $F(Q)$. Let $R'\in \mathrm{R}(Q)^{\mathrm{c}}$ be an ideal Reedy structure such that $R\leq R'$. Then $R'$ is also adequate, so the corresponding labeling $P^{-1}(R')\in \max(L^{\mathrm{c}})$ on $Q$ is adequate. Hence, by \cref{astrict} and \cref{equal}, $P^{-1}(R')$ is the extension $\widehat \s$ of an acyclic strict simple labeling $\s$ on $Q$. Since $X$ is also Reedy fibrant under $R'$, the second condition holds by \cref{fibrantcheck}.

Suppose the second condition. Again by \cref{fibrantcheck}, $X$ is Reedy fibrant with respect to $P(\widehat \s)$, which has cofibrant constants because $\widehat\s\in L^{\mathrm{c}}$.
\end{proof}

In conclusion, given a finite quiver $Q$ with diameter $1$, the maximal Reedy structures on $F(Q)$ with cofibrant constants\footnote{Those for which there does not exist a Reedy structure with cofibrant constants with strictly weaker fibrancy conditions for objects.} are \textit{among} those which correspond to the acyclic strict labelings on the simplification $Q^{\mathrm{s}}$ of $Q$, all of which can be found as the labelings induced by linear orderings on the set of sources. The set of these maximal Reedy structures yield all possible variants of homotopy limit calculations that fit into our framework of Reedy model structures.

Finally, we translate \cref{recognize} to the relevant case where $Q$ is graphic.\\

\begin{construction}
Given a graph $G$, fix an orientation $\mathfrak o$ on the simplification $G^{\mathrm{s}}$, and fix a vertex $v\in V$. Let $S({v})=S_{G}(v)\sube E$ be the set of self-loops of $v$ in $G$. For each $e\in S({v})$, write $e_{v,1}$ and $e_{v,2}$ for the two arrows $v\to e$ in $Q$\footnote{The choice of ordering among these two will not matter.}. Let $E_{\mathfrak o}(v)\sube E^{\mathrm{s}}$ be the set of edges whose $\mathfrak o$-target is $v$. For each $e\in E_{\mathfrak o}(v)$, recall that $(v,e)$ denotes the unique edge $v\to e$ in $Q$. Define the following set of arrows in $Q$: 
\[A_{\mathfrak o}(v)\coloneqq\{(v,e)\mid e\in E_{\mathfrak o}(v)\}\cup \{e_{v,1},e_{v,2}\mid e\in S({v})\}\sube Q_{1}.\]
Given a diagram $X\in \ob(\mc M^{F(Q)})$, define $\vp_{\mathfrak o}^{X}(v)$ to be the map $X^{v}\lr \prod_{x \in A_{\mathfrak o}(v)}X^{c(x)}$ in $\mc M$ given by the product over the maps $X(x):X^{v}\lr X^{c(x)}$.\\
\end{construction}

\begin{theorem} \label{Reedy}
Let $\mc M$ be a model category in which all projections are fibrations. Fix a graph $G$ and a diagram $X\in \ob(\mc M^{F(Q(G))})$. Then $X$ is fibrant in some Reedy model structure with cofibrant constants if and only if there exists an acyclic orientation $\mathfrak o$ on $G^{\mathrm{s}}$ such that, for each vertex $v$, the map $\vp_{\mathfrak o}^{X}(v)$ is a fibration. Under these equivalent conditions, $\holim X \simeq \lim X.$
\end{theorem}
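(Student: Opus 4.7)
The plan is to derive \cref{Reedy} as a direct specialization of \cref{recognize} by setting $Q = Q(G)$ and translating the combinatorics of strict acyclic simple labelings on $Q(G)$ into the geometric combinatorics of acyclic orientations on $G^{\mathrm{s}}$.

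First I would verify the identification $Q(G)^{\mathrm{s}} = Q(G^{\mathrm{s}})$. By \cref{quiverdefinitions}, the parallel arrows in $Q(G)$ come precisely from loops in $G$: a loop $e$ at $v$ produces two arrows $v \to e$, while every non-loop edge $\{v,w\}$ produces the two non-parallel arrows $v \to e$ and $w \to e$. The sinks (edges) whose only incident arrows are parallel are exactly the loops, and these become non-isolating. So removing parallel arrows and non-isolating sinks from $Q(G)$ deletes both the doubled arrows and their loop targets, leaving exactly $Q(G^{\mathrm{s}})$.

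Next I would set up the bijection between strict simple labelings and orientations. A strict labeling $\s \in \s(Q(G^{\mathrm{s}}))$ picks, for each non-loop edge $e = \{v,w\}$, exactly one of the two incident arrows $v \to e$, $w \to e$ to be positive. Define $\mathfrak{o}(\s)$ to be the orientation of $G^{\mathrm{s}}$ whose $\mathfrak{o}$-target at $e$ is the vertex $v$ such that $(w,e)$ is the positive arrow, i.e.\ the non-chosen endpoint. Under this bijection I would check:
\begin{itemize}
\item Acyclicity corresponds: a $\s$-directed cycle $v_1 \to e_1 \to v_2 \to \cdots \to v_1$ in $U(Q)$ (with each $v_i \to e_i$ positive and each $e_i \to v_{i+1}$ reversed-negative) unfolds into an $\mathfrak{o}(\s)$-directed cycle $v_1 \to v_2 \to \cdots \to v_1$ in $G^{\mathrm{s}}$, and vice versa.
\item For each vertex $v$, the set of negative arrows out of $v$ under the negative filling $\widehat{\s}$ matches $A_{\mathfrak{o}(\s)}(v)$: the non-loop contributions come from edges $e$ with $(v,e)$ negative under $\s$, which by the convention above are exactly the edges with $\mathfrak{o}$-target $v$, i.e.\ $e \in E_{\mathfrak{o}}(v)$; the loop contributions come from both copies $e_{v,1}, e_{v,2}$, which are negative under $\widehat{\s}$ since they are parallel, and are also included in $A_{\mathfrak{o}}(v)$ by definition.
\end{itemize}

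With these identifications in hand, the map $X^v \to \prod_{x \in Q(v) \cap \widehat{\s}^{-1}(-)} X^{c(x)}$ appearing in \cref{recognize} is literally the map $\vp_{\mathfrak{o}}^X(v)$ defined in the construction preceding \cref{Reedy}. Applying \cref{recognize} then gives the equivalence of the two fibrancy conditions and, under either, the conclusion $\holim X \simeq \lim X$. The only subtle point — the main obstacle if any — is keeping the orientation convention consistent through the loop case, since loops produce parallel arrows that are forced to be negative by \cref{lset}, so they contribute to $A_{\mathfrak{o}}(v)$ automatically regardless of any choice of orientation, and the construction must (and does) record them separately via $S(v)$.
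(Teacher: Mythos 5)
Your proposal is correct and follows exactly the paper's route: identify $Q(G)^{\mathrm{s}}=Q(G^{\mathrm{s}})$, match strict acyclic simple labelings with acyclic orientations so that $Q(v)\cap\widehat{\s}^{-1}(-)=A_{\mathfrak o}(v)$, and invoke \cref{recognize}. You simply spell out the loop/non-loop bookkeeping and the acyclicity correspondence that the paper's two-line proof leaves implicit, and these verifications are all accurate.
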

\begin{proof}
Write $Q=Q(G)$. Since $Q^{\mathrm{s}}=Q(G^{\mathrm{s}})$, a strict labeling $\s\in \s(Q^{\mathrm{s}})$ is identified with an orientation $\mathfrak o$ on $G^{\mathrm{s}}$. Then for a vertex $v$ of $G$, we have $Q(v)\cap \widehat \s^{-1}(-) = A_{\mathfrak o}(v)$, and the result follows from \cref{recognize}.
\end{proof}

%%%%%%%%%% PATH DIAGRAM CALCULATIONS%%%%%%%
\subsection{Homotopy limits via path objects}
Fix a model category $\mc M$ in which all projections are fibrations. For each $x\in \ob(\mc M)$, we fix a path object $P(x)$ with the canonical map $P(x)\overset{\pi_{1}\times \pi_{2}}\lr  x\times x$.

The \textbfit{subdivision} of a quiver $Q$ is the quiver $\olin Q\coloneqq Q(U(Q))$. Pictorially, each arrow $s\overset{e}\lr t$ in $Q$ changes in $\olin Q$ to $s\overset{(s,e)}\lr e\overset{(t,e)}\longleftarrow t$. We also use the notation $e_{v}\coloneqq (v,e)$. In particular, if $Q=Q(G)$ is graphic, $\olin Q$ is a double subdivision. Starting in $G$, then $Q$, and then $\olin Q$, we have the local progression of an edge $e$:
% https://q.uiver.app/#q=WzAsMTIsWzcsMCwidl8xIl0sWzgsMCwiZV97dl8xfSJdLFs5LDAsImUiXSxbMTAsMCwiZV97dl8yfSJdLFsxLDAsInZfMiJdLFswLDAsInZfMSJdLFsyLDAsIlxccmlnaHRzcXVpZ2Fycm93Il0sWzUsMCwidl8yIl0sWzMsMCwidl8xIl0sWzQsMCwiZSJdLFs2LDAsIlxccmlnaHRzcXVpZ2Fycm93Il0sWzExLDAsInZfMiJdLFswLDEsIih2LGVfe3ZfMX0pIl0sWzIsMSwiKGUsZV97dl8xfSkiLDJdLFsyLDMsIihlLGVfe3ZfMn0pIl0sWzQsNSwiZSIsMix7InN0eWxlIjp7ImhlYWQiOnsibmFtZSI6Im5vbmUifX19XSxbNyw5LCJlX3t2XzJ9IiwyXSxbOCw5LCJlX3t2XzF9Il0sWzExLDMsIih2XzIsZV97dl8yfSkiLDJdXQ==
\[\begin{tikzcd}
	v & w & \rightsquigarrow & {v} & e & {w} & \rightsquigarrow & {v} & {e_{v}} & e & {e_{w}} & {w}
	\arrow["{(v,e_{v})}", from=1-8, to=1-9]
	\arrow["{(e,e_{v})}"', from=1-10, to=1-9]
	\arrow["{(e,e_{w})}", from=1-10, to=1-11]
	\arrow["e"', no head, from=1-2, to=1-1]
	\arrow["{e_{w}}"', from=1-6, to=1-5]
	\arrow["{e_{v}}", from=1-4, to=1-5]
	\arrow["{(w,e_{w})}"', from=1-12, to=1-11]
\end{tikzcd}\]

\begin{definition}\label{pathextension}
Fix a graphic quiver $Q=Q(G)$ and a diagram $X\in \ob(\mc M^{F(Q)})$. The \textbfit{path extension} of $X$ is the diagram $P (X)\in \ob( \mc M^{F(\olin Q)})$ obtained by performing the following insertion at each edge $e=\{v,w\}$.
% https://q.uiver.app/#q=WzAsOSxbNCwwLCJYXnt2XzF9Il0sWzUsMCwiWF5lIl0sWzYsMCwiUChYXmUpIl0sWzcsMCwiWF5lIl0sWzIsMCwiWF57dl8yfSJdLFswLDAsIlhee3ZfMX0iXSxbMSwwLCJYXmUiXSxbMywwLCJcXHJpZ2h0c3F1aWdhcnJvdyJdLFs4LDAsIlhee3ZfMn0iXSxbMCwxLCJYKGVfe3ZfMX0pIl0sWzIsMSwiXFxwaV8xIiwyXSxbMiwzLCJcXHBpXzIiXSxbNCw2LCJYKGVfe3ZfMn0pIiwyXSxbNSw2LCJYKGVfe3ZfMX0pIl0sWzgsMywiKHZfMixlX3t2XzJ9KSIsMl1d
\[\begin{tikzcd}
	{X^{v}} & {X^e} & {X^{w}} & \rightsquigarrow & {X^{v}} & {X^e} & {P(X^e)} & {X^e} & {X^{w}}
	\arrow["{X(e_{v})}", from=1-5, to=1-6]
	\arrow["{\pi_1}"', from=1-7, to=1-6]
	\arrow["{\pi_2}", from=1-7, to=1-8]
	\arrow["{X(e_{w})}"', from=1-3, to=1-2]
	\arrow["{X(e_{v})}", from=1-1, to=1-2]
	\arrow["{X(e_{w})}"', from=1-9, to=1-8]
\end{tikzcd}\]
Similarly, the \textbfit{padding} of $X$ is the diagram $\olin X\in \ob( \mc M^{F(\olin Q)})$ obtained from $P(X)$ by replacing $P(X^{e})$ by $X^{e}$ and each $\pi_{i}$ by $\id$.\\
\end{definition}

\begin{lemma} Let $Q=Q(G)$ be a finite graphic quiver, let $\mc M$ be a model category in which all projections are fibrations, and fix a diagram $X\in \ob(\mc M^{F(Q)})$. Then $\holim X \simeq \holim \olin X$.
\end{lemma}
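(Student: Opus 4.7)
The plan is to present the padding $\olin X$ as the pullback $X\circ p$ of $X$ along an explicit collapse functor $p\colon F(\olin Q)\to F(Q)$, and then invoke a standard cofinality theorem to conclude.

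I would first define $p$ on generators: on objects, $p(v)=v$, $p(e)=e$, and $p(e_v)=e$; on the generating arrows of $\olin Q$, $p$ sends $v\to e_v$ to the corresponding arrow $v\to e$ of $Q$ and sends $e\to e_v$ to $\id_e$ in $F(Q)$. When $Q$ has $k$ parallel arrows $v\to e$ (necessarily coming from a self-loop of $G$), there are $k$ corresponding middle vertices of $\olin Q$ over $e$, and $p$ matches them bijectively with the parallel arrows. Unwinding \cref{pathextension}, one sees directly that $\olin X = X\circ p$ on the nose.

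By Hirschhorn's cofinality theorem \cite[19.6.13]{Hir}, it is then enough to show that the comma category $(p\downarrow j)$ has weakly contractible classifying space for every $j\in F(Q)$. I would verify this by direct computation. For a source $v\in V$, the only morphism with target $v$ in $F(Q)$ is $\id_v$, so $(p\downarrow v)=\{(v,\id_v)\}$, trivially contractible. For a sink $e\in E$, the objects of $(p\downarrow e)$ are the pairs $(v',v'\to e)$ for each half-edge of $G$ at $e$, together with $(e,\id_e)$ and a copy of $(e_{v'},\id_e)$ for each such half-edge; the only non-identity morphisms point into some $(e_{v'},\id_e)$, so there are no composable chains of length $\geq 2$ and the nerve coincides with its $1$-skeleton. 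That $1$-skeleton is a union of spans $(v',v'\to e)\to(e_{v'},\id_e)\leftarrow(e,\id_e)$, one for each half-edge at $e$, all glued at the unique apex $(e,\id_e)$. Hence $(p\downarrow e)$ is a finite tree, in particular contractible.

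The main delicate point is the bookkeeping for parallel arrows: self-loops of $G$ produce multiple half-edges at a single sink, and it is important that the corresponding middle vertices of $\olin Q$ match up bijectively with these half-edges via $p$, so that the spans described above are genuinely distinct and meet only at the common apex. Once cofinality of $p$ is established, \cite[19.6.13]{Hir} delivers $\holim X \simeq \holim(X\circ p) = \holim \olin X$, as claimed.
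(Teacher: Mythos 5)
Your proof is correct, but it takes a genuinely different route from the one in the paper. You exhibit the padding as a restriction $\olin X = X\circ p$ along an explicit collapse functor $p\colon F(\olin Q)\to F(Q)$ and then verify that $p$ is homotopy left cofinal, so that the cofinality theorem gives the equivalence of homotopy limits; your bookkeeping of the comma categories is accurate, including the parallel-arrow case coming from self-loops of $G$ (each $(p\downarrow e)$ is a star-shaped tree with apex $(e,\id_e)$ and one branch per half-edge at $e$, and each $(p\downarrow v)$ is a point, since sources of $Q$ receive only identities). The paper instead stays entirely inside the Reedy framework of Section 1: it takes a Reedy fibrant replacement $X'$ of $X$, transports the acyclic orientation $\mathfrak o$ on $G^{\mathrm{s}}$ to an acyclic orientation $\mathfrak o'$ on $U(Q)$, and checks by hand that $\olin{X'}$ is fibrant in the corresponding Reedy model structure, concluding $\holim \olin X\simeq \lim\olin{X'}=\lim X'\simeq\holim X$. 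Your argument is shorter and more conceptual, and it isolates exactly what is needed, namely objectwise fibrancy of $X$ (automatic here because all projections, in particular all final maps, are fibrations); the costs are the appeal to an external cofinality theorem and the standard identification of the explicit homotopy limit appearing there with the derived limit used in the paper, which does hold for objectwise fibrant diagrams. The paper's longer computation has the side benefit of producing the explicit Reedy structure on $\mc M^{F(\olin Q)}$ that is immediately reused in the subsequent lemma on path extensions, which your route would have to re-derive at that point.
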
 
\begin{proof}
Choose $X'$ to be a fibrant replacement of $X$ in some Reedy model structure on $\mc M^{F(Q)}$ with cofibrant constants. The weak equivalence $X\lr X'$ of diagrams over $Q$ induces another weak equivalence $\olin X\lr \olin {X'}$ of diagrams over $\olin Q$. Let $\mathfrak o$ be the acyclic orientation on the simplification $G^{\mathrm{s}}$ corresponding to an ideal Reedy structure for which $X'$ is fibrant. Then for each vertex $v\in V$ of $G$, $\vp_{\mathfrak o}^{X'}(v)$ is a fibration in $\mc M$.

By definition, $\olin Q$ is graphic, being the quiver associated to the simple graph $U(Q)$. The orientation $\mathfrak o$ on $G^{\mathrm{s}}$ induces an orientation $\mathfrak o'$ on $U(Q)^{\mathrm{s}}= U(Q)$ as follows. Take $e=\{v,w\}$ in $E$ as before. If $v\neq w$ and we write the $\mathfrak o$-orientation of $e$ as above, then the corresponding $\mathfrak o'$-orientation in $U(Q)$ is
% https://q.uiver.app/#q=WzAsNixbMCwwLCJ2Il0sWzEsMCwidyJdLFsyLDAsIlxccmlnaHRzcXVpZ2Fycm93Il0sWzMsMCwidiJdLFs0LDAsImUiXSxbNSwwLCJ3Il0sWzAsMSwiZSJdLFszLDQsImVfdiJdLFs0LDUsImVfdyJdXQ==
\[\begin{tikzcd}
	v & w & \rightsquigarrow & v & e & w
	\arrow["e", from=1-1, to=1-2]
	\arrow["{e_v}", from=1-4, to=1-5]
	\arrow["{e_w}", from=1-5, to=1-6]
\end{tikzcd}\]
The remaining edges in $U(Q)$ come in parallel edge pairs
% https://q.uiver.app/#q=WzAsMixbMCwwLCJ3Il0sWzEsMCwiZSJdLFswLDEsImVfe3csMX0iLDAseyJjdXJ2ZSI6LTEsInN0eWxlIjp7ImhlYWQiOnsibmFtZSI6Im5vbmUifX19XSxbMCwxLCJlX3t3LDJ9IiwyLHsiY3VydmUiOjEsInN0eWxlIjp7ImhlYWQiOnsibmFtZSI6Im5vbmUifX19XV0=
\begin{tikzcd}
	w & e
	\arrow["{e_{w,1}}", bend right=-30, no head, from=1-1, to=1-2]
	\arrow["{e_{w,2}}"', bend right=30, no head, from=1-1, to=1-2]
\end{tikzcd}\!\!, each one corresponding to a loop $e=\{w,w\}$ in $G$. Define $\mathfrak o'$ on this pair as
% https://q.uiver.app/#q=WzAsNCxbMCwwLCJ2Il0sWzEsMCwiZSJdLFsyLDAsInYiXSxbMywwLCJlIl0sWzAsMSwiIiwwLHsiY3VydmUiOi0xfV0sWzAsMSwiIiwyLHsiY3VydmUiOjF9XSxbMywyLCIiLDIseyJjdXJ2ZSI6LTF9XSxbMywyLCIiLDAseyJjdXJ2ZSI6MX1dXQ==
\begin{tikzcd}
	w & e
	\arrow["{e_{w,1}}"', bend right=30, from=1-2, to=1-1]
	\arrow["{e_{w,2}}", bend right=-30, from=1-2, to=1-1]
\end{tikzcd}\!\!. This choice makes $e$ into a source, so $e$ will never be involved in a $\mathfrak o'$-directed cycle in $U(Q)$. Furthermore, since all $\mathfrak o$-directed arrows have been split into two $\mathfrak o'$-directed arrows, and $\mathfrak o$ is acyclic, $\mathfrak o'$ is also acyclic. Therefore, there is an adequate Reedy model structure on $\mc M^{F(\olin Q)}$ corresponding to $\mathfrak o'$. 

The diagram $\olin {X'}\in \ob(\mc M^{F(\olin Q)})$ is fibrant in this Reedy model structure if and only if for each vertex $p\in V\sqcup E$ of $U(Q)$, the resulting map $\vp_{\mathfrak o'}^{\olin{X'}}(p)$ arising from the set $A_{\mathfrak o'}(p) {\sube} U(Q)_{1}$ is a fibration in $\mc M$. Fixing a vertex $p$ of $U(Q)$, we now describe the set of arrows $A_{\mathfrak o'}(p)$. First, since $U(Q)$ is simple, the set $S_{U(Q)}(p)$ of self-loops is empty. So $A_{\mathfrak o'}(p)$ is determined by $E_{\mathfrak o'}(p)$, the set of edges in $U(Q)$ oriented by $\mathfrak o'$ with target $p$. 

\emph{Case 1:} $p=w\in V$ is a \textit{vertex} in $G$. The edges of $U(Q)$ containing $w$ all come from edges in $G$ containing $v$. For each self-loop $e\in S_{G}(w)$ of $w$ in $G$, there are two edges $e_{w,1},e_{w,2}\in U(Q)_{1}$ containing $w$ in $U(Q)$, and both of these have target $v$ under $\mathfrak o'$. For each non-self-loop edge $e\in E^{\mathrm{s}}$ which contains $w$, there is one edge $e_{w}\in U(Q)_{1}$ containing $w$ in $U(Q)$, and $e_{w}$ has target $w$ under $\mathfrak o'$ if and only if $e$ has target $w$ under $\mathfrak o$. So $E_{\mathfrak o'}(w)=\{e_{w}\mid e\in E_{\mathfrak o}(w)\}\cup \{e_{w,1},e_{w,2}\mid e\in S_{G}({w})=A_{\mathfrak o}(w)\}$ and $A_{\mathfrak o'}(w)= \{(w,x)\mid x\in A_{\mathfrak o}(w)\}.$

\emph{Case 2:} $p=e=\{v,w\}\in E$ is an \textit{edge} in $G$. Since $G$ is a graph, there are exactly two arrows $e_{v}=\{v,e\}$ and $e_{w}=\{w,e\}$ in $U(Q)$ containing $e$. If $e$ is a self-loop in $G$, then $e$ is an $\mathfrak o'$-source, so neither of these edges have target $p$. Then $A_{\mathfrak o'}(p)=E_{\mathfrak o'}(p)=\es$. If $e$ is not a self-loop, then $v\neq w$, and $\mathfrak o$ orients $e$ as, say $v\overset{e}\lr w$. Then $\mathfrak o'$ at $e$ looks like $v\overset{e_{v}}\to e\overset{e_{w}}\to w.$, so $E_{\mathfrak o'}(p)=\{e_{d(e)}\}$ and $A_{\mathfrak o'}(e)=\{(e,e_{d(e)})\}.$

We now describe the maps $\vp_{\mathfrak o'}^{\olin X}(p)$. When $p=e\in E$ is a loop, $A_{\mathfrak o'}(p)$ is empty, so there is no fibrancy condition (since all objects of $\mc M$ are fibrant). Otherwise, $A_{\mathfrak o'}(e)=\{(e,e_{d(e)})\}$, so $\vp_{\mathfrak o'}^{\olin X}(e)=\olin {X'}(e,e_{d(e)})=\id_{X'^{e}}$ is a fibration in $\mc M$. Finally, let $p=w\in V$ be a vertex in $G$. Recall that for any $x\in Q(w)$ we have $\olin {X'}(w,x)=X'(x)$, so $\vp^{\olin {X'}}_{\mathfrak o'}(w)= \prod_{x\in A_{\mathfrak o}(w)}\olin{X'}(w,x)=\prod_{x\in A_{\mathfrak o}(w)}X'(x)=\vp^{ {X'}}_{\mathfrak o}(w)$ is a fibration as well, by our assumption on $X'$. Therefore, $X'$ is fibrant in the adequate Reedy model structure corresponding to $\mathfrak o'$.

To conclude, we have $\holim X \simeq \holim X' \simeq \lim X' = \lim \olin{X'} \simeq \holim \olin {X'} \simeq \holim {\olin X}.$ The first and fifth equivalences follow because $X\lr X'$ and the induced map $\olin X\lr \olin{X'}$ are both weak equivalences. The second and fourth equivalences follow because $X'$ and $\olin {X'}$ are both fibrant in a Reedy model structure with cofibrant constants. The equality in the middle is elementary because the two diagrams differ only by inserted identity arrows.
\end{proof}

\begin{remark}This proof was local at each edge, so we can always pad any subset of edges without changing the homotopy limit. In particular, if we pad all self-loops, we end up with a diagram over a graph without self-loops. With this in mind, we can always assume without loss of generality that $G$ has no self-loops.\\
\end{remark}

\begin{lemma}\label{patheq}
Fix a finite graph $G=(V,E)$. Let $\mc M$ be a model category in which all projections are fibrations, and let $X\in \ob(\mc M^{F(Q(G))})$ be an arbitrary diagram. Then $\holim \olin {X} \simeq \lim P(X)$.
\end{lemma}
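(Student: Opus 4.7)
The plan is to factor the equivalence as $\holim \olin{X} \simeq \holim P(X) \simeq \lim P(X)$: first produce a natural weak equivalence of diagrams $\olin{X} \to P(X)$, then verify the hypotheses of \cref{Reedy} for $P(X)$.

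For the first step, I would define a natural transformation $\olin{X} \to P(X)$ by taking the identity on every vertex of $\olin Q$ except the original edge-vertices $e \in E$, at which I place the canonical weak equivalence $X^e \overset{\sim}{\to} P(X^e)$ coming from the path-object factorization $X^e \overset{\sim}{\to} P(X^e) \twoheadrightarrow X^e \times X^e$. The only arrows on which the two diagrams disagree are $e \to e_v$ and $e \to e_w$ in $\olin Q$, where $\olin{X}$ has the identity of $X^e$ and $P(X)$ has $\pi_1$ and $\pi_2$. The associated naturality squares commute strictly because $\pi_i$ composed with the diagonal $X^e \to P(X^e)$ equals $\id_{X^e}$. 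Thus the transformation is objectwise a weak equivalence and $\olin{X} \simeq P(X)$ in $\mc M^{F(\olin Q)}$, giving the first equivalence.

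For the second step, invoke the preceding remark to assume without loss of generality that $G$, and hence the bipartite graph $U(Q)$, has no self-loops, so $U(Q)^{\mathrm{s}} = U(Q)$. I would then apply \cref{Reedy} to the graph $U(Q)$ and the diagram $P(X) \in \ob(\mc M^{F(Q(U(Q)))}) = \ob(\mc M^{F(\olin Q)})$, taking $\mathfrak o'$ to be the orientation on $U(Q)$ that directs every edge from its $V$-endpoint to its $E$-endpoint. Since every $\mathfrak o'$-arrow crosses from $V$ to $E$, there are no directed cycles and $\mathfrak o'$ is acyclic. For each $v \in V$, no edges of $U(Q)$ point toward $v$, so $A_{\mathfrak o'}(v) = \es$ and $\vp_{\mathfrak o'}^{P(X)}(v)$ is the final map $X^v \to 1$, a fibration by the hypothesis on $\mc M$. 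For each $e = \{v,w\} \in E$, both edges $\{v,e\}$ and $\{w,e\}$ of $U(Q)$ point toward $e$, so $A_{\mathfrak o'}(e)$ consists of the two arrows $e \to e_v$ and $e \to e_w$ of $\olin Q$, and since $P(X)$ assigns $X^e$ to both subdivision vertices, $\vp_{\mathfrak o'}^{P(X)}(e)$ is precisely the defining fibration $(\pi_1,\pi_2) \colon P(X^e) \to X^e \times X^e$. Hence \cref{Reedy} gives $\holim P(X) \simeq \lim P(X)$, and the two equivalences compose to the desired result.

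The main obstacle is purely bookkeeping: correctly identifying $A_{\mathfrak o'}(p)$ at each vertex $p$ of $U(Q)$ and matching the resulting map against the fibrations we have available. Once the bipartite orientation $\mathfrak o'$ is chosen, the two kinds of matching maps land exactly on the two kinds of fibrations already at our disposal---final maps and path-object structure maps---so no additional model-categorical work is required.
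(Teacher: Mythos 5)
Your proposal is correct and follows essentially the same route as the paper: the canonical weak equivalences $X^{e}\to P(X^{e})$ extended by identities give $\holim\olin X\simeq\holim P(X)$, and the bipartite orientation of $U(Q)$ from $V$ to $E$ (the paper phrases this as ``considering $Q$ as an orientation on $U(Q)$'') is acyclic and makes $P(X)$ Reedy fibrant, since the only nontrivial matching maps are the structure fibrations $\pi_{1}\times\pi_{2}:P(X^{e})\to X^{e}\times X^{e}$. Your extra check that the comparison squares commute strictly via $\pi_{j}\circ i=\id_{X^{e}}$ is a detail the paper leaves implicit, but otherwise the two arguments coincide.
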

\begin{proof}
Write $G=(V,E)$. For each edge $e\in E$, the canonical weak equivalence $X^{e}\lr P(X^{e})$ extends with identities to a weak equivalence $\olin X\lr P(X)$. Then $\holim \olin {X} \simeq \holim P(X)$, so it remains to show $\holim P(X) \simeq \lim P(X)$. 

Recall that $\olin Q$ is the quiver associated to the graph $U(Q)$. Consider $Q$ as an orientation $\mathfrak o$ on $U(Q)$. Since 
$Q$ is graphic, $Q$ has diameter $1$, so $\mathfrak o$ is acyclic, and there is a corresponding adequate Reedy model structure on $\mc M^{F(\olin Q)}$. Note that a vertex $p$ in $Q$ has an arrow pointing into it if and only if $p=e\in E$ is an edge in $G$, and in this case there are exactly two arrows in $Q$ with target $e$, namely $v\overset{e_{v}}\to e\overset{e_{w}}\leftarrow w$, where $v$ and $w$ are the endpoints of $e$ in $G$ (allowing for $v=w$ or $v\neq w$). 

Therefore, a diagram $Y\in \ob(\mc M^{F(\olin Q)})$ is fibrant in this Reedy model structure if and only if, for each edge $e\in E$ in $G$, the induced map $\vp^{Y}_{\mathfrak o}(e)=Y(e,e_{v})\times Y(e,e_{w})$ is a fibration in $\mc M$. In the case $Y=P(X)$, this map is $\pi_{1}\times \pi_{2}$, which is a fibration. It follows that $P(X)$ is fibrant in the adequate Reedy model structure corresponding to the orientation $\mathfrak o$ from $Q$, so $\holim P(X) \simeq \lim P(X)$.
\end{proof}

\begin{lemma}\label{patheq2}
Take the same situation as above. Fix an acyclic subgraph $T\sube G$. Replace $\olin{X}$ (resp. $P(X)$) with $\widetilde{X}$ (resp. $\widetilde{P}(X)$), which is formed from $X$ by padding (resp. path extending) all edges except for those in $T$. Assume that $X(e_{v})$ is a fibration in $\mc M$ for each edge $e$ in $T$ and vertex $v\in e$.\footnote{This can be weakened to require fibrancy for only half of these edges, as the proof will show. Specifically, we only need there to exist a root vertex $\rh$ in $T$ such that $X(e_{v})$ is a fibration whenever $e=\{\hat v,v\}$ connects $v$ to its parent in the rooted tree $(T,\rho)$.\label{half}} Then the same result above holds. 
\end{lemma}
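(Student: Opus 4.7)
The plan is to follow the two-step template from the proof of \cref{patheq}: first exhibit a canonical levelwise weak equivalence $\widetilde X \to \widetilde{P}(X)$, yielding $\holim \widetilde X \simeq \holim \widetilde{P}(X)$; then find an acyclic orientation making $\widetilde{P}(X)$ Reedy fibrant, so that $\holim \widetilde{P}(X) \simeq \lim \widetilde{P}(X)$ by \cref{Reedy}. Combining the two steps gives $\holim \widetilde X \simeq \lim \widetilde{P}(X)$.

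For the first step, both $\widetilde X$ and $\widetilde{P}(X)$ are diagrams over the same quiver $\widetilde Q$ (namely $Q$ with only the non-$T$ arrows subdivided). They agree on the $T$-portion, and on each subdivided non-tree edge $e \notin T$ they differ only at the middle vertex, where $\widetilde X$ takes value $X^e$ and $\widetilde{P}(X)$ takes value $P(X^e)$. Since the canonical map $s: X^e \to P(X^e)$ is a weak equivalence whose composite with each $\pi_i$ is the identity, extending $s$ by identities elsewhere yields a strictly commuting levelwise weak equivalence $\widetilde X \to \widetilde{P}(X)$, which induces the desired equivalence on homotopy limits.

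For the second step, the key observation is that $\widetilde Q$ is graphic: it is isomorphic to $Q(G')$, where $G'$ has vertex set $V \sqcup \{e \in E \mid e \notin T\}$ and edge set $T \sqcup \{e_v \mid e \notin T,\, v \in e\}$, with $e_v$ joining $v$ and $e$. I would orient $G'$ by directing each tree edge $e \in T$ from its parent to its child in the rooted tree $(T, \rho)$, and directing each midpoint edge $e_v$ from $v$ to $e$. Acyclicity is immediate: every vertex $e \in E \setminus T$ of $G'$ is an $\mathfrak{o}$-sink, so any directed cycle must lie entirely within $V$, but the edges there come from the acyclic rooted tree $T$. The fibrancy conditions of \cref{Reedy} are matched exactly by our hypotheses: at $p = e \notin T$ the map $\vp^{\widetilde{P}(X)}_{\mathfrak{o}}(e)$ equals $\pi_1 \times \pi_2 : P(X^e) \to X^e \times X^e$, a fibration by definition of a path object; at $p = v \in V$ with $v \neq \rho$ the only edge oriented into $v$ is the tree edge from $v$'s parent, so the map reduces to the single arrow $X(e_v) : X^v \to X^e$, a fibration by the weakened hypothesis on $T$-arrows; and at $p = \rho$ the product is empty, giving the final map $X^\rho \to 1$, a fibration by our standing assumption on $\mc M$. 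The main obstacle is essentially bookkeeping — matching the rooted-tree orientation to the fibration data on $T$-arrows, and checking that self-loops of $G$, which are automatically outside $T$ by acyclicity of $T$, are correctly handled by being path-extended exactly as in \cref{patheq}.
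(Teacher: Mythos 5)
Your proposal is correct and follows essentially the same route as the paper: the same quiver $Q(G')$ (the paper's $Q(U(\widetilde Q))$), the same orientation (tree edges parent-to-child from the root $\rho$, midpoint edges toward the subdivision vertices), the same acyclicity argument, and the same three fibrancy checks, with the levelwise weak equivalence $\widetilde X\to\widetilde P(X)$ made explicit where the paper simply reuses it from \cref{patheq}. The only (harmless) imprecision is at vertices $v\in V\setminus T_0$, which have no parent edge; there the fibrancy condition is vacuous since no edge is oriented into them.
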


\begin{proof}
 We only need to check that $\widetilde{P}(X)$ is still fibrant. Fixing any vertex $\rh$ in $T$ as a root, introduce the quiver $\widetilde Q= \widetilde Q(G,\rh)$ which differs from $Q= Q(G)$ by not subdividing any edge in $T$ and orienting $T$ out from $\rh$. Then just as $\olin X$ and $P(X)$ are defined over $\olin Q\coloneqq Q(U(Q))$, so $\widetilde X$ and $\widetilde P(X)$ are defined over $\olin{\widetilde Q} Q(U(\widetilde Q))$. Since this latter quiver is graphic, we can proceed.
 
Having made these replacements, we mimic the above proof. Consider $\widetilde Q$ as an orientation on $U(\widetilde Q)$. Although $\widetilde Q$ may not have diameter 1 (in particular, may not be graphic), it is still acyclic: since $T$ is still a tree in $\widetilde Q$, any cycle would involve going through some edge $e_{v}\in (\widetilde Q)_{1}$ where $e\in E\sm T_{1}$, but this path must terminate in the sink $e\in (\widetilde Q)_{0}$.

We now check the fibrancy condition on the vertices of $U(\widetilde Q)$ equipped with the orientation $\mathfrak o$ coming from $\widetilde Q$. The vertices are  $U(\widetilde Q)_{0}=V\cup (E\setminus T_{1})$. Any $e'\in E\setminus T_{1}$ still has $\vp_{\mathfrak o}^{\widetilde P(X)}(e')=\pi_{1}\times \pi_{2}$ a fibration. In the previous proof, there was nothing to check on the vertices coming from $V$ because these were all sources under $\mathfrak o$. Now this is true for all elements of $V\sm (T_{0}\sm \{\rh\})$. Any vertex $v\in T_{0}\sm \{\rh\}$ has exactly one edge with target $v$, the edge $e$ with source $\hat v$, the parent of $v$ with respect to the root $\rh$. So $A_{\mathfrak o}(v)=\{e\overset{e_{v}}\to v\}$ and $\vp_{\mathfrak o}^{\widetilde P(X)}(v)=X(e_{v})$, which is a fibration by assumption.
\end{proof}

\begin{proposition}\label{pathresult}
Let $Q$ be a graphic quiver. Let $\mc M$ be a model category in which all projections are fibrations, and let $X\in \ob(\mc M^{F(Q)})$ be an arbitrary diagram. Then $\holim {X} \simeq \lim P(X)$. The same result holds if we choose an acyclic subgraph $T\subseteq G$ along whose edges we don't path extend, as long as each $X(e_{w})$ from $T$ is a fibration (or only half of these, see \cref{half} ).\label{maincor}
\end{proposition}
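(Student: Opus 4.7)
The proposition is essentially a packaging of the three results immediately preceding it, so my plan is to chain them together carefully, handling both the vanilla case and the $T$-refined case in parallel.

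First, for the main statement $\holim X \simeq \lim P(X)$ with $Q=Q(G)$ graphic. The plan is a two-step reduction. Start by applying the unnamed lemma immediately following \cref{pathextension}, which gives $\holim X \simeq \holim \olin{X}$, transferring the problem from a diagram over $F(Q)$ to a diagram over $F(\olin Q)$ with identity arrows inserted at each edge. Then invoke \cref{patheq}, which gives $\holim \olin{X} \simeq \lim P(X)$: the proof there observes that $\olin Q$ is graphic (being $Q(U(Q))$), that the quiver $Q$ itself, regarded as an orientation on $U(Q)$, is acyclic of diameter~$1$, and that $P(X)$ is fibrant in the resulting Reedy model structure because the matching map at each edge vertex $e \in E$ is exactly $\pi_1\times \pi_2:P(X^e)\to X^e\times X^e$, a fibration by the choice of path object. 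Composing these two weak equivalences yields the desired chain $\holim X \simeq \holim \olin X \simeq \lim P(X)$.

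For the refined statement involving an acyclic subgraph $T\sube G$, the plan is analogous, but with one extra ingredient. I would first appeal to the remark following the unnamed padding lemma, which notes that its proof is local at each edge; so we can pad only those edges outside $T$ and still get $\holim X \simeq \holim \widetilde{X}$, where $\widetilde{X}$ is the partial padding of $X$. Next, the canonical weak equivalences $X^e \to P(X^e)$ for $e\notin T$, extended by identities everywhere else, assemble into a weak equivalence $\widetilde X \to \widetilde P(X)$ of diagrams over $F(U(\widetilde Q))$, giving $\holim \widetilde X \simeq \holim \widetilde P(X)$. Finally, \cref{patheq2} provides the key fibrancy statement: under the hypothesis that each $X(e_v)$ for $e\in T$ (or only the parental half, per the footnoted variant) is a fibration, $\widetilde P(X)$ is fibrant in the adequate Reedy model structure on $\mc M^{F(\olin{\widetilde Q})}$ corresponding to orienting $T$ out from a chosen root and using the edge-vertex fibrations elsewhere. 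This gives $\holim \widetilde P(X) \simeq \lim \widetilde P(X)$, and concatenating yields $\holim X \simeq \lim \widetilde P(X)$.

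I expect the step requiring the most care is the second one in the $T$-case, namely justifying that the partial padding lemma still applies when we only pad the edges outside $T$. The original padding lemma was proved by a direct construction of an acyclic orientation $\mathfrak o'$ on $U(Q)$ from $\mathfrak o$ on $G^{\mathrm s}$, and by checking fibrancy of the matching maps at every vertex; the remark asserts locality but does not spell it out. My plan is to handle this by re-running the construction of $\mathfrak o'$ verbatim for edges of $G$ not in $T$, and leaving the orientation on $T$-edges untouched (using them directly, unsubdivided, in the quiver $\widetilde Q$ from \cref{patheq2}). Acyclicity and the fibrancy check at each vertex then reduce to the two cases already verified in the proofs of \cref{patheq} and \cref{patheq2}, so no new estimates are required — only a careful bookkeeping of vertices, which is essentially the content of \cref{patheq2}. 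Beyond this, everything else is formal composition of weak equivalences.
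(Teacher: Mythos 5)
Your proposal is correct and matches the paper's argument, which is simply to combine the padding lemma, \cref{patheq}, and \cref{patheq2}; you have merely spelled out the chain of equivalences that the paper leaves implicit. No gaps.
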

\begin{proof}
Combine the previous three results.
\end{proof}

We will see in Section 2 that when $\mc M$ is the category of dg-categories, path objects have a concrete description, so we will obtain explicit formulas for $\holim X$. Most of the arrows in our diagrams will be fibrations, so by \cref{pathresult} we will be able to choose a forest's worth of edges to \textit{not} path extend. The resulting smaller path extension $\widetilde{P}(X)$ still has the property that $\holim X \simeq \lim \widetilde{P}(X)$.

%%%%%%%%%%KAN RESULT%%%%%%%%%%%%%
\subsection{Homotopy limits in steps}
We finish with a technical result that will be used in Section 3 to calculate homotopy limits in multiple steps, rather than all at once.\\

\begin{lemma}\label{kanresult}
Fix a diagram $D\in \ob(\mc M^{J})$ over any category $\mc M$. Let $I\subseteq J$ be a faithful embedding of $F(Q(T))$ for some acyclic graph $T$. Assume that there are no morphisms in $J\sm I$ with codomain in $I$. Define $D'\in \ob(\mc M^{J/I})$ by replacing $D|_{I}$ with $\lim D|_{I}$. Then $\lim D'=\lim D$.
\end{lemma}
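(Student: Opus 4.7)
The plan is to verify from the universal property that cones over $D$ and cones over $D'$ at any apex $X$ are in natural bijection, and conclude by Yoneda.

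First I would make the category $J/I$ and the diagram $D'$ precise. I interpret $J/I$ as the pushout of $I \to J$ along $I \to \ast$ in $\mathrm{Cat}$: its object set is $(\ob J \sm \ob I) \cup \{\ast\}$, and its morphisms are those of $J \sm I$ together with a new morphism $\ast \to j$ for each morphism $i \to j$ in $J$ with $i \in I$ and $j \in J \sm I$. The hypothesis that no morphism of $J \sm I$ has codomain in $I$ is precisely what ensures this quotient is well-behaved, since it prevents the appearance of any morphism $j \to \ast$ that would need to be reconciled. The diagram $D'$ sends $\ast \mapsto \lim D|_I$ with its canonical projections $\mathrm{proj}_i$, agrees with $D$ on $J \sm I$, and sends each new morphism $[f : i \to j]$ to $D(f) \circ \mathrm{proj}_i$. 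Well-definedness on equivalence classes is the main bookkeeping step: since $I \cong F(Q(T))$ has diameter one, all identifications among morphisms $\ast \to j$ in $J/I$ are generated by precomposing with single arrows of $Q(T)$ inside $I$, and the cone condition satisfied by the $\mathrm{proj}_i$ over $D|_I$ provides exactly the required equalities.

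The core step is the bijection of cones. Given a cone $(\pi_j : X \to D(j))_{j \in J}$ over $D$, its restriction to $I$ is a cone over $D|_I$ and so factors uniquely as $\pi_i = \mathrm{proj}_i \circ \pi'_\ast$ for a morphism $\pi'_\ast : X \to \lim D|_I$; together with $\pi'_j := \pi_j$ for $j \in J \sm I$ this produces a cone over $D'$, the compatibility with each new morphism $\ast \to j$ reducing to the original cone condition $D(f) \circ \pi_i = \pi_j$. Conversely, from a cone $(\pi'_k)_{k \in J/I}$ over $D'$ one recovers a cone over $D$ via $\pi_i := \mathrm{proj}_i \circ \pi'_\ast$ for $i \in I$ and $\pi_j := \pi'_j$ otherwise; the cone conditions for morphisms internal to $I$ follow from the cone property of the limit projections, and those for morphisms $i \to j$ out of $I$ follow from the $D'$-cone condition at the corresponding $\ast \to j$. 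These assignments are visibly mutually inverse and natural in $X$, so $\lim D$ and $\lim D'$ represent the same functor on $\mc M^{\mathrm{op}}$ and are canonically isomorphic.

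I expect the main obstacle to be only the precise setup of $J/I$ and the functoriality check for $D'$; once these are in hand, the bijection is a routine universal-property argument, and no homotopy-theoretic or Reedy-categorical machinery enters. The acyclicity of $T$ and the free structure on $I$ serve only to keep the relations in $I$ transparent enough that the functoriality verification is immediate, making this purely a statement of ordinary $1$-categorical limit theory.
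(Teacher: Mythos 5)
Your proof is correct. The paper disposes of this lemma in three sentences by a different packaging of the same content: it asserts that $D'$ is the pointwise right Kan extension of $D$ along the quotient functor $p:J\lr J/I$, and then invokes the general fact that limits are preserved under right Kan extension ($\lim_{J/I}\operatorname{Ran}_{p}D\cong \lim_{J}D$, by composing right Kan extensions to the point). You instead verify the conclusion directly from the universal property, exhibiting a natural bijection between cones over $D$ and cones over $D'$. The two routes are substantively equivalent -- your cone bijection at the collapsed object $\ast$ is exactly the computation one would do to check the pointwise Kan extension formula there, namely that restricting along $I\hookrightarrow (\ast\downarrow p)$ does not change the limit -- but yours has the advantage of being self-contained and of making explicit the two points the paper leaves implicit: the precise construction of $J/I$ as a pushout in $\mathrm{Cat}$, and the well-definedness of $D'$ on equivalence classes of morphisms $\ast\to j$ (which you correctly reduce to the cone condition of the projections $\mathrm{proj}_{i}$, using that the only identifications come from precomposition with arrows internal to $I$; this is where the hypothesis that no morphism of $J\sm I$ has codomain in $I$ is used, matching the paper's remark that the vertices of $T$ are sources in $J$). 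The paper's phrasing buys brevity and a statement that generalizes without rechecking; yours buys transparency and avoids any appeal to Kan extension machinery.
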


\begin{proof}
The replacement is well-defined because the vertices in $T$ are all sources in $J$. The result will follow from the fact that right Kan extensions preserve limits once we show that $D'$ is the right Kan extension of $D$ along the quotient $p: J\lr J/I$. This in turn follows from the pointwise formula for Kan extensions.
\end{proof}

\begin{proposition}\label{steps}
Let $G$ be a graph and $\mc M$ a model category in which all projections are fibrations. Fix a diagram $D\in \ob(\mc M^{J=Q(G)})$. Let $I\subseteq J$ be the subquiver given by an acyclic subgraph $T\sube G$ as above. Suppose $D(e_{v})$ is a fibration for each edge $e$ in $T$ and vertex $v\in e$.\footnote{Or only half of these, see \cref{maincor}.} Define $D'\in \ob(\mc M^{J/I})$ by replacing $D|_{I}$ with $\lim D|_{I}$. Then $\holim D'\simeq \holim D$.
\end{proposition}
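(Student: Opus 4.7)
The plan is to chain together three previously established results. First, apply \cref{pathresult} to $D$ with the acyclic subforest $T$: the fibration hypothesis on $D(e_v)$ for each edge $e$ of $T$ is exactly what \cref{pathresult} requires, yielding $\holim D \simeq \lim \widetilde{P}(D)$, where $\widetilde{P}(D)$ denotes the partial path extension of $D$ that path-extends every edge outside $T$ but leaves $T$-edges alone. Symmetrically, identify $J/I \cong F(Q(G/T))$, where $G/T$ is obtained from $G$ by contracting each edge of $T$ to a point, so that $D'$ is naturally a diagram over a graphic quiver. Applying \cref{pathresult} to $D'$ with empty subforest then gives $\holim D' \simeq \lim P(D')$.

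Next, apply \cref{kanresult} to $\widetilde{P}(D)$ together with the faithful subcategory $I = F(Q(T))$ of the enlarged indexing category. The hypothesis that no morphism outside $I$ has codomain in $I$ is verified as follows: vertices in $V(T)$ are already sources of $Q(G)$, and since the partial path extension only modifies edges outside $T$, no new arrow with codomain in $V(T) \sqcup E(T)$ is introduced; any arrow of $Q(G)$ with codomain in $E(T)$ emanates from $V(T) \subseteq I$ because $T$ is a subgraph. Since $T$ is acyclic by hypothesis, \cref{kanresult} applies and gives
\[
\lim \widetilde{P}(D) \;=\; \lim \bigl(\widetilde{P}(D)\bigr)',
\]
where the right-hand side denotes the diagram obtained from $\widetilde{P}(D)$ by collapsing the $I$-portion to a single object and replacing its value with $\lim D|_I$.

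The central identification is $\bigl(\widetilde{P}(D)\bigr)' = P(D')$ as diagrams over a common quiver, which I expect to be the main bookkeeping obstacle. The idea is that the partial path extension of $Q(G)$ only adds vertices and arrows associated to edges outside $T$; these survive the contraction of $I$ unchanged, while incidences in $Q(G)$ from $V(T)$ to edges outside $T$ get rerouted to the contracted vertex $*$, exactly reproducing the subdivision underlying $P(D')$ over the graphic quiver $Q(G/T)$. At the level of values, both diagrams take value $\lim D|_I$ at $*$, the original $D$-values at untouched vertices, and the corresponding path objects at the newly inserted vertices. Chaining the four equivalences gives
\[
\holim D \;\simeq\; \lim \widetilde{P}(D) \;=\; \lim \bigl(\widetilde{P}(D)\bigr)' \;=\; \lim P(D') \;\simeq\; \holim D',
\]
which is the desired conclusion.
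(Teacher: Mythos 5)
Your proof is correct and follows essentially the same route as the paper's: both establish $\holim D \simeq \lim \widetilde P(D)$ and $\holim D' \simeq \lim P(D')$ via \cref{pathresult}, then identify the two limits through \cref{kanresult} together with the observation that collapsing $T$ and path-extending the edges outside $T$ commute, i.e.\ $(\widetilde P(D))' = P(D')$. Your explicit verification of the hypothesis of \cref{kanresult} (that no arrow outside $I$ has codomain in $I$) is a detail the paper leaves implicit, but the argument is the same.
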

\begin{proof}

First form the path extensions $\widetilde P(D)$ and $P(D')$, where the former is obtained by path extending all edges except for $e$. By \cref{maincor}, we have $\holim D\simeq\lim \widetilde P(D)$ and $\holim D'\simeq\lim P(D')$. It now remains to show that these two limits are equal. 

This will follow from \cref{kanresult} once we show that $\widetilde P(D)$ stands in the same relation to $ P(D')$ as $D$ stands to $D'$, meaning $P(D')=\widetilde P(D)'$. Of course this is true: in $D\rightsquigarrow D'\rightsquigarrow P(D')$, we first collapse $T$ and then path extend all remaining edges, while in $D\rightsquigarrow \widetilde P(D)\rightsquigarrow \widetilde P(D)'$, we first path extend all edges other than $T$ and then collapse $T$. The operations of collapsing $T$ and path extending all other edges do not interact because the latter does not change the diagram on the original vertices of $G$.
\end{proof}

%%%%%%%%%%%%%%%%%%% SECTION 2%%%%%
\section{Homotopy limits of dg-categories}
%%%%%%%%%%%%%%%%%%%%%%%%%%%%%%
In \cite[4.1.13]{K}, Karaba\c{s} gives an explicit formula for homotopy pullbacks of dg-categories. Since our diagrams are made of pullbacks, we will generalize this formula. In Section 1, we proved that path extending is a valid way of representing homotopy limits as limits, and in this section we calculate that representation explicitly. We give straightforward generalizations of Karaba\c{s}' Lemmas, and then finish with local manipulations of diagrams that will be crucial for Section 3.

\subsection{Differential graded categories}
Throughout the rest of this paper, we fix a commutative unital ring $k$ and a cyclic abelian group $R\in\{\zb/n\mid n\in \mathbb Z_{\geq 0}\}$. We will usually think of the case $R=\mathbb Z$, but our final invariant will be $R=\mathbb Z/2$-graded. A \textbfit{dg-category} is a category enriched over cohomologically $R$-graded chain complexes over $k$. We will write $\mc C_{\mathrm{dg}}^{(n)}(k)$ for the dg-category of $\zb/n$-graded chain complexes.\\

\begin{definition}
A dg-category is \textbfit{pointed} if it has a distinguished zero object 0. A dg-functor between pointed dg-categories is \textbfit{pointed} if it respects 0. Fixing $k$ and $R=\zb/n$, the category of pointed dg-categories and pointed dg-functors will be denoted $\dgcat_{k}^{(n)}$, or simply $\dgcat$.\\
\end{definition}

\begin{remark}
In the literature, a general dg-category is usually not required to be pointed, but all of the dg categories we use in this paper will be pointed. Similarly, the few dg-functors we employ are also pointed. So from here on we will omit the adjective ``pointed,'' taking it to be implicit.\\
\end{remark}

\begin{proposition}[Tabuada \cite{T}]\label{tabuadathm}There exists a unique model structure on $\dgcat_{k}^{(1)}$ whose weak equivalences are the quasi-equivalences, and whose \textbfit{fibrations} are those full dg-functors $F:\mc A\lr \mc B$ such that $H^{0}F$ is an isofibration: any isomorphism $F(a)\to b$ in $H^{0}\mc B$ lifts to an isomorphism $a\to a'$ in $H^{0}\mc A$.\\
\end{proposition}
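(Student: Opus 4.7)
The plan is to apply the recognition theorem for cofibrantly generated model categories (Kan's theorem). First I would verify that $\dgcat_{k}^{(1)}$ is complete and cocomplete: limits are constructed objectwise on the underlying pointed graded quivers, while cocompleteness requires a free-construction style argument, freely generating morphisms and imposing differentials, taking care to preserve pointedness and to quotient appropriately for coequalizers.

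Next, I would propose explicit generating cofibrations $I$ and generating trivial cofibrations $J$. For $I$, take the map $\varnothing \to \underline{k}$ (the pointed dg-category that freely adjoins a single object alongside the required zero), together with, for each $n \in \mathbb{Z}$, a map $\mathcal{S}(n) \to \mathcal{D}(n)$ of two-object dg-categories that freely adjoins a morphism of degree $n$ and its null-homotopy. For $J$, take the maps $0 \to \mathcal{D}(n)$ together with a carefully chosen map $\underline{k} \sqcup \underline{k} \to \mathcal{K}$, where $\mathcal{K}$ is a cofibrant dg-category representing the ``walking quasi-isomorphism'' (two objects with a closed degree-$0$ morphism and all higher coherence data witnessing that its image in $H^{0}$ is invertible). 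Smallness of all domains follows because each generator has countably many morphisms in each cohomological degree.

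The main technical obstacle is verifying the two hypotheses of the recognition theorem. The first is that a dg-functor has the right lifting property against $J$ precisely when it is a fibration in the stated sense: lifting against $0 \to \mathcal{D}(n)$ translates into surjectivity of the hom-complex maps in each degree (so the functor is full in the enriched sense), while lifting against the walking-quasi-isomorphism map encodes the isofibration property of $H^{0}F$. The second, and the genuine heart of the argument, is to show that every relative $J$-cell complex is a quasi-equivalence. This requires an explicit analysis of pushouts in $\dgcat$ along the maps in $J$, showing that attaching a ``quasi-isomorphism cell'' does not alter hom-complexes up to quasi-isomorphism and adjoins an invertible class to $H^{0}$, and then passing these properties through the transfinite compositions produced by the small object argument.

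Finally, uniqueness is immediate once existence is established: a cofibrantly generated model structure is determined by its weak equivalences together with its class of fibrations, and both have been specified; any two model structures agreeing on these classes necessarily agree on cofibrations by the retract argument, hence coincide.
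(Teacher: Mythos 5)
The paper does not prove this statement: it is imported verbatim from Tabuada's work \cite{T} (with a remark handling the $\mathbb Z/2$-graded and pointed variants), so your proposal is really being measured against the original proof there. Your overall strategy --- completeness and cocompleteness, Kan's recognition theorem, explicit generating sets, and a pushout analysis showing relative $J$-cell complexes are quasi-equivalences --- is exactly Tabuada's route, and your set $I$ (adjoin an object; adjoin a null-homotopy for a free closed morphism of each degree) is essentially his.

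There is, however, a concrete error in your generating trivial cofibrations $J$, and it is fatal to the argument as written: neither $0\lr \mathcal D(n)$ nor $\underline{k}\sqcup\underline{k}\lr \mathcal K$ is a quasi-equivalence. For the first, $\mathcal D(n)$ contains two objects not isomorphic to the basepoint in $H^{0}$, so the map from the initial (zero) dg-category is not quasi-essentially surjective; moreover the right lifting property against $0\lr\mathcal D(n)$ forces a dg-functor to be surjective on \emph{objects}, which is not part of the fibration class you are trying to characterize. The correct generator is $\underline{k}\sqcup\underline{k}\lr\mathcal D(n)$: with the two objects prescribed in advance, lifting the null-homotopy is precisely degreewise surjectivity of the hom-complex maps. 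For the second, the inclusion $\underline{k}\sqcup\underline{k}\lr\mathcal K$ is not quasi-fully-faithful (the hom-complex between the two objects is $0$ in the source but has nontrivial $H^{0}$ in the target), and its lifting property would demand lifting homotopy equivalences between two \emph{prescribed} objects, which is strictly stronger than the isofibration condition; the correct generator is $\underline{k}\lr\mathcal K$, the inclusion of a single object. With your $J$, the recognition theorem's hypothesis that every relative $J$-cell complex is a weak equivalence already fails for the generators themselves. Once $J$ is corrected, the genuinely hard step --- an explicit analysis showing that pushouts along $\underline{k}\lr\mathcal K$, for a suitably cofibrant model of the walking homotopy equivalence, preserve quasi-equivalences --- is the one you correctly identify but do not carry out; that computation is where most of the content of \cite{T} lives. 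Your uniqueness paragraph is fine, though the determination of a model structure by its weak equivalences and fibrations holds for arbitrary model structures, not only cofibrantly generated ones.
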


\begin{remark}
Although Tabuada's theorem is formulated for dg-categories graded by $R=\zb$, by \cite{DK}, the same result holds for $R=\zb/2$, which suffices for our applications. Additionally, though the modern form of Tabuada's theorem is given for the category of unpointed dg-categories, the original proof was for pointed dg-categories, so the result holds for our purposes.
\end{remark}

All projections in $\dgcat_{k}^{(n)}$ are fibrations, so our results from Section 1 will apply.

Let $ \mc B$ and $\mc C$ be dg-categories. We write $\mathrm{Fun}(\mc B,\mc C)$ for the dg-category of unital $A_{\infty}$-functors from $\mc B$ to $\mc C$.\footnote{Since the only categories $\mathrm{Fun}(\mc B,\mc C)$ we will look at are those for which $\mc B$ is a very simple quiver, we choose to define each instance of $\mathrm{Fun}(\mc B,\mc C)$ explicitly rather than give the full definition of an $A_{\infty}$-functor. See \cite{Sei} for more on $A_{\infty}$-functors.} 

\begin{example}
If $\mc C=\mc C_{\tx{dg}}^{(n)}(k)$, then $\mathrm{Mod}(\mc B):=\mathrm{Fun}(\mc B,\mc C_{\tx{dg}}^{(n)}(k))$ is the dg-category of $A_{\infty}$-modules over $\mc B$.\\
\end{example}

\begin{example} When $\mc B$ is the free dg-category on the $A_{2}$-quiver, an object of $\mc B^{\to}\coloneqq\mathrm{Fun}(A_{2},\mc C)$ is $X=(X_{1}\overset{x}\to X_{2}),$ where $x\in Z^{0}\mc C(X_{1},X_{2})$ The morphism complexes are \[\mc C^{\to}(X,Y)^{k}\coloneqq \mc C(X_{1},Y_{1})^{k}\oplus \mc C(X_{1},Y_{2})^{k-1}\oplus \mc C(X_{2},Y_{2})^{k},\] with differential $d(f_{1},h,f_{2})=(df_{1},dh +(-1)^{k}(yf_{1}-f_{2}x),df_{2})$ and composition $(f_{1}',h',f_{2}')(f_{1},h,f_{2})=(f_{1}'f_{1},f_{2}'h+(-1)^{k}h'f_{1},f_{2}'f_{2})$.
\end{example}

We will often implicitly use the following characterization, due to Karaba\c{s}, of homotopy equivalences in $\mc C^{\to}$.\\

\begin{lemma}[{\cite[Lemma 4.1.4]{K}}]\label{simpler}
Let $\mc C$ be a dg-category and fix objects $X=(X_{1}\overset{x}\to X_{2})$ and $Y=(Y_{1}\overset{y}\to Y_{2})$ in  $\mc C^{\to}$. For each $i=1,2$, let $f_{i}:X_{i}\lr Y_{i}$ be a homotopy equivalence\footnote{A \textbfit{homotopy equivalence} in a dg-category $\mc C$ is a morphism in $Z^{0}\mc C$ which becomes an isomorphism in $H^{0}\mc C$.}. Then $f_{2}x\sim yf_{1}$\footnote{The notation $x\sim y$ means that $x-y$ is exact.} if and only if there exists a map $h\in \mc C(X_{1},Y_{2})^{-1}$ such that $(f_{1},h,f_{2})\in \mc C^{\to}(X,Y)^{0}$ is a homotopy equivalence.
\end{lemma}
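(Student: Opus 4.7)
The easy direction is the ``only if.'' If $(f_1,h,f_2)\in \mc C^{\to}(X,Y)^0$ is a homotopy equivalence it is in particular a cocycle, and the formula for the differential on $\mc C^{\to}$ specialized to degree $0$ gives $d(f_1,h,f_2)=(df_1,\,dh+yf_1-f_2x,\,df_2)=0$. The middle component then yields $f_2 x - y f_1 = dh$, i.e.\ $f_2x\sim yf_1$. This uses nothing beyond the definition of $d$.

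For the ``if'' direction, I would proceed in three steps. First, starting from $f_2x\sim yf_1$, pick some $h\in \mc C(X_1,Y_2)^{-1}$ with $dh=f_2x-yf_1$; combined with the hypothesis that $f_1,f_2\in Z^0\mc C$, this makes $(f_1,h,f_2)$ a cocycle in $\mc C^{\to}(X,Y)^0$. Second, pick homotopy inverses $g_i\in Z^0\mc C(Y_i,X_i)$ with $g_if_i\sim \id_{X_i}$ and $f_ig_i\sim \id_{Y_i}$, and observe that the relation $g_2y\sim xg_1$ follows formally: $g_2y\sim g_2(f_2x)g_1\sim (g_2f_2)xg_1\sim xg_1$, using $f_ig_i\sim \id$ and $f_2x\sim yf_1$. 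Choose $h'\in \mc C(Y_1,X_2)^{-1}$ with $dh'=g_2y-xg_1$, so that $(g_1,h',g_2)$ is a cocycle in $\mc C^{\to}(Y,X)^0$. Third, verify that $(g_1,h',g_2)$ is a two-sided inverse to $(f_1,h,f_2)$ in $H^0\mc C^{\to}$ by explicitly constructing elements $(H_1,H,H_2)\in \mc C^{\to}(X,X)^{-1}$ and $(K_1,K,K_2)\in \mc C^{\to}(Y,Y)^{-1}$ whose differentials are $(\id_{X_1}-g_1f_1,\,-(g_2h+h'f_1),\,\id_{X_2}-g_2f_2)$ and the analogous expression on $Y$. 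The outer components $H_i,K_i$ come directly from the pre-existing homotopies witnessing that each $g_i$ is inverse to $f_i$.

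The main obstacle is the middle components $H$ and $K$. These must be chosen so that the formula for $d$ on degree $-1$ morphisms in $\mc C^{\to}$ produces exactly $-(g_2h+h'f_1)$ (resp.\ $-(f_2h'+hg_1)$) once the contributions $yH_1-H_2x$ (resp.\ the $Y$-analogue) from the outer homotopies are accounted for. My plan is to fix the outer homotopies $H_i,K_i$ first, then define $H$ (and $K$) by writing the desired differential as the image under $d$ of an element built from composing $h$, $h'$, and the outer homotopies in the two legal ways, and correcting by any closed remainder — which must be exact because the obstruction lives in a complex whose cohomology is trivialized once $f_i$ and $g_i$ are already inverse up to homotopy. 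Concretely, the candidate $H$ will be (up to sign) something like $-g_2h'\cdots + \cdots h K_2$-style bookkeeping, and checking that $d$ of this produces the required middle component is a direct, if fiddly, calculation in the morphism complex. Once both homotopies are in hand, composing in $H^0\mc C^{\to}$ yields that $(f_1,h,f_2)$ is invertible, hence a homotopy equivalence.
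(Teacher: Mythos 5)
The paper offers no proof of this lemma at all — it is quoted verbatim from \cite[Lemma 4.1.4]{K} — so there is no argument of the paper's to compare against, and I assess your proposal on its own terms. The direction you treat first (a homotopy equivalence $(f_{1},h,f_{2})$ forces $f_{2}x\sim yf_{1}$) is correct and complete: a homotopy equivalence is by definition closed of degree $0$, and the middle component of the differential gives $dh=f_{2}x-yf_{1}$. (You have the labels ``if''/``only if'' swapped relative to the statement, but both implications are addressed, so this is cosmetic.)

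The other direction has a genuine gap at exactly the step you flag as fiddly. After choosing $h$, $h'$, and outer homotopies $H_{i}$ with $dH_{i}=\id_{X_{i}}-g_{i}f_{i}$, the middle component $H$ must satisfy, up to signs, $dH=xH_{1}-H_{2}x-g_{2}h-h'f_{1}$. That right-hand side is indeed closed (this follows from the four defining identities), but it lives in $\mc C(X_{1},X_{2})^{-1}$, and its class in $H^{-1}(\mc C(X_{1},X_{2}))$ has no reason to vanish: your justification that ``the obstruction lives in a complex whose cohomology is trivialized'' is false as stated, since nothing trivializes the cohomology of that hom complex. Concretely, if one expands the obstruction one finds a term of the form $g_{2}y(f_{1}H_{1}-K_{1}f_{1})$, where $f_{1}H_{1}-K_{1}f_{1}$ is a closed degree $-1$ element measuring the incompatibility of the two chosen homotopies $H_{1},K_{1}$; it is generally nonzero in cohomology. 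The step can be repaired, but it needs an actual idea: either (a) note that replacing $h$ by $h+z$ with $dz=0$ shifts the obstruction class by $\pm[g_{2}z]$, and post-composition with the homotopy equivalence $g_{2}$ is a quasi-isomorphism $\mc C(X_{1},Y_{2})\lr\mc C(X_{1},X_{2})$, so $z$ can be chosen to kill the class; then argue symmetrically (adjusting the middle component of the candidate right inverse via the quasi-isomorphism $f_{2}\circ -$) and conclude from the existence of a left and a right inverse in $H^{0}\mc C^{\to}$; or (b) bypass the explicit homotopies entirely: for every object $Z$ the complex $\mc C^{\to}(Z,X)$ has a two-step filtration with subquotients $\mc C(Z_{1},X_{2})[1]$ and $\mc C(Z_{1},X_{1})\oplus\mc C(Z_{2},X_{2})$, the map $(f_{1},h,f_{2})_{*}$ respects it and is a quasi-isomorphism on subquotients because $f_{1},f_{2}$ are homotopy equivalences, so the five lemma plus the usual Yoneda argument finishes. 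As written, your construction of $H$ and $K$ does not go through.
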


We use $\rho_{1},\rho_{2}:\mc C^{\to}\lr \mc C$ to denote the projection functors, which are fibrations. The \textbfit{path object} $P(\mc C)$ of a dg-category $\mc C$ is the full dg-subcategory of $\mc C^{\to}$ spanned by the homotopy equivalences of $\mc C$. In particular, we have the canonical dg-functors $\mc C\overset{i}\lr P(\mc C)\overset{\pi_{1}\times \pi_{2}}\lr \mc C\times \mc C,$ where $i$ acts by $X\mapsto \id_{X}$, $f\mapsto (f,0,f)$, and $\pi_{j}\coloneqq \rh_{j}|_{P(\mc C)}$. The functor $i$ is a quasi-equivalence, and $\pi_{1}\times \pi_{2}$ is a fibration.

We conclude by verifying that it is sufficient for equivalences of diagrams in $\dgcat$ to commute only up to natural homotopy equivalence. This will be useful for our subsequent calculations in which we will have multiple homotopic models for a given dg-functor. \\

\begin{lemma}\label{right} Two dg-functors $F,G:\mc B\lr\mc C$ are right homotopic in the Tabuada model structure if they are naturally homotopy equivalent.
\end{lemma}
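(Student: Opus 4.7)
The plan is to build an explicit right homotopy, i.e., a dg-functor $H:\mc B\lr P(\mc C)$ such that $\pi_{1}\circ H=F$ and $\pi_{2}\circ H=G$. By the discussion preceding the lemma, the factorization $\mc C\overset{i}\lr P(\mc C)\overset{\pi_{1}\times \pi_{2}}\lr \mc C\times \mc C$ exhibits $P(\mc C)$ as a path object for $\mc C$ in the Tabuada model structure, so producing such an $H$ is exactly what is needed.

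Unpacking the assumption, a natural homotopy equivalence should be interpreted as a closed degree-$0$ natural transformation $\eta:F\Rightarrow G$ whose components $\eta_{b}:F(b)\to G(b)$ are all homotopy equivalences in $\mc C$; thus $d\eta_{b}=0$ and $\eta_{b'}F(f)=G(f)\eta_{b}$ for every morphism $f\in\mc B(b,b')$. The natural guess is to define $H$ on objects by $H(b)\coloneqq(F(b)\overset{\eta_{b}}\to G(b))$, which lies in $P(\mc C)$ precisely because each $\eta_{b}$ is a homotopy equivalence, and on morphisms by $H(f)\coloneqq(F(f),0,G(f))$ using the explicit formula for the morphism complexes of $\mc C^{\to}$ given earlier in the section.

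With these definitions the verifications are short and mechanical. For the differential, using that $F,G$ are dg-functors together with strict naturality of $\eta$, one computes
\[
d H(f)=(dF(f),\,(-1)^{|f|}(\eta_{b'}F(f)-G(f)\eta_{b}),\,dG(f))=(F(df),0,G(df))=H(df).
\]
For composition, the formula $(f_{1}',h',f_{2}')(f_{1},h,f_{2})=(f_{1}'f_{1},f_{2}'h+(-1)^{|f|}h'f_{1},f_{2}'f_{2})$ immediately collapses to $(F(g)F(f),0,G(g)G(f))=H(gf)$ when $h=h'=0$. Pointedness is preserved since $F$ and $G$ are pointed. Finally, $\pi_{j}\circ H=F,G$ hold by construction, giving the required right homotopy.

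The only subtle point, and the one I would think through first, is the correct interpretation of ``naturally homotopy equivalent.'' Under the strict-naturality reading above the proof is essentially immediate; if the author instead intends naturality only up to a specified homotopy, then the middle component $h(f)$ must be chosen nontrivially to absorb the failure of naturality, and one has to verify that this assignment defines a dg-functor (this amounts to an $A_{\infty}$-transformation computation, which I would expect to go through since a dg-natural-transformation-up-to-homotopy is precisely the data of a closed degree-$0$ element of $\mathrm{Fun}(\mc B,\mc C)^{\to}(F,G)$ whose projections agree with $F,G$). Either way the construction of $H$ is the main content, and \cref{simpler} guarantees that the resulting object of $\mc C^{\to}$ indeed lies in $P(\mc C)$.
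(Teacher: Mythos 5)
Your proof is correct and follows essentially the same route as the paper: both construct the right homotopy $H:\mc B\lr P(\mc C)$ explicitly by $b\mapsto (F(b)\overset{\eta_{b}}\to G(b))$ and $f\mapsto (Ff,0,Gf)$, using the strict-naturality reading of ``naturally homotopy equivalent'' so that the middle component can be taken to vanish. (One trivial remark: membership of $H(b)$ in $P(\mc C)$ follows directly from the definition of $P(\mc C)$ as the full subcategory spanned by homotopy equivalences, not from \cref{simpler}, which concerns morphisms of $\mc C^{\to}$.)
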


\begin{proof} A homotopy $\ph:\mc B\lr P(\mc C)$ realizing $F\overset{r}\sim G$ via the path object $P(\mc C)$ is the following data: for each $B\in \ob(\mc B)$, a homotopy equivalence $\ph_{B}:FB\to G$, and for each $f\in \mc B(B,B')^{n}$, a morphism $\ph(f)\in P(\mc C)(\ph_{B},\ph_{B'})^{n}$ of the form $(Ff, \ph(f)_{0},Gf)$ for some $\ph(f)_{0}\in \mc C(FB,GB')^{n-1}$. To be a dg-functor, these data must satisfy (i) identities: $\ph(\id_{B})_{0}=0$; (ii) composition: $\ph(g\circ f)_{0}=Gg \circ \ph(f)_{0}+(-1)^{n}\ph(g)_{0}\circ Ff$; and (iii) chain map: $f\mapsto \ph(f)_{0}$ is linear, and $\ph(df)_{0}=d\ph(f)_{0}+(-1)^{n}(\ph_{B}\circ Ff-Gf\circ \ph_{B})$. Any natural homotopy equivalence $\ph:F\lr G$ immediately yields a right homotopy by setting $\ph(f)_{0}=0$ for all $f$.
\end{proof}

\begin{proposition}\label{onthenose} Fix two diagrams $X,Y$ in $\dgcat^{J}$ for some shape $J$. If we specify dg-functors $\ph\coloneqq(\ph^{\a}:X^{\a}\lr Y^{\a})_{\a\in\ob(J)}$ which commute with $X$ and $Y$ only up to natural homotopy equivalence, then $\ph$ yields a legitimate map in $\Ho(\dgcat)^{J}$.
\end{proposition}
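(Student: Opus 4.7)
The plan is to reduce the naturality condition in $\Ho(\dgcat)^{J}$ to a pointwise right-homotopy statement, for which \cref{right} is precisely the right tool. Write $L:\dgcat\lr \Ho(\dgcat)$ for the localization; post-composing with $L$ turns $X$ and $Y$ into honest functors $LX, LY: J\lr \Ho(\dgcat)$, i.e. objects of $\Ho(\dgcat)^{J}$. A morphism $LX\lr LY$ in $\Ho(\dgcat)^{J}$ is the data, for each $\a\in\ob(J)$, of a morphism $\ps^{\a}:LX^{\a}\lr LY^{\a}$ in $\Ho(\dgcat)$, subject to $L(Y(f))\circ \ps^{\a}=\ps^{\b}\circ L(X(f))$ in $\Ho(\dgcat)$ for every $f:\a\lr \b$ in $J$. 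I take $\ps^{\a}\coloneqq L(\ph^{\a})$.

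For each such $f$, the two parallel dg-functors $Y(f)\circ \ph^{\a}$ and $\ph^{\b}\circ X(f)$ from $X^{\a}$ to $Y^{\b}$ are naturally homotopy equivalent by hypothesis, hence right homotopic in the Tabuada model structure by \cref{right}. I then invoke the standard fact that right homotopic morphisms become equal after localization: if $f\sim_{r}g:A\lr B$ via $H:A\lr P(B)$, then the two projections $\pi_{1},\pi_{2}:P(B)\lr B$ share the weak equivalence $i:B\lr P(B)$ as a common section, so $L(\pi_{1})=L(i)^{-1}=L(\pi_{2})$ in $\Ho(\dgcat)$; composing with $L(H)$ yields $L(f)=L(g)$. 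Applied to each naturality square of $\ph$, this gives $L(Y(f))\circ L(\ph^{\a})=L(\ph^{\b})\circ L(X(f))$, exactly the relation required. No separate check is needed for composable morphisms in $J$, because strict naturality in $\Ho(\dgcat)$ at a generating set automatically propagates under composition.

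The main obstacle, such as it is, is conceptual rather than technical: one must not confuse $\Ho(\dgcat)^{J}$ with $\Ho(\dgcat^{J})$. The statement asks only for a map in the former, which demands pointwise morphisms satisfying naturality after localization, and this is exactly what right homotopy buys us via \cref{right}. A map in $\Ho(\dgcat^{J})$ would instead require zig-zag replacements for the original squares in $\dgcat^{J}$ itself, a stronger claim whose comparison is handled separately by the functor $\Ho(\dgcat^{J})\lr \Ho(\dgcat)^{J}$ cited in the proof of \cref{commute2}. Once that distinction is kept in mind, the proposition is a direct application of \cref{right} combined with the path-object characterization of right homotopy.
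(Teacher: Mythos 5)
Your proof is correct and follows essentially the same route as the paper: apply \cref{right} to convert natural homotopy equivalence of each square into right homotopy, then use the fact that the localization identifies right-homotopic maps to get strict commutativity in $\Ho(\dgcat)$, hence a morphism in $\Ho(\dgcat)^{J}$. The only difference is that you prove the identification of right-homotopic maps directly via the path-object section $i$, where the paper simply cites \cite[5.10]{DS}.
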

\begin{proof}
By \cref{right}, squares in $\dgcat$ which commute up to natural homotopy equivalence also commute up to right homotopy, so by \cite[5.10]{DS} the image of such a square under the localization $\g:\dgcat\lr \Ho(\dgcat)$ commutes in $\Ho(\dgcat)$. So the association $\a\mapsto \g(\ph^{\a})$ defines a morphism $\g(X)\lr \g(Y)$ in $\Ho(\dgcat)^{J}$.
\end{proof}

The next result will often be used implicitly in what follows.\\
\begin{corollary}\label{abstractsimple}
Let $J$ be the free category on a quiver. Fix diagrams $X,Y$ in $\dgcat^{J}$. If there exist objectwise quasi-equivalences from $X$ to $Y$ such that the resulting squares commute up to natural homotopy equivalence, then $X\simeq Y$. In particular, $\holim X\simeq\holim Y$.
\end{corollary}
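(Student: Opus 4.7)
The plan is to assemble this directly from the three preceding results: \cref{right}, \cref{onthenose}, and \cref{commute2}, all of which do the substantive work. There is essentially nothing new to prove; the task is to check that the hypotheses of \cref{commute2} (with $\mc M = \dgcat$) are satisfied under the weaker commutativity assumption of natural homotopy equivalence, which is precisely the content of \cref{right}.

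First I would fix the Tabuada model structure on $\dgcat$ from \cref{tabuadathm}, in which weak equivalences are quasi-equivalences, so that the hypothesized objectwise quasi-equivalences $\vp^{\a}:X^{\a}\lr Y^{\a}$ are automatically objectwise weak equivalences. Next, for each arrow $f\in J$ in the generating quiver, the naturality square for $\vp$ commutes up to natural homotopy equivalence by hypothesis. By \cref{right}, natural homotopy equivalence implies right homotopy in the Tabuada model structure, so each naturality square commutes up to homotopy in $\dgcat$. This is exactly the hypothesis of \cref{commute2} applied to $\mc M = \dgcat$, yielding $X\simeq Y$ in $\dgcat^{J}$.

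Alternatively, and more explicitly, \cref{onthenose} already packages $\vp$ as a morphism $\g(X)\lr \g(Y)$ in $\Ho(\dgcat)^{J}$, where $\g:\dgcat\lr \Ho(\dgcat)$ is the localization. Because each $\vp^{\a}$ is a quasi-equivalence, its image $\g(\vp^{\a})$ is invertible in $\Ho(\dgcat)$, so this morphism is pointwise an isomorphism and hence an isomorphism in $\Ho(\dgcat)^{J}$. Invoking once more the fact (from \cite[3.1.2]{RV}, used in the proof of \cref{commute2}) that the natural functor $\Ho(\dgcat^{J})\lr \Ho(\dgcat)^{J}$ is the identity on objects and reflects isomorphisms, we conclude that $X\cong Y$ in $\Ho(\dgcat^{J})$, i.e.\ $X\simeq Y$ in $\dgcat^{J}$.

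For the final clause, I would note that the homotopy limit is a homotopy functor on $\dgcat^{J}$ (with respect to any Reedy structure on $J$ used to define $\holim$), so it sends the weak equivalence $X\simeq Y$ in $\dgcat^{J}$ to a quasi-equivalence $\holim X\simeq \holim Y$. No step here is an obstacle; the only conceptual point worth flagging is that the bridge between \emph{natural} homotopy equivalence (a concrete, categorical notion) and \emph{model-categorical} right homotopy is supplied by \cref{right}, which is what makes \cref{commute2} directly applicable in the dg-categorical setting.
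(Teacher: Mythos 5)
Your proposal is correct and follows essentially the same route as the paper, whose proof is simply ``immediate from \cref{commute2} and \cref{onthenose}''; your second, more explicit paragraph is exactly that argument spelled out, and your first paragraph via \cref{right} is just the same bridge made visible. Nothing to add.
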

\begin{proof}
Immediate from \cref{commute2} and \cref{onthenose}.
\end{proof}

\subsection{Explicit formulas}

Here we generalize \cite[4.1]{K} to our setting. We will use these results in the rest of the paper, in particular to construct homotopy equivalences within dg-categories arising as the homotopy limit of a diagram.\footnote{The proofs are the same as in \cite{K}, so we claim no originality for this subsection.} 

In the following, a \textbfit{graphic diagram} in $\dgcat$ will mean an object $X$ in $\dgcat^{F(Q(G))}$ for some finite graph $G$. Note that the definition of path extension $P(X)$ of $X$ carries an ambiguity over where to put $\pi_{1}$ versus $\pi_{2}$ on each edge in $G$. This ambiguity is removed by specifying an orientation on $G$. If we fix an orientation, then given an edge $e$ in $G$, we write $e_{1}$ and $e_{2}$ for the source and target vertices of $e$, respectively. We can now justify an explicit formula for the homotopy limit of $X$, which is a straightforward generalization of \cite[4.1.13]{K}.\\

\begin{lemma}\label{formula}
Fix a finite graph $G=(V,E)$ and a graphic diagram $X$ in $\dgcat^{F(Q(G))}$. Given an orientation of $G$, the following dg-category $\mc L$ presents $\holim X$: Objects are
\[\ob(\mc L)=\{M=((M_{v})_{v\in V},(m_{e})_{e\in E})\mid M_{v}\in \ob(X^{v}), m_{e}\in X^{e}(M^{e_{1}},M^{e_{2}})\, \tx{ are homotopy equivalences}\},\]
where $M^{e_{i}}\coloneqq X(e_{i},e)(M_{e_{i}})$ (and $e$ is understood from context). Morphisms are
\[\mc L(M,N)^{k}=\{\m=((\m_{v})_{v\in V},(\m_{e})_{e\in E})\mid \m_{v}\in X^{v}(M_{v},N_{v})^{k},\m_{e}\in X^{e}(M^{e_{1}},N^{e_{2}})^{k-1}\},\]
with differential $d\m=((d\m_{v})_{v\in V},(d\m_{e}+(-1)^{k}(n_{e}\m^{e_{1}}-\m^{e_{2}}m_{e}))_{e\in E})$ and composition $\n\m=((\n_{v}\m_{v})_{v\in V},(\n^{e_{2}}\m_{e}+(-1)^{k}\n_{e}\m^{e_{1}})_{e\in E}),$
where $\m^{e_{i}}\coloneqq X(e_{i},e)(\m_{e_{i}})$ (and $e$ is understood from context). Also, $\id_{M}=((\id_{M_{v}})_{v\in V},(0)_{e\in E})$.
\end{lemma}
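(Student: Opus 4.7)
The plan is to apply \cref{pathresult} to reduce the homotopy limit to an ordinary limit of the path extension, and then compute that ordinary limit by unwinding definitions pointwise. Fix an orientation of $G$, so that for each edge $e = \{e_1, e_2\}$ the path extension $P(X)$ inserts the cospan
\[X^{e_1} \xrightarrow{X(e_1, e)} X^e \xleftarrow{\pi_1} P(X^e) \xrightarrow{\pi_2} X^e \xleftarrow{X(e_2, e)} X^{e_2}.\]
Since all projections in $\dgcat$ are fibrations, \cref{pathresult} gives $\holim X \simeq \lim P(X)$, so it remains to identify $\lim P(X)$ with the dg-category $\mc L$ described in the statement.

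Next, I would unwind what a limit in $\dgcat$ computes on objects. A compatible family consists of an object $M_v \in \ob(X^v)$ for each vertex, together with an object $\widetilde m_e \in \ob(P(X^e))$ for each edge, such that the projection constraints $\pi_1(\widetilde m_e) = X(e_1, e)(M_{e_1})$ and $\pi_2(\widetilde m_e) = X(e_2, e)(M_{e_2})$ hold. Since an object of $P(X^e) \subset (X^e)^{\to}$ is by definition a homotopy equivalence in $X^e$, the datum $\widetilde m_e$ is precisely a homotopy equivalence $m_e : M^{e_1} \to M^{e_2}$, using the notation $M^{e_i} = X(e_i, e)(M_{e_i})$ from the statement. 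This recovers exactly $\ob(\mc L)$.

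Then I would carry out the analogous computation for morphism complexes. A degree-$k$ morphism in $\lim P(X)$ is a compatible family of morphisms: one $\mu_v \in X^v(M_v, N_v)^k$ at each vertex, and one $\widetilde \mu_e \in P(X^e)(\widetilde m_e, \widetilde n_e)^k$ at each edge. By the arrow-category formula recalled in the excerpt, $\widetilde \mu_e$ is a triple $(f_1, h, f_2)$ with $f_i \in X^e(M^{e_i}, N^{e_i})^k$ and $h \in X^e(M^{e_1}, N^{e_2})^{k-1}$. The projection constraints force $f_i = X(e_i, e)(\mu_{e_i}) = \mu^{e_i}$, leaving only $h$ as free data, which I rename $\mu_e$. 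Substituting into the formulas for $d$ and composition in $(X^e)^{\to}$ from the earlier example yields precisely the formulas for $\mc L$, while the identity morphism $\id_{\widetilde m_e} = (\id_{M^{e_1}}, 0, \id_{M^{e_2}})$ contributes the zero-component at each edge.

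This is essentially a bookkeeping argument rather than a conceptually difficult one; the only subtlety is keeping orientations, source/target conventions, and the sign conventions of the $(X^e)^{\to}$ formulas synchronized between the path extension and the statement of $\mc L$. The main care-point is ensuring that the limit in $\dgcat$ really is given pointwise by the naive construction on objects and hom-complexes, which follows because limits in $\dgcat$ are created by the forgetful functor to the underlying graded-set enriched category, and the dg-structure is inherited componentwise.
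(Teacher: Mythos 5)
Your proof is correct and follows the same route as the paper: invoke \cref{pathresult} to replace $\holim X$ by $\lim P(X)$, then compute that limit concretely using the definition of path objects in $\dgcat$ and the morphism-complex formulas for $(X^e)^{\to}$. The paper's own proof is a one-line version of exactly this argument; your pointwise unwinding of the compatibility constraints is the intended bookkeeping.
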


\begin{proof}
Limits of dg-categories are computed concretely, so the formulas given follow on the nose from \cref{pathresult} and the definition of path objects in $\dgcat$.
\end{proof}

Now fix $M,N\in \ob(\mc L)$ and $\m,\n\in \mc L(M,N)^{k}$.\\

\begin{lemma}[Homotopy Lemma, {\cite[4.1.15]{K}}]We have $\m\sim\n$ if and only if there exist $(\xi_{v}\in X^{v}(M_{v},N_{v})^{k-1})_{v\in V}$ such that $d\xi_{v}=\m_{v}-\n_{v}$ for all $v\in V$ and $(-1)^{k-1}(n_{e}\xi^{e_{1}}-\xi^{e_{2}}m_{e})\sim \m_{e}-\n_{e}$ for all $e\in E$.\\
\end{lemma}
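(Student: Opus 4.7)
The plan is to unpack the statement directly using the explicit description of the differential given in \cref{formula}. An element $\xi \in \mc L(M,N)^{k-1}$ is by definition a pair $((\xi_v)_{v\in V}, (\xi_e)_{e\in E})$ with $\xi_v \in X^v(M_v,N_v)^{k-1}$ and $\xi_e \in X^e(M^{e_1},N^{e_2})^{k-2}$, and from the formula we compute
\[
d\xi = \bigl((d\xi_v)_{v\in V},\,(d\xi_e + (-1)^{k-1}(n_e \xi^{e_1} - \xi^{e_2} m_e))_{e\in E}\bigr).
\]
So $\mu \sim \nu$ means that there exists such a $\xi$ whose vertex components satisfy $d\xi_v = \mu_v - \nu_v$ and whose edge components satisfy $d\xi_e = (\mu_e-\nu_e) - (-1)^{k-1}(n_e \xi^{e_1} - \xi^{e_2} m_e)$, i.e.\ exactly that $(-1)^{k-1}(n_e \xi^{e_1} - \xi^{e_2} m_e) \sim \mu_e-\nu_e$ in $X^e(M^{e_1},N^{e_2})$.

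For the forward direction, I would simply take the $\xi_v$ from a given homotopy $\xi$ witnessing $\mu\sim\nu$; the vertex identities are immediate, and the edge condition on $\xi_e$ is exactly the stated homology relation. For the reverse direction, suppose $\xi_v$ are given as in the statement. The condition $(-1)^{k-1}(n_e \xi^{e_1} - \xi^{e_2} m_e) \sim \mu_e - \nu_e$ on each edge means, by definition of $\sim$ inside the hom complex $X^e(M^{e_1}, N^{e_2})$, that there exists some $\xi_e \in X^e(M^{e_1},N^{e_2})^{k-2}$ with $d\xi_e = (\mu_e - \nu_e) - (-1)^{k-1}(n_e \xi^{e_1} - \xi^{e_2} m_e)$. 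I would then assemble $\xi = ((\xi_v)_v, (\xi_e)_e)$ and observe, via the same explicit formula, that $d\xi = \mu - \nu$, establishing $\mu \sim \nu$.

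Both directions are purely formal manipulations from the preceding lemma, so there is no real obstacle here; the only step requiring any care is matching the sign conventions in the differential of $\mc L$ with those of the hom complexes $X^e$, and being explicit about the fact that the edge components $\xi_e$ live in degree $k-2$ (one lower than the degree shift on edges in $\mc L(M,N)^k$ itself). Once these bookkeeping details are confirmed, the equivalence is a direct consequence of \cref{formula}.
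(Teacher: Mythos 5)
Your proof is correct: unpacking the differential formula of \cref{formula} in degree $k-1$ gives exactly the stated vertex and edge conditions, and the degree bookkeeping ($\xi_e$ in degree $k-2$, the sign $(-1)^{k-1}$) matches. The paper itself gives no argument here but defers to Karaba\c{s} \cite[4.1.15]{K}, and your direct verification is precisely that standard computation.
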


\begin{lemma}[{\cite[4.1.16]{K}}]
Suppose $d\m=0$. If for each $v\in V$, $\m_{v}$ is a homotopy equivalence with homotopy inverse $\n_{v}$, then there exist $(\n_{e}\in X^{e}(M^{e_{1}},N^{e_{2}})^{0})_{e\in E}$ such that $d((\n_{v})_{v\in V},(\n_{e})_{e\in E})=0$.\\
\end{lemma}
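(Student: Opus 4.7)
The plan is to treat the problem edge-by-edge, since by the formula in \cref{formula} the cocycle condition $d\n=0$ decouples into one equation at each edge. First I unpack the hypotheses: $d\m = 0$ says that $d\m_v = 0$ for every vertex and
\[
d\m_e \;=\; \m^{e_2} m_e - n_e \m^{e_1}
\]
for every edge, so $\m_e$ is an explicit chain homotopy witnessing $n_e \m^{e_1} \sim \m^{e_2} m_e$ in $X^e(M^{e_1}, N^{e_2})$. Since each $\m_v$ has a chosen homotopy inverse $\n_v \in Z^0 X^v(N_v, M_v)$, I fix primitives $h_v, h'_v \in X^v(\bullet,\bullet)^{-1}$ of $\id_{M_v} - \n_v \m_v$ and $\id_{N_v} - \m_v \n_v$, and push them through $X(e_i, e)$ to obtain chain homotopies $h^{e_i}, h'^{e_i}$ in $X^e$.

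With this notation, the target cocycle equation for a degree-zero closed $\n: N \to M$ in $\mc L$ reads, via \cref{formula},
\[
d\n_e \;=\; \n^{e_2} n_e - m_e \n^{e_1},
\]
and the task reduces to exhibiting, at each edge independently, an explicit primitive $\n_e$ of the right-hand side. The idea is the standard ``sandwich'' trick: use the identities $\m^{e_1} \n^{e_1} = \id - dh'^{e_1}$ and $\n^{e_2} \m^{e_2} = \id - dh^{e_2}$ to insert the identity on either side of $\m_e$, and then transport $\n^{e_2} n_e$ across $\m_e$ using $d\m_e = \m^{e_2} m_e - n_e \m^{e_1}$. Collecting the error terms gives a formula of the shape
\[
\n_e \;=\; \n^{e_2} \m_e \n^{e_1} \;\pm\; h^{e_2} m_e \n^{e_1} \;\pm\; \n^{e_2} n_e h'^{e_1},
\]
whose differential, computed by the Leibniz rule together with $dm_e = dn_e = d\n^{e_i} = 0$, is exactly $\n^{e_2} n_e - m_e \n^{e_1}$.

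Since this construction at each edge uses only the already-fixed vertex data $\{\n_v, h_v, h'_v\}$ and the given $\m_e$, the edgewise solutions glue with no compatibility to check, producing $\n = ((\n_v)_v,(\n_e)_e)$ with $d\n = 0$. Conceptually, the statement just says that edgewise homotopy equivalences assemble into a homotopy equivalence in $\mc L$, which is the global counterpart of the preceding Homotopy Lemma. The only real obstacle is sign bookkeeping: the differential in \cref{formula} carries $(-1)^k$ twists, and the auxiliary homotopies $h^{e_i}, h'^{e_i}, \m_e$ all sit in degree $-1$, so each use of the Leibniz rule contributes a sign that must be tracked carefully in verifying the final identity; this is routine but is where all the work actually is.
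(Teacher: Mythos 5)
Your proof is correct and is exactly the argument the paper defers to by citing \cite{K}: the closedness condition decouples edge by edge, and at each edge one produces an explicit primitive of $\nu^{e_2}n_e-m_e\nu^{e_1}$ from the three pieces of homotopy data $\mu_e$, $h^{e_2}$, $h'^{e_1}$; I checked that your three-term ansatz does close up (with $dh_v=\id-\nu_v\mu_v$, $dh'_v=\id-\mu_v\nu_v$ one gets $d\nu_e=\nu^{e_2}n_e-m_e\nu^{e_1}$ for $\nu_e=-\nu^{e_2}\mu_e\nu^{e_1}-h^{e_2}m_e\nu^{e_1}+\nu^{e_2}n_eh'^{e_1}$), so the $\pm$'s you leave open are genuinely resolvable. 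One remark: the lemma as printed places $\nu_e$ in $X^{e}(M^{e_1},N^{e_2})^{0}$, which is incompatible with $((\nu_v),(\nu_e))$ being a degree-zero morphism $N\to M$ under the conventions of the formula for $\mc L(M,N)^k$; your silent correction to $\nu_e\in X^{e}(N^{e_1},M^{e_2})^{-1}$ is the reading that makes the statement true.
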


\begin{lemma}[Equivalence Lemma, {\cite[4.1.17]{K}}] The morphism $\m$ is a homotopy equivalence if and only if $d\m=0$, $\m_{v}$ is a homotopy equivalence for each $v\in V$, and $n_{e}\m^{e_{1}}\sim \m^{e_{2}}m_{e}$ for all $e\in E$.\\
\end{lemma}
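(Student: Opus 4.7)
\emph{Forward direction.} From the explicit formulas in \cref{formula}, each vertex projection $\pi_v:\mc L\to X^v$ sending $\m\mapsto\m_v$ is a dg-functor: it respects composition ($(\n\m)_v=\n_v\m_v$) and the differential ($(d\m)_v=d\m_v$). Dg-functors preserve closed morphisms and homotopy equivalences, so if $\m$ is a homotopy equivalence in $\mc L$ then $d\m=0$ and every $\m_v$ is a homotopy equivalence in $X^v$. Reading off the edge component of the identity $d\m=0$ at degree $k=0$ yields $d\m_e=\m^{e_2}m_e-n_e\m^{e_1}$, so $n_e\m^{e_1}\sim\m^{e_2}m_e$ in $X^e$ for each $e\in E$.

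\emph{Backward direction.} I would adapt Karaba\c{s}'s pullback argument in {\cite[4.1.17]{K}} edge-wise. Assuming the three stated conditions, pick a homotopy inverse $\n_v$ of $\m_v$ in $X^v$ for each $v$. By the preceding Lemma there exist $(\n_e)_{e\in E}$ completing these into a closed $\n\in\mc L(N,M)^0$. By symmetry it suffices to establish $\n\m\sim_{\mc L}\id_M$. The Homotopy Lemma reduces this to producing $\xi_v\in X^v(M_v,M_v)^{-1}$ with $d\xi_v=\n_v\m_v-\id_{M_v}$ (which exist since $\n_v\m_v\sim\id_{M_v}$), together with the edge compatibility
\[\xi^{e_2}m_e-m_e\xi^{e_1}\;\sim\;(\n\m)_e=\n^{e_2}\m_e+\n_e\m^{e_1}\quad\text{in }X^e,\qquad e\in E.\]

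\emph{Main obstacle.} The hard part is the edge compatibility. Setting $A_e:=\xi^{e_2}m_e-m_e\xi^{e_1}-(\n\m)_e\in X^e(M^{e_1},M^{e_2})^{-1}$, a direct Leibniz calculation using $d\n=d\m=0$, $dm_e=0$ (because $m_e$ is a homotopy equivalence, hence closed), and $d\xi^{e_i}=\n^{e_i}\m^{e_i}-\id_{M^{e_i}}$, shows that $A_e$ is closed. The key step is upgrading closedness to exactness by exploiting the residual freedom in the construction: $\xi_v$ may be shifted by any closed $(-1)$-cocycle in $X^v(M_v,M_v)$, and $\n_e$ by any closed $(-1)$-cocycle in $X^e(N^{e_1},M^{e_2})$, without disturbing the identities already established. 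Since $m_e$ is a homotopy equivalence in $X^e$, pre- and post-composition with $m_e$ and with a chosen homotopy inverse $m_e^{\times}$ induce quasi-isomorphisms on the relevant morphism complexes, so the cohomology class $[A_e]\in H^{-1}X^e(M^{e_1},M^{e_2})$ is in the image of a vertex-level modification of the form $\eta^{e_2}m_e-m_e\eta^{e_1}$ with $\eta_{e_i}$ closed, and can therefore be cancelled by replacing $\xi_{e_i}$ with $\xi_{e_i}+\eta_{e_i}$. Because the construction is local in $e$, these adjustments can be made independently across all edges of $G$. The dual argument produces $\m\n\sim_{\mc L}\id_N$, which completes the proof.
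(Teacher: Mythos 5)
The paper does not actually prove this lemma: it is imported from \cite[4.1.17]{K} with the explicit disclaimer that Karaba\c{s}' proofs carry over verbatim to the many-edge setting, so your reconstruction stands or falls on its own. Your forward direction is correct. Your backward direction has the right skeleton (build a closed $\nu$ via the preceding lemma, then verify the hypotheses of the Homotopy Lemma for $\nu\mu\sim\id_M$), you correctly identify the edge compatibility as the crux, and your verification that $A_e$ is closed is sound. But the mechanism you use to kill $[A_e]$ does not work. You claim that $[A_e]\in H^{-1}X^e(M^{e_1},M^{e_2})$ lies in the image of $(\eta_{e_1},\eta_{e_2})\mapsto[\eta^{e_2}m_e-m_e\eta^{e_1}]$ where the $\eta_{e_i}$ are closed degree $-1$ endomorphisms in the \emph{vertex} categories $X^{e_i}$. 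The justification you offer --- that composition with the homotopy equivalence $m_e$ is a quasi-isomorphism of hom-complexes --- addresses the wrong map: it says nothing about surjectivity of the map from $H^{-1}X^{e_1}(M_{e_1},M_{e_1})\oplus H^{-1}X^{e_2}(M_{e_2},M_{e_2})$, pushed through the (not necessarily full or quasi-fully-faithful) dg-functors $X(e_i,e)$, onto $H^{-1}X^e(M^{e_1},M^{e_2})$. If the vertex hom-complexes are concentrated in degree $0$ while $H^{-1}X^e(M^{e_1},M^{e_2})\neq0$, your adjustment is identically zero and cannot cancel a nonzero class $[A_e]$ arising from a bad choice of $\nu_e$. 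A secondary problem: replacing $\xi_{e_i}$ perturbs \emph{every} edge incident to that vertex, so these adjustments are not independent across the edges of $G$ as you assert.

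The repair is the other piece of ``residual freedom'' you name but do not use: adjust $\nu_e$. Replacing $\nu_e$ by $\nu_e+\zeta_e$ with $\zeta_e$ closed preserves $d\nu=0$ and changes $A_e$ by $-\zeta_e\mu^{e_1}$. Since $\mu_{e_1}$ is a homotopy equivalence and $X(e_1,e)$ is a dg-functor, $\mu^{e_1}$ is a homotopy equivalence in $X^e$, so $-\circ\,\mu^{e_1}:X^e(N^{e_1},M^{e_2})\to X^e(M^{e_1},M^{e_2})$ is a quasi-isomorphism of complexes; hence there is a closed $\zeta_e$ with $[\zeta_e\mu^{e_1}]=[A_e]$, and this adjustment is genuinely local in $e$. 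With that substitution your argument closes, and the symmetric argument yields $\mu\nu\sim\id_N$.
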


\begin{proposition}
Suppose only the data $(\m_{v}\in X^{v}(M_{v},N_{v})^{0})_{v\in V}$ is given, where each $\m_{v}$ is a homotopy equivalence. If in addition, $n_{e}\m^{e_{1}}\sim \m^{e_{2}}m_{e}$ for all $e\in E$, then there exist 
$(\m_{e}\in X^{e}(M^{e_{1}},N^{e_{2}})^{-1})_{e\in E}$ such that $\m\coloneqq((\m_{v})_{v\in V},(\m_{e})_{e\in E})\in \mc L(M,N)^{0}$ is a homotopy equivalence.\footnote{Compare to \cref{simpler} and \cref{abstractsimple}.}
\end{proposition}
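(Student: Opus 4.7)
The plan is to directly apply the Equivalence Lemma stated just above, by choosing each $\mu_e$ to be a witness to the exactness of $\mu^{e_2} m_e - n_e \mu^{e_1}$. Explicitly, since $\mu_v \in Z^0 X^v(M_v,N_v)$ (as homotopy equivalences live in $Z^0$ by the footnote definition), the vertex components of $d\mu$ already vanish. The only condition remaining for $d\mu = 0$ is, using the formula from \cref{formula} at $k=0$, that for each edge $e$,
\begin{equation*}
d\mu_e + n_e \mu^{e_1} - \mu^{e_2} m_e = 0,
\end{equation*}
i.e.\ $d\mu_e = \mu^{e_2} m_e - n_e \mu^{e_1}$.

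The hypothesis $n_e \mu^{e_1} \sim \mu^{e_2} m_e$ is precisely the statement that the right-hand side is exact in $X^e(M^{e_1},N^{e_2})^0$, so some primitive $\mu_e \in X^e(M^{e_1},N^{e_2})^{-1}$ exists; fix one such choice for each edge. With this choice, $\mu \coloneqq ((\mu_v)_{v \in V},(\mu_e)_{e \in E})$ is a closed degree-$0$ morphism in $\mathcal L(M,N)$.

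The three hypotheses of the Equivalence Lemma are now in hand: $d\mu = 0$ by construction, each $\mu_v$ is a homotopy equivalence by assumption, and $n_e \mu^{e_1} \sim \mu^{e_2} m_e$ holds by the standing hypothesis. Applying the Equivalence Lemma concludes that $\mu$ is a homotopy equivalence in $\mc L$. There is no real obstacle: the construction of the $\mu_e$ is merely a choice of homotopy witnessing the given relation, and the verification is an immediate invocation of the previous lemma. This is the natural analogue of \cref{simpler} (which handles the case of a single arrow) for general graphic diagrams, and the parallel with \cref{abstractsimple} makes the statement expected.
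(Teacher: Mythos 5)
Your proposal is correct and follows exactly the paper's argument: choose each $\m_{e}$ to be a primitive of $\m^{e_{2}}m_{e}-n_{e}\m^{e_{1}}$ (which exists by the hypothesis $n_{e}\m^{e_{1}}\sim \m^{e_{2}}m_{e}$), observe that the differential formula then gives $d\m=0$, and conclude by the Equivalence Lemma. The sign bookkeeping at $k=0$ is also handled correctly.
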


\begin{proof}
Set $\m_{e}$ so that $n_{e}\m^{e_{1}}- \m^{e_{2}}m_{e}=-d\m_{e}$. Then $d\m=0$ by construction, and the result follows from the Equivalence Lemma.
\end{proof}

Therefore, to prove that two objects in $\mc L$ are homotopy equivalent, it suffices to supply homotopy equivalences $(\m_{v}\in X^{v}(M_{v},N_{v})^{0})_{v\in V}$ and check that $n_{e}\m^{e_{1}}\sim \m^{e_{2}}m_{e}$ for each $e\in E$.\\

\begin{proposition}\label{simplify}
Let $G$ be a finite graph and $D$ a diagram in $\dgcat^{F(Q(G))}$. Fix a tree $T\sube G$, and assume that for each edge in $T$, the corresponding cospan (coming from $Q(T)\sube Q(G)$) in $D$ has at least one fibration among its two arrows. Let $\mc L \coloneqq \holim D$ be our model for the homotopy limit of a graphic diagram of dg-categories. Define the dg-subcategory $\mc L'\sube \mc L$ given by the condition that, for each edge $f=\{u,w\}$ in $T$, objects $M$ satisfy $M^{u}=M^{w}$ and $m_{f}=\id$, and morphisms $\m$ satisfy $\m^{u}=\m^{w}$ and $\m_{f}=0$. The inclusion of this subcategory is a quasi-equivalence.
\end{proposition}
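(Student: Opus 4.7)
The plan is to identify $\mc L'$ with a strict-limit model for $\holim D$ and then recognise the inclusion $\mc L'\hookrightarrow \mc L$ as the canonical comparison map supplied by \cref{pathresult}. First I would unwind \cref{formula} to see that $\mc L'$ is canonically isomorphic to $\lim \widetilde{P}(D)$, where $\widetilde{P}(D)$ denotes the partial path extension of $D$ that inserts $P(X^e)$ at each $e\notin T$ and keeps each $T$-edge as a bare cospan. On objects, the conditions $m_f=\id$ and $M^u=M^w$ at a $T$-edge $f=\{u,w\}$ are precisely the strict-equaliser datum $X(u,f)(M_u)=X(w,f)(M_w)$; on morphisms, $\m^u=\m^w$ and $\m_f=0$ are precisely the strict-limit constraints. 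The differential and composition formulas of \cref{formula} restrict cleanly because every $T$-edge term either vanishes outright (when it contains a factor $\m_f$ or $\n_f$) or collapses to a trivial identity (when it contains only the object data $m_f=n_f=\id$ together with the matched vertex components).

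Next I would apply \cref{pathresult} in the sharper form recorded in the footnote to \cref{patheq2}, which permits skipping the path extension along an acyclic subgraph $T$ provided a fibration hypothesis holds. The hypothesis of the present proposition supplies exactly this input, so $\lim \widetilde{P}(D)\simeq \holim D$; combined with $\mc L=\lim P(D)\simeq \holim D$, this already yields $\mc L'\simeq \mc L$ in the abstract.

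It remains to see that the inclusion realises this quasi-equivalence. The only difference between $P(D)$ and $\widetilde{P}(D)$ is at each $T$-edge $f$, where we may insert the canonical section $\iota:X^f\hookrightarrow P(X^f)$, $A\mapsto \id_A$, which is a quasi-equivalence. Taking limits, this induces a map $\lim \widetilde{P}(D)\to \lim P(D)$ which sends a strict-equaliser tuple at $f$ with common value $A\in X^f$ to the same tuple equipped with $m_f=\id_A$. Under the identifications above this is exactly the inclusion $\mc L'\hookrightarrow \mc L$, and since both sides compute $\holim D$ while the induced map comes from an objectwise quasi-equivalence of the two presenting diagrams, it is a quasi-equivalence; the same bookkeeping on hom complexes gives quasi-fully-faithfulness.

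The main obstacle I expect is precisely this bookkeeping: $\widetilde{P}(D)$ and $P(D)$ live over slightly different subdivided quivers, so one must check that the induced map on strict limits is the inclusion of $\mc L'$ on the nose rather than a merely quasi-equivalent avatar of it. Keeping track of the signs and constraints from the composition and differential formulas of \cref{formula}, at both the object and morphism levels, is where I expect the translation to be most delicate.
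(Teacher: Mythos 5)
Your proposal is correct and follows essentially the same route as the paper: the paper's proof invokes \cref{steps}, whose own proof passes through exactly the partial path extension $\widetilde{P}(D)$ you use, and then identifies $\mc L'$ with the resulting strict limit. If anything, you are more explicit than the paper (which only sketches an isomorphism $\mc L'\cong \holim D'$) about why the \emph{inclusion}, rather than some abstract zigzag, realizes the quasi-equivalence, via the section $X^{f}\to P(X^{f})$ at each $T$-edge.

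One caveat, which you share with the paper: the hypothesis ``at least one fibration among the two arrows of each cospan'' does not literally supply the input to the footnote of \cref{patheq2}, which asks for a root $\rho$ of $T$ such that the \emph{child-side} arrow of every edge of $T$ is a fibration. For example, on a path $a-b-c$ in which only the $b$-side arrow of each cospan is a fibration, no such root exists, so ``supplies exactly this input'' overstates the match. This is harmless in the paper's applications (where both legs $\rho_{1},\rho_{2}$ of every tree cospan are fibrations), and in general it can be repaired by collapsing $T$ one edge at a time, using that the projection from a pullback onto one corner is a fibration whenever the opposite leg of the cospan is; but as written this step deserves a remark.
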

\begin{proof}
We use the notation of \cref{steps}. We have $\mc L=\holim \mc D'\simeq \holim D$ from \cref{steps}, so it suffices to identify $\mc L'$ with $\holim  D'$. Below is a sketch of an \textit{isomorphism} between these two categories.

The diagram $ D'$ is obtained by collapsing $T\sube G$ to a point, at which we place $\lim \mc D|_{T}$. The category $\lim  D|_{T}$ is precisely the subcategory of $\holim  D|_{T}$ obtained using the restrictions defining $\mc L'$. Then $\holim  D'$ is obtained by gluing $\lim  D|_{T}$ to all other vertices of $G$ using path objects. This is the same as including all other information in $\mc L'$, namely objects and morphisms coming from $G\sm T$.
\end{proof}

%%%%%%%%%%%LOCAL MOVES%%%%%%%%%%%
\subsection{Local moves}\label{localmoves}

We apply the results above to arboreal diagrams, meaning diagrams of dg-categories arising as the microlocal sheaf categories associated to arboreal singularities in the skeleta of Weinstein surfaces. These diagrams arise from the local picture presented in \cite{N1}. 

The categories and functors used in arboreal diagrams are very simple in this dimension. Fix once and for all a strongly pretriangulated dg-category $\mc A\in \ob(\dgcat_{k}^{(n)})$, which the reader, if desired may take to be $\mc C_{\tx{dg}}^{(n)}(k)$. In this setting, the mapping cone $C:\mc A^{\to}\lr \mc A$ is a dg-functor. The quasi-equivalence $T:\mc A^{\to}\lr \mc A^{\to}$ defined by $ {T(X_{1}\overset{x}\to X_{2})\coloneqq(X_{2}\to C(x))}$ has quasi-inverse $T'(X_{1}\overset{x}\to X_{2})\coloneqq(C(x)[-1]\to X_{1})$ and satisfies the natural homotopy equivalence $C\circ T \simeq [1]\circ \rho_{1}$.

Given a dg-functor $f:\mc C\lr \mc B$, we write $\mc C_{f}$ for the full dg-subcategory of $\mc C$ on the objects $X$ in $\mc C$ which satisfy $f(X)\simeq 0$. By \cref{formula} we have
\be\label{edge}\mc C_{f}\simeq \holim(\mc C\overset{f}\lr \mc B\longleftarrow0).\ee
We denote the canonical quasi-equivalence $\mc A\lr P(\mc A)=\mc A_{C}^{\to}$ by $j_{C}$, and we define quasi-equivalences $j_{\rh_{i}}:\mc A^{\to}\lr \mc A_{\rh_{i}}$ by $j_{\rh_{1}}(X)\coloneqq(0\to X)$ and $j_{\rh_{2}}(X)\coloneqq(X\to 0)$.\\

\begin{lemma} [Rotation Lemma]\label{rotationlemma}
Let $X$ be a graphic diagram in $\dgcat$ which at a trivalent vertex looks like the following, for some $i,j,k\in \zb$, with modification $X'$.
% https://q.uiver.app/#q=WzAsMTAsWzIsMSwiXFxtYXRoY2FsIEFfMiJdLFszLDEsIlxcbWF0aGNhbCBBXzEiXSxbMSwwLCJcXG1hdGhjYWwgQV8xIl0sWzEsMiwiXFxtYXRoY2FsIEFfMSJdLFswLDEsIlg6Il0sWzUsMSwiWCc6Il0sWzcsMSwiXFxtYXRoY2FsIEFfMiJdLFs4LDEsIlxcbWF0aGNhbCBBXzEiXSxbNiwyLCJcXG1hdGhjYWwgQV8xIl0sWzYsMCwiXFxtYXRoY2FsIEFfMSJdLFswLDEsIltrXVxcY2lyYyBDIl0sWzAsMiwiW2ldXFxjaXJjIFxccmhvXzEiLDJdLFswLDMsIltqXVxcY2lyYyBcXHJob18yIl0sWzYsOSwiW2ktMV1cXGNpcmMgQyIsMl0sWzYsOCwiW2pdXFxjaXJjIFxccmhvXzEiXSxbNiw3LCJba11cXGNpcmMgXFxyaG9fMiJdXQ==
\[\begin{tikzcd}
	& {\mc A} &&&&& {\mc A} \\
	{X:} && {\mc A^{\to}} & {\mc A} && {X':} && {\mc A^{\to}} & {\mc A} \\
	& {\mc A} &&&&& {\mc A}
	\arrow["{[k]\circ C}", from=2-3, to=2-4]
	\arrow["{[i]\circ \rho_1}"', from=2-3, to=1-2]
	\arrow["{[j]\circ \rho_2}", from=2-3, to=3-2]
	\arrow["{[i-1]\circ C}"', from=2-8, to=1-7]
	\arrow["{[j]\circ \rho_1}", from=2-8, to=3-7]
	\arrow["{[k]\circ \rho_2}", from=2-8, to=2-9]
\end{tikzcd}\]
Then $X\simeq X'$, and in particular $\holim X \simeq \holim X'$.
\end{lemma}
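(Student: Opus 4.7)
The plan is to exhibit $X$ and $X'$ as equivalent in $\dgcat^{F(Q(G))}$ via \cref{abstractsimple}, by specifying objectwise quasi-equivalences $\vp\colon X\to X'$ that commute with the diagram structure only up to natural homotopy equivalence. Since $X$ and $X'$ differ only on the three arrows out of the central trivalent vertex $v$, I set $\vp^{a}=\id$ for every vertex $a\neq v$, and at $v$ (where $X^{v}=X'^{v}=\mc A^{\to}$) I take $\vp^{v}\coloneqq T$, the quasi-equivalence recalled just above the lemma. Every square of the diagram that does not involve an arrow out of $v$ then commutes on the nose, so only the three squares at $v$ require attention.

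For those three squares the three identities available for $T$ do all the work: the defining formula $T(X_{1}\xrightarrow{x}X_{2})=(X_{2}\to C(x))$ gives on the nose $\rho_{1}\circ T=\rho_{2}$ and $\rho_{2}\circ T=C$, while the stated natural homotopy equivalence reads $C\circ T\simeq [1]\circ \rho_{1}$. Precomposing each arrow of $X'$ with $T$ therefore yields
\[
([j]\circ\rho_{1})\circ T=[j]\circ\rho_{2},\qquad
([k]\circ\rho_{2})\circ T=[k]\circ C,\qquad
([i-1]\circ C)\circ T\simeq [i-1]\circ[1]\circ\rho_{1}=[i]\circ\rho_{1},
\]
which are exactly the three arrows of $X$ out of $v$. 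Hence each of the three squares at $v$ commutes up to natural homotopy equivalence, and $\vp$ is objectwise a quasi-equivalence. Invoking \cref{abstractsimple} gives $X\simeq X'$ in $\dgcat^{F(Q(G))}$, from which $\holim X\simeq \holim X'$ follows immediately.

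There is no substantial obstacle here: the argument is pure bookkeeping with the three identities for $T$, and the only subtlety worth calling out is conceptual rather than technical, namely that the drop $i\mapsto i-1$ on the up arrow in $X'$ is precisely the compensation needed for the shift by $[1]$ appearing in $C\circ T\simeq [1]\circ\rho_{1}$; the other two arrows rotate without any shift because the corresponding identities $\rho_{1}\circ T=\rho_{2}$ and $\rho_{2}\circ T=C$ are strict.
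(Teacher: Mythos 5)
Your proof is correct and is essentially the paper's own argument: the paper likewise applies the quasi-equivalence $T$ at the central vertex with identities elsewhere and invokes \cref{abstractsimple}. You have simply written out the three-square verification ($\rho_{1}\circ T=\rho_{2}$, $\rho_{2}\circ T=C$, $C\circ T\simeq[1]\circ\rho_{1}$) that the paper leaves implicit.
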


\begin{proof}
By \cref{abstractsimple}, the quasi-equivalence $T:\mc A^{\to}\lr \mc A^{\to}$ defines a weak equivalence $X\lr X'$ with identities elsewhere.
\end{proof}

\begin{lemma}[Shift Lemma]
Fix $n\in \zb$. Let $X$ be a graphic diagram in $\dgcat$ which at a trivalent vertex looks as shown below, and let $X'$ be the shown local modification.
% https://q.uiver.app/#q=WzAsMTAsWzIsMSwiXFxtYXRoY2FsIEFfMiJdLFszLDEsIlxcbWF0aGNhbCBBXzEiXSxbMSwwLCJcXG1hdGhjYWwgQV8xIl0sWzEsMiwiXFxtYXRoY2FsIEFfMSJdLFswLDEsIlg6Il0sWzUsMSwiWCc6Il0sWzcsMSwiXFxtYXRoY2FsIEFfMiJdLFs4LDEsIlxcbWF0aGNhbCBBXzEiXSxbNiwyLCJcXG1hdGhjYWwgQV8xIl0sWzYsMCwiXFxtYXRoY2FsIEFfMSJdLFswLDEsIltrXVxcY2lyYyBDIl0sWzAsMiwiW2ldXFxjaXJjIFxccmhvXzEiLDJdLFswLDMsIltqXVxcY2lyYyBcXHJob18yIl0sWzYsOSwiW2ktMV1cXGNpcmMgQyIsMl0sWzYsOCwiW2pdXFxjaXJjIFxccmhvXzEiXSxbNiw3LCJba11cXGNpcmMgXFxyaG9fMiJdXQ==
\[\begin{tikzcd}
	& {\mc A} &&&&& {\mc A} \\
	{X:} && {\mc A^{\to}} & {\mc A} && {X':} && {\mc A^{\to}} & {\mc A} \\
	& {\mc A} &&&&& {\mc A}
	\arrow["{[k]\circ C}", from=2-3, to=2-4]
	\arrow["{ [i] \circ \rho_1}"', from=2-3, to=1-2]
	\arrow["{[j]\circ \rho_2}", from=2-3, to=3-2]
	\arrow["{[i+n]\circ \rho_{1}}"', from=2-8, to=1-7]
	\arrow["{[j+n]\circ \rho_{2}}", from=2-8, to=3-7]
	\arrow["{[k+n]\circ C}", from=2-8, to=2-9]
\end{tikzcd}\]
Then $X\simeq X'$, and in particular $\holim X \simeq \holim X'$.
\end{lemma}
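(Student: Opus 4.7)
The natural approach is to mimic the proof of the Rotation Lemma: construct a levelwise quasi-equivalence of diagrams concentrated at the central $\mc A^{\to}$ vertex, and then invoke \cref{abstractsimple}. Specifically, I would set $\phi_{\mc A^{\to}} \coloneqq [-n]: \mc A^{\to} \to \mc A^{\to}$, the self-equivalence sending $(X_{1} \overset{x}\to X_{2})$ to $(X_{1}[-n] \overset{x[-n]}\to X_{2}[-n])$, and $\phi_{v} \coloneqq \mathrm{id}$ at every other vertex of $X$. Since $\mc A$ is strongly pretriangulated, $[-n]$ is a quasi-equivalence, so $\phi$ is levelwise a quasi-equivalence.

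Next I would check that each naturality square commutes up to natural homotopy equivalence. For arrows of the shape quiver not incident to the central vertex, the diagrams $X$ and $X'$ agree and $\phi$ is the identity at both endpoints, so the square commutes trivially. For the three outgoing arrows from the central vertex, this amounts to verifying $([i+n]\circ \rho_{1}) \circ [-n] = [i]\circ \rho_{1}$, $([j+n]\circ \rho_{2}) \circ [-n] = [j]\circ \rho_{2}$, and $([k+n]\circ C) \circ [-n] \simeq [k]\circ C$. The first two are strict equalities because $\rho_{i}$ strictly intertwines the shift on $\mc A^{\to}$ with the shift on $\mc A$; the third is a natural isomorphism coming from the identification $C(x[-n]) \cong C(x)[-n]$. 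Then \cref{abstractsimple} yields $X \simeq X'$, and in particular $\holim X \simeq \holim X'$.

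I do not anticipate any serious obstacle. The content of the lemma is precisely that the three outgoing shifts by $[n]$ can be absorbed into a single precomposition by $[-n]$ at the source, which works because each of $\rho_{1}, \rho_{2}, C$ is shift-equivariant. The only point requiring care is the compatibility $C \circ [-n] \simeq [-n] \circ C$, which is a standard property of the cone in a strongly pretriangulated dg-category.
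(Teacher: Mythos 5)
Your proof is correct, but it takes a different route from the paper. The paper's proof is a one-liner: apply the Rotation Lemma (\cref{rotationlemma}) $3n$ times, since three successive rotations return the labels $(\rho_{1},\rho_{2},C)$ to their original positions while decrementing each shift by $1$ (in effect exploiting that the cube of the rotation functor $T$ is the shift). You instead unroll this into a single direct levelwise equivalence, putting the shift autoequivalence $[-n]$ of $\mc A^{\to}$ at the central vertex and identities elsewhere, then checking the three naturality squares up to natural homotopy equivalence ($\rho_{i}$ strictly intertwines the shifts; $C$ does so up to the canonical identification $C(x[-n])\cong C(x)[-n]$) before invoking \cref{abstractsimple}. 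Both arguments rest on the same corollary in the end; yours is self-contained and makes the mechanism transparent, while the paper's is shorter given that the Rotation Lemma is already available and keeps the bookkeeping of signs and identifications entirely inside that lemma. One small point worth stating explicitly in your write-up: the naturality condition you need is $X'(a)\circ\phi_{v}=\phi_{w}\circ X(a)$ with the central vertex as the \emph{source} of all three arrows, which is indeed the direction in which your identities $([i+n]\circ\rho_{1})\circ[-n]=[i]\circ\rho_{1}$, etc., are phrased.
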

\begin{proof}
Apply \cref{rotationlemma} $3n$ times.
\end{proof}

We finish with a local move which corresponds geometrically to a Weinstein homotopy which removes from the skeleton a univalent vertex and the corresponding $A_{2}$-singularity. This will be used in constructing arboreal Moves 0 and 1, to come in \cref{moves}. \\

\begin{lemma}\label{move1}
Let $X$ be a graphic diagram in $\dgcat$ with the following form at a trivalent vertex, and let $X'$ be the shown local modification.
% https://q.uiver.app/#q=WzAsMTIsWzEsMSwiXFxzcXVhcmUiXSxbMiwwLCJcXG1hdGhjYWwgQV8xIl0sWzIsMiwiXFxtYXRoY2FsIEFfMSJdLFszLDEsIlxcbWF0aGNhbCBBXzIiXSxbNCwxLCJcXG1hdGhjYWwgQV8xIl0sWzUsMSwiMCJdLFswLDEsIlg6Il0sWzcsMSwiWCc6Il0sWzgsMSwiXFxzcXVhcmUiXSxbOSwwLCJcXG1hdGhjYWwgQV8xIl0sWzksMiwiXFxtYXRoY2FsIEFfMSJdLFsxMCwxLCJcXG1hdGhjYWwgQV8xIl0sWzAsMV0sWzAsMl0sWzMsMSwiXFxyaG9fMSIsMl0sWzMsMiwiXFxyaG9fMiJdLFszLDQsIkMiXSxbNSw0XSxbOCw5XSxbOCwxMF0sWzExLDksIlxcbWF0aHJte2lkfSIsMl0sWzExLDEwLCJcXG1hdGhybXtpZH0iXV0=
\[\begin{tikzcd}
	&& {\mc A} &&&&&&& {\mc A} \\
	{X:\!\!\!\!\!\!\!\!} & \square && {\mc A^{\to}} & {\mc A} & 0 && {\!\!\!\!\!\!\!\!X':\!\!\!\!\!\!\!\!} & {\square} && {\mc A} \\
	&& {\mc A} &&&&&&& {\mc A}
	\arrow[from=2-2, to=1-3]
	\arrow[from=2-2, to=3-3]
	\arrow["{[j]\circ \rho_1}"', from=2-4, to=1-3]
	\arrow["{[k]\circ\rho_2}", from=2-4, to=3-3]
	\arrow["{[i]\circ C}", from=2-4, to=2-5]
	\arrow[from=2-6, to=2-5]
	\arrow[from=2-9, to=1-10]
	\arrow[from=2-9, to=3-10]
	\arrow["{[j]}"', from=2-11, to=1-10]
	\arrow["{[k]}", from=2-11, to=3-10]
\end{tikzcd}\]
(Here the square indicates the rest of $X$.) Then $\holim X'\simeq \holim X$. The same result holds with the tuple $(C,\rh_{1},\rh_{2})$ replaced by $(\rh_{1},\rh_{2},C)$, and similarly with $(\rh_{2},C,\rh_{1})$ except for the change $j\mapsto j+1$ in $X'$.

\end{lemma}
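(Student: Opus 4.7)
The essential idea is that the dangling piece $\mc A^{\to}\overset{[i]\circ C}\lr \mc A \ll 0$ in $X$ cuts out the full subcategory of $\mc A^{\to}$ on objects $M_{v_0}$ with $C(M_{v_0})\simeq 0$, i.e.\ the path object $P(\mc A)=\mc A^{\to}_{C}$. Via the quasi-equivalence $j_{C}:\mc A\to P(\mc A)$, this is in turn replaceable by $\mc A$, and the residual arrows to the top and bottom $\mc A$'s collapse to pure shifts. This reduction is the content of the proof.

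Concretely, the plan is a three-step reduction. First, I collapse the cospan $\mc A^{\to}\overset{[i]\circ C}\lr \mc A \ll 0$ to its limit using \cref{steps} (or \cref{simplify}); by \cref{edge} this limit is $\mc A^{\to}_{C}=P(\mc A)$. The result is an intermediate diagram $X''$ agreeing with $X$ on the outer square but having $P(\mc A)$ at the trivalent vertex, with the remaining two arrows given by the restrictions of $[j]\circ \rho_{1}$ and $[k]\circ \rho_{2}$ to $P(\mc A)$. Second, I use $j_{C}:\mc A\to P(\mc A)$ as a quasi-equivalence. Since $\rho_{1}\circ j_{C}=\id_{\mc A}=\rho_{2}\circ j_{C}$ on the nose, the composites to the top and bottom $\mc A$'s become simply $[j]$ and $[k]$. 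By \cref{abstractsimple} this exhibits a weak equivalence of diagrams $X''\to X'$, yielding $\holim X\simeq \holim X''\simeq \holim X'$.

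For the variants $(\rho_{1},\rho_{2},C)$ and $(\rho_{2},C,\rho_{1})$, the same strategy applies with $j_{\rho_{1}}$ and $j_{\rho_{2}}$ in place of $j_{C}$. In the first variant the collapsed limit is $\mc A^{\to}_{\rho_{1}}$, and one computes $\rho_{2}\circ j_{\rho_{1}}=\id$ and $C\circ j_{\rho_{1}}=\id$, so both residual functors become pure shifts $[j]$ and $[k]$ as claimed. In the second variant the limit is $\mc A^{\to}_{\rho_{2}}$, and one has $\rho_{1}\circ j_{\rho_{2}}=\id$ together with a natural homotopy equivalence $C\circ j_{\rho_{2}}\simeq [1]$; the latter is exactly the source of the prescribed shift $j\mapsto j+1$ on the top arrow, while the bottom arrow remains $[k]$.

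The principal obstacle is verifying the fibrancy hypothesis for \cref{steps}/\cref{simplify} at the collapsed edge, since neither $[i]\circ C:\mc A^{\to}\to \mc A$ nor the zero functor $0\to \mc A$ is in general a Tabuada fibration. I would address this by first path-extending at this single edge, replacing the middle $\mc A$ by $P(\mc A)$ so that the canonical projections $P(\mc A)\to \mc A$ are fibrations, then rooting the tree $T$ at the univalent vertex and applying \cref{simplify} along the now-fibration to collapse. Alternatively, one can bypass \cref{simplify} and argue from the explicit formula \cref{formula}: the tuple component $(M_{e_{3}},m_{e_{3}})$ coming from the collapsed edge is determined up to contractible choice once $M_{v_0}\in P(\mc A)$, and the Equivalence Lemma then identifies $\holim X$ with $\holim X'$ directly.
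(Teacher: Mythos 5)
Your proof is correct and follows essentially the same route as the paper: collapse the dangling cospan to $\mc A^{\to}_{C}=P(\mc A)$ via \cref{edge} and then use $j_{C}$ (with identities elsewhere) as the quasi-equivalence to $X'$. The only divergences are minor: the paper dispatches the two variants with the Rotation Lemma where you instead compute $C\circ j_{\rho_{1}}=\id$ and $C\circ j_{\rho_{2}}\simeq[1]$ directly, and you are more explicit than the paper about the fibrancy hypothesis needed to collapse the dangling edge in steps (path-extending there first, or appealing to \cref{formula}); both points are sound.
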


\begin{proof}
The general situation follows from the special case $i=j=k=0$, which we assume. From \cref{edge} we have
% https://q.uiver.app/#q=WzAsNyxbMywxLCJcXHNxdWFyZSJdLFs0LDAsIlxcbWF0aGNhbCBBXzEiXSxbNCwyLCJcXG1hdGhjYWwgQV8xIl0sWzUsMSwiXFxtYXRoY2FsIEFfMl5DIl0sWzIsMSwiXFxtYXRocm17aG9saW19Il0sWzEsMSwiXFxjb25nIl0sWzAsMSwiXFxvcGVyYXRvcm5hbWV7aG9saW19IFgiXSxbMCwxXSxbMCwyXSxbMywxLCJcXHJob18xIiwyXSxbMywyLCJcXHJob18yIl1d
\[\begin{tikzcd}
	&&&& {\mc A} \\
	{\operatorname{holim} X} & \simeq & {\mathrm{holim}\!\!\!\!\!\!\!\!\!\!\!\!} & {\square} && {\mc A^{\to}_C} \\
	&&&& {\mc A}
	\arrow[from=2-4, to=1-5]
	\arrow[from=2-4, to=3-5]
	\arrow["{\rho_1}"', from=2-6, to=1-5]
	\arrow["{\rho_2}", from=2-6, to=3-5]
\end{tikzcd}\]
There is a weak equivalence from $X'$ to this latter diagram given by $j_{C}:\mc A\lr \mc A_{C}^{\to}$ and identities elsewhere. The final sentence then follows from the Rotation Lemma.
\end{proof}

%%%%%%%%%%%%%% SECTION 3%%%%%%%%%%
\section{Application to Weinstein surfaces}
%%%%%%%%%%%%%%%%%%%%%%%%%%%%%%

Below we introduce our geometric setting and show how to represent Weinstein surface homotopies as a sequence of four moves on graphs. We then demonstrate the effect on diagrams of dg-categories that arise from each of our moves. Finally, we prove invariance of our construction $\mc L(W)$ and find a way to present it in cases of arbitrary genus.

\subsection{Arboreal Weinstein surfaces}

See \cite{CE} for the general theory of Weinstein manifolds and \cite{S} for the specific setting being assumed in this paper. In particular, given a Weinstein manifold $(W,\omega,V,\phi)$, rather than require that the Lyopunov function $\phi$ is Morse-Bott, we instead only require that the Liouville vector field $V$ is \textit{Morse-Bott*}, which allows for the components of the zero set of $V$ to have boundary. See \cite[Section 2.2]{S} for the precise definition.

Following \cite{S}, we describe the anatomy of the zero set of a Morse-Bott* vector field $V$. Given a connected component $Z$ of the zero set, if $\D:=\mathrm{Stab}(Z)$\footnote{The union of the stable manifolds of all points in $Z$.} is a Lagrangian submanifold-with-boundary, then we say that $\D$ is a \textbfit{bone} of the skeleton $\mathfrak X$ of $W$, with \textbfit{index} the index of any point in $Z$. Given bones $\D_{1}\neq \D_{2}$, if $H\coloneqq\olin{\D_{2}}\cap \D_{1}\neq\es$, then we say that $H$ is a \textbfit{joint} of $\D_{2}$ on $\D_{1}$. We say that $W$ is \textbfit{built of bones} if $\mathfrak{X}$ is a union of bones, i.e. $\mathrm{Stab}(Z)$ is a bone for each connected component $Z$ of the zero set of $V$.\\

\begin{proposition}[{\cite[Corollary 3.5]{S}}]
Every Weinstein manifold is Weinstein homotopic to one built of bones.
\end{proposition}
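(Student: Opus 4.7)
The plan is to start with an arbitrary Weinstein manifold $(W,\omega,V,\phi)$ with Morse--Bott* Liouville vector field and apply a two-stage deformation: first, a generic perturbation of $V$ within its Weinstein homotopy class to achieve transversality of ascending/descending manifolds of all zero components; and second, localized Weinstein handle modifications to repair any zero components whose stable manifolds still fail to close up as Lagrangian submanifolds-with-boundary.

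First I would invoke standard Weinstein genericity results (Cieliebak--Eliashberg) to arrange, via a Weinstein homotopy, that $V$ is a generic Morse--Bott* Liouville field. Concretely, I want to ensure that for each pair of zero components $Z_1, Z_2$, the unstable manifold $\mathrm{Unstab}(Z_1)$ is transverse to $\mathrm{Stab}(Z_2)$ as submanifolds-with-corners, and that a Morse--Smale-type ordering on zero components holds. Under these hypotheses, for each zero component $Z$, the set $\mathrm{Stab}(Z)$ has a natural smooth stratification whose top stratum is an embedded submanifold of dimension $\dim Z + \mathrm{ind}(Z)$, and whose boundary strata are the intersections with higher-index stable manifolds.

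Next I would verify the two ingredients in the definition of a bone. For the Lagrangian condition: the Liouville flow rescales $\omega$ by an exponential factor, so $\omega$ vanishes identically along any flow-invariant submanifold of the correct dimension; a point on $Z$ contributes the $\dim Z$ directions along $Z$ (on which $\omega$ vanishes by the Morse--Bott* hypothesis) plus the $\mathrm{ind}(Z)$ stable directions, and a short computation in a Morse--Bott* normal form shows these together span an isotropic (hence Lagrangian, by dimension count) subspace that integrates to a Lagrangian leaf. For the manifold-with-boundary structure: the transversality arranged above forces $\overline{\mathrm{Stab}(Z)} \setminus \mathrm{Stab}(Z)$ to be a union of lower-dimensional strata meeting $\mathrm{Stab}(Z)$ along a smooth boundary in codimension one, with higher-codimension strata automatically lying in the boundary.

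The main obstacle will be the second stage: after the generic perturbation, there may still be zero components whose stable manifolds accumulate pathologically (for instance, because $Z$ is not compact, or because flow lines fail to terminate at a zero component). I would address these by inserting auxiliary Weinstein handles (i.e., introducing cancelling pairs of zero components) in a neighborhood of the pathology, using the fact that Weinstein handle attachment is itself a Weinstein homotopy up to later cancellation. A secondary subtlety is to check that all modifications preserve the Morse--Bott* class of the Liouville field throughout the homotopy---that no singularities outside the allowed class appear transiently---which is ensured by performing all deformations within a convex combination of Liouville fields whose zero sets remain Morse--Bott* by an openness argument.
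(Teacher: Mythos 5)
The paper does not prove this statement itself: it is imported verbatim from \cite[Corollary 3.5]{S}, and the construction underlying it is exactly the one this paper later invokes in the proof of \cref{geo}. Namely, one first homotopes the Weinstein structure so that the Lyapunov function is honestly Morse (Cieliebak--Eliashberg), and then performs an explicit, local, and deliberately \emph{non-generic} modification near each index-$0$ critical point: embed a copy of the standard Weinstein structure on $T^{*}D^{n}$ so that the critical point lies on the zero section, and interpolate the Liouville vector fields without introducing new zeros. This fattens each index-$0$ critical point into an $n$-dimensional Lagrangian disk of Morse--Bott* zeros; the stable manifolds of the remaining critical points then attach to these disks as Lagrangian submanifolds-with-boundary, i.e.\ as bones.

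Your proposal goes in the opposite direction, and this is a genuine gap rather than a stylistic difference. A generic perturbation of the Liouville field produces nondegenerate (Morse) zeros, and for an isolated index-$0$ zero $Z=\{p\}$ one has $\mathrm{Stab}(Z)=\{p\}$, a $0$-dimensional set that is not a Lagrangian submanifold of a $2n$-dimensional $W$ for $n\geq 1$; so a generic Weinstein structure is never built of bones (consider radial $\mathbb{C}^{n}$). Being built of bones requires the index-$0$ components of the zero set to be $n$-dimensional Lagrangian disks, which no amount of Morse--Smale transversality between stable and unstable manifolds can produce --- the zero set itself must be actively enlarged. Your isotropy computation (exponential contraction of $\omega$ along the flow) is correct as far as it goes but is applied to the wrong objects, your appeal to ``openness'' of the Morse--Bott* condition is backwards since that condition is degenerate rather than open, and the second ``repair by handle insertion'' stage is too vague to substitute for the explicit $T^{*}D^{n}$ local model that actually carries the argument.
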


We now specify to Weinstein \textit{surfaces}, meaning Weinstein manifolds of dimension 2.\\

\begin{definition}\label{arborealdef}
A Weinstein surface $W$ with skeleton $\mathfrak X$ is \textbfit{arboreal} if $W$ is built of contractible bones and any joint in the skeleton has a neighborhood $U$ such that the pair $(U, U\cap \mathfrak X)$ is symplectomorpic to $((\mathbb R^{2},dx\wedge dy),\{y=0\}\cup\{x=0,y\geq0\})$ in a way such that the Liouville vector field is given locally by $y \p_{y}$. If the contractibility condition fails, then $W$ is \textbfit{non-generic arboreal}.\footnote{Given a non-generic arboreal surface, a small perturbation of its Weinstein structure will generically result in an arboreal surface, so we develop our theory mainly for the latter.}\\
\end{definition}

\begin{lemma}[{\cite[Theorem 3.11]{S}}]
Every Weinstein surface is Weinstein homotopic to an arboreal Weinstein surface.\\
\end{lemma}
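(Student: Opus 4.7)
The plan is to combine the previous proposition (which yields a Weinstein surface built of bones) with a local normal form at each joint and a finite sequence of handle modifications to enforce contractibility. After replacing $W$ with a Weinstein-homotopic surface built of bones, the skeleton $\mathfrak X$ decomposes as a union of index-$0$ bones (isolated points in the zero set of $V$) and index-$1$ bones (stable manifolds of $1$-dimensional components of the zero set, which are arcs or circles), together with the joints where these meet. Since $\dim W = 2$, every joint is $0$-dimensional, so the combinatorial data of $\mathfrak X$ is that of a (possibly non-simple) graph immersed in the surface.

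Next I would establish the local arboreal model at every joint. The claim is that, after a Weinstein homotopy supported in a small neighborhood of each joint, the Liouville vector field $V$ and the skeleton $\mathfrak X$ assume the standard form $(\{y=0\}\cup\{x=0,\,y\ge 0\},\,V=y\partial_y)$ described in \cref{arborealdef}. This is a relative Weinstein Darboux argument: using the Morse-Bott$^{*}$ hypothesis, one chooses coordinates in which the $0$-bone is the origin and the incoming index-$1$ bone is tangent to $\{x=0,y\ge 0\}$, then interpolates between the given Liouville form and the model $y\,dx$ by a compactly supported isotopy of primitives, invoking the standard parametric Moser trick for Liouville structures. Since the supports can be chosen disjoint (joints are isolated), the local modifications patch together into a single global Weinstein homotopy.

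Third, I would enforce the contractibility of each bone. A non-contractible $1$-bone is either a circle in $\mathfrak X$ or an arc whose two endpoints are the same $0$-bone. Using a generic deformation of the Lyapunov function $\phi$, one may insert a canceling pair of index $0$/index $1$ critical points on any given bone (a birth--death move), thereby splitting a circular $1$-bone into two contractible arcs joined at a new $0$-bone, or separating the two self-attaching endpoints of a loop. Because the surface is assumed to be of finite type, only finitely many such moves are needed. After these moves, all bones are contractible, and by the preceding step each joint is already in arboreal form (or can be put into it by an additional small homotopy at the newly created joints).

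The main obstacle is the compatibility of the second and third steps: inserting canceling critical points inside a bone must be done without destroying the normal forms already established at nearby joints, and conversely the normalization at joints must not create new non-contractible bones. I would handle this by performing the birth--death moves \emph{first}, entirely away from the original joint neighborhoods, so that the contractibility is achieved with supports disjoint from any joint, and only afterward performing the joint normalization in the remaining open regions. Verifying that the birth--death move can be realized as a genuine Weinstein homotopy (rather than just a smooth homotopy of the underlying data) is the subtle technical point, and requires checking that the Liouville vector field remains gradient-like throughout the deformation.
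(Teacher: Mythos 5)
This lemma is imported verbatim from \cite[Theorem 3.11]{S}; the paper gives no proof of its own, so there is nothing internal to compare your argument against. Your outline does, however, track the actual strategy of \cite{S} reasonably closely: start from the ``built of bones'' statement (\cite[Corollary 3.5]{S}), then achieve the standard local model at each joint by embedding a standard piece (Starkston's Proposition 3.4 interpolates the Liouville field with that of $T^{*}D^{1}$ near each index $0$ component, exactly the mechanism this paper later recalls in the proof of \cref{geo}). Deferring the Moser-type interpolation to ``the standard parametric trick'' is acceptable in a sketch, but be aware that keeping the interpolated vector field both Liouville and gradient-like without creating new zeros is the real content of that step, not a formality.

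The concrete gap is in your third step. In a surface built of bones, an index $1$ bone is the stable manifold of an isolated index $1$ zero, hence an open arc, and is automatically contractible; an arc whose closure has both endpoints on the same $0$-bone is still a contractible bone (contractibility is a condition on the bone itself, not on how its closure attaches). The only bones that can fail to be contractible are index $0$ bones whose underlying component $Z$ of the zero set is a circle of Morse--Bott$^{*}$ zeros --- the ``non-generic arboreal'' case flagged in the footnote to \cref{arborealdef}. The correct fix is a generic perturbation of the degenerate critical circle (Morse--Bott to Morse), which replaces the circular $0$-bone by an arc $0$-bone together with a new index $1$ bone joining its ends; this is not a birth--death insertion of a cancelling pair, and describing it as ``splitting a circular $1$-bone'' misidentifies both the object being modified and the move that modifies it. Your concluding concern about ordering the local modifications so their supports stay disjoint is legitimate, but it should be aimed at the circular $0$-bones, not at the $1$-bones.
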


\begin{definition}\label{assignmentconvention}  \label{graphdef}
An \textbfit{arboreal graph} is a finite graph $G$ where each interior vertex has valence 3, along with a labeling $(A, B, C)$ of the half edges at each interior vertex which satisfies: at any interior edge, if one half is labeled with a $C$, then so is the other half.\footnote{When we construct arboreal diagrams, this condition on the labeling will not matter. In particular, we can manipulate algebraic diagrams on any labeled trivalent graph.\label{caveat}}\\
\end{definition}

\begin{remark}\label{graphconvention}We will often indicate the strict ordering of the half edges of a vertex $v$ by drawing $G$ immersed in the plane, with exactly two half edges meeting smoothly at $v$. The remaining half edge is $C$, and $A$ is the half-edge on the counterclockwise side of $C$.\\
\end{remark}

\begin{definition}
Fix an arboreal or non-generic arboreal surface $W$ with skeleton $\mathfrak X$. The \textbfit{arboreal graph associated to $W$} is the arboreal graph $G(W)$ given by the underlying combinatorial graph of $\mathfrak{X}$ with the following orderings: at a 3-valent vertex, the half-edge corresponding to $\{x=0,y\geq0\}$ in \cref{arborealdef} is labeled $C$, and $A$ is the next half-edge in the positive direction according to the orientation of $W$.\\
\end{definition}

\begin{remark}
Every arboreal graph is the arboreal graph associated to some possibly non-generic arboreal surface. Accordingly, we will refer to any edge as a 1-bone if it has a $C$ label, and as a 0-bone otherwise.
\end{remark}

See \cref{arborealgraphs} for two examples of arboreal graphs, and one example of a non-arboreal graph.\\

\begin{figure}
\centering
\includegraphics[scale=0.74]{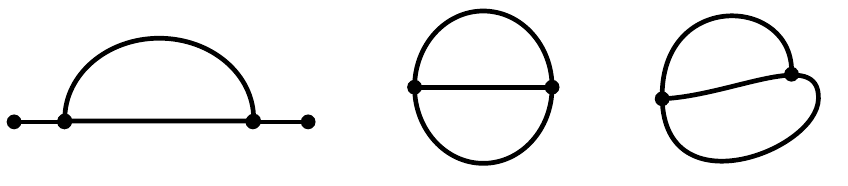}
\caption{We use the convention in \cref{graphconvention}. The graph on the left is the arboreal graph associated to an arboreal surface. The graph in the middle is the arboreal graph associated to a non-generic arboreal surface. The graph on the right is not arboreal, but by \cref{caveat} it will still encode a valid diagram when we come to \cref{arborealdiagramchapter1}.}
\label{arborealgraphs}
\end{figure}

\begin{definition}\label{moves1}
Two arboreal graphs with contractible 0-bones\footnote{In \cref{examples}, we will move between arboreal graphs that arise from arboreal and non-generic arboreal surfaces. This can be formalized by another type of move, the simplification of graphs shown in that section. This move is compatible with homotopy limits mostly through the application of \cref{move1}.} are \textbfit{arboreal equivalent} if one is obtained from the other by a sequence of the \textbfit{arboreal moves} defined in \cref{moves}.\\
\end{definition}

\begin{figure}
\centering
\includegraphics[scale=0.81]{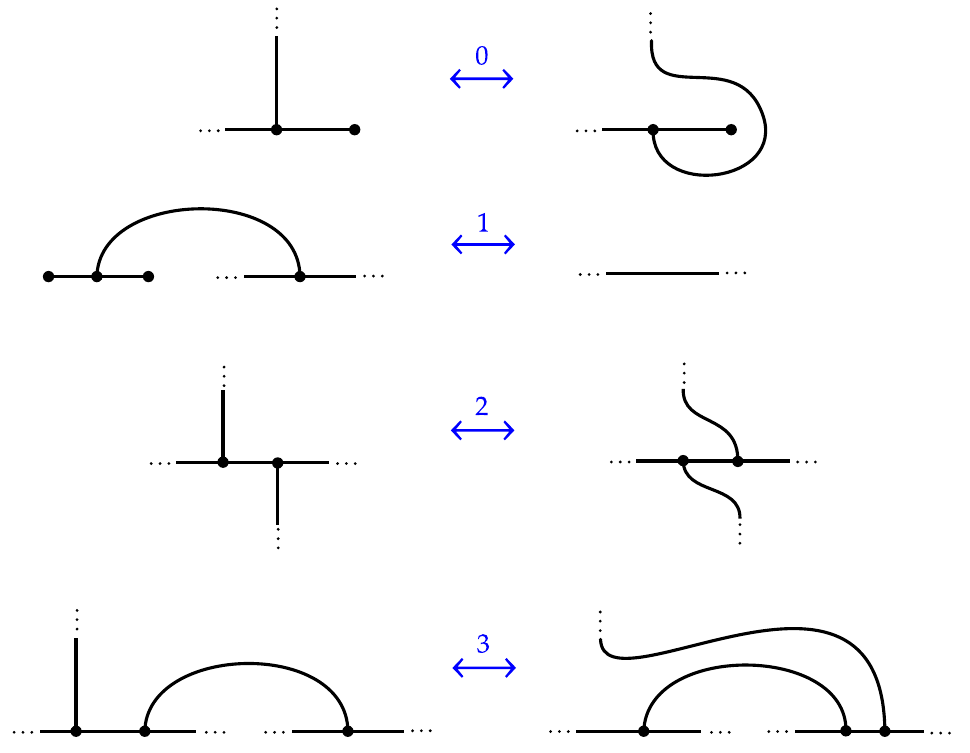}
\caption{The four arboreal moves on arboreal graphs, representing isotopy around a boundary, handle cancellation, isotopy of two 1-handles, and handle slide. The arboreal graph data is implied by \cref{graphconvention}.}
\label{moves}
\end{figure}

\begin{proposition}\label{geo}
The associated arboreal graph, defined up to arboreal moves, is a Weinstein homotopy invariant of arboreal Weinstein surfaces.
\end{proposition}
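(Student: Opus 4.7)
The plan is to use the standard Cerf-theoretic classification of generic 1-parameter families of Morse (more precisely, Morse-Bott*) functions and translate each type of bifurcation into one of the four arboreal moves of \cref{moves}. Fix a Weinstein homotopy $\{(W, \omega_t, V_t, \phi_t)\}_{t\in[0,1]}$ connecting two arboreal Weinstein surfaces. By Starkston's arborealization procedure (\cite{S}) combined with a standard perturbation argument, after a further Weinstein homotopy we may assume the entire family is arboreal (or non-generic arboreal at finitely many times $t_1<\cdots<t_N$), and that the family of underlying Lyapunov functions is generic in the sense of Cerf theory. For $t$ in each open subinterval between bifurcation times, the combinatorial type of the skeleton $\mathfrak X_t$, and thus the associated arboreal graph $G(W_t)$, is constant.

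The core of the argument is a case analysis at each bifurcation time $t_i$. A generic 1-parameter family encounters only three kinds of bifurcations: an ambient isotopy of the zero set (during which bones can slide past one another), a birth/death of a canceling pair of critical manifolds (corresponding to a $0$-handle/$1$-handle cancellation, since in dimension 2 there are no higher-index critical points), and a handle-slide of one $1$-bone over another. First I would show that a handle cancellation produces exactly arboreal Move 1: locally the skeleton loses a univalent vertex together with the incident $1$-bone and one $0$-bone, which is precisely the local picture encoded by that move. Next, a handle-slide of one $1$-bone over another (across a common endpoint of two $0$-bones) produces Move 3 by tracking where the $C$-labeled half-edge is reattached. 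Finally, ambient isotopies split into two generic local models: an endpoint of a $1$-bone sweeping around the free boundary of a $0$-bone (Move 0), and two $1$-bones exchanging cyclic order at a common trivalent vertex (Move 2). Between these local models the associated graph is unchanged.

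The main obstacle is ensuring that the arborealization is preserved throughout the homotopy, since a generic family of Weinstein structures on a surface need not stay arboreal at every time: bones may momentarily become non-contractible, or two joints on a common bone may collide. This forces one either to quote a parametric version of Starkston's result---arborealization of families of surface skeleta, handled similarly to \cite[Section 3]{S} by a relative application of the handle-smoothing procedure---or to allow passage through non-generic arboreal intermediates and verify directly that the combinatorial moves needed to cross such a strata are already among Moves 0--3. I would take the latter route, since in dimension 2 the only new phenomenon is a joint passing through a boundary of a $0$-bone or two joints coinciding, both of which reduce to the local models above after a further perturbation.

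Having matched each elementary bifurcation to an arboreal move, composing them yields a finite sequence of arboreal moves realizing $G(W_0)\rightsquigarrow G(W_1)$, establishing invariance. Conversely, each of Moves 0--3 is visibly induced by an explicit local Weinstein homotopy (an isotopy, a cancellation, or a handle-slide), so the equivalence class under arboreal moves is a complete invariant of the set of arboreal graphs realized within a fixed Weinstein homotopy class, which is exactly the content of the proposition.
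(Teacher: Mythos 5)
Your route differs from the paper's in a way that matters. You run Cerf theory directly on the family of arboreal (Morse--Bott*) structures and then must confront the fact that genericity of one-parameter families is \emph{not} an off-the-shelf statement for Morse--Bott* Liouville fields subject to the arboreal constraints. You name this obstacle yourself, and then dispose of it by assertion: ``in dimension 2 the only new phenomenon is a joint passing through a boundary of a $0$-bone or two joints coinciding.'' That is precisely the claim that needs proof, and it is the hardest part of the argument in your setup --- you would need either a parametric arborealization theorem (which \cite{S} does not supply) or an explicit classification of the codimension-one degenerations of arboreal skeleta in a generic family, together with a verification that crossing each stratum is realized by Moves 0--3. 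As written, this is a genuine gap, not a routine detail.

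The paper avoids the issue entirely by a collapse-and-re-expand strategy: first perturb each $W_i$ to a surface $M_i$ whose Lyapunov function is an honest Morse function (so that index $0$ bones collapse to index $0$ critical points), apply \emph{classical} one-parameter Morse theory to the family $(M_s)$ --- where the bifurcation classification (handle cancellations and handle slides only, no ambient-isotopy strata to classify) is standard --- and then re-arborealize at each regular time $s_i$ via \cite[Proposition 3.4]{S} by embedding a copy of $T^*D^1$ at each index $0$ critical point. The price of this detour is that the re-arborealization $W^{(i)}$ is not canonical: one must choose how to arrange the $1$-handles around each thickened $0$-bone, and the paper shows that all such choices give arboreal equivalent graphs via Moves 0 and 2. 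Your ``ambient isotopy'' cases are playing this role implicitly, but in your framework they appear as bifurcations of the arboreal family that you have not shown are the only ones. A second, smaller omission: for handle cancellation you match the local picture to Move 1 directly, whereas one must first slide all other $1$-handles off the cancelling $0$-bone (using Move 3) before the local model of Move 1 applies; the paper does this explicitly. If you want to salvage your approach, the honest fix is to prove the classification of generic degenerations of arboreal families; otherwise the paper's reduction to ordinary Morse functions is the cleaner path.
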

\begin{proof}
Fix arboreal Weinstein surfaces $ W_{0}$ and $ W_{1}$ which are Weinstein homotopic. Using a standard construction described in \cite[Section 12.3]{CE}, we can Weinstein homotope each $W_{i}$ into a Weinstein surface $M_{i}$ whose Lyopunov function is an honest Morse function. Furthermore, this perturbation can be chosen so that the index 0 critical points of the vector field in $M_{i}$ are in bijection with the index 0 bones of $W_{i}$. By transitivity of Weinstein homotopy, there is a Weinstein homotopy $( M_{s})_{s\in[0,1]}$ from $M_{0}$ to $M_{1}$, and by standard Morse theory we can arrange for the Lyopunov function of each $M_{s}$ to be Morse, except at finitely many values $0<t_{1}<\cdots<t_{n}<1$ of $s$.

This gives a family of handlebodies, which by standard Morse theory can be realized as a finite sequence of (i) handle cancellations between one handle each of indices 0 and 1, and (ii) handle slides between two handles of index 1, one such move happening at each $t_{i}$.\footnote{Recall that a Weinstein surface has no $2$-handles, so we don't see any handle cancellations between handles of indices 1 and 2.} Choose regular values $0=s_{0}<\cdots <s_{n}=1$ such that $s_{i}<t_{i+1}<s_{i+1}$.  Then $ M_{s_{0}}$ and $ M_{s_{i+1}}$ are two Morse handlebodies which differ by a single handle cancellation or handle slide.

The process of perturbing $W_{i}$ to $M_{i}$ can be reversed. By \cite[Proposition 3.4]{S}, we may start with an index 0 critical point $p\in M_{s_{i}}$, embed into $M_{s_{i}}$ a copy of the standard Weinstein surface $T^{*}D^{1}$ such that $p$ lies in the image of the zero section, and then interpolate the (normal to the zero section) Liouville vector field on the cotangent bundle with that in $M_{s_{i}}$ in such a way that introduces no new critical points. The result is an arboreal Weinstein surface $W^{(i)}$ which is Weinstein homotopic to $M_{s_{i}}$, and whose index 0 bones correspond to the index 0 critical points in $M_{s_{i}}$. In particular, we can choose $ W^{(0)}=W_{0}$ and $W^{(n)}=W_{1}$. 

For $1\leq i\leq n-1$, there is an ambiguity of how to define $W^{(i)}$. For each index 0 critical point $p$ in $M_{s_{i}}$, there are some $1$-handles attached to $p$. Other than the fixed cyclic ordering of the 1-handles, the choice of how to arrange them does not matter--the resulting skeleta are always arboreal equivalent via some iterations of Moves 0 and 2, as shown in \cref{mfld2}.

\begin{figure}
\centering
\includegraphics[scale=0.9]{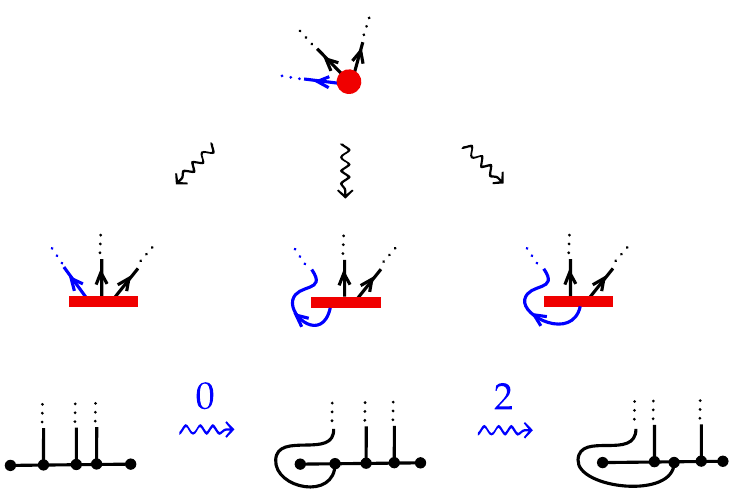}
\caption{Isotopies realize all ways to arrange 1-handles attached to a fixed 0-bone. The first row is $M_{s_{i}}$, the second is three possible choices for $W^{(i)}$, and the third shows how the associated arboreal graphs $G(W^{(i)})$ are arboreal equivalent via Moves 0 and 2.}
\label{mfld2}
\end{figure}

To prove the proposition, we will show that $G(W^{(i)})$ and $G(W^{(i+1)})$ are arboreal equivalent. 

Each handle cancellation or handle slide between $ M_{s_{i}}$ and $ M_{s_{i+1}}$ has a local model which can be drawn in the plane, and in this local model the passage $ M_{s_{i}}\mapsto W^{(i)}$ corresponds to thickening each index 0 critical point to an index 0 bone. So we can draw a picture in the plane that represents locally the passage between the skeleton of $ W^{(i)}$ to the skeleton of $ W^{(i+1)}$. It will be shown that these pictures, interpreted as their associated arboreal graphs, are arboreal equivalent.

Handle slide: Slide one 1-handle $h$ over another $h'$. There are two cases: $h'$ has both joints on the same $0$-handle, or not. 

\begin{figure}
\centering
\includegraphics[scale=0.57]{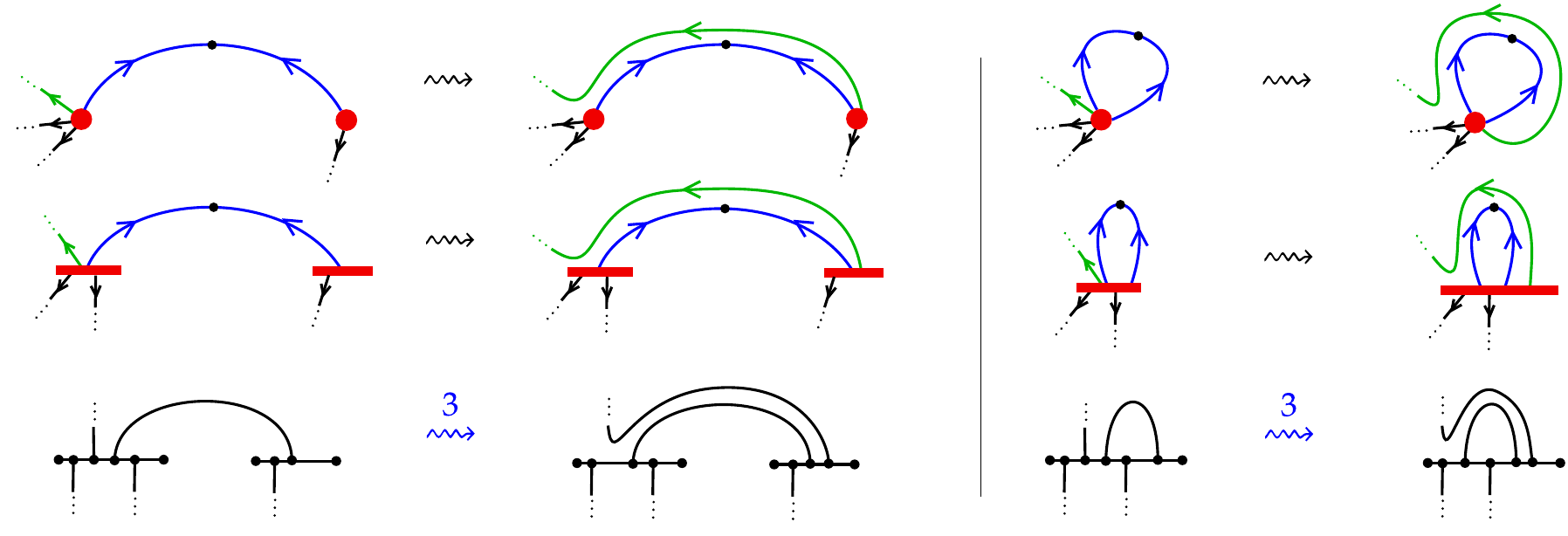}
\caption{The two types of handle slides, each shown first on Morse skeleta, then arboreal skeleta, and finally on the associated arboreal graphs via Move 3.}
\label{mfld3}
\end{figure}

Expanding in either situation, we see in \cref{mfld3} that $G(W^{(i)})$ and $G(W^{(i+1)})$ are related by Move 3.

Handle cancellation: We necessarily have a 1-handle $h$ attached to two different 0-handles, $A$ and $B$. Say we want to cancel $h$ with $B$. By handle-sliding all other 1-handles on $B$ to $A$, we can assume that $h$ is the only 1-handle on $B$. Then the handle cancellation on the arboreal graphs is the result of Move 1, shown in \cref{mfld1}.
\end{proof}

\begin{figure}
\centering
\includegraphics[scale=0.8]{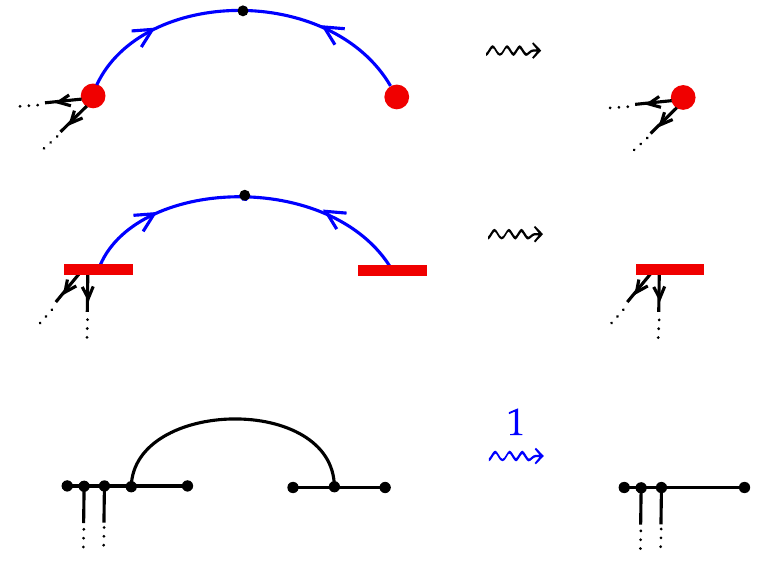}
\caption{Handle cancellation, shown on Morse skeleta, arboreal skeleta, and associated arboreal graphs via Move 1.}
\label{mfld1}
\end{figure}

\begin{corollary}\label{invcor}
Let $G\mapsto \mc L(G)$ be an assignment defined on the class of arboreal graphs which is invariant under arboreal moves. Then $\mc L$ gives a well-defined invariant of all Weinstein surfaces, given by $\mc L(W)\coloneqq \mc L(G(W'))$, where $W'$ is any arboreal surface Weinstein homotopic to $W$.
\end{corollary}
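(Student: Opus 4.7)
The proof is essentially a bookkeeping exercise combining \cref{geo} with the existence of arboreal representatives in each Weinstein homotopy class. My plan is as follows.

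First I would address well-definedness of the formula $\mc L(W) \coloneqq \mc L(G(W'))$. Given any Weinstein surface $W$, the lemma attributed to Starkston (cited in the introduction and reproduced before \cref{arborealdef}) guarantees the existence of at least one arboreal Weinstein surface $W'$ in the Weinstein homotopy class of $W$, so the formula has some value. To check independence of the choice, fix two arboreal representatives $W'_1$ and $W'_2$, both Weinstein homotopic to $W$. By transitivity of Weinstein homotopy, $W'_1$ and $W'_2$ are Weinstein homotopic to each other. Since both are arboreal, \cref{geo} applies and produces an arboreal equivalence between $G(W'_1)$ and $G(W'_2)$, that is, a finite sequence of arboreal moves carrying one graph to the other. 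The hypothesis that $\mc L$ is invariant under each such move then yields $\mc L(G(W'_1)) = \mc L(G(W'_2))$, so the value $\mc L(W)$ does not depend on the chosen arboreal representative.

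Second I would verify that $\mc L$ is a Weinstein homotopy invariant of Weinstein surfaces. Let $W_1$ and $W_2$ be Weinstein homotopic surfaces, and choose arboreal representatives $W'_1$ and $W'_2$ for each. Since Weinstein homotopy is an equivalence relation, $W'_1$ and $W'_2$ are themselves Weinstein homotopic, so the same argument as above gives $\mc L(G(W'_1)) = \mc L(G(W'_2))$, i.e. $\mc L(W_1) = \mc L(W_2)$ by definition.

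There is no real obstacle here: all the geometric work has already been absorbed into \cref{geo}, and the combinatorial input (move-invariance of $\mc L$) is assumed. The only subtlety worth spelling out is that the definition $\mc L(W) = \mc L(G(W'))$ implicitly uses the existence statement before it uses the uniqueness statement, so I would organize the proof in that order to avoid any circularity of phrasing.
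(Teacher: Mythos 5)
Your proof is correct and is exactly the standard bookkeeping argument the paper leaves implicit (the corollary is stated without proof there): existence of an arboreal representative via Starkston's theorem, then transitivity of Weinstein homotopy plus \cref{geo} and the assumed move-invariance to get both well-definedness and homotopy invariance.
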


The remainder of this paper is dedicated to defining and proving invariance for an assignment $G\mapsto \mathcal L(G)$. The output will be an object in $\dgcat_{k}^{(2)}$, considered up to quasi-equivalence, so everything from here on out will use $R=\zb/2$-gradings.\\

%%%%%% MOVES 0 AND 1 %%%%%%%%%%%%%
\subsection{Moves 0 and 1}

Going forward we will encode arboreal diagrams by labeling the underlying arboreal graph as follows: each cospan $\mc A^{\to}\overset{[m]\circ f}\lr \mc A\overset{[n]\circ g}\longleftarrow \mc A^{\to}$ will be represented by the decorated edge  % https://q.uiver.app/#q=WzAsNixbMSwwLCJcXG1hdGhjYWwgQV8yIl0sWzIsMCwiXFxtYXRoY2FsIEFfMSJdLFswLDAsIlxcbWF0aGNhbCBBXzEiXSxbMywwLCI9Il0sWzQsMCwiXFxidWxsZXQiXSxbNSwwLCJcXGJ1bGxldCJdLFswLDEsImciXSxbMCwyLCJmIiwyXSxbNCw1LCJcXGJhciBmXFxxdWFkXFxiYXIgZyIsMCx7InN0eWxlIjp7ImhlYWQiOnsibmFtZSI6Im5vbmUifX19XV0=
\begin{tikzcd}
	 \bullet & \bullet
	\arrow["{ n\quad m}", shorten >=-8pt, shorten <=-8pt, no head, from=1-1, to=1-2]
\end{tikzcd},
where $f,g\in \{\rh_{1},\rh_{2}, C\}$ are recovered by \cref{graphconvention,localvertex}. An absent label will indicate no shifting.

In this subsection we derive the algebraic effect of Moves 0 and 1 on arboreal diagrams. A clockwise isotopy around the boundary of an index 0 bone induces a shift by 1 in the Cone functor:\\

\begin{lemma}[Move 0]\label{algebraicmove1.2}
The following local moves preserve homotopy limits.

%%%%%%%%%%%%%%%boundarymove%%%%%%%%%%%%%
\begin{figure}[H]
\centering
\includegraphics[scale=.9]{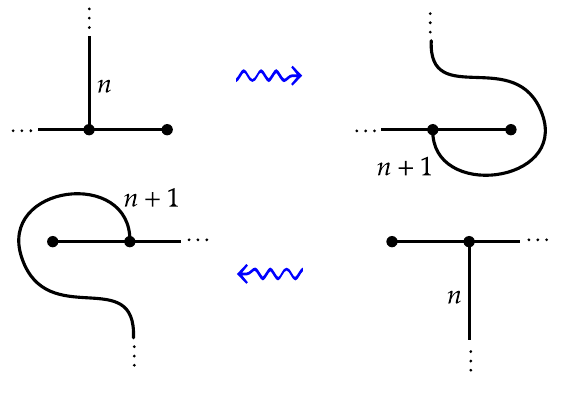}
\label{boundarymove}
\end{figure}
%%%%%%%%%%%%%%%boundarymove%%%%%%%%%%%%%

\end{lemma}

\begin{proof}
Each isotopy involves two applications of \cref{move1}: one simplifying, and one de-simplifying. The result follows because exactly one of these two applications involves the third case listed in \cref{move1}.
\end{proof}

\begin{lemma}[Move 1]\label{algebraicmove1}
The following local move preserves homotopy limits.

%%%%%%%%%%%%%%%handle1%%%%%%%%%%%%%%
\begin{figure}[H]
\centering
\includegraphics[scale=1]{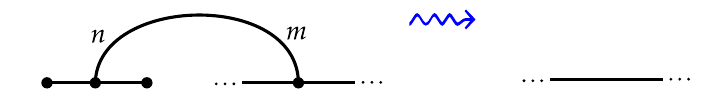}
\label{handle1}
\end{figure}
%%%%%%%%%%%%%%%handle1%%%%%%%%%%%%%%

\end{lemma}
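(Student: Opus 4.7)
The plan is to derive Move 1 as a direct application of the local move lemma \cref{move1}. On the level of arboreal diagrams, the handle cancellation corresponds to the local configuration where a trivalent vertex $\mc A^\to$ has one of its three restriction arrows landing on an $\mc A$ whose other incoming arrow is from the zero object $0$. The $0$ here arises because a leaf in the arboreal graph contributes a zero object, as visible in the leaves of the pair of pants diagram in \cref{pants}, and a handle cancellation in the sense of \cref{mfld1} removes precisely a 1-handle whose attaching end is a 0-handle with no other incident 1-handles, which on the arboreal graph is encoded as a leaf on the far end of the 1-bone being cancelled.

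First I would translate the left-hand side of the move diagram into its algebraic form using the encoding convention from the start of this subsection, reading off the three shifts $[i], [j], [k]$ on the half-edges incident to the cancelled trivalent vertex and determining which of the functor tuples $(C, \rho_1, \rho_2)$, $(\rho_1, \rho_2, C)$, or $(\rho_2, C, \rho_1)$ corresponds to the $(A,B,C)$ half-edge labelling at that vertex under \cref{graphconvention}. Next, I would apply the corresponding variant of \cref{move1}: this removes the $\mc A^\to$, the $\mc A$ on the cancelled bone, and the $0$, replacing them with a single $\mc A$ equipped with two outgoing arrows $[j]$ and $[k]$ to the two surviving $\mc A$ neighbors (plus, in the third variant, the extra $j \mapsto j+1$). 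Finally I would verify that these resulting shifts match those displayed on the right-hand side of the move figure; any residual discrepancy would be absorbed using the Shift Lemma.

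The main obstacle is bookkeeping rather than substance: the three variants of \cref{move1} give slightly different shift outputs, and the correct choice is pinned down by which half-edge at the cancelled vertex is the $C$-half-edge, hence requires careful tracking of the $A/B/C$-labels under our orientation conventions. Once the correct variant is identified, the equivalence $\holim X \simeq \holim X'$ is immediate from \cref{move1}, and no further homotopical input is needed.
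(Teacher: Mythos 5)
Your proposal conflates the arboreal Move~1 (handle cancellation) with the auxiliary local lemma \cref{move1}; these are not the same move, and the gap is structural. The $0$-handle $B$ being cancelled is an index~$0$ bone, i.e.\ an interval: in the arboreal graph it contributes \emph{two} boundary leaves and a trivalent joint, not a single leaf sitting at the far end of the $1$-bone (and in the skeleton of a surface a $1$-bone runs between two joints, so it cannot terminate at a boundary leaf in the first place). Hence the left-hand side of Move~1 is the configuration displayed in the paper's proof: the chain $0\to\mc A\leftarrow \mc A^{\to}\to\mc A\leftarrow 0$ for $B$, a $1$-bone $\mc A$ receiving the two cone functors, and a second trivalent vertex $\mc A^{\to}$ on $A$ whose remaining two arrows continue into the rest of the diagram; the right-hand side is a \emph{single} $\mc A$ absorbing those two continuing arrows. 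A single application of \cref{move1} deletes only one joint and one leaf, so it cannot produce this.

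The paper's argument applies \cref{move1} once to the joint on $B$, then uses \cref{edge} together with the quasi-equivalence $\mc A_{\id}\simeq 0$ to kill the remainder of $B$, a second application of \cref{edge} to replace the cospan $\mc A^{\to}\overset{C}\to\mc A\leftarrow 0$ by $\mc A^{\to}_{C}=P(\mc A)$, the quasi-equivalence $j_{C}:\mc A\to \mc A^{\to}_{C}$ to turn the joint on $A$ into an identity zig-zag $\mc A\leftarrow\mc A\to\mc A$, and finally \cref{kanresult} to collapse that zig-zag into the single $\mc A$ of the right-hand side. Even granting your reading of the figure, your proof stops short: after removing the trivalent vertex via \cref{move1} you are left with a zig-zag of two shift functors out of a common source, not a single vertex, and collapsing it requires the fibrancy/Kan-extension step of \cref{kanresult}; the Shift Lemma cannot absorb this, since it only acts at $\mc A^{\to}$-vertices.
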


\begin{proof}
Algebraically, we have the following move:
% https://q.uiver.app/#q=WzAsMTUsWzEsMiwiXFxtYXRoY2FsIEFfMSJdLFsyLDIsIlxcbWF0aGNhbCBBXzIiXSxbMCwyLCIwIl0sWzMsMiwiXFxtYXRoY2FsIEFfMSJdLFs0LDIsIjAiXSxbMiwxLCJcXG1hdGhjYWwgQV8xIl0sWzIsMCwiXFxtYXRoY2FsIEFfMiJdLFszLDAsIlxcbWF0aGNhbCBBXzEiXSxbMSwwLCJcXG1hdGhjYWwgQV8xIl0sWzQsMCwiXFxjZG90cyJdLFswLDAsIlxcY2RvdHMiXSxbNSwxLCJcXHJpZ2h0c3F1aWdhcnJvdyJdLFs2LDAsIlxcY2RvdHMiXSxbOCwwLCJcXG1hdGhjYWwgQV8xIl0sWzEwLDAsIlxcY2RvdHMiXSxbMSwwLCJbal1cXGNpcmMgXFxyaG9fMSJdLFsyLDBdLFsxLDMsIltrXVxcY2lyY1xccmhvXzIiLDJdLFs0LDNdLFsxLDUsIltpXVxcY2lyYyBDIiwyXSxbNiw1LCJbaSddXFxjaXJjIEMiXSxbNiw3LCJbaiddXFxjaXJjIFxccmhvXzEiXSxbNiw4LCJbaSddXFxjaXJjIFxccmhvXzIiLDJdLFs5LDcsImciLDJdLFsxMCw4LCJmIl0sWzEyLDEzLCJbaSddXFxjaXJjIGYiXSxbMTQsMTMsIltqJ11cXGNpcmMgZyIsMl1d
\[\begin{tikzcd}
	\cdots & {\mc A} & {\mc A^{\to}} & {\mc A} & \cdots && \cdots && {\mc A} && \cdots \\
	&& {\mc A} &&& \rightsquigarrow \\
	0 & {\mc A} & {\mc A^{\to}} & {\mc A} & 0
	\arrow["f", from=1-1, to=1-2]
	\arrow["{ \rho_2}"', from=1-3, to=1-2]
	\arrow["{ \rho_1}", from=1-3, to=1-4]
	\arrow["{[n]\circ C}", from=1-3, to=2-3]
	\arrow["g"', from=1-5, to=1-4]
	\arrow["{ f}", from=1-7, to=1-9]
	\arrow["{ g}"', from=1-11, to=1-9]
	\arrow[from=3-1, to=3-2]
	\arrow["{[m]\circ  C}"', from=3-3, to=2-3]
	\arrow["{ \rho_1}", from=3-3, to=3-2]
	\arrow["{\rho_2}"', from=3-3, to=3-4]
	\arrow[from=3-5, to=3-4]
\end{tikzcd}\]

Apply \cref{move1} to the the bottom $ \rho_{1}$, and then apply \cref{edge} to the resulting arrow $\id_{\mc A}$. The quasi-equivalence $\mc A^{\id }\simeq 0$ then yields the equivalent diagram 
% https://q.uiver.app/#q=WzAsNyxbMiwyLCIwIl0sWzIsMSwiXFxtYXRoY2FsIEFfMSJdLFsyLDAsIlxcbWF0aGNhbCBBXzIiXSxbMywwLCJcXG1hdGhjYWwgQV8xIl0sWzEsMCwiXFxtYXRoY2FsIEFfMSJdLFs0LDAsIlxcY2RvdHMiXSxbMCwwLCJcXGNkb3RzIl0sWzAsMV0sWzIsMSwiW25dXFxjaXJjIEMiXSxbMiwzLCJcXHJob18xIl0sWzIsNCwiXFxyaG9fMiIsMl0sWzUsMywiZyIsMl0sWzYsNCwiZiJdXQ==
\[\begin{tikzcd}
	\cdots & {\mc A} & {\mc A^{\to}} & {\mc A} & \cdots \\
	&& {\mc A} \\
	&& 0
	\arrow["f", from=1-1, to=1-2]
	\arrow["{\rho_2}"', from=1-3, to=1-2]
	\arrow["{\rho_1}", from=1-3, to=1-4]
	\arrow["{[n]\circ C}", from=1-3, to=2-3]
	\arrow["g"', from=1-5, to=1-4]
	\arrow[from=3-3, to=2-3]
\end{tikzcd}\]
Another application of \cref{edge} produces
% https://q.uiver.app/#q=WzAsNSxbMiwwLCJcXG1hdGhjYWwgQV8yXkMiXSxbMywwLCJcXG1hdGhjYWwgQV8xIl0sWzEsMCwiXFxtYXRoY2FsIEFfMSJdLFs0LDAsIlxcY2RvdHMiXSxbMCwwLCJcXGNkb3RzIl0sWzAsMSwiXFxyaG9fMSJdLFswLDIsIlxccmhvXzIiLDJdLFszLDEsImciLDJdLFs0LDIsImYiXV0=
\begin{tikzcd}
	\cdots & {\mc A} & {\mc A^{\to}_C} & {\mc A} & \cdots
	\arrow["f", from=1-1, to=1-2]
	\arrow["{\rho_2}"', from=1-3, to=1-2]
	\arrow["{\rho_1}", from=1-3, to=1-4]
	\arrow["g"', from=1-5, to=1-4]
\end{tikzcd}.
The functor $j_{C}$ now provides a weak equivalence to a zig-zag of functors between copies of $\mc A$. From here, whatever the ellipses contain, the two relevant pullbacks are fibrant, so a calculation of the limit using \cref{kanresult} yields the result.
\end{proof}

%%%%%%% MOVES 2 AND 3 %%%%%%%%%%%%%
\subsection{Moves 2 and 3}\label{gen2}

Here we present our main \cref{gen2} and its consequences, the algebraic effects of Moves 2 and 3. We often implicitly use the straightforward fact that homotopy limits are preserved by the local move
% https://q.uiver.app/#q=WzAsNSxbMCwwLCJcXGJ1bGxldCJdLFsyLDAsIlxcYnVsbGV0Il0sWzMsMCwiXFxyaWdodHNxdWlnYXJyb3ciXSxbNCwwLCJcXGJ1bGxldCJdLFs2LDAsIlxcYnVsbGV0Il0sWzAsMSwiW25dXFxjaXJjIGZcXHFxdWFkW21dXFxjaXJjIGciLDAseyJzdHlsZSI6eyJoZWFkIjp7Im5hbWUiOiJub25lIn19fV0sWzMsNCwiW24ra11cXGNpcmMgZlxccXVhZCBbbStrXVxcY2lyYyBnIiwwLHsic3R5bGUiOnsiaGVhZCI6eyJuYW1lIjoibm9uZSJ9fX1dXQ==
\begin{tikzcd}
	\bullet & \bullet & \rightsquigarrow & \bullet & \bullet
	\arrow["{n\,\,\,\quad m}", shorten <=-8pt, shorten >=-8pt, no head, from=1-1, to=1-2]
	\arrow["{n+k\,\,\, \,\,m+k}", shorten <=-8pt, shorten >=-8pt, no head, from=1-4, to=1-5]
\end{tikzcd}.
In particular, taking $k=-m$ shows that we can always absorb the two shifts on an edge into just one. However, for the sake of symmetry, we show all possible labelings in the following theorems. We state our next result with more generality than required by Move 2 because doing so will aid us in understanding Move 3.\\

\begin{lemma}[Generalized Move 2]\label{gen2}
The following move preserves homotopy limits, where $\D\coloneqq p-q$.
% https:\\q.uiver.app#q=WzAsMTQsWzEsMiwiXFxidWxsZXQiXSxbMCwyLCJcXGNkb3RzIl0sWzIsMiwiXFxidWxsZXQiXSxbMywyLCJcXGNkb3RzIl0sWzUsMiwiXFxkb3RzIl0sWzYsMiwiXFxidWxsZXQiXSxbNywyLCJcXGJ1bGxldCJdLFs4LDIsIlxcZG90cyJdLFsxLDAsIlMiXSxbNiwwLCJTJyJdLFsxLDMsIlxcdmRvdHMiXSxbMiwxLCJcXHZkb3RzIl0sWzcsMSwiXFx2ZG90cyJdLFs2LDMsIlxcdmRvdHMiXSxbMCwxLCIyIiwyLHsic3R5bGUiOnsiaGVhZCI6eyJuYW1lIjoibm9uZSJ9fX1dLFswLDIsIjFcXHF1YWQgMSIsMCx7InN0eWxlIjp7ImhlYWQiOnsibmFtZSI6Im5vbmUifX19XSxbMiwzLCIyIiwwLHsic3R5bGUiOnsiaGVhZCI6eyJuYW1lIjoibm9uZSJ9fX1dLFs1LDQsIjEiLDIseyJzdHlsZSI6eyJoZWFkIjp7Im5hbWUiOiJub25lIn19fV0sWzUsNiwiMlxccXVhZCAyIiwwLHsic3R5bGUiOnsiaGVhZCI6eyJuYW1lIjoibm9uZSJ9fX1dLFs2LDcsIjEiXSxbMCwxMCwiMyIsMCx7InN0eWxlIjp7ImhlYWQiOnsibmFtZSI6Im5vbmUifX19XSxbMiwxMSwiMyIsMix7InN0eWxlIjp7ImhlYWQiOnsibmFtZSI6Im5vbmUifX19XSxbNSwxMiwiMyIsMCx7InN0eWxlIjp7ImhlYWQiOnsibmFtZSI6Im5vbmUifX19XSxbNiwxMywiMyIsMCx7InN0eWxlIjp7IfmhlYWQiOnsibmFtZSI6Im5vbmUifX19XV0=
\[\begin{tikzcd}
	&& \vdots &&&&& \vdots \\
	\cdots & \bullet & \bullet & \cdots &\rightsquigarrow& \cdots & \bullet & \bullet & \dots \\
	& \vdots &&&&& \vdots
	\arrow["{i}"',shorten <=-8pt, no head, from=2-2, to=2-1]
	\arrow["{p\quad q}", shorten <=-8pt, shorten >=-8pt, no head, from=2-2, to=2-3]
	\arrow["{j}",shorten <=-8pt, no head, from=2-3, to=2-4]
	\arrow["{i}"',shorten <=-8pt, no head, from=2-7, to=2-6]
	\arrow["{p\quad q}", shorten <=-8pt, shorten >=-8pt, no head, from=2-7, to=2-8]
	\arrow["{j}",shorten <=-8pt, no head, from=2-8, to=2-9]
	\arrow["{n}",shorten <=-8pt, no head, from=2-2, to=3-2]
	\arrow["{m}"',shorten <=-8pt, no head, from=2-3, to=1-3]
	\arrow["{m+\D}",shorten <=-8pt, no head, from=2-7, to=1-8]
	\arrow["{n-\D}",shorten <=-8pt, no head, from=2-8, to=3-7]
\end{tikzcd}\]
\end{lemma}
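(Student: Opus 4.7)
The plan is to reduce Generalized Move 2 to repeated applications of the Rotation Lemma at each of the two trivalent vertices, using the freedom (noted immediately before the statement) to slide shifts along edges to their most convenient position. Both before and after, the diagram consists of two adjacent $\mc A^{\to}$-vertices joined by a cospan into $\mc A$ with shift labels $p$ and $q$, while the vertical free edges have changed position relative to the horizontal axis: the left free edge goes from pointing down to pointing up, and the right free edge goes from pointing up to pointing down. Via the convention of \cref{graphconvention}, this repositioning is exactly a single cyclic rotation of the three functor labels at each vertex, in opposite rotational senses at the two vertices (the left goes ``counterclockwise'' while the right goes ``clockwise'', by the mirror symmetry across the horizontal baseline).

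First I would move all shift data to the vertical edges, i.e.\ absorb $i,j$ into the horizontal edges on the outside (they are preserved by the move and play no essential role) and rewrite the middle edge with its asymmetric label $(p,q)$ in the form most convenient for comparison: a single shift $p$ on the left-vertex side and $q$ on the right-vertex side of the shared cospan. Then, working locally at the left vertex, I would apply the Rotation Lemma once to cycle $(C,\rho_1,\rho_2) \mapsto (\rho_1,\rho_2,C)$. The effect is to move the free edge from pointing down to pointing up and to redistribute shifts across the three surrounding edges; writing out the new shift on each of the three edges is a direct bookkeeping exercise from the statement of the Rotation Lemma. Symmetrically at the right vertex, one application of the Rotation Lemma in the opposite rotational sense moves the free edge from pointing up to pointing down and redistributes shifts analogously.

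After these two local rotations I would collect the new shifts on each edge. On the horizontal outer edges the labels $i,j$ return unchanged (after undoing the initial absorption). On the two vertical edges the shifts produced by the rotations read off precisely as $m+\Delta$ and $n-\Delta$: the $\Delta=p-q$ emerges because the two rotations act on the shared cospan in opposite senses, so the asymmetry $p$ vs.\ $q$ in that cospan contributes with opposite signs to the two vertical edges. On the middle edge itself, the two contributions coming from the two rotations exactly cancel, and the labels $(p,q)$ remain after reabsorbing shifts back to their original positions. Equivalence of the two diagrams (and hence of their homotopy limits) then follows from \cref{abstractsimple} applied to the two successive Rotation-Lemma quasi-equivalences.

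The main obstacle will be the sign and direction bookkeeping: getting the $+\Delta$ onto the top-left vertical edge and the $-\Delta$ onto the bottom-right vertical edge (rather than, say, $+\Delta$ on both, or $\pm\Delta$ attached to horizontal edges) requires carefully choosing which of the two possible rotation directions to use at each vertex, consistent with the planar embedding determined by \cref{graphconvention}. Once the correct pair of rotation directions is identified, the Rotation Lemma delivers the needed shifts exactly.
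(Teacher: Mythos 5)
Your proposed reduction to the Rotation Lemma does not work, and the obstruction is structural rather than a matter of sign bookkeeping. The Rotation Lemma produces a weak equivalence $X\to X'$ over the \emph{same} quiver $F(Q(G))$, given by $T:\mc A^{\to}\to\mc A^{\to}$ at the chosen trivalent vertex and identities elsewhere: it only permutes which of the three \emph{existing} half-edges at that vertex carries $C$, $\rho_1$, $\rho_2$. It never changes the incidence structure. But Generalized Move 2 does change the incidence structure: in the target diagram the $1$-bone that carried the shift $m$ (continuing to the upper ``$\vdots$'') is now attached to the \emph{left} vertex, and the $1$-bone that carried $n$ is attached to the \emph{right} vertex --- the two joints have slid past each other along the $0$-bone. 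No sequence of vertex-local rotations can reattach a pendant edge to a different vertex. There is also a secondary mismatch: a single rotation at the left vertex moves the $C$ label onto the shared middle edge (turning it into $[p-1]\circ C$), not onto an upward pendant, so the intermediate diagrams never even have the right shape; and consequently the claimed emergence of $\Delta=p-q$ on the two vertical edges has no derivation in your setup.

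The paper's proof supplies exactly the ingredient your approach is missing. After reducing to $p=q=0$ by the Shift Lemma, it collapses the shared cospan into a single $4$-valent vertex using \cref{steps}, placing there the category $\mc B^{\tx{out}}=\mc A^{\to}\times_{\rho_1}\mc A^{\to}$ (for $S$) or $\mc B^{\tx{in}}=\mc A^{\to}\times_{\rho_2}\mc A^{\to}$ (for $S'$), and then constructs an explicit quasi-equivalence $F^{+}:\mc B^{\tx{out}}\to\mc B^{\tx{in}}$ via the Gelfand--Manin reflection-functor formulas. The essential verification is that $F^{+}$ \emph{interchanges} the two cone functors, $C_{R}\circ F^{+}\simeq C_{L}$ and $C_{L}\circ F^{+}\simeq C_{R}$ (an octahedron-type computation with $C(ji_R)$); this interchange is precisely what realizes the swap of the two $1$-bones that the Rotation Lemma cannot see. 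Note also that in the paper's own logic Move 2 is the primitive input and Move 3 is \emph{derived} from it together with rotations; if Move 2 followed from rotations alone it would carry no content.
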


We will prove \cref{gen2} in the next subsection. In particular, by taking all shifts but $n$ and $m$ to be $0$, we get a simple statement about the algebraic effect of Move 2: \textit{shifts do not change under these isotopies}.

With little more effort, we can derive the algebraic effect of Move 3 from that of Move 2. \\
\begin{lemma}\label{move3}
The following moves preserve homotopy limits, where $\D\coloneqq p-q$.  
% https://q.uiver.app/#q=WzAsMTUsWzAsMywiXFxjZG90cyJdLFsxLDMsIlxcYnVsbGV0Il0sWzIsMywiXFxidWxsZXQiXSxbMywzLCJcXGNkb3RzIl0sWzIsMSwiXFx2ZG90cyJdLFsxLDEsIlxcdmRvdHMiXSxbNSwzLCJcXGNkb3RzIl0sWzgsMywiXFxjZG90cyJdLFs3LDMsIlxcYnVsbGV0Il0sWzcsMiwiXFxidWxsZXQiXSxbNywxLCJcXHZkb3RzIl0sWzYsMSwiXFx2ZG90cyJdLFs0LDIsIlxccmlnaHRzcXVpZ2Fycm93Il0sWzEsMCwiUyJdLFs2LDAsIlMnIl0sWzAsMSwiMSIsMCx7InN0eWxlIjp7ImhlYWQiOnsibmFtZSI6Im5vbmUifX19XSxbMSwyLCIyXFxxdWFkIDEiLDAseyJzdHlsZSI6eyJoZWFkIjp7Im5hbWUiOiJub25lIn19fV0sWzIsMywiMiIsMCx7InN0eWxlIjp7ImhlYWQiOnsibmFtZSI6Im5vbmUifX19XSxbMiw0LCIzIiwwLHsibGFiZWxfcG9zaXRpb24iOjMwLCJzdHlsZSI6eyJoZWFkIjp7Im5hbWUiOiJub25lIn19fV0sWzEsNSwiMyIsMCx7ImxhYmVsX3Bvc2l0aW9uIjozMCwic3R5bGUiOnsiaGVhZCI6eyJuYW1lIjoibm9uZSJ9fX1dLFs4LDcsIjIiLDAseyJzdHlsZSI6eyJoZWFkIjp7Im5hbWUiOiJub25lIn19fV0sWzgsOSwiXFxvdmVyc2V0ezF9ezN9IiwwLHsic3R5bGUiOnsiaGVhZCI6eyJuYW1lIjoibm9uZSJ9fX1dLFs5LDEwLCIyIiwwLHsic3R5bGUiOnsiaGVhZCI6eyJuYW1lIjoibm9uZSJ9fX1dLFs5LDExLCIzIiwwLHsibGFiZWxfcG9zaXRpb24iOjMwLCJjdXJ2ZSI6LTIsInN0eWxlIjp7ImhlYWQiOnsibmFtZSI6Im5vbmUifX19XSxbOCw2LCIxIiwyLHsibGFiZWxfcG9zaXRpb24iOjMwLCJzdHlsZSI6eyJoZWFkIjp7Im5hbWUiOiJub25lIn19fV1d
\[\begin{tikzcd}
	& \vdots & \vdots &&&& \vdots & \vdots \\
	&&&& \rightsquigarrow &&& \bullet \\
	\cdots & \bullet & \bullet & \cdots && \cdots && \bullet & \cdots
	\arrow["{i}", shorten >=-8pt, no head, from=3-1, to=3-2]
	\arrow["{p\quad q}", shorten <=-8pt, shorten >=-8pt,no head, from=3-2, to=3-3]
	\arrow["{j}", shorten <=-8pt, no head, from=3-3, to=3-4]
	\arrow["{n}"{pos=0.3}, shorten <=-8pt, no head, from=3-3, to=1-3]
	\arrow["{m}"{pos=0.3}, shorten <=-8pt, no head, from=3-2, to=1-2]
	\arrow["{j}", shorten <=-8pt, no head, from=3-8, to=3-9]
	\arrow["{\overset{p}{q}}", shorten <=-8pt, shorten >=-8pt, no head, from=3-8, to=2-8]
	\arrow["{n+\D}"', shorten <=-8pt, no head, from=2-8, to=1-8]
	\arrow["{m-1}"{pos=0.3}, shorten <=-8pt, bend right = -45, no head, from=2-8, to=1-7]
	\arrow["{i-\D}"'{pos=0.5}, shorten <=-8pt, no head, from=3-8, to=3-6]
\end{tikzcd}\]
% https://q.uiver.app/#q=WzAsOCxbMCwyLCJ+Il0sWzUsMiwiXFxjZG90cyJdLFs4LDIsIlxcY2RvdHMiXSxbNCwxLCJcXHJpZ2h0c3F1aWdhcnJvdyJdLFs2LDIsIlxcYnVsbGV0Il0sWzYsMSwiXFxidWxsZXQiXSxbNiwwLCJcXHZkb3RzIl0sWzcsMCwiXFx2ZG90cyJdLFs0LDEsIltpLVxcRGVsdGFdXFxjaXJjMVxcLFxcLFxcLCIsMix7InN0eWxlIjp7ImhlYWQiOnsibmFtZSI6Im5vbmUifX19XSxbNCwyLCJbal1cXGNpcmMgMiIsMCx7ImxhYmVsX3Bvc2l0aW9uIjozMCwic3R5bGUiOnsiaGVhZCI6eyJuYW1lIjoibm9uZSJ9fX1dLFs0LDUsIlxcb3ZlcnNldHtbcF1cXGNpcmMyfXtbcV1cXGNpcmMzfSIsMix7InN0eWxlIjp7ImhlYWQiOnsibmFtZSI6Im5vbmUifX19XSxbNSw2LCJbbStcXERlbHRhXVxcY2lyYyAxIiwwLHsic3R5bGUiOnsiaGVhZCI6eyJuYW1lIjoibm9uZSJ9fX1dLFs1LDcsIltuXVxcY2lyYyAzIiwyLHsibGFiZWxfcG9zaXRpb24iOjIwLCJjdXJ2ZSI6Miwic3R5bGUiOnsiaGVhZCI6eyJuYW1lIjoibm9uZSJ9fX1dXQ==
\[\begin{tikzcd}
	&&&&&&&& \vdots & \vdots \\
	&&&&&& \,\,\,\,\,\,\rightsquigarrow && \bullet \\
	{~} &&&&&&& \cdots & \bullet && \cdots
	\arrow["{m}", shorten <=-8pt, no head, from=2-9, to=1-9]
	\arrow["{n+\D}"'{pos=0.2}, shorten <=-8pt, bend right=45, no head, from=2-9, to=1-10]
	\arrow["{\overset{p}{q}}"', shorten <=-8pt, shorten >=-8pt, no head, from=3-9, to=2-9]
	\arrow["{i-\Delta}"', shorten <=-8pt, no head, from=3-9, to=3-8]
	\arrow["{j}"{pos=0.3}, shorten <=-8pt, no head, from=3-9, to=3-11]
\end{tikzcd}\]
\end{lemma}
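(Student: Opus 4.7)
The plan is to deduce Move 3 from Generalized Move 2 (\cref{gen2}) together with Move 0 (\cref{algebraicmove1.2}), reflecting the geometric fact that a handle slide decomposes into a vertex isotopy combined with, in some cases, a boundary traversal around an index 0 bone. Each RHS configuration contains a single merged trivalent vertex, which is exactly what one obtains by sliding one endpoint of the $(p,q)$-labeled 1-handle past the adjacent trivalent vertex of the LHS.

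For each of the two configurations, I would first realize the RHS as the result of this slide. By \cref{gen2}, performing such a slide propagates the shift difference $\Delta = p - q$ along the adjacent edges: the shift on one of the two upper edges increases by $\Delta$, while the outgoing edge labeled $i$ decreases by $\Delta$. This accounts for the labels $n + \Delta$ (respectively $m$ unchanged in the second variant) and $i - \Delta$ seen in the target diagrams, as well as the relative positions of the shifts $p$ and $q$ after the reorganization.

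The $m - 1$ appearing in the first move is then explained by one additional application of Move 0: the handle slide in this case traverses a boundary arc of an index 0 bone in the clockwise sense, and \cref{algebraicmove1.2} shows that such a boundary isotopy shifts the corresponding cone functor by $-1$. In the second move the slide occurs in the opposite rotational sense, so no Move 0 is required and the $m$ label is preserved. Composing these steps (which act on disjoint parts of the diagram and hence commute up to the equivalences we have already established) realizes the claimed equivalence of homotopy limits in each case.

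The main obstacle I anticipate is bookkeeping the orientation conventions (\cref{graphconvention}, \cref{assignmentconvention}) carefully enough to determine which outgoing edges receive the $+\Delta$ and which the $-\Delta$ shifts from \cref{gen2}, and to confirm that exactly one application of Move 0 is required in the first case and none in the second. This comes down to checking that the $A, B, C$ half-edge labeling at the vertex being slid past, together with the directionality of the isotopy chosen for the slide, matches the labels in the target configurations on the nose; any sign or orientation mismatch would force an incorrect count of boundary isotopies and throw off the $-1$ shift that distinguishes the two variants.
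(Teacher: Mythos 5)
There is a genuine gap here, on two counts. First, \cref{gen2} cannot be applied directly to the two sides of Move 3: Generalized Move 2 relates two diagrams of the \emph{same} local shape (an isotopy sliding the vertical edges across the $(p,q)$-edge), whereas the left and right sides of Move 3 are genuinely different graph configurations (pre- and post-handle-slide, with one trivalent vertex now sitting ``above'' the other). The missing bridge is the Rotation Lemma (\cref{rotationlemma}): the quasi-equivalence $T:\mc A^{\to}\to\mc A^{\to}$ lets you cyclically re-designate which half-edge at a trivalent vertex carries $C$ versus $\rho_1,\rho_2$, at the cost of replacing $[i]\circ\rho_1$ by $[i-1]\circ C$. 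The paper's proof temporarily records which half-edge is the cone edge, rotates three of the four vertices involved, and only then does the configuration literally become an instance of \cref{gen2}. Your proposal has no mechanism for this re-identification of half-edge roles, which is precisely what makes the two pictures comparable.

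Second, the $m-1$ is not produced by \cref{algebraicmove1.2}. Move 0 is a \emph{boundary} isotopy: its statement and its proof (two applications of \cref{move1}, one of which hits the asymmetric $j\mapsto j+1$ case) require the joint to pass around the end of an index 0 bone, i.e.\ past a univalent vertex. In the configuration of Move 3 the 0-bones continue on both sides (the $\cdots$), so no boundary is traversed and Move 0 is simply not applicable. The $-1$ on $m$ is a rotation artifact: it appears because the half-edge labeled $m$ changes from being the $C$ half-edge to being a $\rho$ half-edge (or vice versa) when the vertex is rotated into Gen Move 2 position. Your instinct that the discrepancy between the two variants of Move 3 is an orientation effect is correct, but the tool that realizes it is the Rotation Lemma, not Move 0, and without it the bookkeeping you flag as the ``main obstacle'' cannot be carried out.
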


\begin{proof}Performing the Rotation Lemma is not compatible with the convention of labeling an arboreal graph with \textit{only} the shifts of functors, as we have been relying on \cref{graphconvention} to tell us the ordering of half edges at a trivalent vertex. We will instead explicitly indicate the half edge $C$ by drawing a bar over its respective shift.

With this temporary notation, the first move above is notated as
\[\begin{tikzcd}
	& \vdots & \vdots &&&& \vdots & \vdots \\
	&&&& \rightsquigarrow &&& \bullet \\
	\cdots & \bullet & \bullet & \cdots && \cdots && \bullet & \cdots
	\arrow["{i}", shorten >=-8pt, no head, from=3-1, to=3-2]
	\arrow["{p\quad q}", shorten <=-8pt, shorten >=-8pt, no head, from=3-2, to=3-3]
	\arrow["{j}", shorten <=-8pt,no head, from=3-3, to=3-4]
	\arrow["{\olin n}"{pos=0.3}, shorten <=-8pt, no head, from=3-3, to=1-3]
	\arrow["{\olin m}"{pos=0.3}, shorten <=-8pt, no head, from=3-2, to=1-2]
	\arrow["{j}", shorten <=-8pt, no head, from=3-8, to=3-9]
	\arrow["{\overset{p}{\olin q}}", shorten <=-8pt, shorten >=-8pt,no head, from=3-8, to=2-8]
	\arrow["{n+\D}"', shorten <=-8pt, no head, from=2-8, to=1-8]
	\arrow["{\olin{m-1}}"{pos=0.3}, shorten <=-8pt,bend right =-45, no head, from=2-8, to=1-7]
	\arrow["{i-\D}"',{pos=0.5}, shorten <=-8pt, no head, from=3-8, to=3-6]
\end{tikzcd}\]

Applying the Rotation Lemma to three of the four vertices above, we arrive at the following equivalent problem:
% https://q.uiver.app/#q=WzAsMTMsWzAsMiwiXFxjZG90cyJdLFsxLDIsIlxcYnVsbGV0Il0sWzIsMiwiXFxidWxsZXQiXSxbMywyLCJcXGNkb3RzIl0sWzIsMCwiXFx2ZG90cyJdLFsxLDAsIlxcdmRvdHMiXSxbNSwyLCJcXGNkb3RzIl0sWzgsMiwiXFxjZG90cyJdLFs3LDIsIlxcYnVsbGV0Il0sWzcsMSwiXFxidWxsZXQiXSxbNywwLCJcXHZkb3RzIl0sWzYsMCwiXFx2ZG90cyJdLFs0LDEsIlxccmlnaHRzcXVpZ2Fycm93Il0sWzAsMSwiWy0xXVxcY2lyYzMiLDAseyJzdHlsZSI6eyJoZWFkIjp7Im5hbWUiOiJub25lIn19fV0sWzEsMiwiMVxccXVhZCAxIiwwLHsic3R5bGUiOnsiaGVhZCI6eyJuYW1lIjoibm9uZSJ9fX1dLFsyLDMsIjIiLDAseyJzdHlsZSI6eyJoZWFkIjp7Im5hbWUiOiJub25lIn19fV0sWzIsNCwiMyIsMCx7ImxhYmVsX3Bvc2l0aW9uIjozMCwic3R5bGUiOnsiaGVhZCI6eyJuYW1lIjoibm9uZSJ9fX1dLFsxLDUsIjIiLDAseyJsYWJlbF9wb3NpdGlvbiI6MzAsInN0eWxlIjp7ImhlYWQiOnsibmFtZSI6Im5vbmUifX19XSxbOCw3LCIxIiwwLHsic3R5bGUiOnsiaGVhZCI6eyJuYW1lIjoibm9uZSJ9fX1dLFs4LDksIlxcb3ZlcnNldHsyfXsyfSIsMCx7InN0eWxlIjp7ImhlYWQiOnsibmFtZSI6Im5vbmUifX19XSxbOSwxMCwiMyIsMCx7InN0eWxlIjp7ImhlYWQiOnsibmFtZSI6Im5vbmUifX19XSxbOSwxMSwiWzFdXFxjaXJjMSIsMCx7ImxhYmVsX3Bvc2l0aW9uIjozMCwiY3VydmUiOi0yLCJzdHlsZSI6eyJoZWFkIjp7Im5hbWUiOiJub25lIn19fV0sWzgsNiwiWy0xXVxcY2lyYyAzIiwyLHsibGFiZWxfcG9zaXRpb24iOjMwLCJzdHlsZSI6eyJoZWFkIjp7Im5hbWUiOiJub25lIn19fV1d
\[\begin{tikzcd}
	& \vdots & \vdots &&&& \vdots & \vdots \\
	&&&& \rightsquigarrow &&& \bullet \\
	\cdots & \bullet & \bullet & \cdots && \cdots && \bullet & \cdots
	\arrow["{\olin{i-1}}", shorten >=-8pt,no head, from=3-1, to=3-2]
	\arrow["{p\quad q}", shorten <=-8pt, shorten >=-8pt,no head, from=3-2, to=3-3]
	\arrow["{j}", shorten <=-8pt, no head, from=3-3, to=3-4]
	\arrow["{\olin n}"{pos=0.3}, no head, from=3-3, to=1-3]
	\arrow["{m}"{pos=0.3}, shorten <=-8pt, no head, from=3-2, to=1-2]
	\arrow["{j}", no head, shorten <=-8pt, from=3-8, to=3-9]
	\arrow["{\overset{p}{ q}}", shorten <=-8pt, shorten >=-8pt,no head, from=3-8, to=2-8]
	\arrow["{\olin{n+\D}}"', shorten <=-8pt, no head, from=2-8, to=1-8]
	\arrow["{m}"{pos=0.3}, shorten <=-8pt, bend right =-45, no head, from=2-8, to=1-7]
	\arrow["{\olin{i-\D-1}}"',{pos=0.5}, shorten <=-8pt, no head, from=3-8, to=3-6]
\end{tikzcd}\]

This is a case of Generalized Move 2, proving the first move. The second move is obtained by applying the Rotation Lemma to the top vertex of the result of the first move.
\end{proof}

Each of the moves above is half of a handle slide. By combining them we get a rule for performing an entire handle slide.\\

\begin{lemma}[Generalized Move 3]\label{algebraichandleslide}
The following moves preserve homotopy limits, where $\D\coloneqq p-q$ and $\D'\coloneqq n-k$.

%%%%%%%%%%%%algebraicslide%%%%%%%%%%%%%%
\begin{figure}[H]
\centering
\includegraphics[scale=.8]{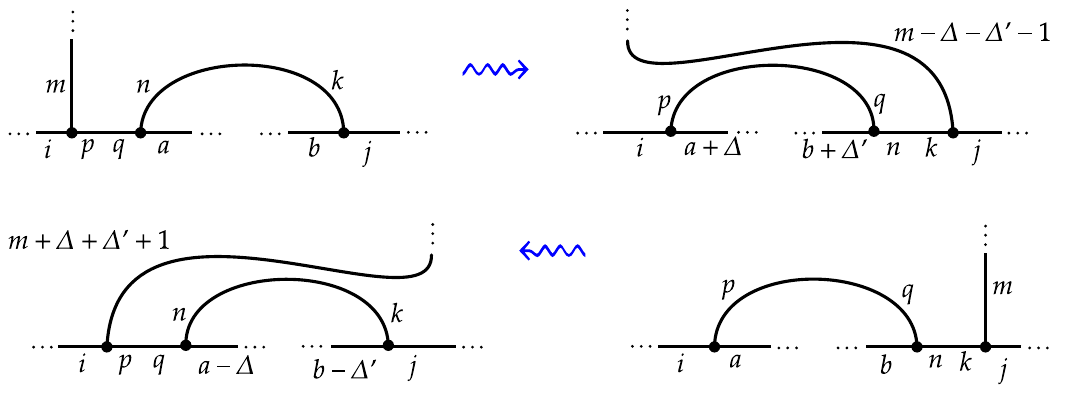}
\label{algebraicslide}
\end{figure}
%%%%%%%%%%%%algebraicslide%%%%%%%%%%%%%%

\end{lemma}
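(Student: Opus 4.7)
The plan is to derive Generalized Move 3 as a composition of two half-slides, each handled by one of the two cases in \cref{move3}. Geometrically, a full handle slide moves one 1-handle $h$ across an adjacent 1-handle $h'$, which relocates \emph{two} trivalent vertices (one joint at each end of $h$), each such relocation being precisely one of the two local moves of \cref{move3}.

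First, I would identify in the figure \texttt{algebraicslide} which cospan corresponds to $h$ being slid and which to the 1-handle $h'$ it is sliding over, then split the move into two halves: the first half relocates the trivalent vertex at one end of $h$, and the second half relocates the other end. The shift labels $p,q$ belong to one cospan and $n,k$ to the other, so I would identify the corresponding differences $\D = p - q$ and $\D' = n - k$ as the \emph{only} shift adjustments produced, one per half-slide.

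Second, I would apply the first case of \cref{move3} at the first end. This introduces exactly the shifts indexed by $\D$ on the edges recorded in that statement, and inserts the new trivalent vertex in its relocated position. Then I would apply the second case of \cref{move3} at the opposite end, which introduces exactly the shifts indexed by $\D'$. The key check is that the output of the first half-move is a valid input for the second: this holds because the shift labels from the first step only live on half-edges disjoint from those manipulated by the second step, so the two applications commute in the sense that their local effects simply superimpose. The resulting total shift pattern is the one claimed, up to the trivial absorption move
\begin{tikzcd}
\bullet & \bullet & \rightsquigarrow & \bullet & \bullet
\arrow["{n\,\,\,\quad m}", shorten <=-8pt, shorten >=-8pt, no head, from=1-1, to=1-2]
\arrow["{n+k\,\,\,\, m+k}", shorten <=-8pt, shorten >=-8pt, no head, from=1-4, to=1-5]
\end{tikzcd}
noted at the start of \cref{gen2}, which may be required to put the final labels into the exact form shown in the figure.

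The main obstacle will be the bookkeeping of shifts. Each application of \cref{move3} involves shifts appearing with offsets like $m-1$, $n+\D$, and $i-\D$, plus the role played by the Rotation Lemma which shuffles the roles of $\rh_1,\rh_2,C$ and creates off-by-one adjustments. Tracking these carefully through both halves and then performing the final absorptions so that the labels match those displayed in \texttt{algebraicslide} is essentially a direct but delicate calculation; none of the underlying homotopical input goes beyond \cref{gen2}, \cref{move3}, and the Rotation and Shift Lemmas.
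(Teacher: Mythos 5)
Your proposal matches the paper's proof: the lemma is obtained by applying the first move of \cref{move3}, performing an intermediate edge-shift (the absorption move you cite), and then applying the second move of \cref{move3}, with the bookkeeping of $\D$ and $\D'$ being exactly the delicate-but-routine part you anticipate. The only point you leave implicit is the second handle slide in the figure, which the paper dispatches by noting it is formally the inverse of the first.
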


\begin{proof}
To realize the first handle slide, we apply the first move of \cref{move3}, then perform the shifts shown below, and finally apply the second move of \cref{move3}.
%%%%%%%%%proof1small%%%%%%%%%%%%%%%%%
\begin{figure}[H]
\centering
\includegraphics[scale=.8]{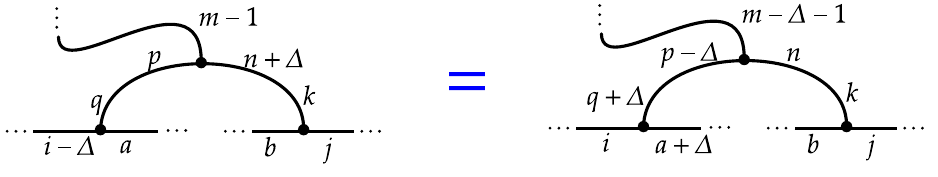}
\label{proof1small}
\end{figure}
%%%%%%%%%proof1small%%%%%%%%%%%%%%%%%
The second handle slide is formally the inverse of the first.
\end{proof}

For our purposes, we only need to shift the Cone functors.\\

\begin{lemma}[Move 3]\label{algebraicmove3}
The following moves preserve homotopy limits. 

%%%%%%%%%%%%simplehandle%%%%%%%%%%
\begin{figure}[H]
\centering
\includegraphics[scale=.8]{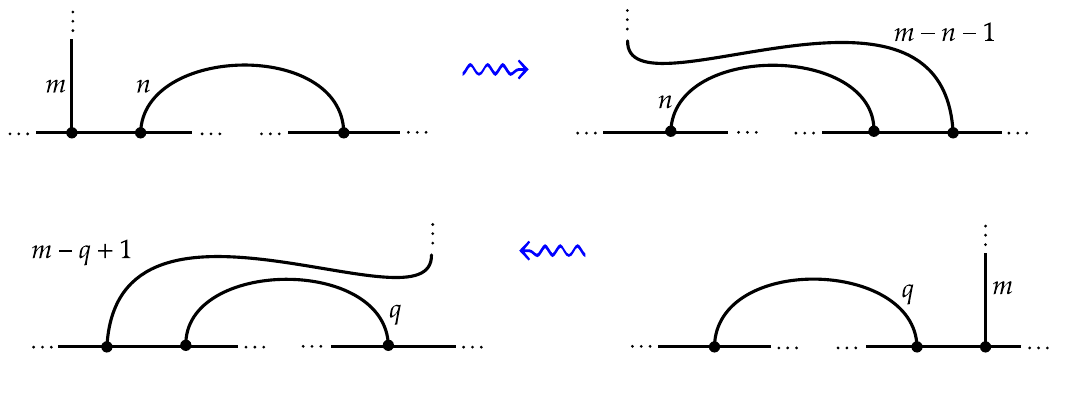}
\label{simplehandle}
\end{figure}
%%%%%%%%%%%simplehandle%%%%%%%%%%%

\end{lemma}

\begin{proof}
As before, we only need to show the first move. Applying \cref{algebraichandleslide} to the special case where only $m$ and $n$ are allowed to be nonzero, we get the following move:

%%%%%%%%%%%%%proof2%%%%%%%%%%%%%%%%%%
\begin{figure}[H]
\centering
\includegraphics[scale=.85]{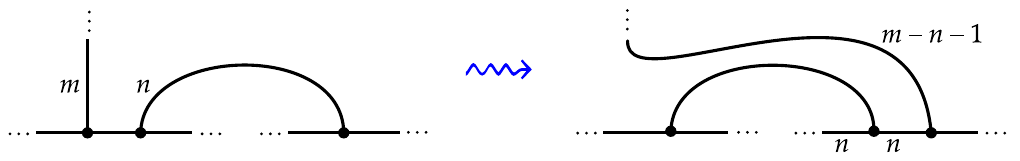}
\label{proof2}
\end{figure}
%%%%%%%%%%%%%%proof2%%%%%%%%%%%%%%%%%

Now shift the middle vertex by $-n$, and add $n$ to both halves of the associated 1-handle.
\end{proof}

\subsection{Proof of Generalized Move 2}

\begin{proof}[Proof of \cref{gen2}]
By applying the Shift Lemma, we can reduce to the case $p=q=0$, which we now assume. Call the left diagram $S$ and the right diagram $S'$. These represent two ends of a Weinstein isotopy. The skeleton in the middle of the isotopy is a 4-valent vertex. This is not arboreal, but there are two natural diagrams associated to it, resulting from $S$ and $S'$ respectively. Here they are, along with the proposed equivalence:

% https://q.uiver.app/#q=WzAsMTUsWzIsMiwiXFxtYXRoY2FsIEJeXFx0ZXh0e291dH0iXSxbMywyLCJcXG1hdGhjYWwgQV8xIl0sWzIsMSwiXFxtYXRoY2FsIEFfMSJdLFsxLDIsIlxcbWF0aGNhbCBBXzEiXSxbMiwzLCJcXG1hdGhjYWwgQV8xIl0sWzUsMiwiXFxtYXRoY2FsIEFfMSJdLFs2LDIsIlxcbWF0aGNhbCBCXlxcdGV4dHtpbn0iXSxbNiwxLCJcXG1hdGhjYWwgQV8xIl0sWzcsMiwiXFxtYXRoY2FsIEFfMSJdLFs2LDMsIlxcbWF0aGNhbCBBXzEiXSxbNCwyLCJcXHNpbWVxIl0sWzAsMCwiUyJdLFs4LDAsIlMnIl0sWzEsMV0sWzcsMV0sWzAsMSwiXFxwaV9SIl0sWzAsMiwiQ19SIiwyXSxbMCwzLCJcXHBpX0wiLDJdLFswLDQsIkNfTCJdLFs2LDUsIlxccGlfTCIsMl0sWzYsNywiQ19MIl0sWzYsOCwiXFxwaV9SIl0sWzYsOSwiQ19SIiwyXSxbMTEsMTMsIiIsMCx7InN0eWxlIjp7ImJvZHkiOnsibmFtZSI6ImRhc2hlZCJ9fX1dLFsxMiwxNCwiIiwwLHsic3R5bGUiOnsiYm9keSI6eyJuYW1lIjoiZGFzaGVkIn19fV1d

\be\label{outin}\begin{tikzcd}
	& {} & {\mc A} &&&& {\mc A} & {} \\
	& {\mc A} & {\mathcal B^\text{out}} & {\mc A} & \simeq & {\mc A} & {\mathcal B^\text{in}} & {\mc A} \\
	&& {\mc A} &&&& {\mc A}
	\arrow["{[j]\circ\pi_R}", from=2-3, to=2-4]
	\arrow["{[m]\circ C_R}"', from=2-3, to=1-3]
	\arrow["{[i]\circ \pi_L}"', from=2-3, to=2-2]
	\arrow["{[n]\circ C_L}", from=2-3, to=3-3]
	\arrow["{[i]\circ\pi_L}"', from=2-7, to=2-6]
	\arrow["{[m]\circ C_L}", from=2-7, to=1-7]
	\arrow["{[j]\circ\pi_R}", from=2-7, to=2-8]
	\arrow["{[n]\circ C_R}"', from=2-7, to=3-7]
\end{tikzcd}\ee

Here $\mc B^{\tx{out}}\coloneqq \mathrm{Fun}(\bcd \leftarrow \bcd \to \bcd,\mc A)=\mc A^{\to}\times_{\rho_{1}}\mc A^{\to}$ and $\mc B^{\tx{in}}\coloneqq\mathrm{Fun}(\bcd \to \bcd \leftarrow \bcd, \mc A)=\mc A^{\to}\times_{\rho_{2}}\mc A^{\to}$. Since $\rho_{1}$ and $\rho_{2}$ are fibrations, these pullbacks are also homotopy pullbacks.  The functors shown in \cref{outin} indicate projections and cones along the left and right arrows in each category. By replacing the two distinguished vertices in $S$ by one of the 4-valent vertices in \cref{outin}, we obtain two new graphic (though not arboreal) diagrams. Call these new diagrams $S^{\tx{out}}$ and $S^{\tx{in}}$, respectively. The full edges in $S$ and $S'$ have homotopy limits $\mc B^{\tx{out}}$ and $\mc B^{\tx{in}}$, respectively, so \cref{steps} yields $\holim S\simeq\holim S^{\tx{out}}$ and $\holim S'\simeq\holim S^{\tx{in}}$. It remains to prove the weak equivalence in the middle of \cref{outin}.\footnote{The cospans defining $\mc B^{\tx{out}}$ and $\mc B^{\tx{in}}$ are themselves weakly equivalent via the quasi-equivalence $T:\mc A^{\to}\lr \mc A^{\to}$. This already implies that $\mc B^{\tx{out}}\simeq\mc B^{\tx{in}}$, but we construct the quasi-equivalence explicitly to show that relevant squares commute.}

 In \cite[IV.4]{GM}, Gelfand and Manin use a triangulated version of $\mc B^{\tx{out/in}}$, namely as the derived categories of quiver representations in $k$-modules\footnote{Equivalently, the derived categories of modules over path algebras.}. The authors provide an equivalence between these derived categories. Our categories specializing to dg-enhancements of theirs, we will use the same formulas from their construction.

We define dg-functors $\varphi^{\text{out/in}}:\mc B^{\tx{out/in}}\lr \mc A^{\to}$. On objects
$V=(V_{L}\overset{\ell}\leftarrow V_{1}\overset{r}\to V_{R})$ and $W=(W_{L}\overset{\ell}\to W_{1}\overset{r}\leftarrow W_{R})$, $\vp^{\tx{out}}$ returns  $(V_{1}\overset{(\ell,r)^{\sf T}}\lr V_{L}\oplus V_{R})$ and $\vp^{\tx{in}}$ returns $(W_{L}\oplus W_{R}\overset{(\ell, r)}\lr W_{1})$, respectively. On a morphism $f=(f_{L},h_{L},f_{1},h_{R},f_{R})$, we define $\vp^{\tx{out}}(f)\coloneqq (f_{1}, (h_{L},h_{R})^{\sf T}, \mathrm{diag}(f_{L},f_{R}))$ and 
$\vp^{\tx{in}}(f)\coloneqq(\mathrm{diag}(f_{L},f_{R}), (h_{L},h_{R}), f_{1})$.

From these we construct a pair of quasi-inverses\footnote{Meaning that the composition each way is naturally homotopy equivalent to the identity.} $F^{+}:\mc B^{\tx{out}}\simeq \mc B^{\tx{in}}: F^{-}$. On objects, $F^{+}$ returns $(V_{L}\to{C(\varphi^{\text{out}}(V))}\leftarrow V_{R})$\footnote{\label{rest}The arrows are the restrictions of $T((\ell,r)^{\sf T}):V_{L}\oplus V_{R}\lr C(\vp^{\tx{out}}(V))$.} and $F^{-}$ returns $(W_{L}\leftarrow{C(\varphi^{\text{in}}(W))[-1]}\to W_{R})$\footnote{The arrows are the projections of $T'((\ell,r)):C(\varphi^{\text{in}}(W))[-1]\lr W_{L}\oplus W_{R}$.}. On morphisms, we define $F^{+}(f)\coloneqq(f_{L},0,C(\vp^{\tx{out}}(f)),0,f_{R})$ and $F^{-}(f)\coloneqq(f_{L},0,C(\vp^{\tx{in}}(f))[-1],0,f_{R})$. The composition $F^{-}F^{+}(V)$ is formed by taking the canonical projections of $T'T((\ell,r)^{\sf T})\simeq (\ell,r)^{\sf T}=\vp^{\tx{out}}(V)$. This recovers $V$ up to homotopy equivalence. Similarly, $F^{+}F^{-}(W)$ is formed by taking the canonical restrictions of $TT'((\ell,r))\simeq \vp^{\tx{in}}(W)$, which recovers $W$ up to homotopy equivalence.

In particular, $F^{\pm}$ are quasi-equivalences. Define $S^{\tx{out}}\lr S^{\tx{in}}$ by using $F^{+}$ at $\mc B^{\tx{out}}$ and identities everywhere else. We show that these commute up to natural homotopy equivalence. The only nontrivial squares are $C_{R}\circ F^{+}\simeq C_{L}$ and $C_{L}\circ F^{+}\simeq C_{R}$. Write $i_{L}$ and $i_{R}$ for the canonical inclusions to $V_{L}\oplus V_{R}$ and $j$ for the map in \cref{rest}. Then $C_{R}\circ F^{+}(V)=C(ji_{R})\simeq C(C(j)[-1]\to C(i_{R}))\simeq C(\ell)=C_{L}(V),$ and similarly when $C_{L}$ and $C_{R}$ are swapped.
\end{proof}

The argument above generalizes. Given a quiver $Q$ and a vertex $v$ in $Q$, define $\s_{v}Q$ to be the quiver obtained from $Q$ by reversing all arrows incident to $v$. \\

\begin{proposition}\label{repequivalence}
Fix an acyclic simple quiver $Q$ and a source or sink $v$ in $Q$. There is a quasi-equivalence $\mathrm{Fun}(Q,\mc A)\lr  \mathrm{Fun}(\s_{v}Q, \mc A)$.
\end{proposition}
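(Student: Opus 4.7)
The plan is to generalize the explicit quasi-inverses $F^{\pm}$ constructed in the proof of \cref{gen2}. Assume without loss of generality that $v$ is a source in $Q$ (the sink case is formally dual, using $T'$ in place of $T$). Since $Q$ is simple, acyclic, and $v$ is a source, the arrows incident to $v$ form a fan $v \to w_1, \ldots, v \to w_n$ with the $w_i$ mutually distinct and distinct from $v$; reversing these arrows turns $v$ into a sink in $\s_v Q$ without creating any new cycles, and leaves the rest of the quiver unchanged.

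First I would construct $F^+ : \mathrm{Fun}(Q, \mc A) \lr \mathrm{Fun}(\s_v Q, \mc A)$. Given $V \in \mathrm{Fun}(Q, \mc A)$, let $F^+(V)$ agree with $V$ on every vertex and every arrow of $Q$ not incident to $v$, replace $V_v$ by $C_v \coloneqq C\!\left(V_v \xrightarrow{(V(v \to w_i))_i} \bigoplus_{i=1}^n V_{w_i}\right)$, and assign to each reversed arrow $w_i \to v$ of $\s_v Q$ the composition of the canonical inclusion $V_{w_i} \hookrightarrow \bigoplus_j V_{w_j}$ with the canonical map $\bigoplus_j V_{w_j} \to C_v$. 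The action of $F^+$ on morphism complexes is modeled verbatim on the formulas of \cref{gen2}: outside the arrows at $v$, $F^+$ is the identity, and at $v$ it uses the functoriality of $C$ and $\bigoplus$ together with zero null-homotopy slots on the new reversed edges, just as in the formula $F^+(f) = (f_L, 0, C(\vp^{\mathrm{out}}(f)), 0, f_R)$ in the two-edge case.

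Dually I would define $F^-$ by replacing $W_v$ with $C\!\left(\bigoplus_i W_{w_i} \to W_v\right)[-1]$ and taking the arrows from the new object at $v$ to each $W_{w_i}$ to be the components of its canonical map to $\bigoplus_i W_{w_i}$. Then the composition $F^- F^+(V)$ replaces $V_v$ with $C\!\left(\bigoplus V_{w_i} \to C_v\right)[-1]$, which is naturally homotopy equivalent to $V_v$ by the standard triangle identity $T'T \simeq \id$, and compatibility with the outgoing arrows at $v$ is witnessed by the canonical null-homotopy. Similarly $F^+ F^- \simeq \id$ via $TT' \simeq \id$. Feeding these natural homotopy equivalences into \cref{abstractsimple}, applied to the two-object diagrams $\id$ and $F^{\mp}F^{\pm}$, yields that $F^+$ is a quasi-equivalence.

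The main obstacle I anticipate is bookkeeping in the morphism complexes of $\mathrm{Fun}(Q, \mc A)$, which record a null-homotopy slot for every arrow of $Q$, and checking that these slots transform coherently under $F^{\pm}$ so that $d$ and composition match on the nose. The observation that renders this tractable is uniformity: because $v$ is a source, every arrow at $v$ points the same way, and $F^+$ handles them all simultaneously via the bundled map $V_v \to \bigoplus V_{w_i}$. The rest of $Q$, which can interact with $v$ only through the $w_i$, is untouched, so no new compatibility conditions arise beyond those already verified in the $n = 2$ case of \cref{gen2}.
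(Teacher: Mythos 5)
Your proposal is correct and takes essentially the same approach as the paper, whose entire proof is the remark that $F^{+}$ (resp.\ $F^{-}$) generalizes to a source (resp.\ sink) $v$ via the reflection-functor formulas of \cite[IV.4]{GM}; you have simply written out that generalization explicitly. The points you emphasize---replacing $V_{v}$ by the cone of the bundled map $V_{v}\to\bigoplus_{i}V_{w_{i}}$, putting zero in the homotopy slots of the reversed arrows, and verifying $F^{\mp}F^{\pm}\simeq \id$ via $T'T\simeq \id$ and $TT'\simeq \id$---are exactly the ones the paper's argument relies on.
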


\begin{proof}
If $v$ is a source (resp. sink) then $F^{+}$ (resp. $F^{-}$) generalizes via the formulas in \cite[IV.4]{GM} to the desired dg-functor $F_{v}^{+}$ (resp. $F_{v}^{-}$) with quasi-inverse $F_{v}^{-}$ (resp. $F_{v}^{+})$.
\end{proof}
Thus all equivalences in the aforementioned result in \cite[IV.4]{GM} can be upgraded to quasi-equivalences on their dg-enhancements. In particular, we have $\mc B^{\tx{out}}\,\,\simeq\,\, \mc B^{\tx{in}}\,\,\simeq\,\,\mathrm{Fun}(\bcd\to\bcd\to\bcd, \mc A).$

%%%%%%%%% CONSTRUCTION OF THE INVARIANT %%%%%%
\subsection{Proof of invariance}
Having developed all necessary tools, we clarify our invariant $\mc L(W)$ by making more explicit the prescription in \cref{markedintro}. 

Since $\mc L(W)$ is $\zb/2$-graded, we have $[n]=[-n]$, so there is no need to indicate on a 1-bone whether the shift is applied at one joint or another. In other words, every 1-bone either involves a shift or it doesn't, and in the former case we will be agnostic about which joint is given the shift. We will indicate this binary data by leaving a 1-bone unmarked if it has no shift, and by marking it with a tick if it has a shift.

As a result, to define a diagram in $\dgcat_{k}^{(2)}$ from an arboreal graph, it suffices to mark some of the 1-bones. Our definition is straightforward. First notice that if we orient a 0-bone, then there is a well-defined notion a joint being on its ``left'' or ``right''.\\

\begin{definition}\label{arborealdiagramchapter1}\label{invariantdef}
Given an arboreal graph $G$\footnote{Or any labeled trivalent graph, see \cref{caveat}.} and a strongly pretriangulated dg-category $\mc A\in\ob(\dgcat_{k}^{(2)})$, we define a graphic diagram $D(G)= D_{\mc A}(G)$ in $\big(\dgcat_{k}^{(2)}\big)^{F(Q(G))}$. Univalent vertices of $G$ are assigned the dg-category 0, trivalent vertices are assigned $\mc A^{\to}$, and edges are assigned $\mc A$. Given a trivalent vertex of $G$, the half edges $(A,B,C)$ are assigned the functors $(\rh_{1},\rh_{2}, [n]\circ C)$ respectively, with $n$ according to \cref{markedintro}. That is, fix an arbitrary orientation on each 0-bone of $G$. Given a 1-bone, mark it if and only if both of its joints are on the same side of their 0-bone(s). Of those marked 1-bones, set one of its halves to have $n=1$, and set $n=0$ otherwise.
\end{definition}

\cref{marked} again shows all six ways 1-bones can attach to oriented 0-bones in an arboreal graph arising from an arboreal surface: both joints on the left, both on the right, and mixed, along with accounting for 1-bones whose joints live on different 0-bones. Markings are given by \cref{invariantdef}, with marked 1-bones in red and unmarked in green. Define $\mc L(G)=\mc L_{\mc A}(G)$ to be the homotopy limit of $D(G)$, defined up to quasi-equivalence of dg-categories.

%%%%%%%%%%%%%%%marked%%%%%%%%%%%%%%%%
\begin{figure}
\centering
\includegraphics[scale=1]{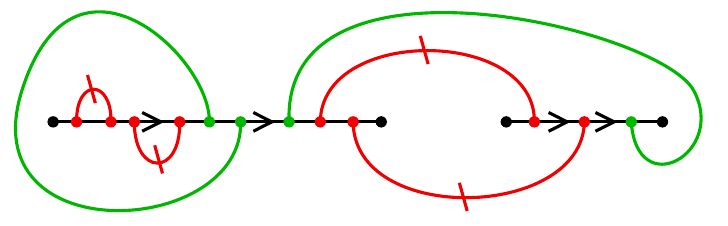}
\caption{Pictured are all six ways that 1-bones can be attached to oriented contractible 0-bones. Each 1-bone represents a cospan \break$\mc A^{\to}\overset{C}\lr \mc A\overset{[n]\circ C}\longleftarrow\mc A^{\to}$, where $n=0$ for an unmarked bone and $n=1$ for a marked bone.}
\label{marked}
\end{figure}
%%%%%%%%%%%%%%%marked%%%%%%%%%%%%%%%%

\begin{lemma}\label{invariancefinal}
The homotopy limit $\mc L(G)\coloneqq \holim D(G)$ is unchanged under the four arboreal moves given in \cref{graphdef}. 
\end{lemma}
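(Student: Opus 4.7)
The plan is to check each of the four arboreal moves separately, in each case matching the change induced in $D(G)$ by the move with one of the algebraic lemmas already established in \cref{localmoves,gen2}. Since $D(G)$ depends on auxiliary orientation choices on the 0-bones, my first task is to show orientation-independence: if one flips the orientation of a single 0-bone $e$, then self-loop 1-bones at $e$ retain their mark (both joints change sides together) while each 1-bone joining $e$ to a different 0-bone flips its mark. I will show this change of markings produces a quasi-equivalent diagram by permuting the joints around $\p e$ one-by-one via the algebraic Move~2 (\cref{gen2}, with the differences $\D=p-q$ bookkept carefully) together with a single application of Move~0 (\cref{algebraicmove1.2}) to account for the full rotation. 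This establishes that I may freely re-orient 0-bones before and after each geometric move to put both diagrams in a most convenient form.

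For Moves~0, 1, and 2 the matching is then direct. For Move~0, choose the orientation on the affected 0-bone so that the dragged joint crosses exactly the ``top'' of the boundary; the marking on the moving 1-bone flips, and this flip is precisely the $[1]\circ C$ shift produced by \cref{algebraicmove1.2}. For Move~1, orient the 0-bone cancelling the 1-bone so that the relevant cospan is unshifted; then \cref{algebraicmove1} applies on the nose. For Move~2, pick orientations so that the two half-edges being swapped stay on the same half of $\p e$; the markings on both 1-bones are preserved, so \cref{gen2} with $\D=0$ (and matching shifts $n$, $m$) gives the equivalence.

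Move~3 is the main obstacle and the reason the marking scheme is set up as in \cref{invariantdef}. A handle slide simultaneously changes which 0-bones some 1-bones attach to, so the ``same-side'' condition can flip for several 1-bones at once. My plan is to first reduce to the two local configurations appearing in \cref{algebraichandleslide} by using orientation-independence to standardize the 0-bones involved. I will then enumerate the cases: the slid 1-bone either ends with its joints on the same or opposite sides, and the two 1-bones involved in the slide may independently be marked or unmarked before the slide. In each case, the marking change predicted by \cref{invariantdef} produces exactly the shift pattern $(\D,\D')$ appearing in \cref{algebraichandleslide} (specialized to cone-shifts only via \cref{algebraicmove3}), the key point being that the half-integer Maslov data on a slid bone is encoded by whether both its joints lie on the same side. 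The bookkeeping here is the core difficulty; I expect to verify it with a small case table indexed by the four possible marking configurations.

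Combining these four verifications, each arboreal move $G\rightsquigarrow G'$ produces a diagram $D(G')$ that is weakly equivalent to $D(G)$ in $\big(\dgcat_{k}^{(2)}\big)^{F(Q(G))}$ (after identifying the underlying quivers, which the moves make natural). By \cref{abstractsimple}, this implies $\holim D(G)\simeq \holim D(G')$, i.e.\ $\mc L(G)\simeq \mc L(G')$, as required.
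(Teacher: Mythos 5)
Your matching of the four moves to the algebraic lemmas (\cref{algebraicmove1.2} for Move 0, \cref{algebraicmove1} for Move 1, \cref{gen2} for Move 2, \cref{algebraicmove3} for Move 3) is exactly the skeleton of the paper's proof, and for Moves 0, 1, 2 your verification is essentially the paper's. However, there are two real problems. First, the orientation-independence step you front-load is both unnecessary for Moves 0--2 (for any fixed choice of orientations, the marking changes are determined and already match the algebraic lemmas) and, as proposed, incorrect. Reversing the orientation of a 0-bone $e$ flips the marking of \emph{every} 1-bone with exactly one joint on $e$ --- say $k$ of them --- while preserving the markings of self-loops at $e$. Move 2 flips no markings and Move 0 flips exactly one (that of the dragged bone), so ``a single application of Move 0'' cannot account for $k\geq 2$ flips; moreover Moves 0 and 2 alone cannot reorder two joints attached on the \emph{same} side of $e$, which is generically required to return the graph to itself after cycling joints around an endpoint. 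The paper isolates this statement as \cref{auxiliary}, proves it \emph{after} the present lemma, and needs handle slides (Move 3) over a chosen connecting 1-bone plus a Move 1 cancellation to make it work. If your treatment of Move 3 genuinely relies on ``freely re-orienting'' the two 0-bones into a standard position, that reliance is currently unsupported.

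Second, the substantive content of the lemma --- that the marking rule of \cref{invariantdef} transforms under a handle slide exactly as \cref{algebraicmove3} permits --- is deferred to ``a small case table I expect to verify'' rather than carried out. The check is short but it is the point of the whole construction: if the slid-over bone $Y$ is marked, the moving joint of $X$ stays on its side and $X$ keeps its marking; if $Y$ is unmarked, the joint changes sides and $X$ flips its marking; and these two outcomes are precisely the two shift patterns that \cref{algebraicmove3} declares homotopy-limit-preserving. Without writing this out (and without the orientation normalization being justified), the proof of the Move 3 case is not yet complete. I would recommend dropping the re-orientation preamble, proving invariance for an arbitrary fixed choice of orientations by direct marking bookkeeping, and deferring orientation-independence to a separate argument in the style of \cref{auxiliary}.
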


\begin{proof}
We will prove the case where all 0-bones are contractible, as geometrically, any non-generic arboreal surface can be made arboreal through an arbitrarily small Weinstein homotopy. Suppose two arboreal graphs $G$ and $G'$ differ by Move 0 acting on a 1-bone $X$. If $X$ is marked (unmarked) in $D(G)$, then $X$ will be unmarked (marked) in $D(G')$, so $\mc L(G)\simeq\mc L(G')$ by \cref{algebraicmove1.2}. See \cref{move0mod2} for an illustration of how Move 0 works in our $\zb/2$-graded setting. If  $G$ and $G'$ differ by Move 1, then \cref{algebraicmove1} shows that $\mc L(G)\simeq\mc L(G')$. Similarly, Move 2 is accounted for by \cref{gen2}.
%%%%%%%%%%%%%%%%move0mod2%%%%%%%%%%%%%%%%%
\begin{figure}
\centering
\includegraphics[scale=1]{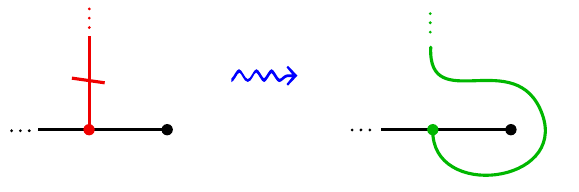}
\caption{The $\zb/2$-graded effect of boundary isotopy.}
\label{move0mod2}
\end{figure}
%%%%%%%%%%%%%%%%move0mod2%%%%%%%%%%%%%%%%%

Finally, suppose $G$ and $G'$ differ by Move 3, with the 1-bone $X$ sliding over the 1-bone $Y$. If $Y$ is \textit{marked}, then the moving joint of $X$, if starting on top (bottom), will remain on top (bottom), so $X$ will retain its (un)marked status. If $Y$ is \textit{unmarked}, then this joint will flip sides, so $X$ will change its (un)marked status. These are exactly the situations guaranteed by \cref{algebraicmove3} to preserve homotopy limits. See \cref{move3mod2} for these two types of handle slides.
%%%%%%%%%%%%%%%%move3mod2%%%%%%%%%%%%%%%%%
\begin{figure}
\centering
\includegraphics[scale=.91]{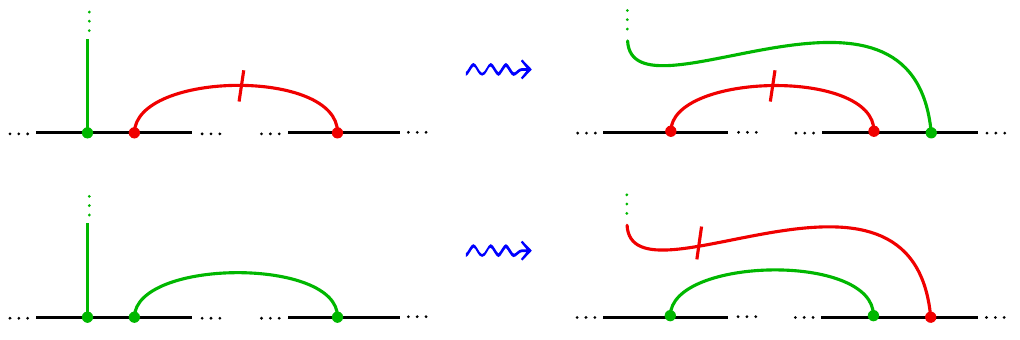}
\caption{The $\zb/2$-graded effect of handle sliding. Here it is assumed that both 0-bones in a given graph are oriented in the same direction}
\label{move3mod2}
\end{figure}
%%%%%%%%%%%%%%move3mod2%%%%%%%%%%%%%%%%%%
\end{proof}

\begin{lemma}\label{auxiliary}
The homotopy limit of $D(G)$ does not depend on the choice of orientations on the 0-bones.
\end{lemma}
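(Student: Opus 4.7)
The plan is to reduce to reversing the orientation of a single 0-bone $b$ and then exhibit an explicit quasi-equivalence of diagrams that implements the required marking changes via $[1]$-shifts at the vertices of $Q(G)$ lying on $b$.

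First I would observe that any two orientation choices differ by a finite sequence of single-bone reversals, so it suffices to show invariance under reversing the orientation of one 0-bone $b$. By \cref{invariantdef}, reversing $b$'s orientation flips precisely the markings of 1-bones with exactly one joint on $b$, while leaving all other markings unchanged (in particular, 1-bones with both joints on $b$ are unaffected, because both sides flip simultaneously).

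The main construction is a morphism $\Phi\colon D(G)\to D(G)_{\mathrm{flipped}}$ whose component at each trivalent vertex $v$ on $b$ is the shift $[1]\colon \mathcal{A}^{\to}\to \mathcal{A}^{\to}$, whose component at each edge-vertex of $Q(G)$ corresponding to a piece of $b$ is the shift $[1]\colon \mathcal{A}\to \mathcal{A}$, and which is the identity elsewhere. I would then verify commutation up to natural homotopy equivalence at each arrow of $Q(G)$: for $\rho_i$-arrows $v\to e$ with both $v$ and $e$ on $b$, we have $\rho_i\circ[1]\simeq [1]\circ\rho_i$; for $C$-arrows $v\to X$ with $v$ on $b$ but $X$ not a piece of $b$, the asymmetric $[1]$-shift produces exactly the required $[1]$-flip of the shift at that end of $X$, realizing the marking flip of $X$; for arrows between vertices not touching $b$ no shifts apply, so commutation is immediate; and arrows from univalent vertices are zero functors that absorb any target shift trivially. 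By \cref{abstractsimple}, $\Phi$ is then a quasi-equivalence of diagrams, and hence $\holim D(G)\simeq \holim D(G)_{\mathrm{flipped}}$.

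The main thing to carefully verify is that the collection of vertices and edges designated ``on $b$'' is consistent with the $\rho$-compatibility constraint $\alpha(e)=\beta(v)$ forced at each trivalent endpoint $v$ of a 0-bone edge $e$ belonging to $b$; this reduces to the observation, available from the geometric description in \cref{symp}, that at each interior joint of $b$ the two 0-bone halves continue the same 0-bone $b$ through the joint, while the path $b$ terminates at univalent boundary vertices where no constraint on $\beta$ is imposed. Granted this, the total change of marking of a 1-bone $X$ with joints $v_1,v_2$ is $\beta(v_1)+\beta(v_2)\pmod 2$, which equals $1$ precisely when exactly one joint of $X$ lies on $b$, matching the required marking change.
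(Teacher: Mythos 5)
Your proof is correct, but it takes a genuinely different route from the paper's. The paper reduces an orientation reversal of a $0$-bone $B$ to a sequence of already-established arboreal moves: it picks a $1$-bone $X$ with exactly one joint on $B$, slides all other joints over $X$ (tracking, via \cref{algebraicmove3}, how markings flip depending on whether $X$ is marked), and ends with the two diagrams differing only in the marking of $X$, which is then killed by a handle cancellation (\cref{algebraicmove1}). Your argument instead exhibits a direct objectwise quasi-equivalence of diagrams, given by $[1]$ on every vertex of $Q(G)$ lying on the reversed bone and the identity elsewhere, and invokes \cref{abstractsimple}; the key identities are $\rho_i\circ[1]=[1]\circ\rho_i$ and $C\circ[1]\simeq[1]\circ C$, and the parity count ($\b(v_1)+\b(v_2)\bmod 2$) matches exactly the set of $1$-bones whose marking flips under the reversal. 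Your approach buys directness and uniformity: it needs none of the move lemmas, involves no bookkeeping of slides, and applies verbatim to circular (non-contractible) $0$-bones, which the paper's proof sidesteps by first perturbing to the contractible case. Two small points you should make explicit: (i) since \cref{invariantdef} only pins down the \emph{sum} of the two end-shifts of a $1$-bone, you should invoke the elementary relabeling move from the beginning of \cref{gen2} (adding $k$ to both halves of an edge) to identify your target diagram with the literal $D(G)_{\mathrm{flipped}}$; and (ii) the natural homotopy equivalence $C\circ[1]\simeq[1]\circ C$ in a strongly pretriangulated $\mc A$ deserves a one-line justification, parallel to the paper's use of $C\circ T\simeq[1]\circ\rho_1$.
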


\begin{proof}
Again we treat the case where all 0-bones are contractible. Suppose $G$ and $G'$ are the same arboreal graph, differing only in the orientation of one 0-bone $B$. Then $D(G)$ and $D(G')$ differ in a precise way: the shift in a 1-bone $X$ differs in $D(G)$ and $D(G')$ if and only if $X$ has exactly one joint on $B$, shown in \cref{step1}. Let $\mc S$ be the set of all such 1-bones.

%%%%%%%%%%%%%%%%%%%%%step1%%%%%%%%%%%%%%%%
\begin{figure}
\centering
\includegraphics[scale=.8]{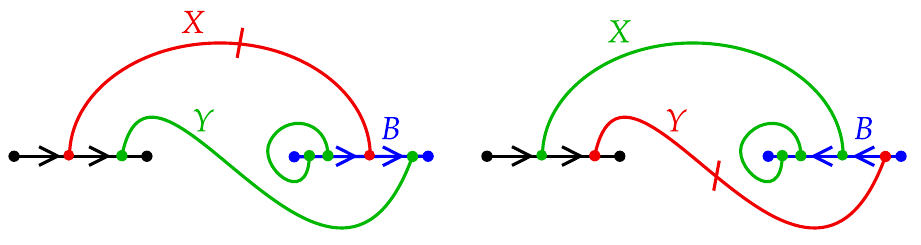}
\caption{An example of the proof. The diagrams $D(G)$ and $D(G')$ are shown on the left and right, respectively. The differing bone $B$ is the 0-bone on the right. In each case, the set $\mc S$ consists of the two 1-bones with joints on both 0-bones, and we arbitrarily choose the one on top to call $X$.}
\label{step1}
\end{figure}
%%%%%%%%%%%%%%%%%%%%%step1%%%%%%%%%%%%%%%%

We now argue that $\holim D(G)\simeq \holim D(G')$.\footnote{In the following, we abuse notation by using the same symbols $D(G)$ and $D(G')$, even as these objects change throughout our moves. This is justified by the fact that each move preserves homotopy limits.} Fix a 1-bone $X\in \mc S$ and let $\mc J$ be the set of joints on the 0-bone connected to $B$ by $X$. Order $\mc J=\{j_{0}<\dots<j_{n}\}$ by starting from the joint on $X$ and moving counterclockwise. Now, starting with $j_{1}$ and proceeding in order, slide all joints in $\mc J$ over $X$. Do this process identically in $G$ and $G'$. Thus each $j\in \mc J\sm\{j_{0}\}$ has undergone exactly one slide, and is now attached to $B$.

For each $Y\neq X$ in $\mc S$, exactly one of its two joints has been slid. Therefore, the marking of $Y$ will flip if and only if $X$ is not marked. Since $X$ has opposite markings in $G$ and $G'$, as $Y$ slides over $X$ it will flip its marking in \textit{exactly one} of $G$ or $G'$. Since the marking of $Y$ differed in $G$ versus $G'$ before the slide over $X$, after the slide it is now identical in $G$ and $G'$. See \cref{step2} for an illustration in our working example. 

%%%%%%%%%%%%%%%%%%%%%%%%step2%%%%%%%%%%%%%
\begin{figure}
\centering
\includegraphics[scale=.8]{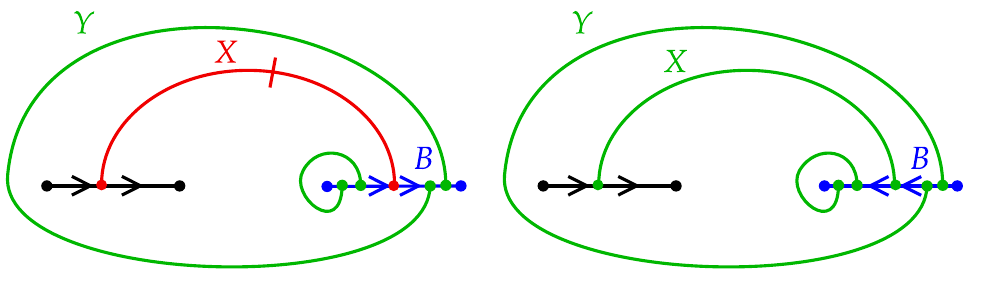}
\caption{Modified $D(G)$ and $D(G')$. In both diagrams, we've performed two instances of Move 0 and one instance of Move 1. The two Move 0's cancel in each case. In $D(G)$, $Y$ does not change its marking under Move 3, because $X$ is unmarked. In $D(G')$, $X$ is marked, so $Y$ does change its marking. Since before these moves $Y$ differed in marking between the two diagrams, it now has the same marking in both.}
\label{step2}
\end{figure}
%%%%%%%%%%%%%%%%%%%%%%step2%%%%%%%%%%%%%%%%%%%%%%

For any 1-bone $Y$ with \textit{both} joints in $\mc J$, both of these joints have been slid over $X$. Thus $Y$ flips markings either 0 times (if $X$ is marked) or 2 times (if $X$ is unmarked). In either case, $Y$ does not change its marking. Since $Y$ started out with the same marking in $G$ and $G'$, it also ends with the same marking in both. Now $D(G)$ and $D(G')$ differ only in the marking of $X$, which is part of a cancelling pair with a 0-bone. By \cref{algebraicmove1}, the cancellation of this pair results in the same picture regardless of whether $X$ is marked or not. Performing this cancellation, $D(G)=D(G')$.\footnote{Recall that the orientations are auxiliary and do not affect the algebraic diagrams.}
\end{proof}

\begin{theorem}\label{final}
Fix a commutative unital ring $k$ and a strongly pretriangulated dg-category $\mc A\in\ob(\dgcat_{k}^{(2)})$. Given a Weinstein surface $W$ and any arborealization $W'$ of $W$, the dg-category $\mc L(W)\coloneqq \holim D_{\mc A}(G(W'))\in \ob(\dgcat_{k}^{(2)})$ is well-defined and invariant up to quasi-equivalence under Weinstein homotopies of $W$.
\end{theorem}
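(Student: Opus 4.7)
The plan is to assemble the theorem directly from the lemmas already established, with essentially no new content beyond checking that the pieces fit together. The main point is that every source of potential ambiguity in the definition $\mc L(W) \coloneqq \holim D_{\mc A}(G(W'))$ has already been addressed in the preceding lemmas.

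First I would argue well-definedness at the level of arboreal graphs. The construction in \cref{arborealdiagramchapter1} depends on a choice of orientations on the $0$-bones of $G$, used to distinguish which $1$-bones receive a shift. By \cref{auxiliary}, reversing the orientation on a single $0$-bone produces a diagram with the same homotopy limit, so iterating gives independence from the entire collection of orientations. Thus $\mc L(G) \coloneqq \holim D_{\mc A}(G)$ is a genuine invariant of the arboreal graph $G$ itself. Next, \cref{invariancefinal} shows that $\mc L(G)$ is invariant under each of the four arboreal moves of \cref{moves}, hence depends only on the arboreal equivalence class of $G$.

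Second, I would pass from arboreal graphs to Weinstein surfaces via \cref{invcor}. The assignment $G \mapsto \mc L(G)$ satisfies the hypothesis of \cref{invcor}, so $\mc L(W) \coloneqq \mc L(G(W'))$ is well-defined on the Weinstein homotopy class of $W$, where $W'$ is any arboreal Weinstein surface homotopic to $W$. The existence of such a $W'$ is guaranteed by Starkston's theorem quoted at the start of Section 3. Independence of the choice of $W'$ follows from \cref{geo}: two arboreal surfaces that are Weinstein homotopic have arboreal-equivalent associated arboreal graphs, and $\mc L$ has just been shown invariant under arboreal equivalence.

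There is no genuine obstacle at this stage — the work has all been done. The only mild subtlety is that \cref{invariancefinal} was proved under the assumption that every $0$-bone is contractible (i.e. the arboreal surface is generic), whereas an arborealization produced by \cref{arborealdef} might be only non-generic arboreal. As noted in the proof of \cref{invariancefinal}, this is harmless: an arbitrarily small Weinstein homotopy converts any non-generic arboreal surface to a generic one, and invariance under such perturbations is itself part of the Weinstein-homotopy invariance being proved. Finally, the output $\mc L(W)$ lies in $\dgcat_{k}^{(2)}$ and is determined up to quasi-equivalence because each instance of $\simeq$ used along the way — from the algebraic moves in \cref{localmoves}, from \cref{gen2}, and from the path-extension model \cref{pathresult} — is a quasi-equivalence of dg-categories in the Tabuada model structure.
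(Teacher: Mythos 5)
Your proposal is correct and matches the paper's proof, which simply combines \cref{invariancefinal} and \cref{auxiliary} with \cref{invcor}; your additional remarks on the non-generic case and on quasi-equivalence are consistent with the remarks already made in those lemmas.
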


\begin{proof}
Combine \cref{invariancefinal} and \cref{auxiliary} with \cref{invcor}.
\end{proof}

%%%%%%%%% SOME EXAMPLES %%%%%%%%%%%
\subsection{Simplification and examples}\label{examples}
We will now provide simplified calculations of $\mc L(W)$ for all topological types of $W$. To do so, we will represent every arboreal graph with an arboreal equivalent one which has no boundary vertices and a single non-contractible 0-bone. Equivalently, we find a canonical way to Weinstein homotope a given arboreal surface into a non-generic arboreal surface whose skeleton's index 0 points form a single $S^{1}$. 

Every arboreal surface $W$ can be Weinstein homotoped so that $G=G(W)$ is obtained by connecting \textit{standard blocks} with form that of \cref{types}, resulting in a presentation which we will call \textit{standard}. We will call a standard block Type 1 if it has only one 1-bone, and Type 2 otherwise. Suppose $G$ is standard and includes, in any order, a total of $n$ Type 1 blocks and $d$ Type 2 blocks, where the $i^{\text{th}}$ Type 2 block has $k_{i}$ 1-bones. An Euler characteristic count shows that $W$ has genus $g=(h-b+1)/2=(\sum_{i}k_{i}+\#\{i| k_{i}\text{ odd}\})/2$, where $h$ and $b$ are the numbers of 1-bones and punctures, respectively. In particular, if $k_{i}=2$ for all $i$, then $g=d$ and $b=n+1$, so we obtain all topological types of Weinstein surfaces. 

%%%%%%%%%%%types%%%%%%%%%%%%%%
\begin{figure}
\centering
\includegraphics[scale=1.2]{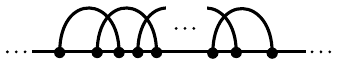}
\caption{Connecting blocks gives a standard form which can represent any arboreal graph. Type 1 blocks have one 1-bone, and each adds a puncture. The rest are Type 2, with $k\geq2$ index 1 bones: each increases genus by $\lceil k/2 \rceil$ and additionally adds a puncture when $k$ is odd.}
\label{types}
\end{figure}
%%%%%%%%%%%types%%%%%%%%%%%%%%

W will show how to turn any standard block into a circular 0-bone with some 1-bones attached. First we prove a local result that will allow us to merge circles.\\

\begin{lemma}\label{crosssimple}
The following move preserves homotopy limits mod 2.

%%%%%%%%%%%%%crosssimple%%%%%%%%%%%%%%%%%%
\begin{figure}[H]
\centering
\includegraphics[scale=1.1]{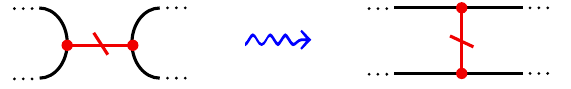}
\end{figure}
%%%%%%%%%%%%%%crosssimple%%%%%%%%%%%%%%%%%

\end{lemma}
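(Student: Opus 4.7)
The plan is to read both sides of the local move as graphic diagrams of dg-categories via \cref{arborealdiagramchapter1}, and then to reduce the equivalence of their homotopy limits to a sequence of the algebraic moves already established in \cref{localmoves} and \cref{gen2}. Because the statement is explicitly mod $2$, I can freely use $[1]=[-1]$, which means only the binary marked/unmarked data on each 1-bone needs to be tracked; all other shifts can be absorbed at will.

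First I would write out $D(G_{\mathrm{left}})$ and $D(G_{\mathrm{right}})$ at the edges/vertices shown, recording for each 1-bone whether it is marked, after fixing an arbitrary orientation on every 0-bone appearing in the picture (by \cref{auxiliary} this choice is immaterial for the final limit). Next I would apply the already-proven moves in a specific order: begin with isotopies (\cref{algebraicmove1.2} and \cref{gen2}) to bring the two diagrams into a common intermediate form, then perform a handle slide (\cref{algebraicmove3}) across the 1-bone in the crossing region, and finally cancel the resulting cancelling pair via \cref{algebraicmove1}. At the level of underlying arboreal graphs this is exactly the sequence Move 2 $\to$ Move 3 $\to$ Move 1 used in the proofs of \cref{invariancefinal} and \cref{auxiliary}, and each step preserves homotopy limits by the corresponding lemma.

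When the local move cannot be reached purely through arboreal moves (for example if one side involves a non-generic configuration with a 4-valent vertex or a bone whose two joints live on the same 0-bone), I would instead invoke \cref{repequivalence} to rewrite the relevant $\mathrm{Fun}(Q,\mc A)$ at that vertex, and then use \cref{steps} (via \cref{pathresult}) to conclude that the homotopy limit of the whole diagram is preserved. In either case the calculation is local to the two 0-bones and the bones between them, so by \cref{abstractsimple} it suffices to exhibit quasi-equivalences on the affected subdiagram that commute with the surrounding structure up to natural homotopy equivalence.

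The main obstacle is bookkeeping of the marking data: every handle slide flips the mark of the sliding 1-bone precisely when the 1-bone it crosses is unmarked, as in the proof of \cref{invariancefinal}. I expect that by choosing the auxiliary 0-bone orientations suitably (mirroring the trick in \cref{auxiliary}), all the mark flips cancel in pairs, leaving the same marking pattern on both sides of the move. Once that combinatorial matching is verified, the chain of equivalences given above yields $\holim D(G_{\mathrm{left}})\simeq \holim D(G_{\mathrm{right}})$, completing the proof.
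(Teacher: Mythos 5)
Your high-level philosophy (reduce the move to the already-established local algebraic moves) is the same as the paper's, but the specific chain you propose is not the one that works, and the step that actually carries the content of the lemma is left unverified. The paper's proof is short and concrete: apply the Rotation Lemma (\cref{rotationlemma}) at the two trivalent vertices so that their half-edge labels match the hypotheses of the \emph{second} move of \cref{move3} (a half handle slide), apply that move once, rotate the remaining top vertex into the desired shape, and absorb the resulting shifts mod $2$. Your proposal never mentions the Rotation Lemma at all, yet it is the essential tool here: the two sides of the crossing move differ in which half-edge at each vertex carries the $C$ label, and without rotating you cannot even put the local diagram into a form where \cref{move3} or \cref{algebraicmove3} applies.

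Beyond that omission, two steps of your plan are suspect. First, you end with "cancel the resulting cancelling pair via \cref{algebraicmove1}," but a handle slide in this local picture does not produce a $0$--$1$ cancelling pair; there is no univalent vertex anywhere in the configuration, so Move 1 has nothing to act on. You appear to be pattern-matching to the proof of \cref{auxiliary}, which solves a different problem. Second, the decisive combinatorial check — that the marking/shift data actually agrees after your sequence of moves — is deferred with "I expect that \dots all the mark flips cancel in pairs." That verification is precisely what the lemma asserts, so deferring it leaves the proof without content. A corrected write-up should name the exact rotations performed, identify which case of \cref{move3} is being invoked, and track the single resulting shift explicitly rather than asserting that everything cancels.
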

\begin{proof}
Rotate the two vertices appropriately so as to apply the second move in \cref{move3}. After this application, rotate the top vertex to obtain the shape desired, and then shift appropriately.
\end{proof}

\begin{lemma}\label{circleslem}
The following move preserves homotopy limits mod 2.

%%%%%%%%%%%circles%%%%%%%%%%%%%%
\begin{figure}[H]
\centering

\tikzset{every picture/.style={line width=0.75pt}} %set default line width to 0.75pt        

\begin{tikzpicture}[x=0.75pt,y=0.75pt,yscale=-1,xscale=1]
%uncomment if require: \path (0,228); %set diagram left start at 0, and has height of 228

%Straight Lines [id:da04053261825895227] 
\draw [line width=1.5]    (20.38,132) -- (195.62,132) ;
%Curve Lines [id:da5885002830197252] 
\draw [color={rgb, 255:red, 240; green, 0; blue, 0 }  ,draw opacity=1 ][line width=1.5]    (38.2,132.6) .. controls (38.2,93.6) and (76.2,95.6) .. (76.2,132.6) ;
\draw [shift={(76.2,132.6)}, rotate = 90] [color={rgb, 255:red, 240; green, 0; blue, 0 }  ,draw opacity=1 ][fill={rgb, 255:red, 240; green, 0; blue, 0 }  ,fill opacity=1 ][line width=1.5]      (0, 0) circle [x radius= 2.61, y radius= 2.61]   ;
\draw [shift={(38.2,132.6)}, rotate = 270] [color={rgb, 255:red, 240; green, 0; blue, 0 }  ,draw opacity=1 ][fill={rgb, 255:red, 240; green, 0; blue, 0 }  ,fill opacity=1 ][line width=1.5]      (0, 0) circle [x radius= 2.61, y radius= 2.61]   ;
%Curve Lines [id:da0452747829082456] 
\draw [color={rgb, 255:red, 240; green, 0; blue, 0 }  ,draw opacity=1 ][line width=1.5]    (62.2,132.6) .. controls (62.2,93.6) and (100.2,95.6) .. (100.2,132.6) ;
\draw [shift={(100.2,132.6)}, rotate = 90] [color={rgb, 255:red, 240; green, 0; blue, 0 }  ,draw opacity=1 ][fill={rgb, 255:red, 240; green, 0; blue, 0 }  ,fill opacity=1 ][line width=1.5]      (0, 0) circle [x radius= 2.61, y radius= 2.61]   ;
\draw [shift={(62.2,132.6)}, rotate = 270] [color={rgb, 255:red, 240; green, 0; blue, 0 }  ,draw opacity=1 ][fill={rgb, 255:red, 240; green, 0; blue, 0 }  ,fill opacity=1 ][line width=1.5]      (0, 0) circle [x radius= 2.61, y radius= 2.61]   ;
%Curve Lines [id:da6446033054905815] 
\draw [color={rgb, 255:red, 240; green, 0; blue, 0 }  ,draw opacity=1 ][line width=1.5]    (88.2,132.6) .. controls (88.2,109.57) and (101.45,100.84) .. (112.3,105.16) .. controls (119.83,108.16) and (126.2,117.45) .. (126.2,132.6) ;
\draw [shift={(88.2,132.6)}, rotate = 270] [color={rgb, 255:red, 240; green, 0; blue, 0 }  ,draw opacity=1 ][fill={rgb, 255:red, 240; green, 0; blue, 0 }  ,fill opacity=1 ][line width=1.5]      (0, 0) circle [x radius= 2.61, y radius= 2.61]   ;
%Curve Lines [id:da260016341276625] 
\draw [color={rgb, 255:red, 240; green, 0; blue, 0 }  ,draw opacity=1 ][line width=1.5]    (113.2,132.6) .. controls (113.2,93.6) and (151.2,95.6) .. (151.2,132.6) ;
\draw [shift={(151.2,132.6)}, rotate = 90] [color={rgb, 255:red, 240; green, 0; blue, 0 }  ,draw opacity=1 ][fill={rgb, 255:red, 240; green, 0; blue, 0 }  ,fill opacity=1 ][line width=1.5]      (0, 0) circle [x radius= 2.61, y radius= 2.61]   ;
%Shape: Rectangle [id:dp10799159750136378] 
\draw  [draw opacity=0][fill={rgb, 255:red, 255; green, 255; blue, 255 }  ,fill opacity=1 ] (106,91) -- (132.4,91) -- (132.4,131) -- (106,131) -- cycle ;
%Curve Lines [id:da3209690800000953] 
\draw [color={rgb, 255:red, 240; green, 0; blue, 0 }  ,draw opacity=1 ][line width=1.5]    (136.21,133) .. controls (136.21,94) and (174.21,96) .. (174.21,133) ;
\draw [shift={(174.21,133)}, rotate = 90] [color={rgb, 255:red, 240; green, 0; blue, 0 }  ,draw opacity=1 ][fill={rgb, 255:red, 240; green, 0; blue, 0 }  ,fill opacity=1 ][line width=1.5]      (0, 0) circle [x radius= 2.61, y radius= 2.61]   ;
\draw [shift={(136.21,133)}, rotate = 270] [color={rgb, 255:red, 240; green, 0; blue, 0 }  ,draw opacity=1 ][fill={rgb, 255:red, 240; green, 0; blue, 0 }  ,fill opacity=1 ][line width=1.5]      (0, 0) circle [x radius= 2.61, y radius= 2.61]   ;
%Straight Lines [id:da9416423248787461] 
\draw [line width=1.5]    (111,132) -- (128.78,132) ;
%Straight Lines [id:da7985608341686848] 
\draw [color={rgb, 255:red, 240; green, 0; blue, 0 }  ,draw opacity=1 ][line width=1.5]    (52,99) -- (56,110.43) ;
%Straight Lines [id:da48107184600675157] 
\draw [color={rgb, 255:red, 240; green, 0; blue, 0 }  ,draw opacity=1 ][line width=1.5]    (77,98) -- (81,109.43) ;
%Straight Lines [id:da33932960390461553] 
\draw [color={rgb, 255:red, 240; green, 0; blue, 0 }  ,draw opacity=1 ][line width=1.5]    (152,99) -- (156,110.43) ;
%Shape: Arc [id:dp13868185305822311] 
\draw  [draw opacity=0][line width=1.5]  (351.88,75.9) .. controls (351.88,75.9) and (351.88,75.9) .. (351.88,75.9) .. controls (341.76,75.95) and (333.51,66.42) .. (333.45,54.61) .. controls (333.39,42.8) and (341.54,33.18) .. (351.66,33.13) .. controls (361.77,33.07) and (370.03,42.61) .. (370.09,54.42) .. controls (370.15,66.23) and (362,75.84) .. (351.88,75.9) -- (351.77,54.51) -- cycle ; \draw  [line width=1.5]  (351.88,75.9) .. controls (351.88,75.9) and (351.88,75.9) .. (351.88,75.9) .. controls (341.76,75.95) and (333.51,66.42) .. (333.45,54.61) .. controls (333.39,42.8) and (341.54,33.18) .. (351.66,33.13) .. controls (361.77,33.07) and (370.03,42.61) .. (370.09,54.42) .. controls (370.15,66.23) and (362,75.84) .. (351.88,75.9) -- cycle ;  
%Straight Lines [id:da5623697131397505] 
\draw [line width=1.5]    (370.38,55) -- (392.2,55) ;
\draw [shift={(370.38,55)}, rotate = 0] [color={rgb, 255:red, 0; green, 0; blue, 0 }  ][fill={rgb, 255:red, 0; green, 0; blue, 0 }  ][line width=1.5]      (0, 0) circle [x radius= 2.61, y radius= 2.61]   ;
%Straight Lines [id:da880176571170182] 
\draw [line width=1.5]    (304.2,56) -- (333.2,56) ;
\draw [shift={(333.2,56)}, rotate = 0] [color={rgb, 255:red, 0; green, 0; blue, 0 }  ][fill={rgb, 255:red, 0; green, 0; blue, 0 }  ][line width=1.5]      (0, 0) circle [x radius= 2.61, y radius= 2.61]   ;
%Straight Lines [id:da6999794454506117] 
\draw [color={rgb, 255:red, 0; green, 0; blue, 0 }  ,draw opacity=1 ][line width=1.5]    (316,50) -- (320,61.43) ;
%Straight Lines [id:da12065737574068636] 
\draw [line width=1.5]    (491.2,55) -- (520.2,55) ;
\draw [shift={(520.2,55)}, rotate = 0] [color={rgb, 255:red, 0; green, 0; blue, 0 }  ][fill={rgb, 255:red, 0; green, 0; blue, 0 }  ][line width=1.5]      (0, 0) circle [x radius= 2.61, y radius= 2.61]   ;
%Straight Lines [id:da5753964223138307] 
\draw [color={rgb, 255:red, 0; green, 0; blue, 0 }  ,draw opacity=1 ][line width=1.5]    (503,49) -- (507,60.43) ;
%Straight Lines [id:da36458919168580917] 
\draw [line width=1.5]    (587.38,55) -- (609.2,55) ;
\draw [shift={(587.38,55)}, rotate = 0] [color={rgb, 255:red, 0; green, 0; blue, 0 }  ][fill={rgb, 255:red, 0; green, 0; blue, 0 }  ][line width=1.5]      (0, 0) circle [x radius= 2.61, y radius= 2.61]   ;
%Shape: Arc [id:dp5440343618534564] 
\draw  [draw opacity=0][line width=1.5]  (355.88,192.9) .. controls (355.88,192.9) and (355.88,192.9) .. (355.88,192.9) .. controls (345.76,192.95) and (337.51,183.42) .. (337.45,171.61) .. controls (337.39,159.8) and (345.54,150.18) .. (355.66,150.13) .. controls (365.77,150.07) and (374.03,159.61) .. (374.09,171.42) .. controls (374.15,183.23) and (366,192.84) .. (355.88,192.9) -- (355.77,171.51) -- cycle ; \draw  [line width=1.5]  (355.88,192.9) .. controls (355.88,192.9) and (355.88,192.9) .. (355.88,192.9) .. controls (345.76,192.95) and (337.51,183.42) .. (337.45,171.61) .. controls (337.39,159.8) and (345.54,150.18) .. (355.66,150.13) .. controls (365.77,150.07) and (374.03,159.61) .. (374.09,171.42) .. controls (374.15,183.23) and (366,192.84) .. (355.88,192.9) -- cycle ;  
%Straight Lines [id:da13050169228664732] 
\draw [line width=1.5]    (374.38,172) -- (396.2,172) ;
\draw [shift={(374.38,172)}, rotate = 0] [color={rgb, 255:red, 0; green, 0; blue, 0 }  ][fill={rgb, 255:red, 0; green, 0; blue, 0 }  ][line width=1.5]      (0, 0) circle [x radius= 2.61, y radius= 2.61]   ;
%Straight Lines [id:da38728556267415726] 
\draw [line width=1.5]    (308.2,173) -- (337.2,173) ;
\draw [shift={(337.2,173)}, rotate = 0] [color={rgb, 255:red, 0; green, 0; blue, 0 }  ][fill={rgb, 255:red, 0; green, 0; blue, 0 }  ][line width=1.5]      (0, 0) circle [x radius= 2.61, y radius= 2.61]   ;
%Straight Lines [id:da7168489353484935] 
\draw [color={rgb, 255:red, 0; green, 0; blue, 0 }  ,draw opacity=1 ][line width=1.5]    (320,167) -- (324,178.43) ;
%Curve Lines [id:da3536971953556778] 
\draw [color={rgb, 255:red, 0; green, 183; blue, 0 }  ,draw opacity=1 ][line width=1.5] [line join = round][line cap = round]   (355.88,192.9) .. controls (355.65,177.91) and (330,145.67) .. (338,138.67) .. controls (346,131.67) and (356.2,136.8) .. (356.2,150.8) ;
\draw [shift={(356.2,150.8)}, rotate = 90] [color={rgb, 255:red, 0; green, 183; blue, 0 }  ,draw opacity=1 ][fill={rgb, 255:red, 0; green, 183; blue, 0 }  ,fill opacity=1 ][line width=1.5] [line join = round][line cap = round]     (0, 0) circle [x radius= 2.61, y radius= 2.61]   ;
\draw [shift={(355.88,192.9)}, rotate = 269.12] [color={rgb, 255:red, 0; green, 183; blue, 0 }  ,draw opacity=1 ][fill={rgb, 255:red, 0; green, 183; blue, 0 }  ,fill opacity=1 ][line width=1.5] [line join = round][line cap = round]     (0, 0) circle [x radius= 2.61, y radius= 2.61]   ;
%Rounded Rect [id:dp07813438651280613] 
\draw  [line width=1.5]  (520,37.83) .. controls (520,32.17) and (524.59,27.59) .. (530.24,27.59) -- (577.14,27.59) .. controls (582.8,27.59) and (587.38,32.17) .. (587.38,37.83) -- (587.38,68.56) .. controls (587.38,74.21) and (582.8,78.8) .. (577.14,78.8) -- (530.24,78.8) .. controls (524.59,78.8) and (520,74.21) .. (520,68.56) -- cycle ;
%Curve Lines [id:da047501398821335306] 
\draw [color={rgb, 255:red, 240; green, 0; blue, 0 }  ,draw opacity=1 ][line width=1.5] [line join = round][line cap = round]   (530.24,78.8) .. controls (530.09,67.96) and (533.58,55.03) .. (544,47.8) .. controls (554.42,40.57) and (572.38,37.83) .. (587.38,37.83) ;
\draw [shift={(587.38,37.83)}, rotate = 0] [color={rgb, 255:red, 240; green, 0; blue, 0 }  ,draw opacity=1 ][fill={rgb, 255:red, 240; green, 0; blue, 0 }  ,fill opacity=1 ][line width=1.5] [line join = round][line cap = round]     (0, 0) circle [x radius= 2.61, y radius= 2.61]   ;
\draw [shift={(530.24,78.8)}, rotate = 269.2] [color={rgb, 255:red, 240; green, 0; blue, 0 }  ,draw opacity=1 ][fill={rgb, 255:red, 240; green, 0; blue, 0 }  ,fill opacity=1 ][line width=1.5] [line join = round][line cap = round]     (0, 0) circle [x radius= 2.61, y radius= 2.61]   ;
%Straight Lines [id:da02629692841745379] 
\draw [color={rgb, 255:red, 240; green, 0; blue, 0 }  ,draw opacity=1 ][line width=1.5]    (557.43,28.31) -- (557.43,78.8) ;
\draw [shift={(557.43,78.8)}, rotate = 90] [color={rgb, 255:red, 240; green, 0; blue, 0 }  ,draw opacity=1 ][fill={rgb, 255:red, 240; green, 0; blue, 0 }  ,fill opacity=1 ][line width=1.5]      (0, 0) circle [x radius= 2.61, y radius= 2.61]   ;
\draw [shift={(557.43,28.31)}, rotate = 90] [color={rgb, 255:red, 240; green, 0; blue, 0 }  ,draw opacity=1 ][fill={rgb, 255:red, 240; green, 0; blue, 0 }  ,fill opacity=1 ][line width=1.5]      (0, 0) circle [x radius= 2.61, y radius= 2.61]   ;
%Straight Lines [id:da7200248882641153] 
\draw [color={rgb, 255:red, 240; green, 0; blue, 0 }  ,draw opacity=1 ][line width=1.5]    (552.69,53.19) -- (563.17,58.9) ;
%Straight Lines [id:da01120352190264029] 
\draw [color={rgb, 255:red, 240; green, 0; blue, 0 }  ,draw opacity=1 ][line width=1.5]    (533.69,51.19) -- (544.17,56.9) ;
%Straight Lines [id:da5369225686902543] 
\draw [line width=1.5]    (461.2,172) -- (490.2,172) ;
\draw [shift={(490.2,172)}, rotate = 0] [color={rgb, 255:red, 0; green, 0; blue, 0 }  ][fill={rgb, 255:red, 0; green, 0; blue, 0 }  ][line width=1.5]      (0, 0) circle [x radius= 2.61, y radius= 2.61]   ;
%Straight Lines [id:da39786763303938866] 
\draw [color={rgb, 255:red, 0; green, 0; blue, 0 }  ,draw opacity=1 ][line width=1.5]    (473,166) -- (477,177.43) ;
%Rounded Rect [id:dp8939925547227529] 
\draw  [line width=1.5]  (490,154.87) .. controls (490,149.19) and (494.6,144.59) .. (500.28,144.59) -- (617.72,144.59) .. controls (623.4,144.59) and (628,149.19) .. (628,154.87) -- (628,185.72) .. controls (628,191.4) and (623.4,196) .. (617.72,196) -- (500.28,196) .. controls (494.6,196) and (490,191.4) .. (490,185.72) -- cycle ;
%Straight Lines [id:da5658467620108382] 
\draw [color={rgb, 255:red, 240; green, 0; blue, 0 }  ,draw opacity=1 ][line width=1.5]    (595.43,145.31) -- (595.43,195.8) ;
\draw [shift={(595.43,195.8)}, rotate = 90] [color={rgb, 255:red, 240; green, 0; blue, 0 }  ,draw opacity=1 ][fill={rgb, 255:red, 240; green, 0; blue, 0 }  ,fill opacity=1 ][line width=1.5]      (0, 0) circle [x radius= 2.61, y radius= 2.61]   ;
\draw [shift={(595.43,145.31)}, rotate = 90] [color={rgb, 255:red, 240; green, 0; blue, 0 }  ,draw opacity=1 ][fill={rgb, 255:red, 240; green, 0; blue, 0 }  ,fill opacity=1 ][line width=1.5]      (0, 0) circle [x radius= 2.61, y radius= 2.61]   ;
%Straight Lines [id:da800086584554316] 
\draw [color={rgb, 255:red, 240; green, 0; blue, 0 }  ,draw opacity=1 ][line width=1.5]    (590.69,169.19) -- (601.17,174.9) ;
%Straight Lines [id:da21393093935075846] 
\draw [line width=1.5]    (537.1,144.17) -- (569.3,144.44) ;
%Curve Lines [id:da7473670817666189] 
\draw [color={rgb, 255:red, 240; green, 0; blue, 0 }  ,draw opacity=1 ][line width=1.5]    (505.73,144.7) .. controls (505.73,115.73) and (531.6,117.22) .. (531.6,144.7) ;
\draw [shift={(531.6,144.7)}, rotate = 90] [color={rgb, 255:red, 240; green, 0; blue, 0 }  ,draw opacity=1 ][fill={rgb, 255:red, 240; green, 0; blue, 0 }  ,fill opacity=1 ][line width=1.5]      (0, 0) circle [x radius= 2.61, y radius= 2.61]   ;
\draw [shift={(505.73,144.7)}, rotate = 270] [color={rgb, 255:red, 240; green, 0; blue, 0 }  ,draw opacity=1 ][fill={rgb, 255:red, 240; green, 0; blue, 0 }  ,fill opacity=1 ][line width=1.5]      (0, 0) circle [x radius= 2.61, y radius= 2.61]   ;
%Curve Lines [id:da6428169038033467] 
\draw [color={rgb, 255:red, 240; green, 0; blue, 0 }  ,draw opacity=1 ][line width=1.5]    (523.43,144.7) .. controls (523.43,127.6) and (532.45,121.11) .. (539.84,124.32) .. controls (544.96,126.55) and (549.3,133.45) .. (549.3,144.7) ;
\draw [shift={(523.43,144.7)}, rotate = 270] [color={rgb, 255:red, 240; green, 0; blue, 0 }  ,draw opacity=1 ][fill={rgb, 255:red, 240; green, 0; blue, 0 }  ,fill opacity=1 ][line width=1.5]      (0, 0) circle [x radius= 2.61, y radius= 2.61]   ;
%Curve Lines [id:da9122479987552631] 
\draw [color={rgb, 255:red, 240; green, 0; blue, 0 }  ,draw opacity=1 ][line width=1.5]    (540.45,144.7) .. controls (540.45,115.73) and (566.32,117.22) .. (566.32,144.7) ;
\draw [shift={(566.32,144.7)}, rotate = 90] [color={rgb, 255:red, 240; green, 0; blue, 0 }  ,draw opacity=1 ][fill={rgb, 255:red, 240; green, 0; blue, 0 }  ,fill opacity=1 ][line width=1.5]      (0, 0) circle [x radius= 2.61, y radius= 2.61]   ;
%Shape: Rectangle [id:dp25324800801087277] 
\draw  [draw opacity=0][fill={rgb, 255:red, 255; green, 255; blue, 255 }  ,fill opacity=1 ] (535.23,114.59) -- (553.2,114.59) -- (553.2,144.31) -- (535.23,144.31) -- cycle ;
%Curve Lines [id:da4834430925848495] 
\draw [color={rgb, 255:red, 240; green, 0; blue, 0 }  ,draw opacity=1 ][line width=1.5]    (556.12,145) .. controls (556.12,116.03) and (583.2,117.31) .. (583.2,144.8) ;
\draw [shift={(583.2,144.8)}, rotate = 90] [color={rgb, 255:red, 240; green, 0; blue, 0 }  ,draw opacity=1 ][fill={rgb, 255:red, 240; green, 0; blue, 0 }  ,fill opacity=1 ][line width=1.5]      (0, 0) circle [x radius= 2.61, y radius= 2.61]   ;
\draw [shift={(556.12,145)}, rotate = 270] [color={rgb, 255:red, 240; green, 0; blue, 0 }  ,draw opacity=1 ][fill={rgb, 255:red, 240; green, 0; blue, 0 }  ,fill opacity=1 ][line width=1.5]      (0, 0) circle [x radius= 2.61, y radius= 2.61]   ;
%Curve Lines [id:da20262901059412985] 
\draw [color={rgb, 255:red, 0; green, 183; blue, 0 }  ,draw opacity=1 ][line width=1.5] [line join = round][line cap = round]   (507.88,195.9) .. controls (507.65,180.91) and (482.2,132.8) .. (491.2,125.8) .. controls (500.2,118.8) and (515.2,130.8) .. (515.2,144.8) ;
\draw [shift={(515.2,144.8)}, rotate = 90] [color={rgb, 255:red, 0; green, 183; blue, 0 }  ,draw opacity=1 ][fill={rgb, 255:red, 0; green, 183; blue, 0 }  ,fill opacity=1 ][line width=1.5] [line join = round][line cap = round]     (0, 0) circle [x radius= 2.61, y radius= 2.61]   ;
\draw [shift={(507.88,195.9)}, rotate = 269.12] [color={rgb, 255:red, 0; green, 183; blue, 0 }  ,draw opacity=1 ][fill={rgb, 255:red, 0; green, 183; blue, 0 }  ,fill opacity=1 ][line width=1.5] [line join = round][line cap = round]     (0, 0) circle [x radius= 2.61, y radius= 2.61]   ;
%Curve Lines [id:da8307772103338934] 
\draw [color={rgb, 255:red, 0; green, 183; blue, 0 }  ,draw opacity=1 ][line width=1.5] [line join = round][line cap = round]   (612,196) .. controls (611.77,181.01) and (613,131) .. (603,125) .. controls (593,119) and (575.2,130.31) .. (575.2,144.31) ;
\draw [shift={(575.2,144.31)}, rotate = 90] [color={rgb, 255:red, 0; green, 183; blue, 0 }  ,draw opacity=1 ][fill={rgb, 255:red, 0; green, 183; blue, 0 }  ,fill opacity=1 ][line width=1.5] [line join = round][line cap = round]     (0, 0) circle [x radius= 2.61, y radius= 2.61]   ;
\draw [shift={(612,196)}, rotate = 269.12] [color={rgb, 255:red, 0; green, 183; blue, 0 }  ,draw opacity=1 ][fill={rgb, 255:red, 0; green, 183; blue, 0 }  ,fill opacity=1 ][line width=1.5] [line join = round][line cap = round]     (0, 0) circle [x radius= 2.61, y radius= 2.61]   ;
%Straight Lines [id:da33063650693951485] 
\draw [line width=1.5]    (539.02,144.7) -- (549.3,144.7) -- (552.3,144.7) ;
%Straight Lines [id:da742649003080738] 
\draw [line width=1.5]    (628.38,171) -- (650.2,171) ;
\draw [shift={(628.38,171)}, rotate = 0] [color={rgb, 255:red, 0; green, 0; blue, 0 }  ][fill={rgb, 255:red, 0; green, 0; blue, 0 }  ][line width=1.5]      (0, 0) circle [x radius= 2.61, y radius= 2.61]   ;
%Straight Lines [id:da42747752473372946] 
\draw [color={rgb, 255:red, 240; green, 0; blue, 0 }  ,draw opacity=1 ][line width=1.5]    (566.69,119.9) -- (571.17,130.6) ;
%Straight Lines [id:da3223147669093449] 
\draw [color={rgb, 255:red, 240; green, 0; blue, 0 }  ,draw opacity=1 ][line width=1.5]    (514.69,119.19) -- (519.17,129.9) ;

% Text Node
\draw (110,113.4) node [anchor=north west][inner sep=0.75pt]  [color={rgb, 255:red, 240; green, 0; blue, 0 }  ,opacity=1 ]  {$\cdots $};
% Text Node
\draw (196,128.4) node [anchor=north west][inner sep=0.75pt]    {$\cdots $};
% Text Node
\draw (-3,127.4) node [anchor=north west][inner sep=0.75pt]    {$\cdots $};
% Text Node
\draw (53,61.4) node [anchor=north west][inner sep=0.75pt]  [font=\Large]  {$\textcolor[rgb]{0.94,0,0}{\overbrace{\ \ \ \ \ \ \ \ \ \ \ \ \ \ \ \ \ \ \ \ \ \ }^{k \ }}$};
% Text Node
\draw (231,98.4) node [anchor=north west][inner sep=0.75pt]  [font=\LARGE,color={rgb, 255:red, 0; green, 0; blue, 255 }  ,opacity=1 ]  {$\rightsquigarrow $};
% Text Node
\draw (393,50.4) node [anchor=north west][inner sep=0.75pt]    {$\cdots $};
% Text Node
\draw (280,51.4) node [anchor=north west][inner sep=0.75pt]    {$\cdots $};
% Text Node
\draw (467,50.4) node [anchor=north west][inner sep=0.75pt]    {$\cdots $};
% Text Node
\draw (610,50.4) node [anchor=north west][inner sep=0.75pt]    {$\cdots $};
% Text Node
\draw (397,167.4) node [anchor=north west][inner sep=0.75pt]    {$\cdots $};
% Text Node
\draw (284,168.4) node [anchor=north west][inner sep=0.75pt]    {$\cdots $};
% Text Node
\draw (437,167.4) node [anchor=north west][inner sep=0.75pt]    {$\cdots $};
% Text Node
\draw (534.56,128.03) node [anchor=north west][inner sep=0.75pt]  [color={rgb, 255:red, 240; green, 0; blue, 0 }  ,opacity=1 ]  {$\cdots $};
% Text Node
\draw (651,166.4) node [anchor=north west][inner sep=0.75pt]    {$\cdots $};
% Text Node
\draw (123,175) node [anchor=north west][inner sep=0.75pt]   [align=left] {$ $};
% Text Node
\draw (332,9.4) node [anchor=north west][inner sep=0.75pt]    {$k=1$};
% Text Node
\draw (531,4.4) node [anchor=north west][inner sep=0.75pt]    {$k=3$};
% Text Node
\draw (336,204.4) node [anchor=north west][inner sep=0.75pt]    {$k=2$};
% Text Node
\draw (541,204.4) node [anchor=north west][inner sep=0.75pt]    {$k\geq 4$};
% Text Node
\draw (518,89.96) node [anchor=north west][inner sep=0.75pt]  [font=\Large,color={rgb, 255:red, 0; green, 0; blue, 0 }  ,opacity=1 ]  {$\textcolor[rgb]{0.94,0,0}{\overbrace{\ \ \ \ \ \ \ \ \ }^{k-4\ }}$};

\end{tikzpicture}
\end{figure}
%%%%%%%%%%%circles%%%%%%%%%%%%%%
\end{lemma}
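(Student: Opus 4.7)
The plan is to prove this lemma by induction on $k$, with base cases $k=1,2,3$ treated separately and the recursive step relying on \cref{crosssimple} to absorb pairs of self-loops into the closing arc that turns the straight 0-bone into a circle.

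For the base cases, I would argue as follows. When $k=1$, the lone self-loop is adjacent to the two boundary joints of the 0-bone. A single application of \cref{algebraicmove1.2} (Move 0) can be used to transport one boundary joint around the boundary, picking up a shift of $1$; the bone is then free to be closed up into a circle by a handle cancellation modeled on \cref{algebraicmove1} (Move 1), which produces the circular 0-bone with a single marked 1-bone as shown. For $k=2$, the two self-loops can be reorganized via \cref{algebraichandleslide} and \cref{gen2} so that one of them is ``closed up'' to form the circular boundary while the other remains as a 1-bone; the parity of shifts produced matches the picture by direct bookkeeping. The case $k=3$ follows by combining $k=1$ and $k=2$.

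For the inductive step when $k\geq 4$, I would apply \cref{crosssimple} to two consecutive self-loops: this move replaces a pair of adjacent self-loops by a local configuration in which one strand closes the 0-bone (contributing a piece of the eventual circular boundary, drawn in green) while the remaining $k-2$ self-loops persist, with shifts adjusted by the action of \cref{crosssimple} on markings. Iterating this procedure $\lfloor k/2 \rfloor -1$ times reduces the configuration to one with two remaining self-loops arranged on the now partially-closed circular 0-bone, at which point the $k=2$ (if $k$ is even) or $k=3$ (if $k$ is odd) base case completes the reduction to the canonical form on the right-hand side.

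The main obstacle will be the careful bookkeeping of the $\mathbb{Z}/2$-graded markings through the iteration. Each application of \cref{crosssimple} can flip the marking on neighboring 1-bones depending on whether the bone being effectively slid over is marked, and each rotation via the Rotation Lemma introduces shifts that must be absorbed consistently. The key invariant to track is the total parity of marked 1-bones, which determines whether an extra tick mark must appear on the final surviving 1-bone; this parity is governed by the residue of $k$ modulo $4$, matching the four cases displayed in the statement. Once this combinatorial shift-tracking is verified for the two inductive transitions ($k \to k-2$), the invariance of homotopy limits established in \cref{crosssimple}, \cref{algebraichandleslide}, \cref{algebraicmove1}, and \cref{algebraicmove1.2} completes the proof.
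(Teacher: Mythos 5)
Your overall strategy diverges from the paper's in a way that opens real gaps. The paper does not induct on $k$: it handles $k=1$ by two applications of the Rotation Lemma (\cref{rotationlemma}), which relabel the $C$ half-edges at the two endpoints of the single self-loop so that the loop itself becomes part of the circular $0$-bone and the former $0$-bone segments become the outgoing edges; it handles $k=2$ by rotations plus the reverse of the second move of \cref{move3}; and it handles all $k\geq 3$ uniformly by rotating the appropriate outer vertices to produce two circles joined by a connecting $1$-bone, applying the reverse of the first move of \cref{move3} a total of $2(k-3)$ times to clear the trivalent vertices off that connecting bone, and then invoking \cref{crosssimple} once to merge the two circles. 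Your base case for $k=1$ is the first concrete problem: you propose to close the segment into a circle via Move~0 followed by a handle cancellation (\cref{algebraicmove1}), but \cref{algebraicmove1} requires a cancelling pair with a univalent boundary vertex, and in the $k=1$ configuration the $0$-bone continues on both sides with no such vertex; nothing is available to cancel, and ``closing a segment into a circle'' is not what Move~1 does. The correct mechanism is the relabeling of half-edges by the quasi-equivalence $T$, not a cancellation.

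The second problem is your inductive step. \cref{crosssimple} is a move that merges two circular $0$-bones joined by a pair of crossing $1$-bones; it is not a device for absorbing two adjacent self-loops sitting on a single straight $0$-bone, so the proposed reduction $k\mapsto k-2$ is not licensed by that lemma as stated. Relatedly, your claim that the answer is ``governed by the residue of $k$ modulo $4$, matching the four cases displayed'' misreads the statement: the displayed cases are $k=1$, $k=2$, $k=3$, and a single uniform picture for all $k\geq 4$ in which $k-4$ self-loops survive explicitly; there is no mod-$4$ periodicity to verify, and a $k\mapsto k-2$ induction would in any case have to land exactly on that $k\geq 4$ picture, which your parity bookkeeping does not establish. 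If you want to salvage an inductive scheme, you would first need a local lemma (proved from \cref{move3} and \cref{gen2}) showing precisely how one adjacent pair of self-loops on a line converts into the green loops and marked chord of the target picture; as written, the proposal substitutes the wrong lemma for that step.
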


Notice that the markings are compatible with the convention in \cref{invariantdef}.

\begin{proof}
Two rotations accomplish the $k=1$ case. The case $k=2$ is shown below, where we've already rotated the outer vertices. The step shown has one more rotation, and the result is then obtained by applying to the rightmost vertex the reverse of the second case in \cref{move3}.

%%%%%%%%%%%%%k2circles%%%%%%%%%%%%%%%%%%
\begin{figure}[H]
\centering
\includegraphics[scale=.8]{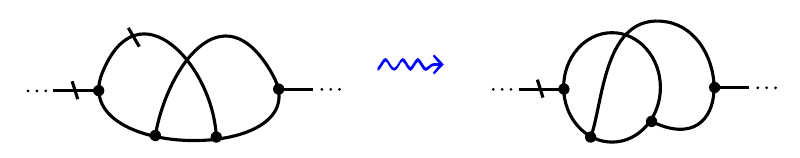}
\end{figure}
%%%%%%%%%%%%%%k2circles%%%%%%%%%%%%%%%%%

The case $k\geq 3$ is now shown, where we've already rotated the first and third outermost vertices on each side.
%%%%%%%%%%%%%kmorecircles%%%%%%%%%%%%%%%%%%
\begin{figure}[H]
\centering
\includegraphics[scale=.8]{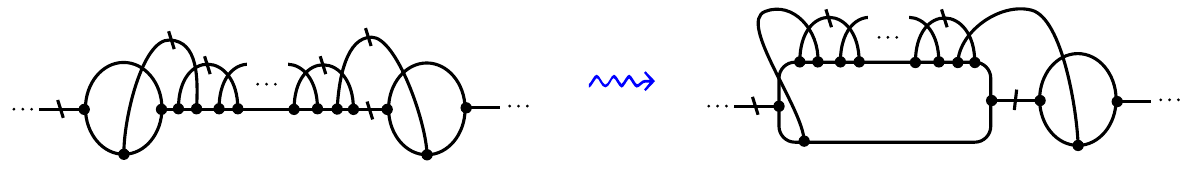}
\end{figure}
%%%%%%%%%%%%%%kmorecircles%%%%%%%%%%%%%%%%%
The step shown applies $2(k-3)$ times the reverse of first case in \cref{move3}, moving all vertices on the connecting 1-bone to the left circle. Now apply \cref{crosssimple}.
\end{proof}

\begin{remark}\label{alltogether}
Blocks can be connected together and \cref{crosssimple} applied to yield a single circular 0-bone. Note that in this process, any horizontal 1-bone connecting two circles becomes a marked vertical 1-bone. Univalent vertices on the ends can be removed by \cref{move1}.
\end{remark}

For example, as remarked earlier we can take $g$ Type 2 blocks, each with $k=2$, and $n$ Type 1 blocks. Call the resulting arboreal graph $G_{g,n}$, which is shown in \cref{wgn}. It is the arboreal graph associated to an arboreal surface $W_{g,n}$ which topologically is the oriented open surface with genus $g$ and $n+1$ punctures.\\

%%%%%%%%%%%%%wgn%%%%%%%%%%%%%%%%%
\begin{figure}
\centering

\tikzset{every picture/.style={line width=0.75pt}} %set default line width to 0.75pt        

\begin{tikzpicture}[x=0.75pt,y=0.75pt,yscale=-1,xscale=1]
%uncomment if require: \path (0,92); %set diagram left start at 0, and has height of 92

%Straight Lines [id:da5305331015591915] 
\draw [line width=1.5]    (21.01,79) -- (439.2,79) ;
\draw [shift={(439.2,79)}, rotate = 0] [color={rgb, 255:red, 0; green, 0; blue, 0 }  ][fill={rgb, 255:red, 0; green, 0; blue, 0 }  ][line width=1.5]      (0, 0) circle [x radius= 2.61, y radius= 2.61]   ;
\draw [shift={(21.01,79)}, rotate = 0] [color={rgb, 255:red, 0; green, 0; blue, 0 }  ][fill={rgb, 255:red, 0; green, 0; blue, 0 }  ][line width=1.5]      (0, 0) circle [x radius= 2.61, y radius= 2.61]   ;
%Curve Lines [id:da7114366241393925] 
\draw [line width=1.5]    (43.2,78.6) .. controls (43.2,39.6) and (81.2,41.6) .. (81.2,78.6) ;
\draw [shift={(81.2,78.6)}, rotate = 90] [color={rgb, 255:red, 0; green, 0; blue, 0 }  ][fill={rgb, 255:red, 0; green, 0; blue, 0 }  ][line width=1.5]      (0, 0) circle [x radius= 2.61, y radius= 2.61]   ;
\draw [shift={(43.2,78.6)}, rotate = 270] [color={rgb, 255:red, 0; green, 0; blue, 0 }  ][fill={rgb, 255:red, 0; green, 0; blue, 0 }  ][line width=1.5]      (0, 0) circle [x radius= 2.61, y radius= 2.61]   ;
%Curve Lines [id:da8661358609028664] 
\draw [line width=1.5]    (58.2,78.6) .. controls (58.2,39.6) and (96.2,41.6) .. (96.2,78.6) ;
\draw [shift={(96.2,78.6)}, rotate = 90] [color={rgb, 255:red, 0; green, 0; blue, 0 }  ][fill={rgb, 255:red, 0; green, 0; blue, 0 }  ][line width=1.5]      (0, 0) circle [x radius= 2.61, y radius= 2.61]   ;
\draw [shift={(58.2,78.6)}, rotate = 270] [color={rgb, 255:red, 0; green, 0; blue, 0 }  ][fill={rgb, 255:red, 0; green, 0; blue, 0 }  ][line width=1.5]      (0, 0) circle [x radius= 2.61, y radius= 2.61]   ;
%Curve Lines [id:da3868402108970791] 
\draw [line width=1.5]    (156.2,78.6) .. controls (156.2,39.6) and (194.2,41.6) .. (194.2,78.6) ;
\draw [shift={(194.2,78.6)}, rotate = 90] [color={rgb, 255:red, 0; green, 0; blue, 0 }  ][fill={rgb, 255:red, 0; green, 0; blue, 0 }  ][line width=1.5]      (0, 0) circle [x radius= 2.61, y radius= 2.61]   ;
\draw [shift={(156.2,78.6)}, rotate = 270] [color={rgb, 255:red, 0; green, 0; blue, 0 }  ][fill={rgb, 255:red, 0; green, 0; blue, 0 }  ][line width=1.5]      (0, 0) circle [x radius= 2.61, y radius= 2.61]   ;
%Curve Lines [id:da8280998056181875] 
\draw [line width=1.5]    (171.2,78.6) .. controls (171.2,39.6) and (209.2,41.6) .. (209.2,78.6) ;
\draw [shift={(209.2,78.6)}, rotate = 90] [color={rgb, 255:red, 0; green, 0; blue, 0 }  ][fill={rgb, 255:red, 0; green, 0; blue, 0 }  ][line width=1.5]      (0, 0) circle [x radius= 2.61, y radius= 2.61]   ;
\draw [shift={(171.2,78.6)}, rotate = 270] [color={rgb, 255:red, 0; green, 0; blue, 0 }  ][fill={rgb, 255:red, 0; green, 0; blue, 0 }  ][line width=1.5]      (0, 0) circle [x radius= 2.61, y radius= 2.61]   ;
%Curve Lines [id:da42247854994392087] 
\draw [line width=1.5]    (380.2,78.6) .. controls (380.2,39.6) and (418.2,41.6) .. (418.2,78.6) ;
\draw [shift={(418.2,78.6)}, rotate = 90] [color={rgb, 255:red, 0; green, 0; blue, 0 }  ][fill={rgb, 255:red, 0; green, 0; blue, 0 }  ][line width=1.5]      (0, 0) circle [x radius= 2.61, y radius= 2.61]   ;
\draw [shift={(380.2,78.6)}, rotate = 270] [color={rgb, 255:red, 0; green, 0; blue, 0 }  ][fill={rgb, 255:red, 0; green, 0; blue, 0 }  ][line width=1.5]      (0, 0) circle [x radius= 2.61, y radius= 2.61]   ;
%Curve Lines [id:da2691123113296098] 
\draw [line width=1.5]    (280.61,79) .. controls (280.61,40) and (318.61,42) .. (318.61,79) ;
\draw [shift={(318.61,79)}, rotate = 90] [color={rgb, 255:red, 0; green, 0; blue, 0 }  ][fill={rgb, 255:red, 0; green, 0; blue, 0 }  ][line width=1.5]      (0, 0) circle [x radius= 2.61, y radius= 2.61]   ;
\draw [shift={(280.61,79)}, rotate = 270] [color={rgb, 255:red, 0; green, 0; blue, 0 }  ][fill={rgb, 255:red, 0; green, 0; blue, 0 }  ][line width=1.5]      (0, 0) circle [x radius= 2.61, y radius= 2.61]   ;

% Text Node
\draw (112,52.4) node [anchor=north west][inner sep=0.75pt]    {$\cdots $};
% Text Node
\draw (334,51.4) node [anchor=north west][inner sep=0.75pt]    {$\cdots $};
% Text Node
\draw (53,3.4) node [anchor=north west][inner sep=0.75pt]  [font=\Large]  {$\overbrace{\ \ \ \ \ \ \ \ \ \ \ \ \ \ \ \ \ \ \ \ \ \ \ \ \ \ \ }^{g\ \text{pairs}}$};
% Text Node
\draw (289,9.4) node [anchor=north west][inner sep=0.75pt]  [font=\Large]  {$\overbrace{\ \ \ \ \ \ \ \ \ \ \ \ \ \ \ \ \ \ \ \ \ \ }^{n}$};

\end{tikzpicture}
\caption{The arboreal graph $G_{g,n}$, which is associated to an arboreal surface with genus $g$ and $n+1$ punctures.}
\label{wgn}
\end{figure}
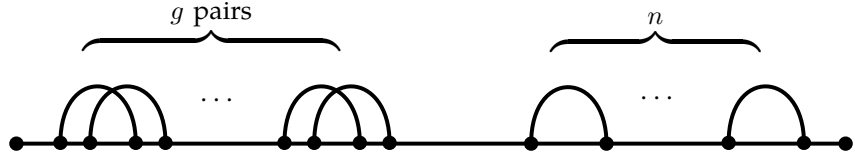
%%%%%%%%%%%%%%wgn%%%%%%%%%%%%%%%%%

\begin{proposition}We have $\mc L(W_{0,0})\simeq 0$. For $(g,n)\neq(0,0)$ we have $\mc L(W_{g,n}) \simeq  \holim D(G_{g,n}')$, where $G_{g,n}'$ is the following arboreal graph, with $\llbracket\cdot\rrbracket\in\{0,1\}$ the Iverson bracket.

%%%%%%%%%%%%%wgnprime%%%%%%%%%%%%%%%%%%
\begin{figure}[H]
\centering

\tikzset{every picture/.style={line width=0.75pt}} %set default line width to 0.75pt        

\begin{tikzpicture}[x=0.75pt,y=0.75pt,yscale=-1,xscale=1]
%uncomment if require: \path (0,154); %set diagram left start at 0, and has height of 154

%Rounded Rect [id:dp9555173963230276] 
\draw  [line width=1.5]  (15,81.16) .. controls (15,73.34) and (21.34,67) .. (29.16,67) -- (344.94,67) .. controls (352.76,67) and (359.1,73.34) .. (359.1,81.16) -- (359.1,123.64) .. controls (359.1,131.46) and (352.76,137.8) .. (344.94,137.8) -- (29.16,137.8) .. controls (21.34,137.8) and (15,131.46) .. (15,123.64) -- cycle ;
%Curve Lines [id:da3475197639514056] 
\draw [color={rgb, 255:red, 0; green, 183; blue, 0 }  ,draw opacity=1 ][line width=1.5] [line join = round][line cap = round]   (38.33,137.79) .. controls (38.1,122.8) and (4.1,56.8) .. (20.1,46.8) .. controls (36.1,36.8) and (44.1,53.8) .. (44.1,67.8) ;
\draw [shift={(44.1,67.8)}, rotate = 90] [color={rgb, 255:red, 0; green, 183; blue, 0 }  ,draw opacity=1 ][fill={rgb, 255:red, 0; green, 183; blue, 0 }  ,fill opacity=1 ][line width=1.5] [line join = round][line cap = round]     (0, 0) circle [x radius= 2.61, y radius= 2.61]   ;
\draw [shift={(38.33,137.79)}, rotate = 269.12] [color={rgb, 255:red, 0; green, 183; blue, 0 }  ,draw opacity=1 ][fill={rgb, 255:red, 0; green, 183; blue, 0 }  ,fill opacity=1 ][line width=1.5] [line join = round][line cap = round]     (0, 0) circle [x radius= 2.61, y radius= 2.61]   ;
%Straight Lines [id:da8717205220101305] 
\draw [color={rgb, 255:red, 240; green, 0; blue, 0 }  ,draw opacity=1 ][line width=1.5]    (61.5,68) -- (61.5,137.8) ;
\draw [shift={(61.5,137.8)}, rotate = 90] [color={rgb, 255:red, 240; green, 0; blue, 0 }  ,draw opacity=1 ][fill={rgb, 255:red, 240; green, 0; blue, 0 }  ,fill opacity=1 ][line width=1.5]      (0, 0) circle [x radius= 2.61, y radius= 2.61]   ;
\draw [shift={(61.5,68)}, rotate = 90] [color={rgb, 255:red, 240; green, 0; blue, 0 }  ,draw opacity=1 ][fill={rgb, 255:red, 240; green, 0; blue, 0 }  ,fill opacity=1 ][line width=1.5]      (0, 0) circle [x radius= 2.61, y radius= 2.61]   ;
%Straight Lines [id:da9673060822279435] 
\draw [color={rgb, 255:red, 240; green, 0; blue, 0 }  ,draw opacity=1 ][line width=1.5]    (54,97.91) -- (70.1,105.8) ;
%Curve Lines [id:da8303205201928796] 
\draw [color={rgb, 255:red, 0; green, 183; blue, 0 }  ,draw opacity=1 ][line width=1.5] [line join = round][line cap = round]   (188.33,137.79) .. controls (188.1,122.8) and (154.1,56.8) .. (170.1,46.8) .. controls (186.1,36.8) and (194.1,53.8) .. (194.1,67.8) ;
\draw [shift={(194.1,67.8)}, rotate = 90] [color={rgb, 255:red, 0; green, 183; blue, 0 }  ,draw opacity=1 ][fill={rgb, 255:red, 0; green, 183; blue, 0 }  ,fill opacity=1 ][line width=1.5] [line join = round][line cap = round]     (0, 0) circle [x radius= 2.61, y radius= 2.61]   ;
\draw [shift={(188.33,137.79)}, rotate = 269.12] [color={rgb, 255:red, 0; green, 183; blue, 0 }  ,draw opacity=1 ][fill={rgb, 255:red, 0; green, 183; blue, 0 }  ,fill opacity=1 ][line width=1.5] [line join = round][line cap = round]     (0, 0) circle [x radius= 2.61, y radius= 2.61]   ;
%Straight Lines [id:da14816258334718813] 
\draw [color={rgb, 255:red, 240; green, 0; blue, 0 }  ,draw opacity=1 ][line width=1.5]    (157.5,68) -- (157.5,137.8) ;
\draw [shift={(157.5,137.8)}, rotate = 90] [color={rgb, 255:red, 240; green, 0; blue, 0 }  ,draw opacity=1 ][fill={rgb, 255:red, 240; green, 0; blue, 0 }  ,fill opacity=1 ][line width=1.5]      (0, 0) circle [x radius= 2.61, y radius= 2.61]   ;
\draw [shift={(157.5,68)}, rotate = 90] [color={rgb, 255:red, 240; green, 0; blue, 0 }  ,draw opacity=1 ][fill={rgb, 255:red, 240; green, 0; blue, 0 }  ,fill opacity=1 ][line width=1.5]      (0, 0) circle [x radius= 2.61, y radius= 2.61]   ;
%Straight Lines [id:da22778386476785417] 
\draw [color={rgb, 255:red, 240; green, 0; blue, 0 }  ,draw opacity=1 ][line width=1.5]    (150,97.91) -- (166.1,105.8) ;
%Curve Lines [id:da6280772125445371] 
\draw [color={rgb, 255:red, 0; green, 183; blue, 0 }  ,draw opacity=1 ][line width=1.5] [line join = round][line cap = round]   (93.33,137.79) .. controls (93.1,122.8) and (59.1,56.8) .. (75.1,46.8) .. controls (91.1,36.8) and (99.1,53.8) .. (99.1,67.8) ;
\draw [shift={(99.1,67.8)}, rotate = 90] [color={rgb, 255:red, 0; green, 183; blue, 0 }  ,draw opacity=1 ][fill={rgb, 255:red, 0; green, 183; blue, 0 }  ,fill opacity=1 ][line width=1.5] [line join = round][line cap = round]     (0, 0) circle [x radius= 2.61, y radius= 2.61]   ;
\draw [shift={(93.33,137.79)}, rotate = 269.12] [color={rgb, 255:red, 0; green, 183; blue, 0 }  ,draw opacity=1 ][fill={rgb, 255:red, 0; green, 183; blue, 0 }  ,fill opacity=1 ][line width=1.5] [line join = round][line cap = round]     (0, 0) circle [x radius= 2.61, y radius= 2.61]   ;
%Straight Lines [id:da7989973817315498] 
\draw [color={rgb, 255:red, 240; green, 0; blue, 0 }  ,draw opacity=1 ][line width=1.5]    (116.5,68) -- (116.5,137.8) ;
\draw [shift={(116.5,137.8)}, rotate = 90] [color={rgb, 255:red, 240; green, 0; blue, 0 }  ,draw opacity=1 ][fill={rgb, 255:red, 240; green, 0; blue, 0 }  ,fill opacity=1 ][line width=1.5]      (0, 0) circle [x radius= 2.61, y radius= 2.61]   ;
\draw [shift={(116.5,68)}, rotate = 90] [color={rgb, 255:red, 240; green, 0; blue, 0 }  ,draw opacity=1 ][fill={rgb, 255:red, 240; green, 0; blue, 0 }  ,fill opacity=1 ][line width=1.5]      (0, 0) circle [x radius= 2.61, y radius= 2.61]   ;
%Straight Lines [id:da6405184156777699] 
\draw [color={rgb, 255:red, 240; green, 0; blue, 0 }  ,draw opacity=1 ][line width=1.5]    (109,97.91) -- (125.1,105.8) ;
%Straight Lines [id:da5725922536124526] 
\draw [color={rgb, 255:red, 240; green, 0; blue, 0 }  ,draw opacity=1 ][line width=1.5]    (333.5,68) -- (333.5,137.8) ;
\draw [shift={(333.5,137.8)}, rotate = 90] [color={rgb, 255:red, 240; green, 0; blue, 0 }  ,draw opacity=1 ][fill={rgb, 255:red, 240; green, 0; blue, 0 }  ,fill opacity=1 ][line width=1.5]      (0, 0) circle [x radius= 2.61, y radius= 2.61]   ;
\draw [shift={(333.5,68)}, rotate = 90] [color={rgb, 255:red, 240; green, 0; blue, 0 }  ,draw opacity=1 ][fill={rgb, 255:red, 240; green, 0; blue, 0 }  ,fill opacity=1 ][line width=1.5]      (0, 0) circle [x radius= 2.61, y radius= 2.61]   ;
%Straight Lines [id:da7579883258559611] 
\draw [color={rgb, 255:red, 240; green, 0; blue, 0 }  ,draw opacity=1 ][line width=1.5]    (326,97.91) -- (342.1,105.8) ;
%Straight Lines [id:da9145908415458772] 
\draw [color={rgb, 255:red, 240; green, 0; blue, 0 }  ,draw opacity=1 ][line width=1.5]    (267.5,68) -- (267.5,137.8) ;
\draw [shift={(267.5,137.8)}, rotate = 90] [color={rgb, 255:red, 240; green, 0; blue, 0 }  ,draw opacity=1 ][fill={rgb, 255:red, 240; green, 0; blue, 0 }  ,fill opacity=1 ][line width=1.5]      (0, 0) circle [x radius= 2.61, y radius= 2.61]   ;
\draw [shift={(267.5,68)}, rotate = 90] [color={rgb, 255:red, 240; green, 0; blue, 0 }  ,draw opacity=1 ][fill={rgb, 255:red, 240; green, 0; blue, 0 }  ,fill opacity=1 ][line width=1.5]      (0, 0) circle [x radius= 2.61, y radius= 2.61]   ;
%Straight Lines [id:da30719678239580595] 
\draw [color={rgb, 255:red, 240; green, 0; blue, 0 }  ,draw opacity=1 ][line width=1.5]    (260,97.91) -- (276.1,105.8) ;
%Straight Lines [id:da3109850929028837] 
\draw [color={rgb, 255:red, 240; green, 0; blue, 0 }  ,draw opacity=1 ][line width=1.5]    (289.5,68) -- (289.5,137.8) ;
\draw [shift={(289.5,137.8)}, rotate = 90] [color={rgb, 255:red, 240; green, 0; blue, 0 }  ,draw opacity=1 ][fill={rgb, 255:red, 240; green, 0; blue, 0 }  ,fill opacity=1 ][line width=1.5]      (0, 0) circle [x radius= 2.61, y radius= 2.61]   ;
\draw [shift={(289.5,68)}, rotate = 90] [color={rgb, 255:red, 240; green, 0; blue, 0 }  ,draw opacity=1 ][fill={rgb, 255:red, 240; green, 0; blue, 0 }  ,fill opacity=1 ][line width=1.5]      (0, 0) circle [x radius= 2.61, y radius= 2.61]   ;
%Straight Lines [id:da9691871363605882] 
\draw [color={rgb, 255:red, 240; green, 0; blue, 0 }  ,draw opacity=1 ][line width=1.5]    (282,97.91) -- (298.1,105.8) ;

% Text Node
\draw (126,92.4) node [anchor=north west][inner sep=0.75pt]    {$\cdots $};
% Text Node
\draw (302,92.4) node [anchor=north west][inner sep=0.75pt]    {$\cdots $};
% Text Node
\draw (14,0.4) node [anchor=north west][inner sep=0.75pt]  [font=\Large,color={rgb, 255:red, 0; green, 183; blue, 0 }  ,opacity=1 ]  {$\overbrace{\ \ \ \ \ \ \ \ \ \ \ \ \ \ \ \ \ \ \ \ \ \ \ \ \ \ \ \ \ \ \ \ \ \ \ \ \ }^{g}$};
% Text Node
\draw (262,22.4) node [anchor=north west][inner sep=0.75pt]  [font=\Large,color={rgb, 255:red, 240; green, 0; blue, 0 }  ,opacity=1 ]  {$\overbrace{\ \ \ \ \ \ \ \ \ \ \ \ \ \ \ \ }^{n-\llbracket g=0\rrbracket}$};

\end{tikzpicture}
\label{wgnprime}
\end{figure}
%%%%%%%%%%%%%%wgnprime%%%%%%%%%%%%%%%%%

\end{proposition}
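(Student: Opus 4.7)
The plan splits into two cases. When $(g,n) = (0,0)$, the graph $G_{0,0}$ is a single contractible 0-bone with two univalent boundary vertices and no 1-bones, so $D(G_{0,0})$ is the cospan $0 \to \mc A \leftarrow 0$; both maps are fibrations and the ordinary limit is $0$, so \cref{recognize} gives $\mc L(W_{0,0}) \simeq 0$.

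For $(g,n) \neq (0,0)$, the strategy is to run the reduction sketched in Subsection~\ref{examples}: apply \cref{circleslem} once to each block of $G_{g,n}$ and then iteratively merge the resulting circles via \cref{crosssimple}, as explicitly allowed by \cref{alltogether}. First I would apply \cref{circleslem} block by block. Each of the $g$ Type 2 blocks, having $k=2$, becomes a circular 0-bone bearing one unmarked (green) self-loop together with one marked (red) vertical 1-bone; each of the $n$ Type 1 blocks, having $k=1$, becomes a circular 0-bone bearing a single marked vertical 1-bone attached at the original endpoint of the block. The outcome is a linear chain of $g+n$ small circles joined by the horizontal 1-bones already present in $G_{g,n}$, with two univalent boundary vertices at the ends of the chain.

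The second step is to merge these circles one at a time using \cref{crosssimple}, which by \cref{alltogether} converts each connecting horizontal 1-bone into a fresh marked vertical 1-bone on the combined circle while preserving the markings of 1-bones already accumulated. After all $g+n-1$ mergers, I would eliminate the two terminal univalent vertices using \cref{algebraicmove1} (Move 1). The result is a single circular 0-bone with $g$ unmarked green self-loops (coming from the Type 2 blocks) together with the prescribed number of marked red vertical 1-bones; by \cref{invariancefinal}, this is precisely $G_{g,n}'$, and $\mc L(W_{g,n}) = \holim D(G_{g,n}) \simeq \holim D(G_{g,n}')$.

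The step I expect to require the most care is matching the final count of marked vertical 1-bones with the Iverson-bracket expression $n - \llbracket g = 0\rrbracket$. When $g \geq 1$, a Type 2 block seeds the circular structure and the two terminal Move 1 applications each cancel a boundary-attached 1-bone that contributed no marking in the reduction, leaving all $g+n$ marked verticals intact. When $g=0$, however, the circle has to be assembled entirely from Type 1 blocks, and the terminal cleanup absorbs one of the marked red vertical 1-bones coming from a Type 1 block, dropping the count to $n-1$. This bookkeeping, together with verifying that the surviving markings agree with the orientation prescription of \cref{invariantdef} on the final circular 0-bone, is the one non-routine part of the argument; the underlying algebraic equivalences themselves are a straightforward chaining of the moves developed in Subsections~\ref{localmoves}, \ref{gen2}, and~\ref{examples}.
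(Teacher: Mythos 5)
Your overall route is the paper's: apply \cref{circleslem} block by block, merge the resulting circles via \cref{crosssimple} as in \cref{alltogether}, and remove the terminal univalent vertices with Move 1. However, there is a concrete counting error in how you read \cref{circleslem}. That lemma does \emph{not} preserve all $k$ of a block's 1-bones: one arc is consumed in closing up the circle, so a block with $k$ arcs yields a circle carrying $k-1$ self-loops (for $k=2$ the output is a circle with a single unmarked green self-loop and \emph{no} red vertical; for $k=1$ the output is a bare circle). The marked vertical 1-bones of $G'_{g,n}$ do not come from inside the blocks at all --- they are the $g+n-1$ horizontal 1-bones \emph{connecting} consecutive circles, which \cref{crosssimple} converts into marked verticals when the circles are merged. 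Your accounting (each Type 2 block contributing a green loop \emph{and} a marked vertical, each Type 1 block contributing a marked vertical) therefore produces $g+n$ marked 1-bones when $g\geq 1$, whereas $G'_{g,n}$ has $g+n-1$ (as the paper states explicitly right after the figure, and as an Euler characteristic check forces: a circle with $h$ self-loops is homotopy equivalent to a surface of genus $g$ with $n+1$ punctures only if $h=2g+n-1$, so with $g$ unmarked loops there are exactly $g+n-1$ marked ones). Your proposed asymmetric ``absorption'' of one marked bone only in the $g=0$ case is an artifact of this miscount; in every case the two end segments attach to univalent vertices and are cancelled by Move 1, and the Iverson bracket in the figure reflects that the $g$ greens are interleaved with only $g-1$ reds.

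A smaller point: in the $(0,0)$ case your claim that both legs of $0\to\mc A\leftarrow 0$ are fibrations is not justified --- the constant functor $0\to\mc A$ is in general neither full nor an isofibration on $H^{0}$, so \cref{recognize} does not apply as stated. The conclusion $\mc L(W_{0,0})\simeq 0$ is still immediate, e.g.\ from the path-extension formula of \cref{pathresult} (the limit of $0\to\mc A\leftarrow P(\mc A)\to\mc A\leftarrow 0$ is the category of homotopy autoequivalences of the zero object), or via \cref{simplify} as the paper does.
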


We have indicated the shifts explicitly, but they are redundant given that our diagram is defined by using the $G\mapsto D(G)$ operation in \cref{invariantdef}. Note that $D(G_{g,n}')$ has $g$ unmarked and $g+n-1$ marked 1-bones.

\begin{proof}
We start with $G_{g,n}$ shown in \cref{wgn}. For $g=n=0$ we have an open disk and $G_{0,0}$ is a single contractible 0-bone, so the result follows by \cref{simplify}. The other cases follow from applying \cref{circleslem} and following \cref{alltogether}.
\end{proof}

\begin{remark}\label{cylinder} The case of $W_{0,1}$ is somewhat degenerate. The graph $G'_{0,1}$ should be thought of as two vertices connected by two edges. Thus it is not arboreal, but we can still put a diagram on $Q=Q(G_{0,1})$. By removing the univalent vertices of $G_{0,1}$, it's clear that $D(G'_{0,1})$ should assign $\id_{\mc A}$ to all arrows in $Q$. Then $\mc L(W_{0,1})$ is the (ordinary) equalizer of the dg functors $\pi_{1},\pi_{2} :P(\mc A)\lr \mc A$. Thus the objects are homotopy autoequivalences.
\end{remark}

If $(g,n)\notin\{(0,0),(0,1)\}$, then $G'_{g,n}$ is a legitimate arboreal graph. There are many cospans in $D(G'_{g,n})$ which involve the fibrations $\rh_{i}:\mc A^{\to}\lr \mc A$. Objects in the homotopy limit then involve many homotopy equivalences, but we can improve this situation using \cref{simplify}. In particular, a spanning tree of $G'_{g,n}$ is given by the 0-bone with one edge subtracted. So we can take all but one cospan of the fibrations $\rh_{i}$ to yield on-the-nose equalities. 

Equivalently, we can first assume that all $\rh_{i}$ cospans are equalities, in which case we can represent every black edge in $G'_{g,n}$ with a single object of $\mc A$, together arranged in a circle. We connect these objects with arrows in the directions indicated by the functors in the cospans, or equivalently by the markings of the 1-bones\footnote{Given our convention on arboreal graphs, this can also be stated as: if a joint is attached from the top (resp. bottom), the corresponding arrow points to the right (resp. left). Compare with Figure 14.}. This yields a quiver whose underlying graph is the circle with 2(2g+n-1) vertices. Choose one of these vertices to ``open up'', replacing the equality it represents with a homotopy equivalence. 

In the following we will not explicitly write down morphism spaces or composition or differential rules, as these follow the same pattern we've been using throughout.\\

\begin{example} For completeness, notice that the right quiver for $W_{0,1}$ is one vertex with a single arrow making a loop. Then necessarily this arrow must be the homotopy equivalence, and we recover the result in \cref{cylinder}.\\
\end{example}

\begin{example} The case $(g,n)=(0,2)$ yields the quiver with two vertices and two arrows in opposite directions. One vertex must be chosen to not be an equality of objects of $\mc A$, but either choice yields the same category. Either way, objects in $\mc L(W_{0,2})$ are representations $A_{0}\overset{a_{0}}\to A_{1}\overset{a_{1}}\to A_{2}$, along with homotopy equivalences $A_{0}\simeq A_{2}$ and $C(a_{0})\simeq C(a_{1})[1]$. \\
\end{example}

\begin{example} The case $(g,n)=(1,0)$ yields the quiver with two vertices and two arrows in the same direction. Now the choice of vertex makes a difference, and we get two equivalent presentations for $\mc L(W_{1,0})$. Objects can be taken to be representations $A_{0}\overset{a_{0}}\to A_{1}\overset{a_{1}}\leftarrow A_{2}$ or $A_{0}\overset{a_{0}}\leftarrow A_{1}\overset{a_{1}}\to A_{2}$, in both cases with homotopy equivalences $A_{0}\simeq A_{2}$ and $C(a_{0})\simeq C(a_{1})$.\\
\end{example}

%%%%%%%%%%%%%33circles%%%%%%%%%%%%%%%%%%
\begin{figure}
\centering
\includegraphics[scale=.8]{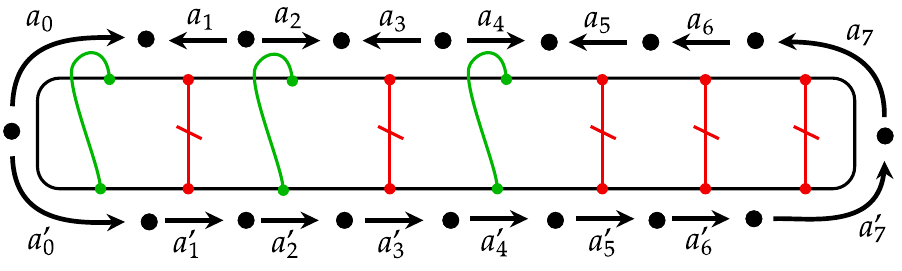}
\label{33circles}
\caption{An object of $\mc L(W_{g,n})$ can be represented in the following way. Open up the shown quiver at a fixed vertex to make a line, represent this quiver in objects of $\mc A$ and supply homotopy equivalences between the ends of this representation and between the cones of the maps (with shifts as appropriate).}
\end{figure}
%%%%%%%%%%%%%%33circles%%%%%%%%%%%%%%%%%

\begin{example} With all small cases dealt with, we show a somewhat larger case in order to illustrate the pattern. A good size is given by $(g,n)=(3,3)$. Here we draw the associated quiver over the graph $G'_{3,3}$, with the arrows in the direction determined the way the corresponding 1-bone is attached. Figure 14 means that an object in $\mc L(W_{3,3})$ is a representation of the black quiver, with \textit{one specified vertex} opened up into a homotopy equivalence, along with homotopy equivalences $C(a_{i})\simeq C(a_{i})[n_{i}]$, where $n_{i}$ is $0$ if the 1-bone connecting $a_{i}$ and $a_{i}'$ is unmarked, and 1 if marked. For example, instead of the maps $a_{0}$ and $a_{0}'$ having a common codomain $A_{0}$, this vertex could be replaced by a homotopy equivalence $A_{0}\simeq A_{0}'$. As before, different choices of this specified vertex yield representations of different linear quivers, with the data of a homotopy equivalence  between the two ends.
\end{example}

	\bibliography{references}
	\bibliographystyle{alpha}

\end{document}